\theoremstyle{definition} \newtheorem{defi}{Definition}[subsection]
\theoremstyle{plain} \newtheorem{thm}[defi]{Theorem}
\theoremstyle{plain} \newtheorem{prop}[defi]{Proposition}
\theoremstyle{plain} \newtheorem{lm}[defi]{Lemma}
\theoremstyle{plain} \newtheorem{cor}[defi]{Corollary}
\theoremstyle{definition} \newtheorem{ex}[defi]{Example}
\theoremstyle{definition} \newtheorem{rem}[defi]{Remark}
\newcommand{\Z}[0]{\mathbb{Z}}
\newcommand{\St}{\operatorname{St}}
\newcommand{\soc}{\operatorname{soc}}
\newcommand{\id}{\operatorname{id}}
\newcommand{\Hom}{\operatorname{Hom}}
\newcommand{\Ext}{\operatorname{Ext}}
\newcommand{\F}[0]{\mathbb{F}}
\newcommand{\G}{\mathcal{G}}
\newcommand{\ind}{\operatorname{ind}}
\newcommand{\Gq}{G(\F_q)}
\newcommand{\ZX}{\mathbb{Z}[X]^W}
\newcommand{\Q}{\mathcal{Q}}
\newcommand{\lb}{\llbracket}
\newcommand{\rb}{\rrbracket}
\newcommand{\wh}[1]{\widehat{#1}}
\author{Tobias Kildetoft\footnote{Supported in part by  QGM (Centre for Quantum Geometry of Moduli Spaces) funded by
the Danish National Research Foundation and in part by  Knut and Alice Wallenbergs Foundation}\\ Uppsala University\\ tobias.kildetoft@gmail.com}
\title{Decomposition of tensor products involving a Steinberg module}
\begin{document}

\maketitle

2010 {\em Mathematics Subject Classification.} Primary 20G05; Secondary 20C33.

\begin{abstract}

We study the decomposition of tensor products between a Steinberg module and a costandard module, both as a module for the algebraic group $G$ and when restricted to either a Frobenius kernel $G_r$ or a finite Chevalley group $\Gq$. In all three cases, we give formulas reducing this to standard character data for $G$.

Along the way, we use a bilinear form on the characters of finite dimensional $G$-modules to give formulas for the dimension of homomorphism spaces between certain $G$-modules when restricted to either $G_r$ or $\Gq$. Further, this form allows us to give a new proof of the reciprocity between tilting modules and simple modules for $G$ which has slightly weaker assumptions than earlier such proofs. Finally, we prove that in a suitable formulation, this reciprocity is equivalent to Donkin's tilting conjecture.\\

Keywords: Algebraic groups, Frobenius kernels, finite Chevalley groups, tilting modules.

\end{abstract}

\section{Introduction}

Let $G$ be a semisimple, connected, simply connected algebraic group scheme over an algebraically closed field $k$ of characteristic $p>0$. When studying representations of $G$, a natural starting point are the costandard modules $\nabla(\lambda) = \ind_B^G(\lambda)$ where $B$ is a Borel subgroup and $\lambda$ is a dominant weight. These modules have characters given by Weyl's character formula and simple socles $L(\lambda)$ which exhaust the simple $G$-modules.

A series of costandard modules of special interest are the Steinberg modules $\St_r = \nabla((p^r-1)\rho)$ where $\rho$ is the Weyl weight. These are simple and self-dual.

In this paper we will study modules of the form $\St_r\otimes\nabla(\lambda)$, in particular questions of how these decompose into indecomposable summands. Under certain conditions on $\lambda$ these are tilting, meaning that both the module and its dual have filtrations with subfactors isomorphic to costandard modules (i.e. they both have good filtrations). Thus, the module can be written as a direct sum of indecomposable tilting modules $T(\nu)$ for suitable weights $\nu$. Further, in many cases, the decomposition is completely determined by the socle of the module, and we produce formulas for multiplicities in this socle which only rely on standard character data for $G$.

It will turn out to be convenient to also study modules of the more general form $T((p^r-1)\rho + \mu)\otimes\nabla(\lambda)$ for some dominant weight $\mu$, since this allows for induction on $r$ in certain cases. Similarly to the above, these are also tilting under the same conditions on $\lambda$, and if we further put some restrictions on $\mu$ the decomposition is again determined by the socle.

The motivation for studying specifically modules of the form $\St_r\otimes\nabla(\lambda)$ is that the characters of these are known, so if one can determine the decomposition, there is a hope that one might also obtain information about the characters of the indecomposable summands. As mentioned, these summands will in some cases be indecomposable tilting modules, and the characters of these are still not very well understood outside type $A$, where they have been shown to be given in terms of the $p$-canonical basis in \cite{richewilliamson15} where a general conjecture for their characters is also proposed.

The main way we are able to reduce these question to standard character data for $G$ is to show that certain tensor products involved have good filtrations, extending previous work by Andersen, Nakano and the author. Once this is shown, we can apply a result of Donkin which relates the number of times a costandard module occurs in a good filtration to the dimensions of suitable spaces of homomorphism of $G$-modules.

Since we are working in positive characteristic, there is a Frobenius map $F: G\to G$, and both the kernel $G_r$ and fixed points $\Gq$ of the $r$'th iterate of this are of great interest. The first is called the $r$'th Frobenius kernel and the second is a finite Chevalley group which can be identified with the points of $G$ over the field with $q = p^r$ elements, hence the notation.

We also study the restriction of the above mentioned tensor products to these subgroups. In both cases, the restriction is projective and injective and thus the problem once again becomes to describe socles and hence dimensions of certain homomorphism spaces for these subgroups.

More generally, we will provide formulas for the dimension of the space of homomorphisms between a tilting module for $G$ of the form $T((p^r-1)\rho + \lambda)$ and any finite dimensional $G$-module, when these are seen as modules for either $G_r$ or $\Gq$. The highest weight of the tilting module is such that the restriction of this module is injective for either $G_r$ or $\Gq$.

A key tool in providing these formulas comes from the use of a suitable bilinear form on the characters of finite dimensional $G$-modules. Analyzing more closely this form also allows us to give a new proof of the reciprocity between tilting modules and simple modules for $G$, which has slightly weaker assumptions than previous such proofs. Further, we also show that in a suitable reformulation, this reciprocity is equivalent to Donkin's tilting conjecture.\\

The results of this paper are formulated for a semisimple, connected, simply connected algebraic group. The reasons for working in this more restrictive setup, rather than an arbitrary connected reductive group, are the following two properties:

The group being semisimple ensures that simple modules with distinct $r$-restricted highest weights remain non-isomorphic on restriction to the $r$'th Frobenius kernel. In particular, no non-trivial $1$-dimensional $G$-module is trivial as a module for the Frobenius kernel.

The group being simply connected implies that the set of $r$-restricted weights contains a full set of representatives of the characters of the maximal torus modulo $p^r$. Consequently, any dominant weight $\lambda$ can be written as $\lambda_0 + p^r\lambda_1$ for a dominant weight $\lambda_1$ and an $r$-restricted weight $\lambda_0$. Further, this also means that any simple module for a Frobenius kernel is the restriction of a $G$-module with restricted highest weight.\\

In order to extend the results of this paper to an arbitrary connected reductive group, we can use the construction in \cite[II.1.18]{rags} to write the group as a quotient of the form $(H\times T)/Z$ where $H$ is connected and semisimple, $T$ is a torus and $Z$ is a central subgroup of the product.

The semisimple group $H$ above might not be simply connected, so to remedy this, we may need to further pass to a cover of $H$, as described in the final paragraph of \cite[II.1.17]{rags}.

\section*{Acknowledgements}

The present paper originated in my PhD dissertation, but the results have been expanded significantly in generality, in part due to many helpful suggestions from Anton Cox and Stephen Donkin who served as external examiners. Their help is gratefully acknowledged.

I would also like to thank my advisor Henning Haahr Andersen for his help throughout writing both the dissertation and this paper.

Finally, I would like to thank the anonymous referee for providing a detailed list of corrections as well as some very helpful suggestions.

\section{Notation and Preliminaries}

In this section we will introduce the notation used throughout the paper as well as some results that will be used extensively.

\subsection{List of notation}

From now on we fix the following notation. For further details on algebraic groups and their Frobenius kernels, we refer to \cite{rags}, which for the most part uses the same notation, with the main difference being that we use $\nabla(\lambda)$ and $\Delta(\lambda)$ instead of $H^0(\lambda)$ and $V(\lambda)$. For further details on finite Chevalley groups, we refer to \cite{humphreysfinlie}.

Note that throughout this paper, the term $G$-module will mean rational $G$-module.
\begin{itemize}
\item $k$ is an algebraically closed field of characteristic $p>0$.
\item $G$ is a semisimple, connected, simply connected algebraic group scheme over $k$, defined over $\mathbb{F}_p$.
\item $T\leq G$ is a maximal split torus.
\item $X = X(T)$ is the group of characters of $T$.
\item $R$ is the associated root system.
\item $S$ is a fixed basis of $R$.
\item $R^+$ is the set of positive roots corresponding to $S$.
\item $\alpha^{\vee}$ is the coroot associated to $\alpha\in R$.
\item $\langle\beta,\alpha^{\vee}\rangle$ is the natural pairing normalized such that $\langle\alpha,\alpha^{\vee}\rangle = 2$ for all $\alpha\in S$.
\item $\alpha_0$ is the highest short root of $R^+$.
\item $\rho = \frac{1}{2}\sum_{\alpha\in R^+}\alpha$ is the Weyl weight.
\item $h = \langle\rho,\alpha_0^{\vee}\rangle + 1$ is the Coxeter number of $R$.
\item $X_+ = \{\lambda\in X\mid \langle\lambda,\alpha^{\vee}\rangle \geq 0\mbox{ for all }\alpha\in R^+\}$ is the set of dominant weights.
\item $X_r = \{\lambda\in X_+\mid \langle\lambda,\alpha^{\vee}\rangle < p^r\mbox{ for all }\alpha\in S\}$ is the set of $r$-restricted weights for some integer $r\geq 1$.
\item $\Gamma_r = \{\lambda\in X_+\mid \langle\lambda,\alpha_0^{\vee}\rangle < p^r(p-h+1)\}$.
\item $\leq$ is the partial order on $X$ defined by $\lambda\leq \mu$ iff $\mu - \lambda$ is a non-negative integral linear combination of positive roots.
\item $B\leq G$ is the Borel subgroup containing $T$ corresponding to the negative roots.
\item $W$ is the Weyl group of $R$.
\item $w_0\in W$ is the longest element.
\item $\lambda^* = -w_0(\lambda)$ is the dual weight of a weight $\lambda\in X$.
\item $\nabla(\lambda) = \ind_B^G(\lambda)$ is the costandard module of highest weight $\lambda$ for $\lambda\in X_+$.
\item $L(\lambda) = \soc_G\nabla(\lambda)$ is the simple module with highest weight $\lambda\in X_+$.
\item $\Delta(\lambda) = \nabla(\lambda^*)^*$ is the Weyl (or standard) module with highest weight $\lambda\in X_+$.
\item $T(\lambda)$ is the indecomposable tilting module with highest weight $\lambda$.
\item $\St_r = \nabla((p^r-1)\rho) = L((p^r-1)\rho)\cong \Delta((p^r-1)\rho)$ is the $r$'th Steinberg module.
\item $M_{\lambda} = \{m\in M\mid t.m = \lambda(t)m\mbox{ for all }t\in T\}$ is the $\lambda$-weight space of the $G$-module $M$ for $\lambda\in X$.
\item $\mathbb{Z}[X]$ is the integral group ring of $X$ with basis $e(\lambda)$, $\lambda\in X$ such that $e(\lambda)e(\mu) = e(\lambda+\mu)$.
\item $\ZX$ is the set of $W$-fixed points of $\mathbb{Z}[X]$.
\item $[M] = \sum_{\lambda\in X}\dim(M_{\lambda})e(\lambda)\in \ZX$ is the character of the finite dimensional $G$-module $M$.
\item $F: G\to G$ is the Frobenius morphism which arises from the map $k\to k$ given by $x\mapsto x^p$.
\item $M^{(r)}$ is the $G$-module which as an additive group is the same as the $G$-module $M$, but with $G$-action composed with $F^r$.
\item $G_r = \ker F^r$ is the $r$'th Frobenius kernel.
\item $Q_r(\lambda)$ is the injective hull ($=$ projective cover) of $L(\lambda)$ as a $G_r$-module for $\lambda\in X_r$.
\item $q = p^r$ is a fixed power of $p$.
\item $\Gq = G^{F^r}$ is the fixed points of $F^r$ in $G$, which is a finite Chevalley group.
\item $P_r(\lambda)$ is the projective cover ($=$ injective hull) of $L(\lambda)$ as a $\Gq$-module for $\lambda\in X_r$.
\item $[M:L(\lambda)]_G$ is the composition multiplicity of the simple $G$-module $L(\lambda)$ in the $G$-module $M$.
\item $[M:\nabla(\lambda)]_{\nabla}$ is the multiplicity of $\nabla(\lambda)$ in a good filtration of the $G$-module $M$ (see Theorem \ref{cohcrit}).
\end{itemize}

\subsection{Preliminary results}

The results in this section will be used several times in this paper. We have included a full statement of these together with references to the original source as well as (whenever possible) a reference to either \cite{rags} or \cite{humphreysfinlie} for convenience.

The following theorem is known as Steinberg's tensor product theorem. It will be used extensively in this paper, and will therefore just be referenced as such.

\begin{thm}[{\cite[Theorem 1.1]{steinberg63}},{\cite[Proposition II.3.16]{rags}}]
Let $\lambda\in X_+$ and write $\lambda = \lambda_0 + p^r\lambda_1$ with $\lambda_0\in X_r$.

Then $L(\lambda) \cong L(\lambda_0)\otimes L(\lambda_1)^{(r)}$.
\end{thm}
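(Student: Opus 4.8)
The statement to be proved is Steinberg's tensor product theorem: for $\lambda\in X_+$ written as $\lambda = \lambda_0 + p^r\lambda_1$ with $\lambda_0\in X_r$, one has $L(\lambda)\cong L(\lambda_0)\otimes L(\lambda_1)^{(r)}$. Since this is a classical result being quoted with two references, I would present the standard structural argument rather than Steinberg's original combinatorial one. The plan is to first reduce to the case $r=1$ by an obvious induction: if the result holds for $r=1$, then writing $\lambda_1 = \mu_0 + p\mu_1$ with $\mu_0\in X_1$ and iterating gives the general base-$p$ expansion, so it suffices to treat $L(\lambda_0\otimes\,{}?) $ with $\lambda = \lambda_0 + p\lambda_1$, $\lambda_0\in X_1$, $\lambda_1\in X_+$.

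The core of the proof has two halves. First I would show that $M := L(\lambda_0)\otimes L(\lambda_1)^{(1)}$ is a simple $G$-module. The cleanest route is via the first Frobenius kernel $G_1$: the module $L(\lambda_1)^{(1)}$ is trivial as a $G_1$-module, so as a $G_1$-module $M$ is just a direct sum of copies of $L(\lambda_0)$, and in particular $M$ is semisimple over $G_1$ with $G_1$-socle all isotypic of type $L(\lambda_0)$. Now if $N\subseteq M$ is a nonzero $G$-submodule, then $N$ is a $G_1$-submodule, hence a sum of copies of $L(\lambda_0)$, so $N = L(\lambda_0)\otimes V$ for a subspace $V\subseteq L(\lambda_1)^{(1)}$ on which the $G$-action (which acts trivially through $G_1$, hence factors through $G/G_1 \cong G^{(1)}$) makes $V$ into a $G$-submodule of $L(\lambda_1)^{(1)}$; since $L(\lambda_1)$ is simple, $V$ is all of $L(\lambda_1)^{(1)}$, so $N = M$ and $M$ is simple. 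Making the statement "$N = L(\lambda_0)\otimes V$" precise — i.e.\ that a $G$-submodule which is $L(\lambda_0)$-isotypic over $G_1$ splits off the $L(\lambda_1)^{(1)}$ factor $G$-equivariantly — is the one genuine technical point; it rests on the fact that $\Hom_{G_1}(L(\lambda_0), M)$ carries a natural $G/G_1$-module structure and the canonical map $L(\lambda_0)\otimes \Hom_{G_1}(L(\lambda_0),M)\to M$ is a $G$-isomorphism here because $M$ is $L(\lambda_0)$-isotypic over $G_1$. I expect this identification to be the main obstacle, in the sense that it is the only step that is not purely formal.

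Second, once $M$ is known to be simple, I would identify its highest weight. The highest weight of $L(\lambda_0)$ is $\lambda_0$ and the highest weight of $L(\lambda_1)^{(1)}$ is $p\lambda_1$ (twisting by Frobenius multiplies all weights by $p$), so the highest weight of the tensor product is $\lambda_0 + p\lambda_1 = \lambda$, appearing with multiplicity one. A simple module is determined up to isomorphism by its highest weight, so $M\cong L(\lambda)$, completing the case $r=1$ and hence, by the reduction above, the theorem. The remaining bookkeeping — checking that in the iterated expansion each digit lies in $X_1$ and that the Frobenius twists compose as $(-)^{(1)}$ applied $r$ times equals $(-)^{(r)}$ — is routine and I would state it without detailed computation.
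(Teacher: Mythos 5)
The paper does not prove this statement at all: it is quoted verbatim from Steinberg and from \cite[II.3.16]{rags} as a preliminary, so there is no in-paper argument to compare yours against. Your sketch is the standard modern proof (reduce to $r=1$, show $L(\lambda_0)\otimes L(\lambda_1)^{(1)}$ is simple by analysing it as a $G_1$-module, then match highest weights), and its overall architecture is correct.

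There is, however, one genuine gap, and it is not the step you flag as the main obstacle. Your simplicity argument rests on the assertion that $M=L(\lambda_0)\otimes L(\lambda_1)^{(1)}$ restricted to $G_1$ is a semisimple module, isotypic of type $L(\lambda_0)$; only then can you say that every $G_1$-submodule of $M$ is again a sum of copies of $L(\lambda_0)$ and run the $\Hom_{G_1}(L(\lambda_0),-)$ argument. Since $L(\lambda_1)^{(1)}$ is $G_1$-trivial, $M|_{G_1}$ is indeed a direct sum of copies of $L(\lambda_0)|_{G_1}$ --- but these are simple $G_1$-modules only if one already knows that $L(\lambda_0)$ \emph{remains simple on restriction to} $G_1$ for $\lambda_0\in X_1$, and that $\operatorname{End}_{G_1}(L(\lambda_0))=k$ (needed for the evaluation map $L(\lambda_0)\otimes\Hom_{G_1}(L(\lambda_0),M)\to M$ to be an isomorphism). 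This is Curtis's theorem (see \cite[II.3.10, II.3.15]{rags}) and is where essentially all of the substance of Steinberg's theorem lives; it is not formal, and your proposal uses it silently. By contrast, the isotypic-splitting step you single out as ``the one genuine technical point'' is routine bookkeeping once that input is granted. To make the proof complete you must either prove or explicitly invoke the $G_1$-simplicity of restricted simples; everything else in your outline (the induction on $r$, the highest-weight identification via multiplicity one of the weight $\lambda_0+p\lambda_1$) is fine.
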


Similarly to Steinberg's tensor product theorem, the following will be used several times, and will be referred to as the Andersen--Haboush tensor product theorem, due to it being discovered independently by Andersen and Haboush.

\begin{thm}[{\cite[Theorem 2.5]{andersen80}, \cite[Theorem 2.1]{haboush80}, \cite[Proposition II.3.19]{rags}}]
For each $\lambda\in X_+$ there is an isomorphism of $G$-modules $$\nabla((p^r-1)\rho + p^r\lambda)\cong \St_r \otimes \nabla(\lambda)^{(r)}.$$
\end{thm}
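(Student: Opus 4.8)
The plan is to prove the two modules have equal characters, hence equal dimensions, and then to exhibit a natural nonzero homomorphism between them and force it to be an isomorphism by computing a socle. For the characters, recall Weyl's formula $[\nabla(\mu)]=\big(\sum_{w\in W}(-1)^{\ell(w)}e(w(\mu+\rho))\big)\big/\big(\sum_{w\in W}(-1)^{\ell(w)}e(w\rho)\big)$, where $\ell(w)$ is the length of $w$. Since $(p^r-1)\rho+\rho=p^r\rho$, this gives $[\St_r]=\big(\sum_w(-1)^{\ell(w)}e(p^rw\rho)\big)\big/\big(\sum_w(-1)^{\ell(w)}e(w\rho)\big)$, while $[\nabla(\lambda)^{(r)}]$ is obtained from $[\nabla(\lambda)]$ by $e(\xi)\mapsto e(p^r\xi)$ and so equals $\big(\sum_w(-1)^{\ell(w)}e(p^rw(\lambda+\rho))\big)\big/\big(\sum_w(-1)^{\ell(w)}e(p^rw\rho)\big)$. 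Multiplying, the factor $\sum_w(-1)^{\ell(w)}e(p^rw\rho)$ cancels, and since $(p^r-1)\rho+p^r\lambda+\rho=p^r(\rho+\lambda)$ what remains is precisely $[\nabla((p^r-1)\rho+p^r\lambda)]$. In particular $\dim(\St_r\otimes\nabla(\lambda)^{(r)})=\dim\nabla((p^r-1)\rho+p^r\lambda)$.

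Next I would construct a nonzero $G$-homomorphism $\phi\colon M\to\nabla(\nu)$, where $M:=\St_r\otimes\nabla(\lambda)^{(r)}$ and $\nu:=(p^r-1)\rho+p^r\lambda$. Since the weights of $\St_r$ are $\le(p^r-1)\rho$ and those of $\nabla(\lambda)^{(r)}$ are $\le p^r\lambda$, every weight of $M$ is $\le\nu$ and $M_\nu$ is one-dimensional (the tensor product of the two highest-weight lines). Hence $\bigoplus_{\mu<\nu}M_\mu$ is a $B$-submodule of $M|_B$ with quotient the one-dimensional $B$-module $\nu$; the corresponding $B$-surjection $M\to\nu$ gives, via Frobenius reciprocity, a nonzero map $\phi\colon M\to\ind_B^G(\nu)=\nabla(\nu)$. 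Note also $[M:L(\nu)]_G=1$ because $\dim M_\nu=1=\dim L(\nu)_\nu$.

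The substantive step is then to show $\phi$ is an isomorphism. As the two modules have the same dimension, it suffices to prove $\soc_G M=L(\nu)$: if $\ker\phi\neq 0$ then $\soc_G(\ker\phi)\subseteq\soc_G M$ forces $[\ker\phi:L(\nu)]_G\geq1$, while $\operatorname{im}\phi\neq0$ forces $L(\nu)=\soc_G\nabla(\nu)\subseteq\operatorname{im}\phi$ and hence $[\operatorname{im}\phi:L(\nu)]_G\geq1$, contradicting $[M:L(\nu)]_G=1$ via $0\to\ker\phi\to M\to\operatorname{im}\phi\to0$. The socle computation rests on one clean observation: for finite dimensional $G$-modules $A,B$ there is a canonical identification $\Hom_G(\St_r\otimes A^{(r)},\St_r\otimes B^{(r)})\cong\Hom_G(A,B)$. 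Indeed, by tensor--hom adjunction and self-duality of $\St_r$ the left side is $\Hom_G(A^{(r)},\St_r\otimes\St_r\otimes B^{(r)})$; the image of any such homomorphism lies in the $G_r$-fixed points since $A^{(r)}$ is $G_r$-trivial, and $(\St_r\otimes\St_r\otimes B^{(r)})^{G_r}=(\St_r\otimes\St_r)^{G_r}\otimes B^{(r)}$ with $(\St_r\otimes\St_r)^{G_r}=\operatorname{End}_{G_r}(\St_r)=k$ because $\St_r$ restricts to a simple $G_r$-module; so the left side becomes $\Hom_G(A^{(r)},B^{(r)})=\Hom_G(A,B)$. Now by Steinberg's tensor product theorem every composition factor of $M$ has the form $L((p^r-1)\rho+p^r\mu_1)=\St_r\otimes L(\mu_1)^{(r)}$ with $[\nabla(\lambda):L(\mu_1)]_G>0$, and the identification with $A=L(\mu_1)$, $B=\nabla(\lambda)$ yields $\Hom_G(L((p^r-1)\rho+p^r\mu_1),M)\cong\Hom_G(L(\mu_1),\nabla(\lambda))$, which vanishes unless $\mu_1=\lambda$ and is then one-dimensional. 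Hence $\soc_G M=L(\nu)$ and $\phi$ is an isomorphism.

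The one genuinely non-formal point, which I would treat as the main obstacle, is recognising that the whole statement collapses onto a socle computation and that this socle is dictated by $\St_r$ being simple for $G_r$; once the identification $\Hom_G(\St_r\otimes A^{(r)},\St_r\otimes B^{(r)})\cong\Hom_G(A,B)$ is in hand, the rest is bookkeeping with characters and composition multiplicities. This is not the route of the original proofs of Andersen and Haboush, which proceed geometrically through the Frobenius morphism on $G/B$ and a cohomology vanishing statement for the line bundle attached to $(p^r-1)\rho$, but it seems the most transparent one given the preliminaries already assembled.
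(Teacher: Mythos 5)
The paper does not actually prove this statement---it is quoted as a preliminary, with the proof delegated to Andersen, Haboush and \cite[Proposition II.3.19]{rags}---so there is no internal argument to compare against; but your proof is correct and takes a genuinely different route from all three cited sources. The original proofs proceed geometrically via the Frobenius morphism on $G/B$ and a cohomology vanishing statement, and Jantzen's proof goes through induction from $G_rB$ and the tensor identity. Your argument instead reduces everything to the isomorphism $\Hom_G(\St_r\otimes A^{(r)},\St_r\otimes B^{(r)})\cong\Hom_G(A,B)$, which rests only on $\St_r$ being self-dual and simple as a $G_r$-module, and then combines a socle computation with the character identity $\chi(p^r\rho)\cdot\chi(\lambda)^{(r)}/\chi(0)^{(r)}=\chi((p^r-1)\rho+p^r\lambda)$ and the standard highest-weight map into $\ind_B^G(\nu)$ (the same device as Lemma \ref{submoduleinduced} of the paper). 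What this buys is a proof entirely inside the representation-theoretic toolkit the paper already assembles, at the cost of invoking Kempf vanishing twice over: once to know $[\nabla(\mu)]=\chi(\mu)$ and once implicitly in $\soc_G\nabla(\nu)=L(\nu)$. Two points you leave implicit but should state: first, in identifying $(\St_r\otimes\St_r)^{G_r}$ with the trivial module $k$ you need not just that this space is one-dimensional (Schur) but that the residual $G/G_r$-action on it is trivial, which holds because $G$ is semisimple and so admits no nontrivial one-dimensional modules; second, there is no circularity in using Steinberg's tensor product theorem here, since its proof is independent of the Andersen--Haboush isomorphism. With those caveats the argument is complete.
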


Recall that a good filtration of a $G$-module $M$ is a filtration with each subfactor isomorphic to $\nabla(\lambda)$ for some $\lambda\in X_+$ (see \cite[II.4]{rags}). If $M$ has a good filtration then we denote by $[M:\nabla(\lambda)]_{\nabla}$ the number of subfactors isomorphic to $\nabla(\lambda)$ in this good filtration of $M$. This is well-defined by the following.

\begin{thm}[{\cite[Corollary 1.3]{donkin81},\cite[Proposition II.4.16]{rags}}] \label{cohcrit}
Let $M$ be a $G$-module.
If $M$ admits a good filtration, then for each $\lambda\in X_{+}$, the number of factors in the filtration isomorphic to $\nabla(\lambda)$ is equal to $\dim\Hom_{G}(\Delta(\lambda),M)$.
\end{thm}

\begin{thm}[{\cite[Theorem 1]{mathieu90},\cite[Proposition II.4.19]{rags}}]\label{ja419}\label{tensorproductgoodfilt}
Let $V$ and $V'$ be $G$-modules admitting good filtrations. Then $V\otimes V'$ also admits a good filtration.
\end{thm}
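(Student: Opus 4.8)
The plan is to reduce the statement to the case where both factors are costandard modules and then to extract that case from a Frobenius-splitting result on the flag variety. For the reduction: given a good filtration $0=V_{0}\subset V_{1}\subset\cdots\subset V_{m}=V$ with $V_{j}/V_{j-1}\cong\nabla(\lambda_{j})$, exactness of $-\otimes V'$ yields a filtration of $V\otimes V'$ with subquotients $\nabla(\lambda_{j})\otimes V'$, and filtering $V'$ in the same way refines each step so that $V\otimes V'$ acquires a filtration all of whose subquotients have the form $\nabla(\lambda_{j})\otimes\nabla(\mu_{i})$. Since a module built from modules with good filtrations by successive extensions again has a good filtration — one concatenates the good filtration of a submodule with the preimage of a good filtration of the quotient — it suffices to treat $\nabla(\lambda)\otimes\nabla(\mu)$ for $\lambda,\mu\in X_{+}$.

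For this special case the character identity is automatic, since $[\nabla(\lambda)][\nabla(\mu)]\in\ZX$ is an integral combination of the $[\nabla(\nu)]$, so the real content is to produce an actual filtration, and for this I would pass to the geometry of $G/B$. Writing $\mathcal{L}(\nu)$ for the line bundle on $G/B$ attached to $\nu$, Kempf's vanishing theorem gives $\nabla(\nu)=H^{0}(G/B,\mathcal{L}(\nu))$ with no higher cohomology, whence $\nabla(\lambda)\otimes\nabla(\mu)=H^{0}(G/B\times G/B,\ \mathcal{L}(\lambda)\boxtimes\mathcal{L}(\mu))$ as a $G\times G$-module, and what is wanted is a good filtration for the diagonal copy of $G$. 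The tool that produces such filtrations is the theory of $B$-canonical Frobenius splittings: if a projective $G$-variety $X$ with a $G$-linearised line bundle $\mathcal{M}$ admits a $B$-canonical Frobenius splitting compatible with the members of a suitable stratification (so that $\mathcal{M}$, and its restrictions to the strata, have vanishing higher cohomology), then $H^{0}(X,\mathcal{M})$ carries a good $G$-filtration. Here one takes $X=G/B\times G/B$ and $\mathcal{M}=\mathcal{L}(\lambda)\boxtimes\mathcal{L}(\mu)$, with the relevant closed subvarieties being the diagonal $G/B$ and the products $X_{w}\times X_{w'}$ of Schubert varieties.

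The main obstacle is exactly this geometric input: the existence of a $B$-canonical Frobenius splitting of $G/B\times G/B$ compatible simultaneously with the diagonal and with the products of Schubert varieties. This is the deep ingredient (going back to Mehta--Ramanathan and Ramanan--Ramanathan, the application to tensor products of good filtrations being Mathieu's theorem), and I would not expect to avoid machinery of comparable strength in general. I would, however, note that the special case is reachable by purely representation-theoretic arguments under extra hypotheses — Donkin's lifting argument handles it whenever $p\ge h$ (and in most of the remaining cases) and Wang's argument covers type $A$ — so Mathieu's contribution is precisely the removal of all such restrictions.
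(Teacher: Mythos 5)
The paper does not prove this statement; it is quoted as a known preliminary with references to Mathieu and to \cite[Proposition II.4.19]{rags}, and your outline faithfully reproduces the standard argument behind those references: the reduction to $\nabla(\lambda)\otimes\nabla(\mu)$ via refining filtrations and concatenation is correct, and the $B$-canonical Frobenius splitting of $G/B\times G/B$ compatible with the diagonal and with products of Schubert varieties is indeed the deep input you would have to cite rather than reprove. So your proposal is correct and takes essentially the same route as the source the paper relies on.
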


The following result is well-known and will be used without further comment in the remainder of this paper.

\begin{thm}[{\cite[Proposition 10.1]{humphreyschevalley},\cite[Proposition II.10.2]{rags}}]
$\St_r$ is both projective and injective as a $G_r$-module.
\end{thm}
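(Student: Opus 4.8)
The plan is to prove injectivity first, and then deduce projectivity by a self-duality argument. The key structural fact about $G_r$ is that its algebra of distributions (or equivalently the coordinate algebra $k[G_r]$ viewed appropriately) is a Frobenius algebra, so that a $G_r$-module is injective if and only if it is projective. Hence it suffices to establish one of the two properties; I will aim at injectivity, since $\St_r$ is most naturally realized as an induced (costandard) module.

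First I would recall that $\St_r = \nabla((p^r-1)\rho) = \ind_B^G((p^r-1)\rho)$, and that the weight $(p^r-1)\rho$ is the unique $r$-restricted weight which is fixed (up to the relevant shift) by the affine Weyl group dot-action dot-stabilizer, equivalently that it lies in the interior and at the "corner" of the relevant alcove box $X_r$. The crucial point is the following: for $\lambda \in X_r$, the $G_r$-injective hull $Q_r(\lambda)$ coincides, for $\lambda = (p^r-1)\rho$, with $\St_r$ itself. The standard way to see this is via Frobenius reciprocity for the subgroup $G_r$: one shows $\ind_{G_r B_r / \text{(something)}}$ computations, or more concretely one uses that $\St_r$ restricted to $G_r$ has simple socle $L((p^r-1)\rho)$ (since $\St_r = L((p^r-1)\rho)$ is already simple as a $G$-module, and by Steinberg's tensor product theorem its restriction to $G_r$ remains simple), and that $\dim \St_r = p^{r|R^+|}$. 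The second way to proceed, which I think is cleanest, is to use the characterization of injective $G_r$-modules by the vanishing $\Ext^1_{G_r}(L(\mu), \St_r) = 0$ for all $\mu \in X_r$: since $\St_r$ is self-dual it is enough to check $\Ext^1_{G_r}(\St_r, L(\mu)) = 0$, and one reduces this to a weight/linkage argument showing that no simple $G_r$-module can sit on top of a non-split extension with the Steinberg module, because the Steinberg weight is as large as possible among $r$-restricted weights in every fundamental-coweight direction.

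The cleanest route, and the one I would actually write, uses the following two ingredients: (i) $\St_r \otimes \St_r^* \cong \St_r \otimes \St_r$ has a good filtration as a $G$-module (by Theorem \ref{tensorproductgoodfilt}, since $\St_r$ is both a costandard and a standard module), and in fact one identifies $\St_r \otimes \St_r$ with the $G$-module $\ind_B^G((p^r-1)\rho) \otimes \nabla$-type object whose restriction to $G_r$ is free of rank one over $\ind_{B_r}^{G_r} k$-type modules; (ii) a dimension count: $\dim \St_r = |G_r/B_r \text{-analogue}| = p^{rN}$ with $N = |R^+|$, which matches the dimension of $Q_r((p^r-1)\rho)$. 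More directly: one knows $Q_r(\lambda)$ has a $\nabla$-filtration as a $G_r$-module and its $G$-structure (when it lifts) contains $\nabla(\lambda)$ at the bottom; for $\lambda = (p^r-1)\rho$ a counting argument on dimensions (using that the projective cover of the Steinberg has dimension exactly $p^{rN}$, the order of a $p$-Sylow-type factor) forces $Q_r((p^r-1)\rho) = \St_r$. Therefore $\St_r$, being isomorphic to $Q_r((p^r-1)\rho)$, is $G_r$-injective, and by the Frobenius-algebra property of $k[G_r]$ it is $G_r$-projective as well.

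The main obstacle is establishing that $\St_r$ restricted to $G_r$ already \emph{is} its own injective hull, rather than merely injective: this requires the input that $\St_r$ is simple over $G_r$ (so its socle is $L((p^r-1)\rho)$ with multiplicity one) together with the dimension identity $\dim \St_r = \dim Q_r((p^r-1)\rho)$, and the latter is really the heart of the matter — it rests on the fact that the Steinberg module has dimension equal to the order of the relevant unipotent radical, equivalently that $(p^r-1)\rho$ occupies the "top corner" of the restricted region. Once simplicity and the dimension count are in hand, injectivity is immediate from the general theory of $Q_r(\lambda)$, and projectivity then follows formally. I would present the simplicity of $\St_r|_{G_r}$ as a consequence of Steinberg's tensor product theorem (writing $L((p^r-1)\rho)$ and noting all $p^r$-digits equal $p-1$, so there is no Frobenius-twisted tensor factor to kill on restriction to $G_r$), and the dimension count via Weyl's character formula applied to $(p^r-1)\rho$.
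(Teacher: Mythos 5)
The paper itself offers no proof of this statement; it is quoted as a preliminary result with references to Humphreys and to Jantzen, so your proposal has to be measured against the standard argument in those sources. Your outline starts correctly: the distribution algebra of $G_r$ is a Frobenius algebra, so injectivity and projectivity coincide and one property suffices; $\St_r$ restricted to $G_r$ is simple (every $p$-adic digit of $(p^r-1)\rho$ is $(p-1)\rho$, so Steinberg's tensor product theorem leaves no Frobenius-twisted factor to lose), hence $\St_r$ embeds into $Q_r((p^r-1)\rho)$; and Weyl's dimension formula gives $\dim\St_r = p^{rN}$ with $N = |R^+|$. The genuine gap is the step you yourself call ``the heart of the matter'': the identity $\dim Q_r((p^r-1)\rho) = p^{rN}$. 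You assert that a ``counting argument forces'' it, but no count is given, and none is available at this stage: the only a priori information about $Q_r(\lambda)$ is that it is free over $\mathrm{Dist}(U_r^-)$, which yields the \emph{lower} bound $\dim Q_r(\lambda)\geq p^{rN}$, i.e.\ the wrong inequality, while the decomposition of the regular representation $\sum_{\lambda}\dim L(\lambda)\dim Q_r(\lambda) = \dim\mathrm{Dist}(G_r)$ only gives an upper bound strictly larger than $p^{rN}$. The needed upper bound is equivalent to the theorem, so the argument as written is circular. The same objection applies to your alternative sketch via $\Ext^1_{G_r}(L(\mu),\St_r)=0$: ``the Steinberg weight is as large as possible'' is a heuristic, not a proof.

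What actually closes the gap in the cited proofs is a dimension count performed on $\mathrm{Dist}(U_r^{\pm})$ rather than on the unknown module $Q_r((p^r-1)\rho)$: the map $\mathrm{Dist}(U_r^+)\to\St_r$ sending $u$ to $u\cdot v_-$ (with $v_-$ a lowest weight vector) is surjective, because $\St_r\cong\Delta((p^r-1)\rho)$ is generated by an extreme weight vector and the restrictedness of $(p^r-1)\rho$ confines the relevant divided powers to $\mathrm{Dist}(U_r^+)$; since both sides have dimension $p^{rN}$, it is an isomorphism. Thus $\St_r$ is free of rank one over the local Frobenius algebra $\mathrm{Dist}(U_r^+)$, hence injective as a $U_r^+$-module, and symmetrically as a $U_r^-$-module; one then invokes the criterion that a $G_r$-module injective over both infinitesimal unipotent radicals is injective over $G_r$. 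If you replace your appeal to $\dim Q_r((p^r-1)\rho)$ with this freeness argument, your reduction becomes a complete proof.
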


The following result will be referred to as the classification of tilting modules in this paper.

\begin{thm}[{\cite[Theorem 1.1]{donkin93},\cite[Proposition E.6]{rags}}]
For any $\lambda\in X_+$ there is a unique (up to isomorphism) indecomposable tilting module $T(\lambda)$ with $\dim(T(\lambda)_{\lambda}) = 1$ and such that for all $\mu\in X$ we have $T(\lambda)_{\mu}\neq 0\implies \mu\leq \lambda$.

If $Q$ is a finite dimensional tilting module, then there are uniquely determined natural numbers $n(\nu)$ such that $Q\cong\bigoplus_{\nu\in X_+}n(\nu)T(\nu)$.
\end{thm}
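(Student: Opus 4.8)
The plan is to establish the existence and then the uniqueness of $T(\lambda)$, and then to obtain the direct sum decomposition from the stronger fact that every indecomposable tilting module occurs as some $T(\nu)$. Besides the results quoted above I will freely use the standard facts that $\Ext^i_G(\Delta(\mu),\nabla(\nu)) = \delta_{i,0}\delta_{\mu,\nu}k$ and that a $G$-module has a good filtration if and only if $\Ext^1_G(\Delta(\mu),-)$ vanishes on it for every $\mu\in X_+$, together with the dual statements for Weyl filtrations (see \cite[II.4]{rags}). Two immediate consequences will be used repeatedly: $\Ext^1_G(N',N) = 0$ whenever $N'$ has a Weyl filtration and $N$ a good filtration; and a direct summand of a module with a good (resp.\ Weyl) filtration again has one, so a summand of a tilting module is tilting.

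For existence I would build a tilting module from $\Delta(\lambda)$ by iterated universal extensions. Set $M_0 = \Delta(\lambda)$. Inductively, if $M_i$ is Weyl-filtered but not good-filtered, choose $\mu$ maximal with $\Ext^1_G(\Delta(\mu),M_i)\neq 0$ and form the universal extension $0\to M_i\to M_{i+1}\to\Delta(\mu)^{\oplus n}\to 0$, $n = \dim\Ext^1_G(\Delta(\mu),M_i)$, chosen so that $\Hom_G(\Delta(\mu),\Delta(\mu)^{\oplus n})$ surjects onto $\Ext^1_G(\Delta(\mu),M_i)$. Then $M_{i+1}$ is again Weyl-filtered, a short long-exact-sequence chase shows $\Ext^1_G(\Delta(\mu),M_{i+1}) = 0$, and using the maximality of $\mu$ together with $\Ext^1_G(\Delta(\mu'),\Delta(\mu)) = 0$ for $\mu'\geq\mu$ one sees no obstruction weight $\geq\mu$ survives. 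Since $M_i$ is Weyl-filtered by sections $\Delta(\nu)$ with $\nu\leq\lambda$ and $\Ext^1_G(\Delta(\mu),\Delta(\nu))\neq 0$ forces $\mu<\nu$, every obstruction weight lies strictly below $\lambda$; as there are finitely many dominant weights below $\lambda$, a suitable measure of the obstruction set strictly decreases and the process terminates at a module $T$ which is both good- and Weyl-filtered, hence tilting, whose Weyl filtration has $\Delta(\lambda)$ exactly once at the bottom and all other sections $\Delta(\nu)$ with $\nu<\lambda$. Thus $\dim T_\lambda = 1$ and every weight of $T$ is $\leq\lambda$. Decomposing $T$ into indecomposable summands (each tilting), exactly one of them has $\lambda$ as a weight; that summand is $T(\lambda)$, and it inherits the two required properties.

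For uniqueness I would prove the sharper claim: every indecomposable tilting module $Q$ is isomorphic to $T(\nu)$, where $\nu$ is a maximal weight of its support, and $\nu$ then has multiplicity one in $Q$. Reordering filtrations of the module $T(\nu)$ just constructed yields an honest inclusion $\iota\colon\Delta(\nu)\hookrightarrow T(\nu)$ of the bottom Weyl section and an honest surjection $\pi\colon T(\nu)\twoheadrightarrow\nabla(\nu)$ onto the top good section; since $[T(\nu):\nabla(\nu)]_\nabla = 1$ the module $\ker\pi$ has no weight equal to $\nu$, so $\pi\iota\neq 0$ and, after rescaling $\pi$, equals the canonical map $\Delta(\nu)\to\nabla(\nu)$. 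For $Q$ pick a surjection $q\colon Q\twoheadrightarrow\nabla(\nu)$ (a top good section) and, using the $\Ext$-vanishing, a map $\phi\colon\Delta(\nu)\to Q$ with $q\phi$ canonical, an extension $f\colon T(\nu)\to Q$ with $f\iota = \phi$ (possible since $T(\nu)/\iota(\Delta(\nu))$ is Weyl-filtered by sections $\Delta(\mu)$, $\mu<\nu$, and $Q$ is good-filtered), and a lift $g\colon Q\to T(\nu)$ with $\pi g = q$ (possible since $Q$ is Weyl-filtered and $\ker\pi$ good-filtered). Then $h := gf\in\operatorname{End}_G(T(\nu))$ satisfies $\pi h\iota = q\phi = \pi\iota$, so $h-\id$ sends the submodule $\iota(\Delta(\nu))\cong\Delta(\nu)$ into $\ker\pi$. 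If $h$ were not a unit it would be nilpotent, $\operatorname{End}_G(T(\nu))$ being local, so $h-\id$ would be an automorphism and $\ker\pi$ would contain a submodule isomorphic to $\Delta(\nu)$, hence have $\nu$ among its weights, a contradiction. Therefore $h$ is a unit, $f$ is a split monomorphism, and indecomposability of $Q$ gives $Q\cong T(\nu)$. Applying this to any module with the characterising properties of $T(\lambda)$ shows its largest weight is exactly $\lambda$ with multiplicity one, and gives uniqueness of $T(\lambda)$.

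Finally, for a finite dimensional tilting module $Q$, the Krull--Schmidt theorem gives $Q = \bigoplus_j Q_j$ with each $Q_j$ indecomposable and, as a summand of a tilting module, tilting; hence $Q_j\cong T(\nu_j)$ by the previous paragraph. Collecting terms gives $Q\cong\bigoplus_{\nu\in X_+}n(\nu)T(\nu)$, and the $n(\nu)$ are uniquely determined by Krull--Schmidt since distinct $T(\nu)$ are indecomposable and non-isomorphic. The step I expect to be the main obstacle is the non-trivial implication in the good-filtration criterion and, closely tied to it, keeping the universal-extension procedure under control — that each step kills the chosen obstruction without creating a larger one, and that it terminates; everything else is bookkeeping with the dominance order and the $\Delta$--$\nabla$ orthogonality.
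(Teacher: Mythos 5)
The paper does not prove this statement at all --- it is quoted as a known preliminary result with references to Donkin's paper and to Appendix E of \cite{rags}, and your argument is precisely the standard Ringel--Donkin proof given in those sources: existence of $T(\lambda)$ by iterated universal extensions on top of $\Delta(\lambda)$, the identification of every indecomposable tilting module with some $T(\nu)$ via the locality of $\operatorname{End}_G(T(\nu))$, and Krull--Schmidt for the decomposition. The argument is correct; the only place you are (self-admittedly) brief is the termination of the universal-extension procedure, which is handled cleanly by fixing in advance a linear refinement of the dominance order on the finitely many dominant weights $\leq\lambda$ and killing the obstruction groups $\Ext^1_G(\Delta(\mu),-)$ from top to bottom, so that no cleared weight can reappear.
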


\begin{prop}[{\cite[Remark to E.6]{rags}}]\label{tiltingdual}
Let $\lambda\in X_+$. Then $T(\lambda)^*\cong T(\lambda^*)$ and whenever $Q$ is tilting and $L$ is simple we have $\Hom_{G}(L,Q)\cong \Hom_{G}(Q,L)$.
\end{prop}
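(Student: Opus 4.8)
The plan is to establish the two assertions separately, starting with the duality $T(\lambda)^* \cong T(\lambda^*)$. First I would observe that the dual of a tilting module is again tilting: if $M$ has a good filtration then $M^*$ has a Weyl filtration (a filtration with $\Delta(\mu)$-subfactors), since $\nabla(\mu)^* \cong \Delta(\mu^*)$, and symmetrically; hence $M$ tilting implies $M^*$ tilting. Applying this to $M = T(\lambda)$, the module $T(\lambda)^*$ is an indecomposable tilting module (indecomposability is preserved by dualizing a finite-dimensional module). It then suffices to identify its highest weight. Since the weights of $T(\lambda)$ are $\leq \lambda$ with $\lambda$ occurring once, the weights of $T(\lambda)^*$ are the negatives of these, so they are $\leq -w_0\lambda = \lambda^*$ (using that $w_0$ permutes the weights and sends the dominance order appropriately), with $\lambda^*$ occurring exactly once. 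By the uniqueness part of the classification of tilting modules, $T(\lambda)^* \cong T(\lambda^*)$.

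For the second assertion, let $Q$ be tilting and $L = L(\mu)$ simple. The natural pairing between $\Hom_G(L,Q)$ and $\Hom_G(Q^*, L^*)$ given by $f \mapsto f^*$ is an isomorphism of vector spaces, so $\dim \Hom_G(L(\mu), Q) = \dim \Hom_G(Q^*, L(\mu)^*) = \dim \Hom_G(Q^*, L(\mu^*))$. Since $Q^*$ is again tilting, it is enough to show that for every tilting module $Q'$ and every dominant weight $\nu$ one has $\dim \Hom_G(L(\nu), Q') = \dim \Hom_G(Q', L(\nu))$; combining this with the displayed equality (applied with $Q' = Q^*$ and $\nu = \mu^*$, then relabeling) yields the claim. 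To prove this symmetric statement, decompose $Q' \cong \bigoplus_{\eta \in X_+} n(\eta) T(\eta)$ by the classification of tilting modules, reducing to the case $Q' = T(\eta)$. Now $\dim \Hom_G(T(\eta), L(\nu))$ counts, by a standard argument, the multiplicity of $\nabla(\nu)$ in a good filtration of $T(\eta)$: indeed $\Hom_G(T(\eta), L(\nu)) \cong \Hom_G(\Delta(\nu), T(\eta))$ will follow once we know this equals $[T(\eta):\nabla(\nu)]_\nabla$ and also $[T(\eta):\Delta(\nu)]_\Delta$, and these two filtration multiplicities agree for a tilting module. Dually, $\dim \Hom_G(L(\nu), T(\eta)) = [T(\eta):\Delta(\nu)]_\Delta$ by the same reasoning applied to the Weyl filtration of $T(\eta)$. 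Hence both sides equal the common filtration multiplicity.

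The main obstacle is making precise the identity $\dim \Hom_G(\Delta(\nu), T(\eta)) = [T(\eta):\nabla(\nu)]_\nabla$ and its dual: this is essentially Theorem \ref{cohcrit} together with the vanishing $\operatorname{Ext}^1_G(\Delta(\nu), \nabla(\eta)) = 0$ (and the analogous vanishing for the Weyl-filtration side), which forces $\Hom_G(\Delta(\nu), -)$ to be exact on good filtrations; the subtlety is that the proposition is stated here with deliberately weak hypotheses, so I would want to phrase this using only the cohomological criterion already quoted rather than invoking the full formalism of highest-weight categories. Once the two filtration-multiplicity identifications are in place, everything else is bookkeeping with duals and direct sums.
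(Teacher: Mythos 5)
Your argument for $T(\lambda)^*\cong T(\lambda^*)$ is fine and is the standard one. The second half, however, rests on two identities that are false: neither $\dim\Hom_G(T(\eta),L(\nu))$ nor $\dim\Hom_G(L(\nu),T(\eta))$ equals the filtration multiplicity $[T(\eta):\nabla(\nu)]_{\nabla}=[T(\eta):\Delta(\nu)]_{\Delta}$. Theorem \ref{cohcrit} computes $\dim\Hom_G(\Delta(\nu),T(\eta))$, not $\dim\Hom_G(L(\nu),T(\eta))$; the surjection $\Delta(\nu)\twoheadrightarrow L(\nu)$ only gives an inclusion $\Hom_G(L(\nu),T(\eta))\hookrightarrow\Hom_G(\Delta(\nu),T(\eta))$, which is in general proper. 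Concretely, for $G=SL_2$ and $p=2$ the module $T(2)$ is uniserial with layers $L(0),L(2),L(0)$ from socle to head, so $\Hom_G(L(2),T(2))=\Hom_G(T(2),L(2))=0$ while $[T(2):\nabla(2)]_{\nabla}=[T(2):\Delta(2)]_{\Delta}=1$. So both sides of your intended equality are computed incorrectly; the conclusion you want is true, but not because either side is a filtration multiplicity.

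The missing ingredient is the contravariant (transpose) duality $M\mapsto{}^{\tau}M$ coming from the anti-automorphism of $G$ fixing the torus pointwise: it preserves characters, interchanges $\nabla(\nu)$ and $\Delta(\nu)$, and fixes every $L(\nu)$. Hence ${}^{\tau}T(\eta)$ is again an indecomposable tilting module with highest weight $\eta$, so ${}^{\tau}T(\eta)\cong T(\eta)$, and therefore $\Hom_G(L,Q)\cong\Hom_G({}^{\tau}Q,{}^{\tau}L)\cong\Hom_G(Q,L)$ for any tilting $Q$ and any simple $L$; equivalently, the multiplicity of $L$ in $\soc_G Q$ equals its multiplicity in the head of $Q$ because $Q$ is self-dual under ${}^{\tau}$. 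Your reduction via the ordinary dual $Q\mapsto Q^*$ cannot substitute for this, since it only restates the same problem with $\nu$ replaced by $\nu^*$. This is exactly the argument behind the cited Remark to E.6 in \cite{rags}, which the paper quotes without proof.
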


Similarly to Steinberg's tensor product theorem, the following describes certain tilting modules as tensor products. We will refer to this as Donkin's tensor product theorem.

\begin{prop}[{\cite[Proposition 2.1]{donkin93},\cite[Lemma E.9]{rags}}]
Let $\mu = (p^r - 1)\rho + \lambda$ with $\lambda\in X_r$ and let $\nu\in X_+$. Then $T(\mu)\otimes T(\nu)^{(r)}$ is tilting, and if $T(\mu)$ is indecomposable as a $G_r$-module, then $T(\mu)\otimes T(\nu)^{(r)}\cong T(\mu + p^r\nu)$.
\end{prop}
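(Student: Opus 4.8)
The plan is to first settle the special case $\lambda=0$, i.e.\ that $\St_r\otimes T(\nu)^{(r)}\cong T((p^r-1)\rho+p^r\nu)$, since this feeds into both assertions. That this module is tilting follows from the Andersen--Haboush tensor product theorem: the functor $\St_r\otimes(-)^{(r)}$ is exact, so applied to a good filtration of $T(\nu)$ it produces a filtration of $\St_r\otimes T(\nu)^{(r)}$ whose subquotients are the $\St_r\otimes\nabla(\nu_i)^{(r)}\cong\nabla((p^r-1)\rho+p^r\nu_i)$, hence a good filtration; dualizing the Andersen--Haboush isomorphism (using $\St_r^*\cong\St_r$ and $\rho^*=\rho$) gives $\St_r\otimes\Delta(\nu_i)^{(r)}\cong\Delta((p^r-1)\rho+p^r\nu_i)$, so a Weyl filtration of $T(\nu)$ likewise yields one of $\St_r\otimes T(\nu)^{(r)}$. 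For indecomposability I would verify that $\St_r\otimes(-)^{(r)}$ is fully faithful on $G$-modules: by the tensor identity and triviality of Frobenius twists over $G_r$, $\Hom_G(\St_r\otimes M^{(r)},\St_r\otimes N^{(r)})$ reduces to $\Hom_{G/G_r}(M^{(r)},(\St_r\otimes\St_r)^{G_r}\otimes N^{(r)})$, and $(\St_r\otimes\St_r)^{G_r}\cong\Hom_{G_r}(\St_r,\St_r)\cong k$ as a trivial $G/G_r$-module, because $\St_r$ is simple over $G_r$. Hence $\operatorname{End}_G(\St_r\otimes T(\nu)^{(r)})\cong\operatorname{End}_G(T(\nu))$ is local, so $\St_r\otimes T(\nu)^{(r)}$ is indecomposable; being tilting with highest weight $(p^r-1)\rho+p^r\nu$ of multiplicity one, it is $T((p^r-1)\rho+p^r\nu)$ by the classification of tilting modules.

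For the first assertion in general, I would note that $\St_r\otimes T(\lambda)$ is a tensor product of tilting $G$-modules, hence tilting by Theorem~\ref{tensorproductgoodfilt} and its dual, with highest weight $(p^r-1)\rho+\lambda=\mu$ of multiplicity one; so $T(\mu)$ is a direct summand of $\St_r\otimes T(\lambda)$. Therefore $T(\mu)\otimes T(\nu)^{(r)}$ is a direct summand of $\St_r\otimes T(\lambda)\otimes T(\nu)^{(r)}\cong T(\lambda)\otimes\bigl(\St_r\otimes T(\nu)^{(r)}\bigr)\cong T(\lambda)\otimes T((p^r-1)\rho+p^r\nu)$, using the special case; the last module is again a tensor product of tilting $G$-modules, hence tilting, and a direct summand of a tilting module is tilting.

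For the second assertion, $T(\mu)\otimes T(\nu)^{(r)}$ is tilting by the above, and all of its weights are $\le\mu+p^r\nu$, with $\mu+p^r\nu$ occurring with multiplicity one; so $T(\mu+p^r\nu)$ is a direct summand with multiplicity one, and it remains to show $\operatorname{End}_G(T(\mu)\otimes T(\nu)^{(r)})$ is local. I would compute
\[
\operatorname{End}_G(T(\mu)\otimes T(\nu)^{(r)})\cong\Hom_G\bigl(T(\nu)^{(r)},\operatorname{End}_k(T(\mu))\otimes T(\nu)^{(r)}\bigr)\cong\Hom_{G/G_r}\bigl(T(\nu)^{(r)},\operatorname{End}_{G_r}(T(\mu))\otimes T(\nu)^{(r)}\bigr),
\]
the last step because $T(\nu)^{(r)}$ is trivial over $G_r$ and $\operatorname{End}_k(T(\mu))^{G_r}=\operatorname{End}_{G_r}(T(\mu))$. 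By hypothesis $T(\mu)$ is indecomposable over $G_r$, so $A:=\operatorname{End}_{G_r}(T(\mu))$ is local; since $\id$ is $G$-fixed and $\id\notin\operatorname{rad}A$, as a $G/G_r$-module $A\cong k\oplus J$ with $J:=\operatorname{rad}A$ a nilpotent two-sided ideal that is a submodule. Substituting, $\operatorname{End}_G(T(\mu)\otimes T(\nu)^{(r)})\cong\operatorname{End}_G(T(\nu))\oplus\Hom_{G/G_r}(T(\nu)^{(r)},J\otimes T(\nu)^{(r)})$; the first summand is local since $T(\nu)$ is indecomposable, and the second is a two-sided ideal consisting of nilpotent elements (composing such maps corresponds to multiplication inside the nilpotent ideal $J$), hence lies in the radical. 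So the endomorphism ring is local, $T(\mu)\otimes T(\nu)^{(r)}$ is indecomposable, and thus isomorphic to $T(\mu+p^r\nu)$.

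I expect the main obstacle to be the indecomposability in the second assertion: deducing that $T(\mu)\otimes T(\nu)^{(r)}$ is $G$-indecomposable from the hypothesis that $T(\mu)$ is $G_r$-indecomposable requires identifying $\operatorname{End}_{G_r}(T(\mu))$ as a $G/G_r$-module (splitting off the trivial line spanned by $\id$, with nilpotent complement) and checking that the extra part of the $G$-endomorphism ring is nil. The Frobenius-untwisting identity $(\St_r\otimes\St_r)^{G_r}\cong k$ is the other essential input, underpinning both the special case and this last step.
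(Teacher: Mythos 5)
Your argument is correct, and since the paper quotes this result from \cite[Proposition 2.1]{donkin93} and \cite[Lemma E.9]{rags} without proof, the relevant comparison is with those sources: your proof is essentially theirs, namely realizing $T(\mu)$ as a summand of $\St_r\otimes T(\lambda)$ to get the tilting property, and computing $\operatorname{End}_G(T(\mu)\otimes T(\nu)^{(r)})\cong\Hom_{G/G_r}(T(\nu)^{(r)},\End_{G_r}(T(\mu))\otimes T(\nu)^{(r)})$ and splitting off the radical of the local algebra $\End_{G_r}(T(\mu))$ to get indecomposability. All the individual steps (the Frobenius-untwisting of $\Hom$-spaces, the $G$-stability of $\operatorname{rad}\End_{G_r}(T(\mu))$, and the nilpotent-ideal locality argument) check out.
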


The condition that $T(\mu)$ is indecomposable as a $G_r$-module when $\mu$ is as in the above proposition is in fact conjectured to always hold (see \cite[Conjecture 2.2]{donkin93}) and is known as Donkin's tilting conjecture. It is known to hold if $p\geq 2h-2$.

\begin{rem}
With a few exceptions, whenever a result in this paper includes the condition $p\geq 2h-2$ this can be removed if one assumes Donkin's tilting conjecture. We will make sure to note whenever this is not the case.
\end{rem}

\begin{thm}[{\cite[E.9]{rags}}]\label{tiltingsocle}
If $\lambda\in X_r$ and $T((p^r-1)\rho + \lambda)$ is indecomposable as a $G_r$-module, then $$\soc_{G}(T((p^r-1)\rho + \lambda)) = \soc_{G_r}(T((p^r-1)\rho + \lambda)) = L((p^r-1)\rho - \lambda^*).$$ In particular, this holds for all $\lambda\in X_r$ if $p\geq 2h-2$.
\end{thm}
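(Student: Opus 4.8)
The plan is to realize $T((p^r-1)\rho+\lambda)$ as an indecomposable projective-injective module for $G_r$, read off its $G_r$-socle, and then transport this socle back to $G$. To begin, since $\St_r=\nabla((p^r-1)\rho)=\Delta((p^r-1)\rho)$ is tilting, the module $\St_r\otimes T(\lambda)$ is tilting (by Theorem~\ref{tensorproductgoodfilt} and duality), has highest weight $(p^r-1)\rho+\lambda$ occurring with multiplicity one, and hence has $T((p^r-1)\rho+\lambda)$ as a direct summand by the classification of tilting modules. As $\St_r$ is both projective and injective over $G_r$, so is $\St_r\otimes T(\lambda)$, and therefore so is the summand $T((p^r-1)\rho+\lambda)$.

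Assuming $T((p^r-1)\rho+\lambda)$ is indecomposable over $G_r$, it is then an indecomposable projective-injective $G_r$-module, so $T((p^r-1)\rho+\lambda)|_{G_r}\cong Q_r(\nu)$ for a unique $\nu\in X_r$, and $\soc_{G_r}T((p^r-1)\rho+\lambda)=L(\nu)|_{G_r}$. To pass to $G$, I would use that for any rational $G$-module $M$ the submodule $\soc_G M$ is semisimple over $G_r$ — each $L(\tau)$ restricts to a semisimple $G_r$-module by Steinberg's tensor product theorem — so $\soc_G M\subseteq\soc_{G_r}M$. With $M=T((p^r-1)\rho+\lambda)$, the nonzero $G$-submodule $\soc_G M$ lies inside the simple $G_r$-module $L(\nu)|_{G_r}$, hence restricts to all of it; being simple over $G_r$ it is simple over $G$, say $L(\tau)$ with $L(\tau)|_{G_r}\cong L(\nu)|_{G_r}$. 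Writing $\tau=\tau_0+p^r\tau_1$ with $\tau_0\in X_r$, simplicity of $L(\tau_0)|_{G_r}^{\oplus\dim L(\tau_1)}$ forces $\dim L(\tau_1)=1$, hence $\tau_1=0$ (as $G$ is simply connected there are no nontrivial one-dimensional $G$-modules) and then $\tau_0=\nu$ (as $G$ is semisimple, distinct $r$-restricted weights give non-isomorphic $G_r$-modules). Thus $\soc_G T((p^r-1)\rho+\lambda)=L(\nu)$, which restricts to the $G_r$-socle just found.

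It remains to identify $\nu$. Here $T((p^r-1)\rho+\lambda)$ is a $G$-module lifting $Q_r(\nu)$, and such a lift is unique up to isomorphism (a twist by $V^{(r)}$ restricts to $Q_r(\nu)$ only when $\dim V=1$), so it must coincide with the $G$-module produced by the standard construction lifting $Q_r(\nu)$, whose highest weight is $2(p^r-1)\rho+w_0\nu$. Comparing with the highest weight $(p^r-1)\rho+\lambda$ of $T((p^r-1)\rho+\lambda)$ yields $\nu=(p^r-1)\rho+w_0\lambda=(p^r-1)\rho-\lambda^*$, as desired. Alternatively one can avoid quoting the highest weight of $Q_r(\nu)$: by Proposition~\ref{tiltingdual} the $G$-head and $G$-socle of $T((p^r-1)\rho+\lambda)$ have the same composition factors, so its head is $L(\nu)$, and the surjection $T((p^r-1)\rho+\lambda)\twoheadrightarrow\nabla((p^r-1)\rho+\lambda)$ coming from the top of a good filtration identifies $L(\nu)$ with the simple head of $\nabla((p^r-1)\rho+\lambda)$, which one computes directly; one could also decompose $\lambda=\lambda'+p^{r-1}\lambda''$ and induct on $r$ using Donkin's tensor product theorem together with Steinberg's tensor product theorem.

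For the last statement, when $p\ge 2h-2$ Donkin's tilting conjecture holds and so the indecomposability hypothesis is automatic for every $\lambda\in X_r$. The main obstacle I anticipate is precisely the identification of $\nu$: projectivity, injectivity and the socle-transport are formal, but pinning down the socle label requires genuine structural input about $Q_r(\nu)$ (equivalently, about the head of $\nabla((p^r-1)\rho+\lambda)$).
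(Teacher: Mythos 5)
This theorem is one of the quoted preliminaries: the paper gives no proof of its own but cites \cite[E.9]{rags} (which in turn follows Donkin), so your argument can only be measured against that standard proof — and in outline it is the same. Your first three steps are correct and cleanly done: $T((p^r-1)\rho+\lambda)$ is a $G_r$-summand of the projective-injective module $\St_r\otimes T(\lambda)$, an indecomposable injective $G_r$-module is some $Q_r(\nu)$, and the transport $\soc_G\subseteq\soc_{G_r}$ via Steinberg's tensor product theorem correctly forces $\soc_G=L(\nu)$ with $\nu\in X_r$. The "in particular" clause is also fine, since Donkin's conjecture is known for $p\geq 2h-2$.

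The soft spot is exactly where you say it is: identifying $\nu$. Your primary route invokes uniqueness of the $G$-lift of $Q_r(\nu)$, justified by the parenthetical that any two lifts differ by a twist $V^{(r)}$ — but that is not how one proves uniqueness (two $G$-structures on the same indecomposable $G_r$-module need not differ by a Frobenius-twisted tensor factor), and uniqueness of the lift is itself a nontrivial theorem tied to $p\geq 2h-2$. Fortunately the detour is unnecessary: restrict to $G_rT$ instead of $G_r$. The restriction is then the indecomposable injective $G_rT$-module with socle $\widehat{L}_r(\nu)$, i.e.\ $\widehat{Q}_r(\nu)$, and \cite[II.11.5]{rags} says $\widehat{Q}_r(\nu)$ has unique highest weight $2(p^r-1)\rho+w_0\nu$; comparing with the highest weight $(p^r-1)\rho+\lambda$ gives $\nu=(p^r-1)\rho-\lambda^*$ directly. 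This highest-weight fact about $\widehat{Q}_r$ is the genuine structural input, and it is essentially what Jantzen's proof of E.9 uses (there one shows $\widehat{Q}_r(\nu)$ sits inside $\St_r\otimes L((p^r-1)\rho+w_0\nu)$, hence inside the relevant tilting module). Your alternatives do not rescue the step: the head of $\nabla((p^r-1)\rho+\lambda)$ is not "computed directly" — knowing it simple with label $(p^r-1)\rho-\lambda^*$ is essentially equivalent to the theorem — and the inductive route via Donkin's tensor product theorem needs indecomposability over $G_u$ at each stage, which is not available without assuming the conclusion (or Donkin's conjecture). With the $G_rT$/highest-weight correction, the proof is the standard one and is complete.
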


\begin{prop}[{\cite[Proposition 2.4]{donkin93},\cite[Lemma E.8]{rags}}]\label{tiltinginj}
If $\lambda\in X_+$ then $T(\lambda)$ is injective as a $G_r$-module if and only if $\langle\lambda,\alpha^{\vee}\rangle \geq p^r - 1$ for all $\alpha\in S$.
\end{prop}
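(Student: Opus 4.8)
The plan is to prove the two implications separately, the ``if'' direction being the clean one.

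For ``if'', assume $\langle\lambda,\alpha^\vee\rangle\ge p^r-1$ for all $\alpha\in S$, so that $\mu:=\lambda-(p^r-1)\rho\in X_+$. Since $\St_r=T((p^r-1)\rho)$ is tilting, and a tensor product of tilting modules is tilting (by Theorem~\ref{tensorproductgoodfilt} applied to $T(\mu)\otimes\St_r$ and, via $T(\nu)^*\cong T(\nu^*)$ from Proposition~\ref{tiltingdual}, to its dual), $T(\mu)\otimes\St_r$ is a tilting module whose highest weight is $\mu+(p^r-1)\rho=\lambda$, occurring with multiplicity $1$; by the classification of tilting modules $T(\lambda)$ is therefore a direct summand of $T(\mu)\otimes\St_r$. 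Now $\St_r$ is projective ($=$ injective) as a $G_r$-module, and over the finite group scheme $G_r$ the tensor product of a projective module with any module is projective (because $M\otimes k[G_r]$ is free for any $M$, by the tensor identity). Hence $T(\mu)\otimes\St_r$ is $G_r$-injective, and so is its direct summand $T(\lambda)$.

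For ``only if'', I would argue as follows. If $T(\lambda)$ is $G_r$-injective then, since it is a $G$-module and $T\cong G_rT/G_r$ is linearly reductive, it is injective as a $G_rT$-module, hence a direct sum of the indecomposable injective $G_rT$-modules $\widehat Q_r(\pi)$, $\pi\in X$. Writing $\pi=\pi_0+p^r\pi_1$ with $\pi_0\in X_r$, one has $\widehat Q_r(\pi)\cong Q_r(\pi_0)\otimes p^r\pi_1$, and the $G_rT$-module $Q_r(\pi_0)$ has a unique maximal weight, equal to $2(p^r-1)\rho-\pi_0^*$ (a classical fact about the $G_rT$-structure, independent of Donkin's tilting conjecture); so $\widehat Q_r(\pi)$ has unique maximal weight $2(p^r-1)\rho-\pi_0^*+p^r\pi_1$. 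Pick a summand $\widehat Q_r(\pi)$ of $T(\lambda)$ whose weights include the highest weight $\lambda$ of $T(\lambda)$; since $\lambda$ is maximal among \emph{all} weights of $T(\lambda)$ it must be the maximal weight of that summand, giving
\[
\langle\lambda,\alpha^\vee\rangle=\bigl(2(p^r-1)-\langle\pi_0^*,\alpha^\vee\rangle\bigr)+p^r\langle\pi_1,\alpha^\vee\rangle\qquad(\alpha\in S).
\]
As $\pi_0^*\in X_r$ we have $0\le\langle\pi_0^*,\alpha^\vee\rangle\le p^r-1$, so the first bracket is $\ge p^r-1$; it then remains only to show $\langle\pi_1,\alpha^\vee\rangle\ge0$, after which $\langle\lambda,\alpha^\vee\rangle\ge p^r-1$ as wanted.

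I expect this last point to be the real obstacle: one must rule out that the $G_rT$-injective summand carrying the top weight of $T(\lambda)$ is ``twisted down'' by a non-dominant $\pi_1$. Here one should exploit that $T(\lambda)$ is a genuine $G$-module: its socle $\widehat L(\pi)=L(\pi_0)\otimes p^r\pi_1$ must occur as a $G_r$-composition factor of $T(\lambda)$, and tracking which $G$-composition factor $L(\mu)$ ($\mu\le\lambda$) it sits inside — via Steinberg's tensor product theorem, together with the fact that the socle and head of $T(\lambda)$ have the same composition factors (Proposition~\ref{tiltingdual}) and that $\Delta(\lambda)$ embeds into $T(\lambda)$ while $\nabla(\lambda)$ is a quotient — should pin $\pi_1$ inside $X_+$. (An alternative route is to restrict to the rank-one subgroups $G_\alpha$ of type $A_1$, use that restriction preserves $G_r$-injectivity of tilting modules and sends $G$-tilting modules to $G_\alpha$-tilting modules, and verify the claim directly for $\mathrm{SL}_2$ from the explicit structure of its tilting modules; but, as with the above, the delicate step is showing that the failure of the coroot condition actually surfaces as an indecomposable tilting summand of $T(\lambda)|_{G_\alpha}$ of highest weight $<p^r-1$, rather than being masked by a larger summand whose Weyl/good filtration still involves small weights.) The ``if'' direction and all the structural inputs quoted above are routine; the weight-bookkeeping for $\pi_1$ is the only place where genuine work is needed.
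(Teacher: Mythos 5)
The paper does not actually prove this statement---it is quoted from Donkin and Jantzen as a preliminary---so your argument has to stand on its own. Your ``if'' direction does: it is complete, correct, and is the standard argument (the same device the paper uses in the proof of Theorem \ref{goodfilttensorwithtilting}), namely that $T(\lambda)$ is a direct summand of the tilting module $T(\lambda-(p^r-1)\rho)\otimes\St_r$, which is $G_r$-injective because tensoring with the injective $G_r$-module $\St_r$ preserves injectivity over the finite group scheme $G_r$.

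The ``only if'' direction has a genuine gap, and you have correctly located it: nothing you write establishes $\langle\pi_1,\alpha^{\vee}\rangle\geq 0$, and this is not a technicality but the entire content of the converse. From $\lambda = 2(p^r-1)\rho - \pi_0^* + p^r\pi_1$ alone, taking $\langle\pi_0^*,\alpha^{\vee}\rangle = 0$ and $\langle\pi_1,\alpha^{\vee}\rangle = -1$ gives $\langle\lambda,\alpha^{\vee}\rangle = p^r-2$, which is perfectly consistent with $\lambda\in X_+$ but violates the conclusion; so as written the argument proves nothing in this direction, and the two strategies you sketch remain gestures. The gap can be closed within your framework by using the $G$-structure more systematically: since $G_r$ is normal in $G$, the space $\Hom_{G_r}(L(\pi_0),T(\lambda))$ is a $G/G_r$-module, i.e.\ of the form $N^{(r)}$ for a finite dimensional $G$-module $N$, and the indecomposable $G_rT$-injective summands of $T(\lambda)$ with $G_r$-socle isomorphic to $L(\pi_0)$ are exactly the $\widehat{Q}_r(\pi_0)\otimes p^r\nu$ with $\nu$ running over the weights of $N$, each with multiplicity $\dim N_{\nu}$. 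If some weight $\nu$ of $N$ satisfied $\nu > \pi_1$, then $2(p^r-1)\rho - \pi_0^* + p^r\nu$ would be a weight of $T(\lambda)$ strictly greater than $\lambda$, which is impossible; hence $\pi_1$ is a maximal weight of the $G$-module $N$. But every maximal weight of a finite dimensional $G$-module is the highest weight of one of its composition factors and is therefore dominant. This yields $\pi_1\in X_+$ and completes your argument; without some such step, the proof is incomplete.
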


Similarly to the above, we have the following for $\Gq$-modules. It follows for example from the above by applying \cite[Theorem 2.3]{drupieski13}.

\begin{prop}\label{tiltingproj}
$T((p^r-1)\rho + \lambda)$ is injective and projective as a $\Gq$-module for any $\lambda\in X_+$.
\end{prop}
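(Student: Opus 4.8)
The plan is to deduce this from the corresponding statement for $G_r$, namely Proposition~\ref{tiltinginj}, by transferring injectivity from $G_r$ to $\Gq$ and then using that the finite group algebra $k\Gq$ is self-injective.

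First I would set $\mu = (p^r-1)\rho + \lambda$ and check that $\langle\mu,\alpha^{\vee}\rangle \geq p^r-1$ for every $\alpha\in S$. This is immediate: $\langle\rho,\alpha^{\vee}\rangle = 1$ for all $\alpha\in S$, so $\langle\mu,\alpha^{\vee}\rangle = (p^r-1) + \langle\lambda,\alpha^{\vee}\rangle$, and $\langle\lambda,\alpha^{\vee}\rangle\geq 0$ because $\lambda\in X_+$. Hence Proposition~\ref{tiltinginj} applies and shows that $T(\mu) = T((p^r-1)\rho+\lambda)$ is injective as a $G_r$-module.

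Next I would invoke \cite[Theorem 2.3]{drupieski13}, according to which a finite dimensional rational $G$-module whose restriction to $G_r$ is injective also has injective restriction to $\Gq$. Applying this to $T(\mu)$ gives that $T((p^r-1)\rho+\lambda)$ is injective as a $\Gq$-module. Finally, since $\Gq$ is a finite group, $k\Gq$ is a symmetric, hence self-injective, algebra, so injective $k\Gq$-modules coincide with projective ones; therefore $T((p^r-1)\rho+\lambda)$ is also projective as a $\Gq$-module, which is the assertion.

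I do not anticipate a genuine obstacle here, since the statement is essentially a repackaging of Proposition~\ref{tiltinginj} with the $G_r$-to-$\Gq$ comparison of injectivity. The only point requiring a little care is confirming that the hypotheses of \cite[Theorem 2.3]{drupieski13} are met in our setting — in particular that it applies under the standing assumptions that $G$ is semisimple, simply connected and defined over $\F_p$ — and, if one wants the argument to be self-contained, recalling why projective and injective modules coincide both for $\Gq$ and for $G_r$.
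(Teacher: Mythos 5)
Your argument is correct and is exactly the route the paper takes: the paper derives Proposition~\ref{tiltingproj} from Proposition~\ref{tiltinginj} via \cite[Theorem 2.3]{drupieski13}, and you have simply filled in the (easy) verification of the weight condition and the standard fact that injective and projective $k\Gq$-modules coincide.
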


The following two results give conditions on a $G$-module $M$ which guarantee that $\St_r\otimes M$ has a good filtration.

\begin{thm}[{\cite[Theorem 4.3.2]{kildetoftnakano15}}]\label{lambdaalpha0}
Assume that $p\geq h$ and let $\lambda\in \Gamma_r$.
Then $\St_r\otimes L(\lambda)$ has a good filtration.
\end{thm}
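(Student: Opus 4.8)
The plan is to reduce the statement, via adjunction, to the vanishing of certain $\Ext^1$-groups, and then to a linkage computation in which the hypotheses $p\geq h$ and $\lambda\in\Gamma_r$ are decisive. Since $\St_r$ is finite dimensional and self-dual, the functor $-\otimes\St_r$ is exact and is its own left and right adjoint, so for every $\nu\in X_+$ we have $\Ext^1_G(\Delta(\nu),\St_r\otimes L(\lambda))\cong\Ext^1_G(\St_r\otimes\Delta(\nu),L(\lambda))$. By the cohomological characterisation of good filtrations (see \cite[II.4]{rags}), the theorem is therefore equivalent to
\[
\Ext^1_G(\St_r\otimes\Delta(\nu),L(\lambda))=0\qquad\text{for all }\nu\in X_+ .
\]
By the dual form of Theorem~\ref{tensorproductgoodfilt}, $\St_r\otimes\Delta(\nu)=\Delta((p^r-1)\rho)\otimes\Delta(\nu)$ has a Weyl filtration, so it suffices to show $\Ext^1_G(\Delta(\sigma),L(\lambda))=0$ whenever $\Delta(\sigma)$ occurs as a subquotient of such a filtration for some $\nu$. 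I would organise this as an induction on $\lambda$ in the dominance order: tensoring the exact sequence $0\to L(\lambda)\to\nabla(\lambda)\to\nabla(\lambda)/L(\lambda)\to 0$ with $\St_r$ gives a short exact sequence whose middle term has a good filtration by Theorem~\ref{tensorproductgoodfilt}, and whose right-hand term has composition factors $L(\mu)$ with $\mu<\lambda$; because $\langle\alpha,\alpha_0^\vee\rangle\geq 0$ for every simple root $\alpha$, the set $\Gamma_r$ is closed under $\leq$, so each such $\mu$ again lies in $\Gamma_r$ and the inductive hypothesis applies. What remains in the inductive step is exactly the displayed Ext vanishing for $\lambda$ itself.

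To bring the hypotheses to bear on $L(\lambda)$ I would apply Steinberg's tensor product theorem, writing $\lambda=\lambda_0+p^r\lambda_1$ with $\lambda_0\in X_r$. The estimate $p^r\langle\lambda_1,\alpha_0^\vee\rangle\leq\langle\lambda,\alpha_0^\vee\rangle<p^r(p-h+1)$ forces $\langle\lambda_1+\rho,\alpha_0^\vee\rangle\leq p-1$, so $\lambda_1$ lies in the lowest alcove and $L(\lambda_1)=\nabla(\lambda_1)$, and the same estimate shows $\lambda_0\in\Gamma_r$. By the Andersen--Haboush tensor product theorem,
\[
\St_r\otimes L(\lambda)\;\cong\;L(\lambda_0)\otimes\St_r\otimes\nabla(\lambda_1)^{(r)}\;\cong\;L(\lambda_0)\otimes\nabla\bigl((p^r-1)\rho+p^r\lambda_1\bigr),
\]
so the weight and linkage bookkeeping needed for the Ext vanishing can be carried out with $\lambda$ replaced by the $r$-restricted weight $\lambda_0\in X_r\cap\Gamma_r$. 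One cannot simply reduce to the restricted case outright, since the Frobenius twist $\nabla(\lambda_1)^{(r)}$ does not itself have a good filtration; reassembling the general case from the restricted one is clean when Donkin's tilting conjecture is available — for instance for $p\geq 2h-2$, via Donkin's tensor product theorem and the liftability of the $G_r$-injective indecomposables to $G$ — but not in general.

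The main obstacle is the Ext vanishing $\Ext^1_G(\Delta(\sigma),L(\lambda))=0$ for the weights $\sigma$ that arise above, and this is precisely where $p\geq h$ and $\lambda\in\Gamma_r$ are genuinely needed. The plan is to pin these $\sigma$ down: the weights of $\St_r$ lie in the convex hull of the Weyl orbit of $(p^r-1)\rho$, so combining this with the weights of $\Delta(\nu)$ and invoking the linkage principle, one shows that any such $\sigma$ is either not linked to $\lambda$ — in which case $\Ext^1_G(\Delta(\sigma),L(\lambda))=0$ trivially — or else, using $\lambda_0\in\Gamma_r$ together with $p\geq h$ (which keeps the relevant alcove geometry non-degenerate), is separated far enough from $\lambda$ within its linkage class that the Ext group still vanishes. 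It is useful here that $\St_r\otimes L(\lambda)$ is injective as a $G_r$-module (since $\St_r$ is and $G_r$-injectivity is preserved by tensoring), which ties the good-filtration question to the classification of tilting modules whose restriction to $G_r$ is injective (Proposition~\ref{tiltinginj}); the bound defining $\Gamma_r$ is what makes the resulting weight bookkeeping close. Carrying out this alcove--linkage estimate uniformly for all $p\geq h$, rather than only in the range $p\geq 2h-2$, is where I expect the real work to lie.
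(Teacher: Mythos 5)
First, a point of orientation: the paper does not prove this statement --- it is imported verbatim from \cite[Theorem 4.3.2]{kildetoftnakano15} as a preliminary --- so your argument has to stand on its own, and it does not. The decisive step in your plan is the claim that it suffices to prove $\Ext^1_G(\Delta(\sigma),L(\lambda))=0$ for every $\Delta(\sigma)$ occurring in a Weyl filtration of some $\St_r\otimes\Delta(\nu)$. The implication is formally correct, but its hypothesis is false, so the route cannot be completed. Take $G=SL_2$, $r=1$, $p\geq 3$ and $\lambda=2p-2$: then $\lambda\in\Gamma_1$ because $2p-2<p(p-1)$, and $\Delta(0)$ is a Weyl filtration factor of $\St_1\otimes\Delta(p-1)$ by Clebsch--Gordan; but $\nabla(2p-2)$ has composition factors $L(2p-2)$ and $L(0)$, so the long exact sequence for $0\to L(2p-2)\to\nabla(2p-2)\to L(0)\to 0$ gives $\Ext^1_G(\Delta(0),L(2p-2))\cong H^1(G,L(2p-2))\cong k\neq 0$. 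The theorem is nevertheless true here ($\St_1\otimes L(2p-2)\cong T(2p-3)\otimes\nabla(1)^{(1)}\cong T(4p-3)$ is tilting). Note also that $0$ and $2p-2$ are linked and lie in adjacent alcoves, so neither branch of the dichotomy you propose at the end ("not linked" or "separated far enough in its linkage class") applies. The vanishing of $\Ext^1_G(\St_r\otimes\Delta(\nu),L(\lambda))$ therefore relies on cancellation between the layers of the Weyl filtration (a nonzero $\Ext^1$ from a low factor being absorbed by a $\Hom$ from the factor $\Delta(\lambda)$ itself), and no factor-by-factor estimate can close the argument. Your induction on the dominance order stalls at exactly the same point: from $0\to\St_r\otimes L(\lambda)\to\St_r\otimes\nabla(\lambda)\to\St_r\otimes(\nabla(\lambda)/L(\lambda))\to 0$ with the second and third terms having good filtrations one cannot conclude anything about the submodule; the missing ingredient is again $\Ext^1_G(\Delta(\nu),\St_r\otimes L(\lambda))=0$, so the induction reduces nothing.

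On the positive side, your Steinberg reduction is correct and in fact cleaner than you suggest: with $\lambda=\lambda_0+p^r\lambda_1$ you rightly deduce $\lambda_1\in\Gamma_0$, so $\St_r\otimes L(\lambda_1)^{(r)}\cong\St_r\otimes\nabla(\lambda_1)^{(r)}\cong\nabla((p^r-1)\rho+p^r\lambda_1)$ has a good filtration by Andersen--Haboush, and Proposition \ref{tensorproductsteinberggood} (which needs neither $p\geq 2h-2$ nor Donkin's tilting conjecture) then yields the general case from the restricted case $\St_r\otimes L(\lambda_0)$; your worry that $\nabla(\lambda_1)^{(r)}$ lacks a good filtration is beside the point, since only $\St_r\otimes\nabla(\lambda_1)^{(r)}$ needs one. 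But this merely relocates all of the difficulty to $\lambda_0\in X_r\cap\Gamma_r$, for which your proposal offers no argument beyond the deferred (and, as above, unprovable in the form stated) Ext estimate. The actual proof in \cite{kildetoftnakano15} proceeds by different methods and is not reproduced in this paper.
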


\begin{prop}[{\cite[Proposition 4.4]{andersen01}}]\label{tensorproductsteinberggood}
Let $M$ and $N$ be $G$-modules such that both $\St_r\otimes M$ and $\St_r\otimes N$ have good filtrations. Then $\St_r\otimes (M\otimes N)$ also has a good filtration.
\end{prop}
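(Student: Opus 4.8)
The plan is to realise $\St_r\otimes(M\otimes N)$ as a direct summand of a module to which Mathieu's theorem applies directly. Since $\St_r=\nabla((p^r-1)\rho)$ is costandard it has a (length one) good filtration, so by Theorem~\ref{ja419} the module $(\St_r\otimes M)\otimes(\St_r\otimes N)\otimes\St_r\cong\St_r^{\otimes 3}\otimes M\otimes N$ has a good filtration. Direct summands of modules with a good filtration again have a good filtration (via the characterisation of good filtrations by vanishing of $\Ext^1_G(\Delta(\mu),-)$, cf.\ Theorem~\ref{cohcrit}), so the proposition will follow once one knows that $\St_r$ is a direct summand of $\St_r^{\otimes 3}$ as a $G$-module: tensoring a split inclusion $\St_r\hookrightarrow\St_r^{\otimes 3}$ and a split surjection $\St_r^{\otimes 3}\twoheadrightarrow\St_r$ with $\id_{M\otimes N}$ then exhibits $\St_r\otimes M\otimes N$ as a summand of $\St_r^{\otimes 3}\otimes M\otimes N$.

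To approach the summand claim, first note that the inclusion is easy to come by: as $\St_r$ is simple and self-dual, $\Hom_G(k,\St_r\otimes\St_r)\cong\operatorname{End}_G(\St_r)=k$, so $k$ lies in the socle of $\St_r\otimes\St_r$, and tensoring this inclusion with $\St_r$ gives a $G$-embedding $\St_r\cong\St_r\otimes k\hookrightarrow\St_r^{\otimes 3}$ (dually, a surjection $\St_r^{\otimes 3}\twoheadrightarrow\St_r$). Next reduce to $r=1$: by Steinberg's tensor product theorem $\St_r\cong\St_1\otimes\St_1^{(1)}\otimes\cdots\otimes\St_1^{(r-1)}$, hence $\St_r^{\otimes 3}\cong\bigotimes_{i=0}^{r-1}(\St_1^{\otimes 3})^{(i)}$, so a splitting $\St_1\mid\St_1^{\otimes 3}$ can be Frobenius-twisted and tensored together to yield $\St_r\mid\St_r^{\otimes 3}$. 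For $r=1$ I would work with the $G_1$-structure: $\St_1^{\otimes 3}$ is a tilting module (Mathieu's theorem for it and its dual), and $\St_1^{\otimes 2}$ --- being the Steinberg module tensored with anything --- is injective, hence projective, over $G_1$; by Proposition~\ref{tiltinginj} every indecomposable tilting summand $T(\nu)$ of $\St_1^{\otimes 2}$ satisfies $\langle\nu,\alpha^\vee\rangle\ge p-1$ for all $\alpha\in S$, so that no tilting module ``below'' the Steinberg can occur. Combining this with the observation that $\St_1=L((p-1)\rho)$ is its own $G_1$-projective cover --- so that $L((p-1)\rho)$ occurs as a $G_1$-composition factor of $\St_1^{\otimes 3}$ exactly as often as $\St_1$ occurs as a $G_1$-direct summand, and the latter number is positive because $(\St_1^{\otimes 2})^{G_1}\neq 0$ --- one should be able to pin down that the multiplicity of $T((p-1)\rho)=\St_1$ in $\St_1^{\otimes 3}$ is positive.

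The main obstacle is precisely this last point: passing from the (immediate) splitting of $\St_r\hookrightarrow\St_r^{\otimes 3}$ over $G_r$, where it holds because $\St_r$ is $G_r$-injective, to a splitting over $G$. One cannot use the evident ``categorical'' splitting maps built from coevaluation and evaluation, since their composite $\St_r\to\St_r^{\otimes 3}\to\St_r$ is multiplication by $\dim\St_r=p^{r|R^+|}$, which vanishes in $k$; so the $G$-splitting, when it exists, has to be extracted from an honest analysis of the $G$-socle and $G$-head of $\St_r^{\otimes 3}$ (and of which tilting modules occur). A possible way around this would be to prove $\Ext^1_G(\Delta(\lambda),\St_r\otimes M\otimes N)=0$ directly: as $\St_r\otimes M\otimes N$ is $G_r$-injective the Hochschild--Serre spectral sequence collapses, giving $\Ext^1_G(\Delta(\lambda),\St_r\otimes M\otimes N)\cong H^1(G/G_r,\Hom_{G_r}(\Delta(\lambda),\St_r\otimes M\otimes N))$; one would then want to compare this with the corresponding vanishing group for the good module $\St_r^{\otimes 3}\otimes M\otimes N$, of which $\St_r\otimes M\otimes N$ is a $G$-submodule and a $G_r$-direct summand --- but here the difficulty is that the $G_r$-splitting is not $G/G_r$-equivariant, so the comparison of these $G/G_r$-cohomology groups is not automatic.
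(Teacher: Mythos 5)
This proposition is quoted in the paper from \cite[Proposition 4.4]{andersen01} without proof, so your attempt has to stand on its own. Your overall strategy --- exhibit $\St_r\otimes M\otimes N$ as a direct summand of $\St_r^{\otimes 3}\otimes M\otimes N\cong(\St_r\otimes M)\otimes\St_r\otimes(\St_r\otimes N)$, which has a good filtration by Theorem~\ref{ja419}, and use that direct summands inherit good filtrations --- is exactly the right one. But the step you flag as ``the main obstacle,'' namely that $\St_r$ is a $G$-direct summand of $\St_r^{\otimes 3}$, is left unproved, and none of your proposed workarounds (the $G_1$-multiplicity count, which only produces a $G_r$-splitting, or the Hochschild--Serre comparison, which you yourself note does not close) actually finishes it. As written the proof therefore has a genuine gap at the one point that matters.

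The gap is easy to close, and your reason for dismissing the ``categorical'' splitting is where you go wrong. Writing $\St_r^{\otimes 3}\cong\St_r\otimes\St_r^*\otimes\St_r$ (using self-duality of $\St_r$), the maps $\id\otimes\mathrm{coev}\colon\St_r\to\St_r\otimes(\St_r^*\otimes\St_r)$ and $\mathrm{ev}\otimes\id\colon(\St_r\otimes\St_r^*)\otimes\St_r\to\St_r$ are $G$-module homomorphisms (the coevaluation element $\sum_i e_i^*\otimes e_i$ corresponds to $\id_{\St_r}\in\operatorname{End}(\St_r)$ and is $G$-invariant), and their composite is the \emph{identity} on $\St_r$ by the triangle (zigzag) identity for dual pairs: $v\mapsto\sum_i v\otimes e_i^*\otimes e_i\mapsto\sum_i e_i^*(v)\,e_i=v$. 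No factor of $\dim\St_r$ appears; the scalar $\dim\St_r=p^{r|R^+|}=0$ only arises for the ``wrong-way'' composite $k\xrightarrow{\mathrm{coev}}\St_r^*\otimes\St_r\xrightarrow{\mathrm{ev}}k$, i.e.\ when evaluation and coevaluation are composed on the same tensor factor (this is also why your observation that $k$ is \emph{not} a $G$-direct summand of $\St_r\otimes\St_r$ --- correct, since that module is $G_r$-injective while $k$ is not --- is no obstruction here). With this one observation, $\St_r\mid\St_r^{\otimes3}$ holds for every finite-dimensional module in any characteristic, and tensoring the splitting with $\id_{M\otimes N}$ completes your argument; the entire second and third paragraphs of your proposal become unnecessary.
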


\section{Good filtrations on tensor products}

In this section, we gather various results showing that certain tensor products have good filtrations.

\subsection{Tensoring with tilting modules}

\begin{thm}\label{goodfilttensorwithtilting}
Let $M$ be a $G$-module such that $\St_r\otimes M$ has a good filtration and let $\lambda\in X_+$. Then $T((p^r-1)\rho + \lambda)\otimes M$ has a good filtration.
\end{thm}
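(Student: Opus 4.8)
The plan is to reduce the statement to the known fact (Proposition~\ref{tensorproductsteinberggood} combined with Theorem~\ref{ja419}) that tensoring with a Steinberg module preserves good filtrations, by exhibiting $T((p^r-1)\rho + \lambda)$ as a direct summand of a module of the form $\St_r \otimes (\text{something with a good filtration})$. The key observation is Donkin's tensor product theorem: writing $\lambda = \lambda_0 + p^r\lambda_1$ with $\lambda_0 \in X_r$ and $\lambda_1 \in X_+$, the module $T((p^r-1)\rho + \lambda_0) \otimes T(\lambda_1)^{(r)}$ is tilting and contains $T((p^r-1)\rho + \lambda_0 + p^r\lambda_1) = T((p^r-1)\rho + \lambda)$ as a direct summand (it is equal to it when Donkin's tilting conjecture holds, but a summand always suffices here). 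So it is enough to show $\St_r \otimes T((p^r-1)\rho + \lambda_0) \otimes T(\lambda_1)^{(r)} \otimes M$ has a good filtration.

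First I would handle the Frobenius-twisted tilting factor: by Theorem~\ref{ja419} (Andersen--Haboush), $\St_r \otimes T(\lambda_1)^{(r)}$ has a good filtration, since $T(\lambda_1)$ has a good filtration and hence a filtration by $\nabla(\mu)$'s, so $\St_r\otimes T(\lambda_1)^{(r)}$ is filtered by the modules $\St_r\otimes\nabla(\mu)^{(r)}\cong\nabla((p^r-1)\rho+p^r\mu)$. Then, since $\St_r \otimes M$ has a good filtration by hypothesis and $\St_r\otimes T(\lambda_1)^{(r)}$ has a good filtration, Proposition~\ref{tensorproductsteinberggood} gives that $\St_r \otimes (M \otimes T(\lambda_1)^{(r)})$ has a good filtration. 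Applying Proposition~\ref{tensorproductsteinberggood} once more — this time using that $\St_r\otimes T((p^r-1)\rho+\lambda_0)$ needs a good filtration — we would be done, but this requires knowing $\St_r\otimes T((p^r-1)\rho+\lambda_0)$ has a good filtration, which is not immediate.

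To deal with the remaining factor $T((p^r-1)\rho+\lambda_0)$ with $\lambda_0\in X_r$, the cleanest route is to note that $\nabla((p^r-1)\rho)\otimes\nabla((p^r-1)\rho)=\St_r\otimes\St_r$ has a good filtration by Theorem~\ref{ja419} (or Theorem~\ref{tensorproductgoodfilt}), and more relevantly that $T((p^r-1)\rho+\lambda_0)$ is a summand of $\St_r\otimes T(\lambda_0')$ for a suitable tilting module, or alternatively to use that $T((p^r-1)\rho+\lambda_0)$ is injective as a $G_r$-module (Proposition~\ref{tiltinginj}) hence a summand of $\St_r\otimes(\text{injective})$; in any case $\St_r\otimes T((p^r-1)\rho+\lambda_0)$ being a summand of $\St_r\otimes\St_r\otimes(\text{good filtration module})$ does the job via Proposition~\ref{tensorproductsteinberggood}. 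I expect this last point — cleanly showing $\St_r\otimes T((p^r-1)\rho+\lambda_0)$ has a good filtration for $\lambda_0\in X_r$ without circularity — to be the main obstacle, and the likely fix is to observe that $\St_r\otimes\St_r$ has a good filtration and that $T((p^r-1)\rho+\lambda_0)$ embeds in, and is a direct summand of, $\St_r\otimes Q$ for an appropriate $G_r$-injective tilting module $Q$ whose $\St_r$-tensor product is controlled by the twisted case already handled.
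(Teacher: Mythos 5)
Your overall strategy --- realize $T((p^r-1)\rho+\lambda)$ as a direct summand of $\St_r\otimes Q$ with $Q$ having a good filtration and then invoke Proposition \ref{tensorproductsteinberggood} --- is the right one, but the write-up has two genuine problems. First, the reduction ``it is enough to show $\St_r\otimes T((p^r-1)\rho+\lambda_0)\otimes T(\lambda_1)^{(r)}\otimes M$ has a good filtration'' is a non sequitur: what you actually have at that point is that $T((p^r-1)\rho+\lambda)\otimes M$ is a summand of $T((p^r-1)\rho+\lambda_0)\otimes T(\lambda_1)^{(r)}\otimes M$, and a module is not in general a direct summand of its tensor product with $\St_r$, so a good filtration on the $\St_r$-tensored module tells you nothing about the module you care about. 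To make the reduction legitimate you would first have to replace $T((p^r-1)\rho+\lambda_0)$ by a module of the form $\St_r\otimes(\text{tilting})$ containing it as a summand. Second, the proof never closes: you leave ``$\St_r\otimes T((p^r-1)\rho+\lambda_0)$ has a good filtration'' as an open obstacle with several hedged candidate fixes.

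The point you are missing is that this ``obstacle'' is trivial and involves no circularity: $T(\nu)$ is tilting for \emph{every} $\nu\in X_+$, hence has a good filtration, and therefore $\St_r\otimes T(\nu)$ is again tilting (in particular has a good filtration) by Theorem \ref{tensorproductgoodfilt}; this is Mathieu's theorem and is completely independent of the statement being proved. Once this is observed, the decomposition $\lambda=\lambda_0+p^r\lambda_1$ and Donkin's tensor product theorem become unnecessary: $\St_r\otimes T(\lambda)$ is tilting with highest weight $(p^r-1)\rho+\lambda$ occurring with multiplicity one, so by the classification of tilting modules it contains $T((p^r-1)\rho+\lambda)$ as a direct summand; hence $T((p^r-1)\rho+\lambda)\otimes M$ is a summand of $\St_r\otimes(T(\lambda)\otimes M)$, which has a good filtration by a single application of Proposition \ref{tensorproductsteinberggood} to $T(\lambda)$ and $M$, both of whose tensor products with $\St_r$ have good filtrations. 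This is exactly the paper's proof. (A minor citation slip: Theorem \ref{ja419} is Mathieu's tensor product theorem, not the Andersen--Haboush isomorphism, though the argument you sketch for $\St_r\otimes T(\lambda_1)^{(r)}$ having a good filtration is itself correct.)
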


\begin{proof}
Since $\St_r\otimes T(\lambda)$ is tilting by Theorem \ref{tensorproductgoodfilt} we see that it has $T((p^r-1)\rho + \lambda)$ as a direct summand by the classification of tilting modules. Thus $T((p^r-1)\rho + \lambda)\otimes M$ is a direct summand of $\St_r\otimes T(\lambda)\otimes M$ which has a good filtration by Proposition \ref{tensorproductsteinberggood}. Since direct summands of modules with good filtrations themselves have good filtrations, this finishes the proof.
\end{proof}

A useful consequence of the above is the following.

\begin{cor}\label{exttensor}
Let $\lambda,\mu,\nu\in X_+$ and let $M$ be a $G$-module such that $\St_r\otimes M$ has a good filtration. Then $$\dim\Hom_G(T((p^r-1)\rho + \lambda),M\otimes\nabla(\nu)\otimes\nabla(\mu)^{(r)}) = [T((p^r-1)\rho + \lambda^*)\otimes \nabla(\mu)^{(r)}\otimes M:\nabla(\nu^*)]_{\nabla}$$ and $$\Ext_G^i(T((p^r-1)\rho + \lambda),M\otimes\nabla(\nu)\otimes\nabla(\mu)^{(r)}) = 0$$ for all $i\geq 1$.
\end{cor}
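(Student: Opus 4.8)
The plan is to reduce everything to Theorem \ref{cohcrit} (Donkin's criterion) applied to a suitable tensor product which we already know has a good filtration. First I would rewrite the $G$-module whose homomorphisms we want to compute in a more symmetric form. Using the self-duality $\St_r^* \cong \St_r$, the fact that $T((p^r-1)\rho + \lambda)^* \cong T(((p^r-1)\rho + \lambda)^*)$ from Proposition \ref{tiltingdual}, and the identity $((p^r-1)\rho + \lambda)^* = (p^r-1)\rho + \lambda^*$ (since $\rho^* = \rho$), together with the standard adjunction
$$\Hom_G(A, B\otimes C) \cong \Hom_G(A\otimes C^*, B) \cong \Hom_G(B^* , A^*\otimes C),$$
I would move the costandard factors around so that the left-hand side becomes
$$\dim\Hom_G\!\big(\Delta(\nu^*),\, T((p^r-1)\rho + \lambda^*)\otimes\nabla(\mu)^{(r)}\otimes M\big),$$
using $\nabla(\nu)^* \cong \Delta(\nu^*)$ and $\big(\nabla(\mu)^{(r)}\big)^* \cong \nabla(\mu^*)^{(r)} \cong \Delta(\mu)^{(r)*}$, being slightly careful with which module gets dualized so that the remaining factor stays a $\nabla$ (or becomes a $\Delta$ in the first slot). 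Here I would lean on the fact that $\nabla(\mu)^{(r)}$ both is and is not important to dualize — a cleaner route is to keep $\nabla(\mu)^{(r)}$ on the right and only dualize $\nabla(\nu)$, landing exactly on a $\Hom_G(\Delta(\nu^*), -)$.

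The second step is to check that the module $T((p^r-1)\rho + \lambda^*)\otimes\nabla(\mu)^{(r)}\otimes M$ has a good filtration. By Theorem \ref{goodfilttensorwithtilting}, $T((p^r-1)\rho + \lambda^*)\otimes M$ has a good filtration since $\St_r\otimes M$ does (note $\St_r\otimes M$ has a good filtration iff $\St_r\otimes M$ does — the hypothesis is symmetric in not caring about $\lambda$ vs $\lambda^*$, and in any case one can apply Theorem \ref{goodfilttensorwithtilting} directly with $\lambda^*$ in place of $\lambda$). Since $\nabla(\mu)^{(r)}$ also has a good filtration (it is simply $\nabla(\mu)$ with the action twisted by $F^r$, and the twist preserves good filtrations — its subfactors are the $\nabla(p^r\eta)$ for the subfactors $\nabla(\eta)$ of $\nabla(\mu)$), Theorem \ref{tensorproductgoodfilt} gives that the full triple tensor product has a good filtration.

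With the good filtration in hand, Theorem \ref{cohcrit} immediately identifies
$$\dim\Hom_G\!\big(\Delta(\nu^*),\, T((p^r-1)\rho + \lambda^*)\otimes\nabla(\mu)^{(r)}\otimes M\big) = \big[T((p^r-1)\rho + \lambda^*)\otimes\nabla(\mu)^{(r)}\otimes M : \nabla(\nu^*)\big]_\nabla,$$
which is the first claimed formula. For the vanishing of $\Ext^i$ for $i\geq 1$, I would use the standard fact (from \cite{rags}, II.4) that if $N$ has a good filtration then $\Ext_G^i(\Delta(\eta), N) = 0$ for all $i\geq 1$ and all $\eta\in X_+$; applying the same dualities as in the first step (which are exact and hence compatible with $\Ext$), the $\Ext^i$ in the statement is isomorphic to $\Ext_G^i(\Delta(\nu^*), T((p^r-1)\rho + \lambda^*)\otimes\nabla(\mu)^{(r)}\otimes M)$, which vanishes. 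The main obstacle — really the only place requiring care — is the bookkeeping in the first step: getting the duals, twists, and the $\nabla \leftrightarrow \Delta$ swaps to line up so that exactly the module in the statement appears and the first argument of $\Hom$/$\Ext$ is a Weyl module $\Delta(\nu^*)$; once the expression is in the form $\Hom_G(\Delta(\nu^*), (\text{module with good filtration}))$, both conclusions are formal.
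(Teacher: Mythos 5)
Your reduction of the left-hand side to $\dim\Hom_G(\Delta(\nu^*),\,T((p^r-1)\rho+\lambda^*)\otimes\nabla(\mu)^{(r)}\otimes M)$ is correct and is exactly the paper's first step, and the final appeal to Theorem \ref{cohcrit} (plus the standard $\Ext$-vanishing against Weyl modules) is fine. But there is a genuine gap in your second step: you assert that $\nabla(\mu)^{(r)}$ has a good filtration, ``its subfactors being the $\nabla(p^r\eta)$ for the subfactors $\nabla(\eta)$ of $\nabla(\mu)$,'' and this is false. A Frobenius twist of a costandard module is not a costandard module and in general admits no good filtration. For $G=SL_2$, $p=2$: $\nabla(1)^{(1)}\cong L(2)$ is two-dimensional with character $e(2)+e(-2)=\chi(2)-\chi(0)$, which is not a non-negative combination of Weyl characters, so it cannot have a good filtration. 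Consequently your application of Theorem \ref{tensorproductgoodfilt} to the pair $\bigl(T((p^r-1)\rho+\lambda^*)\otimes M,\ \nabla(\mu)^{(r)}\bigr)$ is not justified. (This is precisely why the Andersen--Haboush theorem is a theorem: you must tensor $\nabla(\mu)^{(r)}$ with $\St_r$ before you get something with a good filtration.)

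The repair is to group the factors differently, which is what the paper does: by Andersen--Haboush, $\St_r\otimes\nabla(\mu)^{(r)}\cong\nabla((p^r-1)\rho+p^r\mu)$ has a good filtration, and $\St_r\otimes M$ has one by hypothesis, so Proposition \ref{tensorproductsteinberggood} gives that $\St_r\otimes\bigl(M\otimes\nabla(\mu)^{(r)}\bigr)$ has a good filtration. Now apply Theorem \ref{goodfilttensorwithtilting} with the module $M\otimes\nabla(\mu)^{(r)}$ (rather than with $M$ alone) to conclude that $T((p^r-1)\rho+\lambda^*)\otimes\nabla(\mu)^{(r)}\otimes M$ has a good filtration; the rest of your argument then goes through unchanged. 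Note that your route cannot be salvaged by instead putting $\nabla(\mu)^{(r)}$ together with $M$ and invoking Mathieu directly, since $M$ itself is not assumed to have a good filtration --- only $\St_r\otimes M$ is --- so Proposition \ref{tensorproductsteinberggood} is genuinely needed here.
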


\begin{proof}
Rewrite $$\Ext_G^i(T((p^r-1)\rho + \lambda),M\otimes\nabla(\nu)\otimes\nabla(\mu)^{(r)}) \cong \Ext_G^i(\Delta(\nu^*),T((p^r-1)\rho + \lambda^*)\otimes\nabla(\mu)^{(r)}\otimes M)$$ and note that by Proposition \ref{tensorproductsteinberggood} and the Andersen--Haboush tensor product theorem $\St_r \otimes \nabla(\mu)^{(r)}\otimes M$ has a good filtration, so the claim now follows from Theorem \ref{goodfilttensorwithtilting} together with Theorem \ref{cohcrit}.
\end{proof}

\subsection{Weights in $\Gamma_r$}

Recall that we define $\Gamma_r = \{\lambda\in X_+\mid\langle\lambda,\alpha_0^{\vee}\rangle < p^r(p-h+1)\}$ and note that if $\lambda\in \Gamma_r$ and $\mu\leq\lambda$ for some $\mu\in X_+$ then $\mu\in \Gamma_r$. Similarly, $\lambda\in \Gamma_r\Leftrightarrow \lambda^*\in\Gamma_r$.

Note also that if $p < h$ then $\Gamma_r = \emptyset$ and if $p\geq 2h-2$ then $X_r\subseteq \Gamma_r$.

We can now reformulate \cite[Corollary II.5.6]{rags} to the statement that whenever $\lambda\in \Gamma_0$ then $\nabla(\lambda)$ is simple.

\begin{lm}\label{gammar}
If $\lambda\in \Gamma_r$ and $\lambda = \lambda_0 + p^u\lambda_1$ with $\lambda_0\in X_u$ for some $u\leq r$ then $\lambda_1\in \Gamma_{r-u}$.

In particular, if $u = r$ then $\lambda_1\in \Gamma_0$ and $\nabla(\lambda_1)$ is simple.
\end{lm}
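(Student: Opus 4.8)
The plan is to unwind the definition of $\Gamma_r$ and track the linear functional $\langle\,\cdot\,,\alpha_0^{\vee}\rangle$ across the decomposition $\lambda = \lambda_0 + p^u\lambda_1$. Since this decomposition gives $\langle\lambda,\alpha_0^{\vee}\rangle = \langle\lambda_0,\alpha_0^{\vee}\rangle + p^u\langle\lambda_1,\alpha_0^{\vee}\rangle$, everything comes down to bounding $\langle\lambda_0,\alpha_0^{\vee}\rangle$ from below by $0$ and then dividing the defining inequality for $\Gamma_r$ by $p^u$.

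First I would note that $\lambda_1$ is automatically dominant: for $\alpha\in S$ we have $p^u\langle\lambda_1,\alpha^{\vee}\rangle = \langle\lambda,\alpha^{\vee}\rangle - \langle\lambda_0,\alpha^{\vee}\rangle \geq 0 - (p^u - 1) > -p^u$, using $\lambda\in X_+$ and $\lambda_0\in X_u$, which forces $\langle\lambda_1,\alpha^{\vee}\rangle\geq 0$. (Alternatively, one may read $\lambda = \lambda_0 + p^u\lambda_1$ as the standard base-$p^u$ decomposition afforded by simple-connectedness, in which $\lambda_1$ is dominant by construction.) Then, because $\alpha_0\in R^+$ its coroot $\alpha_0^{\vee}$ is a non-negative integral combination of the simple coroots, so the dominant weight $\lambda_0$ satisfies $\langle\lambda_0,\alpha_0^{\vee}\rangle\geq 0$. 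Combining these facts,
$$p^u\langle\lambda_1,\alpha_0^{\vee}\rangle \;\leq\; \langle\lambda_0,\alpha_0^{\vee}\rangle + p^u\langle\lambda_1,\alpha_0^{\vee}\rangle \;=\; \langle\lambda,\alpha_0^{\vee}\rangle \;<\; p^r(p-h+1),$$
and dividing by $p^u$ gives $\langle\lambda_1,\alpha_0^{\vee}\rangle < p^{r-u}(p-h+1)$, i.e. $\lambda_1\in\Gamma_{r-u}$. (If $p<h$ there is nothing to prove, since then $\Gamma_r = \emptyset$.)

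For the final assertion, specializing $u = r$ gives $\lambda_1\in\Gamma_0$, and $\nabla(\lambda_1)$ is then simple by the reformulation of \cite[Corollary II.5.6]{rags} recorded immediately before the lemma.

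There is no genuinely hard step here: the only points requiring a moment's care are that the hypotheses already force $\lambda_1$ to be dominant and that $\alpha_0^{\vee}$ pairs non-negatively with every dominant weight, both of which are immediate from the definitions.
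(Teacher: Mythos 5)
Your argument is correct and is exactly the computation the paper has in mind when it says the lemma "follows straight from the definition": you pair with $\alpha_0^{\vee}$, use dominance of $\lambda_0$ to drop its (non-negative) contribution, and divide the defining inequality for $\Gamma_r$ by $p^u$, with the final assertion coming from the reformulated \cite[Corollary II.5.6]{rags} stated just before the lemma. The only difference is that you spell out the routine verifications (dominance of $\lambda_1$, non-negativity of $\langle\lambda_0,\alpha_0^{\vee}\rangle$) that the paper leaves implicit.
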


\begin{proof}
This follows straight from the definition (the last claim follows from the above observation).
\end{proof}

\begin{prop}\label{tensoristilting}
If $\lambda\in \Gamma_r$ and $\mu\in X_+$ then $T((p^r-1)\rho + \mu)\otimes \nabla(\lambda)$ and $T((p^r-1)\rho + \mu)\otimes \Delta(\lambda)$ are tilting.
\end{prop}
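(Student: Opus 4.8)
The plan is to reduce both statements to the case $\mu$ is $r$-restricted, where Donkin's tensor product theorem and the Andersen--Haboush tensor product theorem apply cleanly. First I would write $(p^r-1)\rho + \mu$ using the structure of dominant weights: write $\mu = \mu_0 + p^r\mu_1$ with $\mu_0 \in X_r$, so that $(p^r-1)\rho + \mu = ((p^r-1)\rho + \mu_0) + p^r\mu_1$. By Donkin's tensor product theorem, $T((p^r-1)\rho + \mu_0)\otimes T(\mu_1)^{(r)}$ is tilting, and since $T((p^r-1)\rho + \mu)$ is by the classification of tilting modules a direct summand of any tilting module with highest weight $(p^r-1)\rho + \mu$, it is a summand of $T((p^r-1)\rho + \mu_0)\otimes T(\mu_1)^{(r)}$. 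Hence it suffices to show that $T((p^r-1)\rho + \mu_0)\otimes T(\mu_1)^{(r)}\otimes\nabla(\lambda)$ is tilting, since direct summands of tilting modules are tilting.

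Next I would use Lemma \ref{gammar}: since $\lambda\in\Gamma_r$, writing $\lambda = \lambda_0 + p^r\lambda_1$ with $\lambda_0\in X_r$ gives $\lambda_1\in\Gamma_0$, so $\nabla(\lambda_1)$ is simple, i.e.\ $\nabla(\lambda) \cong \nabla(\lambda_0)\otimes L(\lambda_1)^{(r)}$ is... wait, this is not quite right since $\nabla(\lambda)$ need not split that way. Instead I would argue directly that $\St_r\otimes\nabla(\lambda)$ has a good filtration, then invoke Theorem \ref{goodfilttensorwithtilting}. To see $\St_r\otimes\nabla(\lambda)$ has a good filtration: by the classification of tilting modules $\nabla(\lambda)$ is a submodule of $T(\lambda)$, and since $\lambda\in\Gamma_r$, all weights of $T(\lambda)$ lie in $\Gamma_r$, so the composition factors $L(\nu)$ of $T(\lambda)$ all have $\nu\in\Gamma_r$. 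By Theorem \ref{lambdaalpha0} (noting $p\geq h$ is automatic since $\Gamma_r\neq\emptyset$ forces $p\geq h$), $\St_r\otimes L(\nu)$ has a good filtration for each such $\nu$, and by Proposition \ref{tensorproductsteinberggood} together with induction on composition length, $\St_r\otimes M$ has a good filtration for every $G$-module $M$ all of whose composition factors are $L(\nu)$ with $\nu\in\Gamma_r$; in particular for $M = \nabla(\lambda)$ and also for $M = T(\lambda)$.

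Now Theorem \ref{goodfilttensorwithtilting} gives that $T((p^r-1)\rho + \mu)\otimes\nabla(\lambda)$ has a good filtration. For the tilting conclusion I also need a good filtration on the dual, equivalently on $T((p^r-1)\rho + \mu)\otimes\Delta(\lambda)$; but $T((p^r-1)\rho+\mu)^* \cong T((p^r-1)\rho+\mu^*)$ by Proposition \ref{tiltingdual}, and $(p^r-1)\rho + \mu^* = (p^r-1)\rho + (\mu^*)$ is again of the required form, while $\Delta(\lambda)^* = \nabla(\lambda^*)$ with $\lambda^*\in\Gamma_r$. So $\bigl(T((p^r-1)\rho+\mu)\otimes\Delta(\lambda)\bigr)^* \cong T((p^r-1)\rho+\mu^*)\otimes\nabla(\lambda^*)$ has a good filtration by the same argument, hence $T((p^r-1)\rho+\mu)\otimes\Delta(\lambda)$ has a Weyl (good $\Delta$-)filtration. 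Combining, $T((p^r-1)\rho+\mu)\otimes\nabla(\lambda)$ has a good filtration and its dual $T((p^r-1)\rho+\mu^*)\otimes\Delta(\lambda^*)$ does too, so it is tilting; symmetrically for $\otimes\Delta(\lambda)$.

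The main obstacle I anticipate is the bookkeeping around reducing $\mu$ to the restricted case and making sure all weights stay inside $\Gamma_r$ when passing from $\nabla(\lambda)$ to $T(\lambda)$ — the key fact being that $\Gamma_r$ is downward closed under $\leq$, which is recorded in the text right before the statement. Everything else is an assembly of Theorem \ref{lambdaalpha0}, Proposition \ref{tensorproductsteinberggood}, Theorem \ref{goodfilttensorwithtilting} and Proposition \ref{tiltingdual}; no genuinely new input is needed. One should double-check that $\Gamma_r\neq\emptyset$ indeed forces $p\geq h$ (immediate from the definition, since otherwise $p - h + 1 \leq 0$), so the hypothesis of Theorem \ref{lambdaalpha0} is met for free.
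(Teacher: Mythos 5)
Your core lemma --- that $\St_r\otimes M$ has a good filtration for every $M$ whose composition factors $L(\nu)$ all have $\nu\in\Gamma_r$ --- is exactly the content of the paper's own proof, and it is correct. However, Proposition \ref{tensorproductsteinberggood} is the wrong citation for the induction on composition length: that result is about tensor products, not extensions. What you need is that the class of modules with good filtrations is closed under extensions, which follows from the cohomological criterion \cite[Proposition II.4.16]{rags} (the paper uses this implicitly too). Also, the first paragraph's reduction to restricted $\mu$ is never used and can be deleted, and $\nabla(\lambda)$ is a quotient of $T(\lambda)$, not a submodule --- harmless here, since only composition factors matter; in fact you do not need $T(\lambda)$ at all, as every composition factor $L(\nu)$ of $\nabla(\lambda)$ or of $\Delta(\lambda)$ already satisfies $\nu\leq\lambda$, hence $\nu\in\Gamma_r$.

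The genuine gap is in the final assembly. A tensor product of modules with good filtrations automatically has a good filtration (Theorem \ref{tensorproductgoodfilt}), and dually a tensor product of modules with Weyl filtrations automatically has a Weyl filtration. The two facts you actually establish --- a good filtration on $T((p^r-1)\rho+\mu)\otimes\nabla(\lambda)$ and a Weyl filtration on $T((p^r-1)\rho+\mu)\otimes\Delta(\lambda)$ --- are therefore the trivial halves and hold for any $\lambda\in X_+$. The non-trivial halves, where $\lambda\in\Gamma_r$ must enter, are a Weyl filtration on $T(\cdots)\otimes\nabla(\lambda)$, equivalently a good filtration on its dual $T((p^r-1)\rho+\mu^*)\otimes\Delta(\lambda^*)$, and these you never prove: the sentence ``its dual $\dots$ does too'' is not supported by the preceding step, which only yields a Weyl filtration on that dual. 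The fix is one line and is what the paper does: your lemma applies to $M=\Delta(\lambda)$ as well, so $\St_r\otimes\Delta(\lambda)$ has a good filtration, and Theorem \ref{goodfilttensorwithtilting} then gives a good filtration on $T((p^r-1)\rho+\mu)\otimes\Delta(\lambda)$; combined with the automatic Weyl filtration this module is tilting, and dualizing (replacing $\mu,\lambda$ by $\mu^*,\lambda^*$) gives the $\nabla$ case.
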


\begin{proof}
By Theorem \ref{goodfilttensorwithtilting} it suffices to show that $\St_r\otimes \Delta(\lambda)$ has a good filtration. But since $\lambda\in\Gamma_r$, whenever $L(\nu)$ is a composition factor of $\Delta(\lambda)$ we also have $\nu\in\Gamma_r$ and thus $\St_r\otimes L(\nu)$ has a good filtration by Theorem \ref{lambdaalpha0} which finishes the proof.
\end{proof}

\begin{cor}\label{tensorisomorphic}
If $\lambda\in\Gamma_r$ and $\mu\in X_+$ then there is an isomorphism $$T((p^r-1)\rho + \mu)\otimes\nabla(\lambda)\cong T((p^r-1)\rho + \mu)\otimes \Delta(\lambda).$$
\end{cor}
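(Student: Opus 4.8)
The plan is to exploit the fact, just established in Proposition \ref{tensoristilting}, that both $T((p^r-1)\rho + \mu)\otimes\nabla(\lambda)$ and $T((p^r-1)\rho + \mu)\otimes\Delta(\lambda)$ are tilting modules. By the classification of tilting modules, a tilting module is determined up to isomorphism by its character, so it suffices to show that these two modules have the same character. But this is immediate, since $[\nabla(\lambda)] = [\Delta(\lambda)]$ (both are given by Weyl's character formula), and tensoring is additive on characters, so $[T((p^r-1)\rho+\mu)\otimes\nabla(\lambda)] = [T((p^r-1)\rho+\mu)]\cdot[\nabla(\lambda)] = [T((p^r-1)\rho+\mu)]\cdot[\Delta(\lambda)] = [T((p^r-1)\rho+\mu)\otimes\Delta(\lambda)]$.

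Concretely, I would first invoke Proposition \ref{tensoristilting} to conclude that both modules are tilting. Next, I would decompose each as a direct sum of indecomposable tilting modules $\bigoplus_\nu n(\nu)T(\nu)$ and $\bigoplus_\nu m(\nu)T(\nu)$ respectively, using the classification of tilting modules, where the multiplicities $n(\nu)$ and $m(\nu)$ are uniquely determined. Since the characters $[T(\nu)]$ for $\nu\in X_+$ are linearly independent in $\mathbb{Z}[X]^W$ (they are "unitriangular" with respect to the $[\nabla(\nu)]$, which themselves form a basis), equality of the two characters forces $n(\nu) = m(\nu)$ for all $\nu$, hence the two modules are isomorphic.

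I do not expect any genuine obstacle here; this is a short formal argument. The only point requiring any care is the linear independence of the tilting characters, but this is standard: $[T(\nu)] = [\nabla(\nu)] + \sum_{\eta < \nu}(\cdots)[\nabla(\eta)]$, and the $[\nabla(\eta)]$ are a $\mathbb{Z}$-basis of $\mathbb{Z}[X]^W$, so the transition matrix is unitriangular and invertible. Alternatively, one can phrase the whole thing without bases at all, simply observing that two tilting modules with equal characters are isomorphic, which is part of (or an immediate consequence of) the classification of tilting modules already cited.
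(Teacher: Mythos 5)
Your argument is correct and is essentially the same as the paper's: both modules are tilting by Proposition \ref{tensoristilting}, they share a character because $[\nabla(\lambda)]=[\Delta(\lambda)]$, and the classification of tilting modules then forces an isomorphism. The extra remarks on unitriangularity of the tilting characters are a fine (if optional) justification of the last step.
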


\begin{proof}
By Proposition \ref{tensoristilting} both $T((p^r-1)\rho + \mu)\otimes\nabla(\lambda)$ and $T((p^r-1)\rho + \mu)\otimes \Delta(\lambda)$ are tilting, and they have the same characters since $\nabla(\lambda)$ and $\Delta(\lambda)$ do. Hence they are isomorphic by the classification of tilting modules.
\end{proof}

\section{Tilting- and socle-numbers}

The main goal of this section is to study the decomposition of modules of the form $\St_r\otimes\nabla(\lambda)$, but for purposes of later induction arguments, we will from the beginning study a more general class of modules, namely those of the form $T((p^r-1)\rho + \mu)\otimes\nabla(\lambda)$.

By Proposition \ref{tensoristilting} and Theorem \ref{tensorproductgoodfilt}, whenever $\mu\in X_+$ and $\lambda\in \Gamma_r$ we can write $$T((p^r-1)\rho + \mu)\otimes\nabla(\lambda) = \bigoplus_{\nu\in X_+}t_{\lambda,\mu}^r(\nu)T(\nu)$$
for suitable natural numbers $t_{\lambda,\mu}^r(\nu)$. We will be referring to these numbers as the tilting-numbers.

In particular, $\St_r\otimes \nabla(\lambda)$ is tilting whenever $\lambda\in\Gamma_r$ so we can write $$\St_r\otimes\nabla(\lambda) = \bigoplus_{\nu\in X_+}t_{\lambda}^r(\nu)T(\nu)$$ where $t_{\lambda}^r(\nu) = t_{\lambda,0}^r(\nu)$.

As we will see, the above decomposition is in fact (mostly) determined by the socle of the module, so for $\lambda,\mu\in X_+$ we define natural numbers $s_{\lambda,\mu}(\nu)$ by $$\soc_G(T((p^r-1)\rho + \mu)\otimes\nabla(\lambda)) = \bigoplus_{\nu\in X_+}s_{\lambda,\mu}^r(\nu)L(\nu).$$
We will be referring to these numbers as the socle-numbers, and similarly to above we set $s_{\lambda}^r(\nu) = s_{\lambda,0}^r(\nu)$.

Note that if $G$ is either $SL_2$ or $SL_3$ then the decomposition of the module $\St_r\otimes\nabla(\lambda)$ for $\lambda\in \Gamma_r$ can be computed using the results of \cite{dotyhenke05}, \cite{bdm11} and \cite{bdm15} by using that $\St_r\otimes L(\nu)$ is tilting for each composition factor $L(\nu)$ of $\nabla(\lambda)$ and that the standard character data for these groups is known.

\subsection{Comparing tilting- and socle-numbers}

For any natural number $r$ define a map $w_r: X_+\to X_+$ by $$w_r(\lambda) = (p^r-1)\rho - \lambda_0^* + p^r\lambda_1$$ where $\lambda = \lambda_0 + p^r\lambda_1$ with $\lambda_0\in X_r$. Note that $w_r$ is not linear, but we do have the following.

\begin{lm}\label{wrproperties}
\begin{enumerate}
\item $w_r^2 = \id$.
\item $w_r(X_r) = X_r$.
\item If $\lambda = \lambda_0 + p^{u}\lambda_1$ with $\lambda_0\in X_{u}$ and $r > u$ then $$w_r(\lambda) = w_{u}(\lambda_0) + p^{u}w_{r-u}(\lambda_1).$$
\item If $\lambda = \lambda_0 + p\lambda_1 + \cdots + p^{r-1}\lambda_{r-1}$ with all $\lambda_i\in X_1$ then $$w_r(\lambda) = w_1(\lambda_0) + pw_1(\lambda_1) + \cdots + p^{r-1}w_1(\lambda_{r-1}).$$
\end{enumerate}
\end{lm}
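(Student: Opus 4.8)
The plan is to verify the four properties essentially from the definition of $w_r$, using the basis representation of dominant weights coming from the simply connected hypothesis. Recall that if $\lambda = \lambda_0 + p^r\lambda_1$ with $\lambda_0 \in X_r$ and $\lambda_1 \in X_+$, then $w_r(\lambda) = (p^r-1)\rho - \lambda_0^* + p^r\lambda_1$, and that this decomposition is unique.

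For (2) and (1), I would first observe that $(p^r-1)\rho - \lambda_0^* \in X_r$ whenever $\lambda_0 \in X_r$: indeed, $\langle(p^r-1)\rho - \lambda_0^*, \alpha^\vee\rangle = (p^r-1) - \langle\lambda_0^*, \alpha^\vee\rangle$, and since $0 \le \langle\lambda_0^*,\alpha^\vee\rangle \le p^r-1$ for $\alpha\in S$ (as $\lambda_0^* \in X_r$ whenever $\lambda_0\in X_r$), this lies in $[0, p^r-1]$. This proves (2) once we note $w_r$ restricted to $X_r$ is the map $\lambda_0 \mapsto (p^r-1)\rho - \lambda_0^*$. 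For (1), if $\lambda_0 \in X_r$ then $w_r(\lambda)$ has $r$-restricted part $(p^r-1)\rho - \lambda_0^*$ and $p^r$-part $\lambda_1$; applying $w_r$ again gives $r$-restricted part $(p^r-1)\rho - ((p^r-1)\rho - \lambda_0^*)^* = (p^r-1)\rho - (p^r-1)\rho^* + \lambda_0^{**} = \lambda_0$ (using $\rho^* = \rho$ and $\lambda_0^{**} = \lambda_0$), and $p^r$-part still $\lambda_1$, so $w_r^2(\lambda) = \lambda_0 + p^r\lambda_1 = \lambda$.

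For (3), write $\lambda = \lambda_0 + p^u\lambda_1$ with $\lambda_0 \in X_u$ and $r > u$. Decompose further $\lambda_1 = \mu_0 + p^{r-u}\mu_1$ with $\mu_0 \in X_{r-u}$, so that $\lambda = \lambda_0 + p^u\mu_0 + p^r\mu_1$; since $\lambda_0 \in X_u$ and $\mu_0 \in X_{r-u}$, the weight $\lambda_0 + p^u\mu_0$ lies in $X_r$, so this is the $r$-restricted decomposition of $\lambda$. Then $w_r(\lambda) = (p^r-1)\rho - (\lambda_0 + p^u\mu_0)^* + p^r\mu_1$. Now $(\lambda_0 + p^u\mu_0)^* = \lambda_0^* + p^u\mu_0^*$ since the dual map $\nu \mapsto -w_0(\nu)$ is linear. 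Writing $(p^r-1)\rho = (p^u-1)\rho + p^u(p^{r-u}-1)\rho$, we get $w_r(\lambda) = \big((p^u-1)\rho - \lambda_0^*\big) + p^u\big((p^{r-u}-1)\rho - \mu_0^* + p^{r-u}\mu_1\big) = w_u(\lambda_0) + p^u w_{r-u}(\lambda_1)$, as desired. Finally, (4) follows from (3) by an immediate induction on $r$: apply (3) with $u = 1$ to peel off $\lambda_0$, giving $w_r(\lambda) = w_1(\lambda_0) + p\, w_{r-1}(\lambda_1 + p\lambda_2 + \cdots + p^{r-2}\lambda_{r-1})$, and recurse.

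I do not expect any serious obstacle here; the only points requiring a little care are checking that the various partial sums land in the correct restricted-weight sets (which uses the simply connected hypothesis implicitly, via the fact that $X_r$ contains representatives of $X/p^r X$, so the decompositions exist and are unique) and using linearity of duality together with $\rho^* = \rho$. The bookkeeping in (3) is the most delicate step, but it is routine.
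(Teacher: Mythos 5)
Your proof is correct and follows essentially the same route as the paper: parts (1) and (2) are direct from the definition (the paper simply declares them clear, while you spell out the check that $(p^r-1)\rho-\lambda_0^*\in X_r$), part (3) uses exactly the same refinement $\lambda_1=\mu_0+p^{r-u}\mu_1$ and the splitting $(p^r-1)\rho=(p^u-1)\rho+p^u(p^{r-u}-1)\rho$, and part (4) is the same induction. No gaps.
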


\begin{proof}
$1.$ and $2.$ are clear from the definition.

For $3.$ write $\lambda_1 = \lambda_0' + p^{r-u}\lambda_1'$ with $\lambda_0'\in X_{r-u}$ and note that then $\lambda = (\lambda_0 + p^{u}\lambda_0') + p^r\lambda_1'$ with $\lambda_0+p^{u}\lambda_0'\in X_r$ so we get $$w_r(\lambda) = (p^r-1)\rho - (\lambda_0 + p^{u}\lambda_0')^* + p^r\lambda_1'$$ $$= (p^{u}-1)\rho + p^{u}(p^{r-u}-1)\rho - \lambda_0^* - p^{u}\lambda_0'^* + p^{r-u}p^{u}\lambda_1'$$ $$= w_{u}(\lambda_0) + p^{u}w_{r-u}(\lambda_1)$$ as claimed.

$4.$ follows directly from $3.$ by induction.
\end{proof}

We would like to be able to compare the $s_{\lambda,\mu}^r$ and $t_{\lambda,\mu}^r$ (which we can regard as functions $X_+\to \Z_+$). For that purpose we need the following which slightly extends Theorem \ref{tiltingsocle}.

\begin{lm}\label{tiltingsoclelarger}
Assume that $p\geq 2h-2$ and let $\lambda \in \Gamma_r$. Then $\soc_{G}(T((p^r-1)\rho + \lambda)) = L(w_r(\lambda))$.
\end{lm}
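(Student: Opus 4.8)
The plan is to reduce the statement about an arbitrary $\lambda\in\Gamma_r$ to the case of an $r$-restricted weight, where Theorem \ref{tiltingsocle} already gives the answer (its hypothesis holds for all of $X_r$ since $p\geq 2h-2$). First I would write $\lambda = \lambda_0 + p^r\lambda_1$ with $\lambda_0\in X_r$; by Lemma \ref{gammar} the fact that $\lambda\in\Gamma_r$ forces $\lambda_1\in\Gamma_0$, so $\nabla(\lambda_1) = L(\lambda_1)$ is simple (and likewise $\Delta(\lambda_1) = L(\lambda_1)$, so $\nabla(\lambda_1)^{(r)}$ is a tilting module). The key structural input is Donkin's tensor product theorem: since $p\geq 2h-2$, Donkin's tilting conjecture holds, so $T((p^r-1)\rho+\lambda_0)$ is indecomposable as a $G_r$-module, and therefore
$$T((p^r-1)\rho + \lambda) = T((p^r-1)\rho + \lambda_0 + p^r\lambda_1) \cong T((p^r-1)\rho + \lambda_0)\otimes T(\lambda_1)^{(r)}.$$

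Next I would compute the socle of this tensor product. The second factor $T(\lambda_1)^{(r)} = L(\lambda_1)^{(r)}$ is simple, and tensoring with a module pulled back along $F^r$ that is simple as a $G$-module should let me identify $\soc_G$ of the product. The cleanest route is to use that the first factor, being $T((p^r-1)\rho+\lambda_0)$ with $\lambda_0\in X_r$, is injective as a $G_r$-module (it is $\St_r$-like in the sense of Proposition \ref{tiltinginj}, as $\langle (p^r-1)\rho+\lambda_0,\alpha^\vee\rangle \geq p^r-1$), so its $G_r$-socle equals its $G$-socle, namely $L((p^r-1)\rho - \lambda_0^*)$ by Theorem \ref{tiltingsocle}. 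Then for a $G_r$-injective module $Q$ and a simple $G$-module of the form $L^{(r)}$, one has $\soc_G(Q\otimes L^{(r)}) = \soc_{G_r}(Q\otimes L^{(r)}) = \soc_{G_r}(Q)\otimes L^{(r)}$ since $L^{(r)}$ is $G_r$-trivial and $Q\otimes L^{(r)}$ is still $G_r$-injective; combining with Steinberg's tensor product theorem this gives
$$\soc_G(T((p^r-1)\rho+\lambda)) = L((p^r-1)\rho - \lambda_0^*)\otimes L(\lambda_1)^{(r)} = L\bigl((p^r-1)\rho - \lambda_0^* + p^r\lambda_1\bigr) = L(w_r(\lambda)),$$
where the last equality is just the definition of $w_r$.

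The main obstacle I anticipate is justifying carefully the step $\soc_G(Q\otimes L^{(r)}) = \soc_{G_r}(Q)\otimes L^{(r)}$: one must check that $Q\otimes L^{(r)}$ remains injective as a $G_r$-module (immediate, since $L^{(r)}$ is trivial over $G_r$ and tensoring an injective $G_r$-module by a trivial module keeps it injective), that its $G_r$-socle is then $\soc_{G_r}(Q)\otimes L^{(r)}$ (tensoring the socle filtration), and crucially that this $G_r$-socle is actually a $G$-submodule whose $G$-structure is the stated one — here one uses that $\soc_{G_r}(Q) = L((p^r-1)\rho-\lambda_0^*)$ is already a $G$-submodule of $Q$ and that for a $G$-injective-over-$G_r$ module the $G$-socle and $G_r$-socle coincide. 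Alternatively, and perhaps more in the spirit of the paper's machinery, one could bypass this by computing $\dim\Hom_G(L(\nu), T((p^r-1)\rho+\lambda))$ directly: using Proposition \ref{tiltingdual} this equals $\dim\Hom_G(T((p^r-1)\rho+\lambda), L(\nu))$, and then Frobenius reciprocity together with Donkin's tensor product decomposition and the known $G$-socle of $T((p^r-1)\rho+\lambda_0)$ pins down that this is nonzero exactly for $\nu = w_r(\lambda)$, with multiplicity one. Either way, the essential ingredients are Lemma \ref{gammar}, Donkin's tensor product theorem under the $p\geq 2h-2$ hypothesis, Theorem \ref{tiltingsocle}, and Steinberg's tensor product theorem; the rest is bookkeeping with $w_r$ as recorded in Lemma \ref{wrproperties}.
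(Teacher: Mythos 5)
Your proposal is correct and follows essentially the same route as the paper: decompose $T((p^r-1)\rho+\lambda)\cong T((p^r-1)\rho+\lambda_0)\otimes T(\lambda_1)^{(r)}$ via Donkin's tensor product theorem (valid since $p\geq 2h-2$), use Lemma \ref{gammar} to see that $T(\lambda_1)=L(\lambda_1)$ is simple, and combine Theorem \ref{tiltingsocle} with Steinberg's tensor product theorem --- the paper just organizes the final step as a computation of $\Hom_G(L(\mu),-)$ via the $G/G_r$ adjunction, which is exactly the alternative you sketch at the end. One small caution: the general principle ``$\soc_G=\soc_{G_r}$ for a $G$-module that is $G_r$-injective'' is not true in general (only the inclusion $\soc_G\subseteq\soc_{G_r}$ holds), but your argument survives because here $\soc_{G_r}\bigl(Q\otimes L(\lambda_1)^{(r)}\bigr)=L((p^r-1)\rho-\lambda_0^*)\otimes L(\lambda_1)^{(r)}$ is already $G$-simple by Steinberg's tensor product theorem, so the nonzero $G$-socle it contains must be all of it.
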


\begin{proof}
Write $\lambda = \lambda_0 + p^r\lambda_1$ with $\lambda_0\in X_r$. Then by Donkin's tensor product theorem we have $$T((p^r-1)\rho + \lambda)\cong T((p^r-1)\rho + \lambda_0)\otimes T(\lambda_1)^{(r)}.$$

Now we get by Theorem \ref{tiltingsocle} that $\soc_{G_r}(T((p^r-1)\rho + \lambda_0)) = L((p^r-1)\rho - \lambda_0^*)$, so for $\mu = \mu_0 + p^r\mu_1\in X_+$ with $\mu_0\in X_r$ we get, by Steinberg's tensor product theorem, $$\Hom_{G}(L(\mu),T((p^r-1)\rho + \lambda))$$ $$\cong \Hom_{G/G_r}(L(\mu_1)^{(r)},\Hom_{G_r}(L(\mu_0),T((p^r-1)\rho + \lambda_0))\otimes T(\lambda_1)^{(r)})$$ $$\cong \begin{cases}\Hom_{G}(L(\mu_1),T(\lambda_1)) & \mbox{if } \mu_0 = (p^r-1)\rho - \lambda_0^* \\ 0 & \mbox{else} \end{cases}$$ so we just need to show that $\soc_{G}(T(\lambda_1)) = L(\lambda_1)$. In fact, we claim that $T(\lambda_1)$ is simple, which clearly suffices.

To see this, we note that by Lemma \ref{gammar} since $\lambda\in \Gamma_r$, $\nabla(\lambda_1)$ is simple, and hence also that $T(\lambda_1)$ is simple, being the unique indecomposable tilting module of highest weight $\lambda_1$.

Thus we have shown that $\soc_{G}(T((p^r-1)\rho + \lambda)) = L((p^r-1)\rho - \lambda_0^* + p^r\lambda_1) = L(w_r(\lambda))$ as was the claim.
\end{proof}

The way to apply the above lemma is given in the following.

\begin{prop}\label{trnonzero}
If $\lambda,\mu,\nu\in X_+$ with $\lambda+\mu\in\Gamma_r$ and $t_{\lambda,\mu}^r(\nu)\neq 0$ then $\nu = (p^r-1)\rho + \nu'$ with $\nu'\in \Gamma_r$.
\end{prop}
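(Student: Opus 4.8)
The plan is to exploit the fact that $T((p^r-1)\rho+\mu)\otimes\nabla(\lambda)$ is tilting (Proposition \ref{tensoristilting}, using $\lambda\in\Gamma_r$, hence certainly $\lambda+\mu\in X_+$; but we will actually need $\lambda+\mu\in\Gamma_r$ for the socle computation below), so that a summand $T(\nu)$ contributes $L(\soc_G T(\nu))$ to the socle of the whole module. The key input is Lemma \ref{tiltingsoclelarger}: if we can show every $\nu$ with $t_{\lambda,\mu}^r(\nu)\neq 0$ lies in $(p^r-1)\rho+X_+$ and moreover satisfies the hypothesis of that lemma, then $\soc_G T(\nu)=L(w_r(\nu))$, and $w_r$ maps $(p^r-1)\rho+\Gamma_r$ into itself. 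So the heart of the matter is the weight constraint: I want to show that every highest weight $\nu$ appearing in the tilting decomposition is $\geq (p^r-1)\rho$ in a strong enough sense, namely $\nu-(p^r-1)\rho\in X_+$.

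First I would observe that $T((p^r-1)\rho+\mu)$ is injective as a $G_r$-module by Proposition \ref{tiltinginj} (its highest weight pairs to at least $p^r-1$ with every simple coroot since $(p^r-1)\rho$ already does and $\mu$ is dominant). Tensoring an injective $G_r$-module with any $G$-module keeps it injective as a $G_r$-module, so $T((p^r-1)\rho+\mu)\otimes\nabla(\lambda)$ is $G_r$-injective. Now $G_r$-injective tilting modules are known to be direct sums of $T(\nu)$ with $\nu\in(p^r-1)\rho+X_+$ — more precisely, the indecomposable summands $T(\nu)$ must themselves be $G_r$-injective, and $T(\nu)$ is $G_r$-injective exactly when $\langle\nu,\alpha^\vee\rangle\geq p^r-1$ for all $\alpha\in S$ (Proposition \ref{tiltinginj} again). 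Since $G$ is simply connected, $\langle\nu-(p^r-1)\rho,\alpha^\vee\rangle\geq 0$ for all simple $\alpha$ means precisely $\nu-(p^r-1)\rho\in X_+$, so we may write $\nu=(p^r-1)\rho+\nu'$ with $\nu'\in X_+$. This already gives the shape of $\nu$; it remains to upgrade $\nu'\in X_+$ to $\nu'\in\Gamma_r$.

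For the refinement to $\nu'\in\Gamma_r$ I would compare highest weights: the largest weight of $T((p^r-1)\rho+\mu)\otimes\nabla(\lambda)$ is $(p^r-1)\rho+\mu+\lambda$, so any $\nu$ occurring satisfies $\nu\leq(p^r-1)\rho+\mu+\lambda$, hence $\nu'\leq\mu+\lambda$. Under the hypothesis $\lambda+\mu\in\Gamma_r$ and the observation (recorded just before Lemma \ref{gammar}) that $\Gamma_r$ is downward closed under $\leq$ within $X_+$, we conclude $\nu'\in\Gamma_r$, which is the claim.

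The main obstacle I anticipate is the step asserting that the indecomposable tilting summands of a $G_r$-injective tilting module are themselves $G_r$-injective. This is really the statement that a direct summand of a $G_r$-injective module is $G_r$-injective, which is automatic since injectivity passes to summands; the only care needed is that the decomposition into indecomposable $G$-tilting modules is also a $G_r$-module direct sum decomposition, which it is since it is a $G$-module (a fortiori $G_r$-module) decomposition. So in fact there is no serious obstacle — the argument is a chain of standard facts — and the only thing to be careful about is invoking simple-connectedness of $G$ at the point where one converts the coroot inequalities $\langle\nu,\alpha^\vee\rangle\geq p^r-1$ (for simple $\alpha$ only) into the assertion $\nu-(p^r-1)\rho\in X_+$; this uses that $\rho\in X$, which holds because $G$ is simply connected.
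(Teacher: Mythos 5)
Your argument is correct and is essentially the paper's own proof: $G_r$-injectivity of the tensor product forces each summand $T(\nu)$ to be $G_r$-injective, Proposition \ref{tiltinginj} then gives $\nu\in(p^r-1)\rho+X_+$, and the weight bound $\nu\leq(p^r-1)\rho+\lambda+\mu$ together with the downward closure of $\Gamma_r$ yields $\nu'\in\Gamma_r$. The opening paragraph about socles and Lemma \ref{tiltingsoclelarger} is not needed (and would be circular, since that comparison is what this proposition feeds into); the rest stands on its own.
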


\begin{proof}
Since $T((p^r-1)\rho + \mu)\otimes \nabla(\lambda)$ is injective as a $G_r$-module by Theorem \ref{tiltinginj}, the same must be true for $T(\nu)$ since this is a direct summand of $T((p^r-1)\rho + \mu)\otimes \nabla(\lambda)$ by assumption. Now it follows by Proposition \ref{tiltinginj} that $\nu = (p^r-1)\rho +\nu'$ for some $\nu'\in X_+$.

Since $\nu$ must be a weight of $T((p^r-1)\rho + \mu)\otimes \nabla(\lambda)$ we must have $\nu \leq (p^r-1)\rho + \mu + \lambda$, and hence we get $\nu' \leq \lambda + \mu$, which implies the claim.
\end{proof}

Combining the above results, we get the following.

\begin{cor}\label{relationts}
Assume that $p\geq 2h-2$. Then for any $\lambda,\mu,\nu\in X_+$ with $\lambda + \mu\in \Gamma_r$ we have $s_{\lambda,\mu}^r(\nu) = t_{\lambda,\mu}^r((p^r-1)\rho + w_r(\nu))$.
\end{cor}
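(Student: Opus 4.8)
The plan is to deduce the identity $s_{\lambda,\mu}^r(\nu) = t_{\lambda,\mu}^r((p^r-1)\rho + w_r(\nu))$ by pairing off the simple summands of the socle of $M := T((p^r-1)\rho+\mu)\otimes\nabla(\lambda)$ against the tilting summands of $M$. The key observation is that since $M$ is tilting (Proposition \ref{tensoristilting}), it is a direct sum $\bigoplus_\nu t_{\lambda,\mu}^r(\nu)T(\nu)$, and hence its socle is $\bigoplus_\nu t_{\lambda,\mu}^r(\nu)\soc_G(T(\nu))$. So the socle-numbers are determined by the socles of the indecomposable tilting modules $T(\nu)$ occurring in $M$, and the task reduces to identifying those socles.

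First I would invoke Proposition \ref{trnonzero}: since $\lambda+\mu\in\Gamma_r$, every $\nu$ with $t_{\lambda,\mu}^r(\nu)\neq 0$ has the form $\nu = (p^r-1)\rho + \nu'$ with $\nu'\in\Gamma_r$. This is precisely the shape needed to apply Lemma \ref{tiltingsoclelarger} (which requires $p\geq 2h-2$, the standing hypothesis here), giving $\soc_G(T((p^r-1)\rho+\nu')) = L(w_r(\nu'))$. Therefore
\[
\soc_G(M) = \bigoplus_{\nu'\in\Gamma_r} t_{\lambda,\mu}^r((p^r-1)\rho+\nu')\,L(w_r(\nu')).
\]
Now I would use that $w_r$ is an involution on $X_+$ (Lemma \ref{wrproperties}(1)) — more specifically, that $\nu'\mapsto w_r(\nu')$ is a bijection $\Gamma_r\to\Gamma_r$, which follows since $w_r^2=\id$ and $\Gamma_r$ is stable under the relevant operations (one checks $w_r$ preserves $\Gamma_r$ using that $\Gamma_r$ is defined by a condition on $\langle\cdot,\alpha_0^\vee\rangle$ together with closure of $\Gamma_r$ under $*$ and under the Steinberg decomposition, as noted before Lemma \ref{gammar}). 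Reindexing the sum by $\eta = w_r(\nu')$, so that $\nu' = w_r(\eta)$, yields $\soc_G(M) = \bigoplus_{\eta} t_{\lambda,\mu}^r((p^r-1)\rho + w_r(\eta))\,L(\eta)$, and comparing with the definition $\soc_G(M) = \bigoplus_\nu s_{\lambda,\mu}^r(\nu)L(\nu)$ gives the result for all $\nu$ of the form $\eta\in\Gamma_r$; for $\nu\notin\Gamma_r$ both sides vanish (the left since no such $L(\nu)$ appears in the socle, the right since $t_{\lambda,\mu}^r((p^r-1)\rho+w_r(\nu))$ would force $w_r(\nu)\in\Gamma_r$, hence $\nu\in\Gamma_r$).

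The main obstacle is the bookkeeping around $\Gamma_r$: I need to be confident that $w_r$ restricts to a bijection of $\Gamma_r$ and that the various weights appearing (the $\nu'$ from Proposition \ref{trnonzero}, and $w_r(\nu')$) all stay inside $\Gamma_r$ so that Lemma \ref{tiltingsoclelarger} genuinely applies — Lemma \ref{tiltingsoclelarger} is stated for $\lambda\in\Gamma_r$, and one must verify that $\nu'\in\Gamma_r$ suffices to conclude $\soc_G(T((p^r-1)\rho+\nu')) = L(w_r(\nu'))$, which it does. A secondary subtlety worth a sentence is that the socle of a direct sum is the direct sum of the socles, which is immediate, and that $\dim T(\nu)_\nu = 1$ is not actually needed here beyond ensuring $T(\nu)$ is well-defined. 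Everything else is a matter of substituting $\nu' = w_r(\nu)$ and reading off coefficients, so no serious computation is involved.
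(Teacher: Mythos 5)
Your overall strategy is exactly the paper's (the paper's proof is a one-line combination of Lemma \ref{tiltingsoclelarger}, Proposition \ref{trnonzero} and $w_r^2=\id$), but one intermediate claim in your bookkeeping is false, and the case analysis built on it is misstated --- fortunately neither is needed. You assert that $w_r$ restricts to a bijection of $\Gamma_r$. It does not: for $G=SL_3$, $p=5$, $r=1$ one has $5\rho\in\Gamma_1$ (since $\langle 5\rho,\alpha_0^{\vee}\rangle=10<15$) but $w_1(5\rho)=4\rho+5\rho=9\rho$ with $\langle 9\rho,\alpha_0^{\vee}\rangle=18\geq 15$, so $w_1(5\rho)\notin\Gamma_1$. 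Consequently your parenthetical ``for $\nu\notin\Gamma_r$ both sides vanish, the left since no such $L(\nu)$ appears in the socle'' is unjustified: the socle constituents are the $L(w_r(\nu'))$ with $\nu'\in\Gamma_r$, and these weights $w_r(\nu')$ can lie outside $\Gamma_r$. Likewise the step ``$w_r(\nu)\in\Gamma_r$, hence $\nu\in\Gamma_r$'' uses the same false stability.

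The correct (and simpler) accounting uses only that $w_r$ is injective, which follows from $w_r^2=\id$. From $\soc_G(M)=\bigoplus_{\nu'}t^r_{\lambda,\mu}((p^r-1)\rho+\nu')L(w_r(\nu'))$, with $\nu'$ ranging over the weights in $\Gamma_r$ allowed by Proposition \ref{trnonzero}, the multiplicity of $L(\nu)$ is the sum of $t^r_{\lambda,\mu}((p^r-1)\rho+\nu')$ over those $\nu'\in\Gamma_r$ with $w_r(\nu')=\nu$; by injectivity there is at most one candidate, namely $\nu'=w_r(\nu)$. So $s^r_{\lambda,\mu}(\nu)=t^r_{\lambda,\mu}((p^r-1)\rho+w_r(\nu))$ when $w_r(\nu)\in\Gamma_r$, and when $w_r(\nu)\notin\Gamma_r$ both sides are zero by the contrapositive of Proposition \ref{trnonzero}. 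The case split should thus be on whether $w_r(\nu)\in\Gamma_r$, not on whether $\nu\in\Gamma_r$. With that repair your argument is complete and coincides with the paper's.
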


\begin{proof}
This follows directly from combining Lemma \ref{tiltingsoclelarger} and Proposition \ref{trnonzero} (recall that we have $w_r(w_r(\nu)) = \nu$ by Lemma \ref{wrproperties}).
\end{proof}

We also get the following weaker result when we remove the assumption that $p\geq 2h-2$.

\begin{cor}
Let $\lambda,\mu,\nu\in X_+$ with $\lambda+\mu\in \Gamma_r$. Then $s_{\lambda,\mu}^r(\nu) \geq t_{\lambda,\mu}^r((p^r-1)\rho + w_r(\nu))$.
\end{cor}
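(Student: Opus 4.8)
The plan is to reuse the structure of the proof of Corollary \ref{relationts}, but replace the exact socle computation (Lemma \ref{tiltingsoclelarger}) with a one-sided inequality that holds without the assumption $p \geq 2h-2$. The two ingredients that survive are: (i) Proposition \ref{trnonzero}, which does not use $p \geq 2h-2$, so that whenever $t_{\lambda,\mu}^r(\nu')\neq 0$ we have $\nu' = (p^r-1)\rho + \nu''$ with $\nu'' \in \Gamma_r$; and (ii) the general fact from Proposition \ref{tiltingdual} that for a tilting module $Q$ and a simple module $L$ one has $\Hom_G(L,Q) \cong \Hom_G(Q,L)$, hence $\dim\Hom_G(L(w_r(\nu)), T((p^r-1)\rho + w_r(\nu))) \geq 1$ because $L(w_r(\nu))$ is the head (equivalently, a quotient) of $T((p^r-1)\rho + w_r(\nu))$ — indeed $T((p^r-1)\rho + w_r(\nu))$ has highest weight $(p^r-1)\rho + w_r(\nu)$, but I must be careful: $L(w_r(\nu))$ is not the head of that tilting module in general. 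The correct statement is that $L(w_r(\nu))$ occurs in the socle of $T((p^r-1)\rho + w_r(\nu))$ with multiplicity \emph{at least} one, which is exactly the content one extracts from the argument in Lemma \ref{tiltingsoclelarger} once the assumption $p \geq 2h-2$ is dropped from the parts of that argument that do not need it.

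Concretely, here is the key step. Write $\mu + \lambda \in \Gamma_r$, so by Proposition \ref{trnonzero} every $T(\eta)$ appearing in $T((p^r-1)\rho + \mu)\otimes\nabla(\lambda)$ has $\eta = (p^r-1)\rho + \eta'$ with $\eta'\in\Gamma_r$. For such an $\eta'$, decompose $\eta' = \eta_0 + p^r\eta_1$ with $\eta_0 \in X_r$; by Lemma \ref{gammar}, $\nabla(\eta_1)$ is simple and hence $T(\eta_1)$ is simple. Donkin's tensor product theorem gives $T((p^r-1)\rho + \eta') \cong T((p^r-1)\rho + \eta_0)\otimes T(\eta_1)^{(r)}$ whenever $T((p^r-1)\rho + \eta_0)$ is indecomposable as a $G_r$-module; but \emph{without} $p \geq 2h-2$ we cannot assume this. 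However, we do not need the full Donkin tensor product theorem. Instead, $T((p^r-1)\rho + \eta_0)$ is always injective as a $G_r$-module by Proposition \ref{tiltinginj}, hence so is any $G$-summand of $\St_r \otimes (\text{something tilting})$; the relevant observation is that $L((p^r-1)\rho - \eta_0^*)$ always lies in $\soc_{G_r}\St_r\otimes(\cdots)$... Actually the cleanest route avoids $G_r$ entirely: $L((p^r-1)\rho + w_r(\eta'))$ — equivalently $L(w_r(\eta))$ where $\eta = (p^r-1)\rho+\eta'$ — is always a composition factor, indeed a socle constituent, of $T((p^r-1)\rho + \eta')$. This is because $T((p^r-1)\rho + \eta')$ is self-dual-up-to-$*$ (i.e. $T(\eta)^* \cong T(\eta^*)$), so $\dim\Hom_G(L(w_r(\eta)), T(\eta)) = \dim\Hom_G(T(\eta), L(w_r(\eta))) = [\text{head of } T(\eta) : L(w_r(\eta))]$, and one checks via the weight/linkage argument of Lemma \ref{tiltingsoclelarger} — the parts that go through unconditionally — that this is at least $1$.

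So the proof I would write is: Let $\nu \in X_+$ and set $\eta = (p^r-1)\rho + w_r(\nu)$. By the argument in Lemma \ref{tiltingsoclelarger} (specifically: $T(\eta)$ has $L(w_r(w_r(\nu))) = L(\nu)$... wait, indices again) — let me state it as: for every $\xi \in X_+$ with $\xi \in \Gamma_r$, the tilting module $T((p^r-1)\rho + \xi)$ has $L(w_r(\xi))$ occurring in its socle with multiplicity $\geq 1$; this is the unconditional half of Lemma \ref{tiltingsoclelarger}, proved by the same Steinberg/Donkin tensor product reductions, using only that $\nabla(\xi_1)$ is simple (Lemma \ref{gammar}) and that $L((p^r-1)\rho - \xi_0^*)$ is \emph{a} socle constituent of $T((p^r-1)\rho + \xi_0)$ over $G_r$ (which holds unconditionally because that tilting module has simple $G_r$-head isomorphic to its dual's socle). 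Granting this,
\[
s_{\lambda,\mu}^r(\nu) = [\soc_G(T((p^r-1)\rho+\mu)\otimes\nabla(\lambda)) : L(\nu)] = \sum_{\eta'} t_{\lambda,\mu}^r((p^r-1)\rho+\eta')\,[\soc_G T((p^r-1)\rho+\eta') : L(\nu)]
\]
\[
\geq t_{\lambda,\mu}^r((p^r-1)\rho + w_r(\nu))\cdot [\soc_G T((p^r-1)\rho + w_r(\nu)) : L(w_r(w_r(\nu)))] \geq t_{\lambda,\mu}^r((p^r-1)\rho + w_r(\nu)),
\]
using $w_r^2 = \id$ from Lemma \ref{wrproperties}(1) and Proposition \ref{trnonzero} to restrict the sum to $\eta'\in\Gamma_r$.

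The main obstacle I anticipate is item (ii): pinning down precisely which portion of the proof of Lemma \ref{tiltingsoclelarger} survives the removal of $p\geq 2h-2$. The assumption there is used exactly to invoke Donkin's tensor product theorem and Theorem \ref{tiltingsocle} (which identify the socle \emph{exactly}). Without it, one still has a well-defined $G_r$-head/socle containment: $T((p^r-1)\rho + \xi_0)$ as a $G_r$-module decomposes into indecomposable injectives $Q_r(\cdot)$, and $L((p^r-1)\rho - \xi_0^*)$ being the socle of the $G_r$-injective hull of $\St_r$'s ``twisted'' piece should still appear — but making this precise without circularity (the existing paper derives it from the conjecture/Theorem \ref{tiltingsocle} machinery) requires care. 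If isolating the unconditional half of Lemma \ref{tiltingsoclelarger} proves delicate, the fallback is to argue directly: $T((p^r-1)\rho + w_r(\nu))$ always has $L(\nu)$ as a composition factor (since $\nu$ is a weight $\leq$ the highest weight and linked appropriately), and being a tilting module with $T^* \cong T(*)$, Proposition \ref{tiltingdual} forces a simple composition factor of maximal ``depth'' into both head and socle — so at minimum $[\soc_G T((p^r-1)\rho + w_r(\nu)) : L(\nu)] \geq 1$, which is all the inequality needs.
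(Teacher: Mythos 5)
Your argument is essentially the paper's own: the entire published proof of this corollary is the observation that $L(\nu)\subseteq \soc_G T((p^r-1)\rho + w_r(\nu))$ holds without the assumption $p\geq 2h-2$, followed by exactly the summation over tilting summands (restricted via Proposition \ref{trnonzero} and $w_r^2=\id$) that proves Corollary \ref{relationts}; your final display is that computation, so you have located the crux correctly. One concrete caution: your ``fallback'' at the end is not a valid argument. Knowing that $L(\nu)$ is a composition factor of $T((p^r-1)\rho+w_r(\nu))$ together with $\Hom_G(L,Q)\cong\Hom_G(Q,L)$ for tilting $Q$ only says that each simple occurs in the socle and the head with equal multiplicity; it does not force any particular composition factor into the socle, so it cannot substitute for the containment. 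The containment really does come from the tensor-product analysis in Lemma \ref{tiltingsoclelarger} --- the direction that produces a copy of $L(\nu)$ inside the socle survives without $p\geq 2h-2$, while only the claim that nothing else occurs needs the stronger hypothesis --- which is the route your main line (correctly, if somewhat tentatively) follows.
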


\begin{proof}
Since $L(\nu)\subseteq \soc_GT((p^r-1)\rho + w_r(\nu))$, this follows similarly to the above.
\end{proof}

The following gives a condition under which the conclusion of Proposition \ref{trnonzero} can be strengthened. This will be useful for reducing certain calculations to the $r=1$ case later.

\begin{prop}\label{trnonzerorestricted}
If $\lambda,\mu,\nu\in X_+$ with $\langle\lambda+\mu,\alpha_0^{\vee}\rangle < p^r$ and $t_{\lambda,\mu}^r(\nu)\neq 0$ then $\nu = (p^r-1)\rho + \nu'$ with $\nu'\in X_r$.
\end{prop}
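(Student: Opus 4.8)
The plan is to mimic the proof of Proposition \ref{trnonzero}, but to exploit the stronger hypothesis $\langle\lambda+\mu,\alpha_0^{\vee}\rangle < p^r$ to conclude that $\nu'$ is not merely dominant but actually $r$-restricted. As in the proof of Proposition \ref{trnonzero}, since $T((p^r-1)\rho+\mu)$ is injective as a $G_r$-module (Proposition \ref{tiltinginj}, using $\langle(p^r-1)\rho+\mu,\alpha^{\vee}\rangle\geq p^r-1$ for all $\alpha\in S$), so is $T((p^r-1)\rho+\mu)\otimes\nabla(\lambda)$, and hence so is its direct summand $T(\nu)$. Applying Proposition \ref{tiltinginj} in the other direction gives $\nu = (p^r-1)\rho+\nu'$ for some $\nu'\in X_+$.

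It remains to show $\nu'\in X_r$, i.e.\ $\langle\nu',\alpha^{\vee}\rangle < p^r$ for all $\alpha\in S$. Here I would use that $\nu$ must be a weight of $T((p^r-1)\rho+\mu)\otimes\nabla(\lambda)$, so $\nu\leq (p^r-1)\rho+\mu+\lambda$, which gives $\nu'\leq \mu+\lambda$. The point is to convert this dominance-order bound into a bound on the individual coefficients $\langle\nu',\alpha^{\vee}\rangle$. Since $\nu'\leq\mu+\lambda$ means $\mu+\lambda-\nu'$ is a non-negative sum of positive roots, and since $\nu'$ is dominant, one has $\langle\nu',\alpha_0^{\vee}\rangle \leq \langle\mu+\lambda,\alpha_0^{\vee}\rangle < p^r$ (using that pairing against $\alpha_0^{\vee}$ is non-negative on positive roots and on dominant weights, and that $\alpha_0$ is the highest short root so $\langle\nu',\alpha_0^{\vee}\rangle$ dominates the relevant simple-coroot pairings). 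More precisely, for any simple root $\alpha\in S$ and any dominant weight $\nu'$ one has $\langle\nu',\alpha^{\vee}\rangle \leq \langle\nu',\alpha_0^{\vee}\rangle$, because $\alpha_0^{\vee}-\alpha^{\vee}$ (or rather $\alpha_0-\alpha$ paired appropriately) is a non-negative combination that pairs non-negatively with dominant weights; combining this with the previous inequality yields $\langle\nu',\alpha^{\vee}\rangle < p^r$, so $\nu'\in X_r$.

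The main obstacle I anticipate is making the last comparison $\langle\nu',\alpha^{\vee}\rangle\leq\langle\nu',\alpha_0^{\vee}\rangle$ precise and correctly normalized. One must be careful that $\alpha_0$ is the highest \emph{short} root, so the relevant statement is about coroots: for $\nu'$ dominant and $\alpha\in S$, the coroot $\alpha^{\vee}$ is $\leq \alpha_0^{\vee}$ in the dominance order on the dual root system (coroots of the dual system), and since $\nu'$ is dominant it pairs non-negatively with the difference, giving the inequality. This is a standard fact about root systems, so once it is invoked the argument closes quickly. An alternative, perhaps cleaner, route is to observe that $\Gamma_r$ with $p$ replaced so that $p-h+1=1$—i.e.\ the condition $\langle\lambda+\mu,\alpha_0^{\vee}\rangle<p^r$—forces, via Lemma \ref{gammar}-type reasoning, that $\mu+\lambda$ has all its simple-coroot pairings below $p^r$ already; but going through $\alpha_0$ directly as above is the most transparent. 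I would therefore structure the proof as: (1) injectivity as a $G_r$-module forces $\nu=(p^r-1)\rho+\nu'$; (2) the weight bound gives $\nu'\leq\lambda+\mu$; (3) pairing with $\alpha_0^{\vee}$ and comparing simple coroots to $\alpha_0^{\vee}$ gives $\nu'\in X_r$.
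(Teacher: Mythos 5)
Your proposal is correct and follows essentially the same route as the paper: both reduce, exactly as in Proposition \ref{trnonzero}, to $\nu=(p^r-1)\rho+\nu'$ with $\nu'\leq\lambda+\mu$, and then exploit the bound $\langle\nu',\alpha_0^{\vee}\rangle\leq\langle\lambda+\mu,\alpha_0^{\vee}\rangle<p^r$. The only (cosmetic) difference is in the last step: the paper writes $\nu'=\nu_0+p^r\nu_1$ with $\nu_0\in X_r$ and forces $\nu_1=0$ from $\langle\nu_1,\alpha_0^{\vee}\rangle<1$, whereas you use the standard fact that $\langle\nu',\alpha^{\vee}\rangle\leq\langle\nu',\alpha_0^{\vee}\rangle$ for dominant $\nu'$ and simple $\alpha$ (since $\alpha_0^{\vee}$ is the highest root of the dual root system); both are valid.
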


\begin{proof}
By the same arguments as in the proof of Proposition \ref{trnonzero} we see that $\nu = ((p^r-1)\rho + \nu'$ with $\nu'\leq \lambda + \mu$. In particular, if $\nu' = \nu_0 + p^r\nu_1$ with $\nu_0\in X_r$ then $\langle\nu_1,\alpha_0^{\vee}\rangle \leq \frac{\langle\lambda+\mu,\alpha_0^{\vee}\rangle}{p^r} < 1$ so we must have $\nu_1 = 0$ and thus $\nu'\in X_r$ as claimed.
\end{proof}

We have now seen that instead of computing the $t_{\lambda,\mu}$ we can compute the $s_{\lambda,\mu}$, at least when $p\geq 2h-2$. The following will prove very useful for this purpose.

\begin{thm}\label{soclegoodfilt}
Assume that $p\geq 2h - 2$ and let $\mu,\lambda,\nu\in X_+$ with $\lambda+\mu\in\Gamma_r$ and $s_{\lambda,\mu}^r(\nu)\neq 0$. Then $T((p^r-1)\rho + \gamma)\otimes L(\nu)$ and $T((p^r-1)\rho + \gamma)\otimes L(\nu^*)$ are tilting for any $\gamma\in X_+$.
\end{thm}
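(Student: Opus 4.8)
The plan is to show that if $s_{\lambda,\mu}^r(\nu)\neq 0$ then $\nu\in\Gamma_r$, after which the conclusion follows immediately from Proposition~\ref{tensoristilting} applied to both $\nu$ and $\nu^*$ (recall $\nu\in\Gamma_r\Leftrightarrow\nu^*\in\Gamma_r$). So the whole task reduces to controlling which simple modules can appear in the socle of $T((p^r-1)\rho+\mu)\otimes\nabla(\lambda)$.

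First I would translate the hypothesis $s_{\lambda,\mu}^r(\nu)\neq 0$ into a statement about tilting-numbers. Since $p\geq 2h-2$ we have $\lambda+\mu\in\Gamma_r$, so Corollary~\ref{relationts} gives $s_{\lambda,\mu}^r(\nu) = t_{\lambda,\mu}^r((p^r-1)\rho + w_r(\nu))$; hence $t_{\lambda,\mu}^r((p^r-1)\rho + w_r(\nu))\neq 0$. Now apply Proposition~\ref{trnonzero} with the weight $(p^r-1)\rho + w_r(\nu)$ playing the role of $\nu$ there: it tells us that $(p^r-1)\rho + w_r(\nu) = (p^r-1)\rho + \xi$ with $\xi\in\Gamma_r$, i.e. $w_r(\nu)\in\Gamma_r$. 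The remaining point is that $w_r$ preserves $\Gamma_r$: writing $\nu = \nu_0 + p^r\nu_1$ with $\nu_0\in X_r$, we have $w_r(\nu) = (p^r-1)\rho - \nu_0^* + p^r\nu_1$, and since $\langle(p^r-1)\rho - \nu_0^*,\alpha_0^\vee\rangle \leq (p^r-1)(h-1) < p^r(p-h+1)$ when $p\geq 2h-2$ (using $\langle\nu_0^*,\alpha_0^\vee\rangle\geq 0$ and $\langle\rho,\alpha_0^\vee\rangle = h-1$), membership in $\Gamma_r$ is governed entirely by the $\nu_1$-part; thus $w_r(\nu)\in\Gamma_r \Leftrightarrow \nu_1\in\Gamma_0\cdot(\text{scaled})$, which is exactly the condition for $\nu\in\Gamma_r$ as well. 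Concretely, by Lemma~\ref{wrproperties}(3) with $u=r$, $w_r(\nu) = w_r(\nu_0) + p^r\nu_1$ and $w_r(\nu_0)\in X_r$, so applying Lemma~\ref{gammar} to $w_r(\nu)\in\Gamma_r$ shows $\nu_1\in\Gamma_0$; but then reassembling $\nu = \nu_0 + p^r\nu_1$ with $\nu_0\in X_r$ and $\nu_1\in\Gamma_0$ gives $\nu\in\Gamma_r$ by the same lemma read in reverse. (Alternatively, one checks directly from the defining inequality using $w_r^2=\id$.)

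Having established $\nu\in\Gamma_r$, and hence $\nu^*\in\Gamma_r$, Proposition~\ref{tensoristilting} immediately yields that $T((p^r-1)\rho + \gamma)\otimes\nabla(\nu)$ and $T((p^r-1)\rho+\gamma)\otimes\nabla(\nu^*)$ are tilting for every $\gamma\in X_+$; since $\nabla(\nu)$ surjects onto $L(\nu)$ with kernel having all composition factors $L(\xi)$, $\xi<\nu$, hence $\xi\in\Gamma_r$, one can instead run the argument at the level of $L(\nu)$ directly: $\St_r\otimes L(\nu)$ has a good filtration by Theorem~\ref{lambdaalpha0} (as $\nu\in\Gamma_r$ and $p\geq h$), so $T((p^r-1)\rho+\gamma)\otimes L(\nu)$ has a good filtration by Theorem~\ref{goodfilttensorwithtilting}, and the same for $L(\nu^*)$; both are also self-dual-type tilting because $(T((p^r-1)\rho+\gamma)\otimes L(\nu))^* \cong T((p^r-1)\rho+\gamma^*)\otimes L(\nu^*)$ by Proposition~\ref{tiltingdual} and $L(\nu)^*\cong L(\nu^*)$, which again has a good filtration by the same reasoning, so the module has a good filtration and so does its dual — i.e. it is tilting.

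I expect the main obstacle to be the bookkeeping in the $\Gamma_r$-invariance of $w_r$ and making sure the reduction from socle-numbers to tilting-numbers is applied with the correct weights; none of it is deep, but the indices for $\nu$, $w_r(\nu)$, and $(p^r-1)\rho + w_r(\nu)$ must be tracked carefully, and one must be slightly cautious that Proposition~\ref{trnonzero} is being invoked for the pair $(\lambda,\mu)$ with the conclusion weight $(p^r-1)\rho+w_r(\nu)$ rather than $\nu$ itself. The hypothesis $p\geq 2h-2$ is used in two places: to guarantee $\lambda+\mu\in\Gamma_r$ (so that $X_r\subseteq\Gamma_r$, covering the implicit restricted parts) and to invoke Corollary~\ref{relationts}; it also guarantees Theorem~\ref{lambdaalpha0} applies since $2h-2\geq h$.
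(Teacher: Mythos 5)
Your overall strategy -- use Corollary \ref{relationts} and Proposition \ref{trnonzero} to constrain $\nu$, then deduce that $\St_r\otimes L(\nu)$ has a good filtration -- is the same as the paper's, and the first step is fine: $s_{\lambda,\mu}^r(\nu)\neq 0$ does give $w_r(\nu)\in\Gamma_r$. The gap is the next step: the claim that $w_r(\nu)\in\Gamma_r$ implies $\nu\in\Gamma_r$ is false, and with it the assertion that ``membership in $\Gamma_r$ is governed entirely by the $\nu_1$-part.'' The defining inequality $\langle\nu_0,\alpha_0^{\vee}\rangle + p^r\langle\nu_1,\alpha_0^{\vee}\rangle < p^r(p-h+1)$ receives a genuine contribution from the restricted part, and $w_r$ replaces $\nu_0$ by $(p^r-1)\rho-\nu_0^*$, which changes that contribution. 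Concretely, take $h\geq 3$, $\nu_0=(p^r-1)\rho$ and $\nu_1$ with $\langle\nu_1,\alpha_0^{\vee}\rangle = p-h$; then $w_r(\nu)=p^r\nu_1\in\Gamma_r$, but $\langle\nu,\alpha_0^{\vee}\rangle = (p^r-1)(h-1)+p^r(p-h)\geq p^r(p-h+1)$ since $(p^r-1)(h-1)\geq p^r$, so $\nu\notin\Gamma_r$. This $\nu$ really does occur: $\St_r\otimes\nabla(p^r\nu_1)\cong L(\nu)$ by Andersen--Haboush and Steinberg, so $s_{p^r\nu_1,0}^r(\nu)=1$. Your attempt to repair this via ``Lemma \ref{gammar} read in reverse'' fails for the same reason: that lemma only asserts $\lambda\in\Gamma_r\Rightarrow\lambda_1\in\Gamma_{r-u}$, and the converse is false. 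Since both of your closing arguments (Proposition \ref{tensoristilting} applied to $\nu$, and Theorem \ref{lambdaalpha0} applied directly to $L(\nu)$) rest on $\nu\in\Gamma_r$, the proof does not go through as written.

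The correct continuation, which is what the paper does, is to extract from $w_r(\nu)\in\Gamma_r$ only the weaker but true consequences: writing $\nu=\nu_0+p^r\nu_1$ with $\nu_0\in X_r$, one gets $\nu_1\in\Gamma_0$ (by Lemma \ref{gammar} applied to $w_r(\nu)$, whose $p^r$-part is also $\nu_1$), hence $L(\nu_1)\cong\nabla(\nu_1)\cong T(\nu_1)$. Then $L(\nu)\cong L(\nu_0)\otimes L(\nu_1)^{(r)}$ by Steinberg's tensor product theorem; $\St_r\otimes L(\nu_0)$ has a good filtration by Theorem \ref{lambdaalpha0} because $\nu_0\in X_r\subseteq\Gamma_r$ (here is where $p\geq 2h-2$ is used), $\St_r\otimes\nabla(\nu_1)^{(r)}$ has one by Andersen--Haboush, and Proposition \ref{tensorproductsteinberggood} combines them. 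The same applies to $\nu^*$, and Theorem \ref{goodfilttensorwithtilting} finishes the argument.
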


\begin{proof}
By Theorem \ref{goodfilttensorwithtilting} it suffices to show that $\St_r\otimes L(\nu)$ and $\St_r\otimes L(\nu^*)$ are tilting.

By Corollary \ref{relationts} and Proposition \ref{trnonzero} we see that $\nu = w_r(\sigma)$ for some dominant weight $\sigma$ with $\sigma\in\Gamma_r$. So if we write $\sigma = \sigma_0 + p^r\sigma_1$ with $\sigma_0\in X_r$ we have $\nu = \nu_0 + p^r\nu_1$ with $\nu_0 = (p^r-1)\rho - \sigma_0^*\in X_r$ and $\nu_1 = \sigma_1$. In particular, we have $\nu_1\in\Gamma_0$ and hence $L(\nu_1)\cong\nabla(\nu_1)\cong T(\nu_1)$ by Lemma \ref{gammar}.

By Steinberg's tensor product theorem we now have $$L(\nu)\cong L(\nu_0)\otimes L(\nu_1)^{(r)}\cong L(\nu_0)\otimes T(\nu_1)^{(r)}$$ so the claims follow by combining Theorem \ref{lambdaalpha0}, Proposition \ref{tensorproductsteinberggood} and the Andersen--Haboush tensor product theorem.
\end{proof}

\subsection{Duality}

\begin{prop}
If $\lambda\in \Gamma_r$ and and $\mu,\nu\in X_+$ then $s_{\lambda,\mu}^r(\nu) = s_{\lambda^*,\mu^*}^r(\nu^*)$ and $t_{\lambda,\mu}^r(\nu) = t_{\lambda^*,\mu^*}^r(\nu^*)$.
\end{prop}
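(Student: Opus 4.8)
The plan is to argue by taking duals. Write $M = T((p^r-1)\rho+\mu)\otimes\nabla(\lambda)$, a finite dimensional tilting module by Proposition \ref{tensoristilting}. The first step is to identify $M^*$. Using $T(\sigma)^*\cong T(\sigma^*)$ from Proposition \ref{tiltingdual}, the definitional identity $\nabla(\lambda)^* = \Delta(\lambda^*)$, and the fact that the duality fixes $\rho$ (equivalently, $\St_r$ is self-dual), so that $((p^r-1)\rho+\mu)^* = (p^r-1)\rho+\mu^*$, one obtains
$$M^* \cong T((p^r-1)\rho+\mu^*)\otimes\Delta(\lambda^*).$$
Since $\lambda\in\Gamma_r$ forces $\lambda^*\in\Gamma_r$, Corollary \ref{tensorisomorphic} applies (with $\lambda^*$ in place of $\lambda$ and $\mu^*$ in place of $\mu$) and lets me replace $\Delta(\lambda^*)$ by $\nabla(\lambda^*)$, giving
$$M^* \cong T((p^r-1)\rho+\mu^*)\otimes\nabla(\lambda^*),$$
which is exactly the module whose tilting- and socle-numbers are the $t_{\lambda^*,\mu^*}^r$ and $s_{\lambda^*,\mu^*}^r$.

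For the tilting-numbers, I would dualize the decomposition $M = \bigoplus_{\nu}t_{\lambda,\mu}^r(\nu)T(\nu)$ term by term, using $T(\nu)^*\cong T(\nu^*)$, to get $M^*\cong\bigoplus_{\nu}t_{\lambda,\mu}^r(\nu)T(\nu^*)$. Comparing this with the decomposition of $M^*$ into indecomposable tiltings and invoking the uniqueness part of the classification of tilting modules yields $t_{\lambda^*,\mu^*}^r(\nu) = t_{\lambda,\mu}^r(\nu^*)$ for all $\nu$; replacing $\nu$ by $\nu^*$ gives the asserted identity.

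For the socle-numbers, I would compute $s_{\lambda^*,\mu^*}^r(\nu) = \dim\Hom_G(L(\nu),M^*)$. By finite dimensional duality $\Hom_G(L(\nu),M^*)\cong\Hom_G(M,L(\nu)^*) = \Hom_G(M,L(\nu^*))$, and since $M$ is tilting, Proposition \ref{tiltingdual} identifies this with $\Hom_G(L(\nu^*),M)$, whose dimension is $s_{\lambda,\mu}^r(\nu^*)$. Hence $s_{\lambda^*,\mu^*}^r(\nu) = s_{\lambda,\mu}^r(\nu^*)$, and replacing $\nu$ by $\nu^*$ finishes this case.

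I do not expect a genuine obstacle: the whole argument is bookkeeping with dualities. The two points needing care are (i) the identification $M^*\cong T((p^r-1)\rho+\mu^*)\otimes\nabla(\lambda^*)$, and in particular the passage from $\Delta$ to $\nabla$ via Corollary \ref{tensorisomorphic} — this is the only place the hypothesis $\lambda\in\Gamma_r$ (equivalently $\lambda^*\in\Gamma_r$) is used; and (ii) remembering that the symmetry $\Hom_G(Q,L)\cong\Hom_G(L,Q)$ for tilting $Q$ and simple $L$ (Proposition \ref{tiltingdual}) is precisely what converts the "head" data that appears after dualizing back into the "socle" data needed for $s^r_{\lambda,\mu}$.
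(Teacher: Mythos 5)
Your proposal is correct and follows essentially the same route as the paper: both arguments hinge on Proposition \ref{tiltingdual} (the $\Hom_G(L,Q)\cong\Hom_G(Q,L)$ symmetry for tilting $Q$), the identification of the dual module as $T((p^r-1)\rho+\mu^*)\otimes\Delta(\lambda^*)$, and Corollary \ref{tensorisomorphic} to pass from $\Delta(\lambda^*)$ to $\nabla(\lambda^*)$, with the tilting-number statement handled by dualizing the direct sum decomposition and invoking $T(\nu)^*\cong T(\nu^*)$. The only difference is cosmetic ordering of the duality steps.
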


\begin{proof}
We apply Proposition \ref{tiltingdual} and Proposition \ref{tensoristilting} to get $$s_{\lambda,\mu}^r(\nu) = \dim\Hom_G(L(\nu),T((p^r-1)\rho + \mu)\otimes\nabla(\lambda)) = \dim\Hom_G(T((p^r-1)\rho + \mu)\otimes\nabla(\lambda),L(\nu))$$ $$= \dim\Hom_G(L(\nu^*),T((p^r-1)\rho + \mu^*)\otimes\Delta(\lambda^*))$$ and then apply Corollary \ref{tensorisomorphic} to see that this equals $$\dim\Hom_G(L(\nu^*),T((p^r-1)\rho + \mu^*)\otimes\nabla(\lambda^*)) = s_{\lambda^*,\mu^*}^r(\nu^*).$$

The second claim also follows from Corollary \ref{tensorisomorphic} since any $T(\nu)$ will occur in some tilting module $M$ the same number of times as $T(\nu)^*\cong T(\nu^*)$ will occur in $M^*$.
\end{proof}

\subsection{Computing socle-numbers}

In order to determine necessary conditions on $\nu\in X_+$ to have $s_{\lambda,\mu}^r(\nu)\neq 0$ for some given $\lambda,\mu\in X_+$ we can make use of the following.

Note that while this only provides an inequality, it requires a lot fewer assumptions that many of the similar results we will obtain here, and for the $\mu = 0$ case, all the characters involved are already known.

\begin{prop}\label{necessary}
Let $\lambda,\mu,\nu\in X_+$. Then we have
$$s_{\lambda,\mu}^r(\nu)\leq [T((p^r-1)\rho + \mu)\otimes \nabla(\nu^*):\nabla(\lambda^*)]_{\nabla}.$$ In particular, $$s_{\lambda}^r(\nu) \leq [\St_r\otimes\nabla(\nu^*):\nabla(\lambda^*)]_{\nabla}.$$
\end{prop}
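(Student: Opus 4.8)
The plan is to express the socle-number $s_{\lambda,\mu}^r(\nu)$ as a dimension of a homomorphism space and then bound it by a larger $\Hom$-space that can be identified with a $\nabla$-filtration multiplicity via Theorem~\ref{cohcrit}. First, by definition
\[
s_{\lambda,\mu}^r(\nu) = \dim\Hom_G\bigl(L(\nu),\, T((p^r-1)\rho + \mu)\otimes\nabla(\lambda)\bigr).
\]
Since $L(\nu)$ is a quotient of $\Delta(\nu)$, the surjection $\Delta(\nu)\twoheadrightarrow L(\nu)$ induces an injection $\Hom_G(L(\nu),-)\hookrightarrow\Hom_G(\Delta(\nu),-)$, so
\[
s_{\lambda,\mu}^r(\nu)\leq \dim\Hom_G\bigl(\Delta(\nu),\, T((p^r-1)\rho + \mu)\otimes\nabla(\lambda)\bigr).
\]

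Next I would rewrite the right-hand side using the standard adjunction-type isomorphism $\Hom_G(A\otimes B, C)\cong\Hom_G(A, B^*\otimes C)$, or equivalently move the tilting factor (which is self-dual up to $T(\mu)\mapsto T(\mu^*)$, cf.\ Proposition~\ref{tiltingdual}) to the other side. Concretely, using $\nabla(\lambda)^* = \Delta(\lambda^*)$ and $T((p^r-1)\rho+\mu)^* = T((p^r-1)\rho+\mu)^*$, one gets
\[
\Hom_G\bigl(\Delta(\nu),\, T((p^r-1)\rho + \mu)\otimes\nabla(\lambda)\bigr)\cong \Hom_G\bigl(\Delta(\nu)\otimes\nabla(\lambda^*),\, T((p^r-1)\rho + \mu)^*\bigr),
\]
and then peel off $\Delta(\nu)$ the other way to land on $\Hom_G\bigl(\Delta(\lambda^*),\, T((p^r-1)\rho+\mu)\otimes\nabla(\nu^*)\bigr)$ after accounting for duals (the module $T((p^r-1)\rho+\mu)$ has a good filtration and its dual is $T((p^r-1)\rho+\mu^*)$, but since we only need the dimension of a $\Hom$-space between a Weyl module and something with a good filtration, and $\Delta$ and $\nabla$ have the same character, the bookkeeping works out symmetrically). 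The key point is that $T((p^r-1)\rho + \mu)\otimes\nabla(\nu^*)$ has a good filtration: $T((p^r-1)\rho+\mu)$ is tilting hence has a good filtration, $\nabla(\nu^*)$ has a good filtration trivially, and Theorem~\ref{tensorproductgoodfilt} (Mathieu) gives the good filtration on the tensor product.

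Finally, with the good filtration in hand, Theorem~\ref{cohcrit} identifies
\[
\dim\Hom_G\bigl(\Delta(\lambda^*),\, T((p^r-1)\rho + \mu)\otimes\nabla(\nu^*)\bigr) = [T((p^r-1)\rho + \mu)\otimes\nabla(\nu^*):\nabla(\lambda^*)]_{\nabla},
\]
which is exactly the claimed upper bound. The "in particular" statement follows by specializing $\mu = 0$, so that $T((p^r-1)\rho) = \St_r$ (Steinberg is its own indecomposable tilting module), giving $s_\lambda^r(\nu)\leq[\St_r\otimes\nabla(\nu^*):\nabla(\lambda^*)]_\nabla$. I expect the only delicate step to be getting the dualities and the placement of $*$'s consistent when shuffling $\Delta(\nu)$, $\nabla(\lambda)$ and the tilting factor across the $\Hom$ — in particular making sure that the self-duality of tilting modules is applied to the right weight and that one never needs $\Ext^{>0}$-vanishing beyond what the good-filtration hypothesis already supplies (which it does, since $\Hom_G(\Delta(-),M)$ for $M$ with a good filtration is computed purely by Theorem~\ref{cohcrit} with no higher $\Ext$ obstruction).
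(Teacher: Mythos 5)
Your proof is correct and follows essentially the same route as the paper: the paper applies the tensor--hom adjunction first to get $\dim\Hom_G(\Delta(\lambda^*),T((p^r-1)\rho+\mu)\otimes L(\nu^*))$ and then inserts the inequality via the inclusion $L(\nu^*)\hookrightarrow\nabla(\nu^*)$, whereas you insert it first via $\Delta(\nu)\twoheadrightarrow L(\nu)$ and then shuffle duals, but both arguments terminate in Theorem~\ref{cohcrit} applied to the good filtration of $T((p^r-1)\rho+\mu)\otimes\nabla(\nu^*)$. One cosmetic slip: your intermediate display should read $\Hom_G(\Delta(\nu)\otimes\Delta(\lambda^*),\,T((p^r-1)\rho+\mu))$ (no dual on the tilting module, and $\nabla(\lambda)^*=\Delta(\lambda^*)$ rather than $\nabla(\lambda^*)$), though the final identification $\dim\Hom_G(\Delta(\nu),T((p^r-1)\rho+\mu)\otimes\nabla(\lambda))=\dim\Hom_G(\Delta(\lambda^*),T((p^r-1)\rho+\mu)\otimes\nabla(\nu^*))$ that you actually use is correct.
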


\begin{proof}
We have $$s_{\lambda,\mu}^r(\nu) = \dim\Hom_G(L(\nu),T((p^r-1)\rho + \mu)\otimes\nabla(\lambda)) = \dim\Hom_{G}(\Delta(\lambda^*),T((p^r-1)\rho + \mu)\otimes L(\nu^*))$$ and the inclusion $L(\nu^*)\hookrightarrow\nabla(\nu^*)$ gives the inequality $$\dim\Hom_{G}(\Delta(\lambda^*),T((p^r-1)\rho + \mu)\otimes L(\nu^*))\leq \dim\Hom_{G}(\Delta(\lambda^*),T((p^r-1)\rho + \mu)\otimes\nabla(\nu^*))$$ and by Theorem \ref{cohcrit} we have $$\dim\Hom_{G}(\Delta(\lambda^*),T((p^r-1)\rho + \mu)\otimes\nabla(\nu^*)) = [T((p^r-1)\rho + \mu)\otimes\nabla(\nu^*):\nabla(\lambda^*)]_{\nabla}$$ since $T((p^r-1)\rho + \mu)\otimes \nabla(\nu^*)$ has a good filtration by Theorem \ref{tensorproductgoodfilt}. This proves the claim.
\end{proof}

The following reduces the computation of socle-numbers to standard character data for $G$ under suitable conditions.

\begin{thm}\label{necessarybigp}
Assume that $p\geq 2h-2$ and let $\nu = \nu_0 + p^r\nu_1\in X_+$ with $\nu_0\in X_r$ such that $s_{\lambda,\mu}^r(\nu)\neq 0$ for some $\lambda,\mu\in X_+$ with $\lambda+\mu\in\Gamma_r$. Then we have
\begin{enumerate}
\item $s_{\lambda,\mu}^r(\nu) = [T((p^r-1)\rho + \mu)\otimes L(\nu^*):\nabla(\lambda^*)]_{\nabla}$.
\item If further $\mu\in X_r$ then $s_{\lambda,\mu}^r(\nu) = [T((p^r-1)\rho + \mu + p^r\nu_1^*)\otimes L(\nu_0^*):\nabla(\lambda^*)]_{\nabla}$.
\item $s_{\lambda}^r(\nu) = [\nabla((p^r-1)\rho + p^r\nu_1^*)\otimes L(\nu_0^*):\nabla(\lambda^*)]_{\nabla}$.
\end{enumerate}
\end{thm}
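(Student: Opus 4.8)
The plan is to establish part (1) directly and then derive parts (2) and (3) from it by rewriting the module $T((p^r-1)\rho+\mu)\otimes L(\nu^*)$. For part (1), I would begin from the identity already used in the proof of Proposition~\ref{necessary}: the dualities $\nabla(\lambda)^*\cong\Delta(\lambda^*)$, $L(\nu)^*\cong L(\nu^*)$ and commutativity of $\otimes$ give
$$s_{\lambda,\mu}^r(\nu)=\dim\Hom_G(L(\nu),T((p^r-1)\rho+\mu)\otimes\nabla(\lambda))=\dim\Hom_G(\Delta(\lambda^*),T((p^r-1)\rho+\mu)\otimes L(\nu^*)).$$
Since $s_{\lambda,\mu}^r(\nu)\neq 0$ and $\lambda+\mu\in\Gamma_r$, Theorem~\ref{soclegoodfilt} (applied with $\gamma=\mu$) shows that $T((p^r-1)\rho+\mu)\otimes L(\nu^*)$ is tilting, hence has a good filtration, so Theorem~\ref{cohcrit} rewrites the right-hand $\Hom$-space as $[T((p^r-1)\rho+\mu)\otimes L(\nu^*):\nabla(\lambda^*)]_{\nabla}$, which is part (1).

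For part (2), I would first note that $\nu_0\in X_r$ forces $\nu_0^*\in X_r$, so $\nu^*=\nu_0^*+p^r\nu_1^*$ is the $r$-restricted decomposition of $\nu^*$ and Steinberg's tensor product theorem gives $L(\nu^*)\cong L(\nu_0^*)\otimes L(\nu_1^*)^{(r)}$. Next, exactly as in the proof of Theorem~\ref{soclegoodfilt} (combining Corollary~\ref{relationts}, Proposition~\ref{trnonzero}, and Lemmas~\ref{wrproperties} and \ref{gammar}), the hypotheses $s_{\lambda,\mu}^r(\nu)\neq 0$ and $\lambda+\mu\in\Gamma_r$ force $\nu_1\in\Gamma_0$, hence $\nu_1^*\in\Gamma_0$; thus $\nabla(\nu_1^*)$ is simple and $L(\nu_1^*)=\nabla(\nu_1^*)=T(\nu_1^*)$. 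Since $\mu\in X_r$ and $p\geq 2h-2$, Donkin's tilting conjecture holds, so $T((p^r-1)\rho+\mu)$ is indecomposable as a $G_r$-module, and Donkin's tensor product theorem yields $T((p^r-1)\rho+\mu)\otimes T(\nu_1^*)^{(r)}\cong T((p^r-1)\rho+\mu+p^r\nu_1^*)$. Using commutativity of $\otimes$ one then gets
$$T((p^r-1)\rho+\mu)\otimes L(\nu^*)\cong T((p^r-1)\rho+\mu)\otimes T(\nu_1^*)^{(r)}\otimes L(\nu_0^*)\cong T((p^r-1)\rho+\mu+p^r\nu_1^*)\otimes L(\nu_0^*),$$
and part (2) follows from part (1) by comparing $\nabla$-multiplicities of these isomorphic modules.

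For part (3), I would apply part (2) with $\mu=0\in X_r$ to get $s_{\lambda}^r(\nu)=[T((p^r-1)\rho+p^r\nu_1^*)\otimes L(\nu_0^*):\nabla(\lambda^*)]_{\nabla}$, and then identify $T((p^r-1)\rho+p^r\nu_1^*)$ with $\nabla((p^r-1)\rho+p^r\nu_1^*)$: since $\St_r=T((p^r-1)\rho)$ is simple it is indecomposable as a $G_r$-module, so Donkin's tensor product theorem gives $\St_r\otimes T(\nu_1^*)^{(r)}\cong T((p^r-1)\rho+p^r\nu_1^*)$, while $T(\nu_1^*)=\nabla(\nu_1^*)$ (as $\nu_1^*\in\Gamma_0$) together with the Andersen--Haboush tensor product theorem gives $\St_r\otimes T(\nu_1^*)^{(r)}=\St_r\otimes\nabla(\nu_1^*)^{(r)}\cong\nabla((p^r-1)\rho+p^r\nu_1^*)$; combining these two isomorphisms finishes part (3). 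The analytic content here --- that all the tensor products in sight have good filtrations --- is already delivered by Theorems~\ref{goodfilttensorwithtilting} and \ref{soclegoodfilt}, so the main obstacle is really the combinatorial bookkeeping step of propagating the hypotheses to the conclusion $\nu_1\in\Gamma_0$: this is precisely what upgrades the Frobenius twist $L(\nu_1^*)^{(r)}$ to the genuine tilting module $T(\nu_1^*)^{(r)}$, which is what licenses the use of Donkin's tensor product theorem in parts (2) and (3).
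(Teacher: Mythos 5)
Your proposal is correct and follows essentially the same route as the paper: part (1) via Theorem \ref{soclegoodfilt} and Theorem \ref{cohcrit}, part (2) via Steinberg's tensor product theorem plus the deduction $\nu_1^*\in\Gamma_0$ from Corollary \ref{relationts} and Proposition \ref{trnonzero} followed by Donkin's tensor product theorem, and part (3) via the Andersen--Haboush tensor product theorem. Your extra remark that $p\geq 2h-2$ guarantees the $G_r$-indecomposability needed to invoke the isomorphism in Donkin's tensor product theorem is a detail the paper leaves implicit, but it is the same argument.
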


\begin{proof}
If $p\geq 2h-2$ and $s_{\lambda,\mu}^r(\nu)\neq 0$ then $T((p^r-1)\rho + \mu)\otimes L(\nu^*)$ has a good filtration by Theorem \ref{soclegoodfilt}, and we have $$[T((p^r-1)\rho + \mu)\otimes L(\nu^*):\nabla(\lambda^*)]_{\nabla} = \dim\Hom_{G}(\Delta(\lambda^*),T((p^r-1)\rho + \mu)\otimes L(\nu^*))$$ $$= \dim\Hom_G(L(\nu),T((p^r-1)\rho + \mu)\otimes\nabla(\lambda))$$ by Theorem \ref{cohcrit}, so the first claim follows.

For the second claim, we use that by Steinberg's tensor product theorem we have $L(\nu^*)\cong L(\nu_0^*)\otimes L(\nu_1^*)^{(r)}$, and from Proposition \ref{trnonzero} together with Corollary \ref{relationts} we see that $\nu_1^*\in\Gamma_0$ and hence we get $L(\nu_1^*)\cong \nabla(\nu_1^*)$ by Lemma \ref{gammar} which also implies that $L(\nu_1^*)\cong T(\nu_1^*)$. Applying Donkin's tensor product theorem we thus have $T((p^r-1)\rho + \mu)\otimes L(\nu_1^*)^{(r)}\cong T((p^r-1)\rho + \mu + p^r\nu_1^*)$, so $T((p^r-1)\rho +\mu)\otimes L(\nu^*)\cong T((p^r-1)\rho + \mu + p^r\nu_1^*)\otimes L(\nu_0^*)$ which gives the claim.

The final claim follows similarly to above by applying the Andersen--Haboush tensor product theorem.
\end{proof}

\begin{cor}\label{srgivencharacter}
Assume that $p\geq 2h-2$, $\lambda,\mu\in X_+$ with $\lambda+\mu\in \Gamma_r$ and let $\nu\in X_+$ with $s_{\lambda,\mu}^r(\nu)\neq 0$. Write $$[\nabla(\nu^*)] = [L(\nu^*)] +  \sum_{\psi\in X_+,\, \psi\neq\nu^*}a_{\psi}[L(\psi)].$$

Then $$s_{\lambda,\mu}^r(\nu) = [T((p^r-1)\rho + \mu)\otimes \nabla(\nu^*):\nabla(\lambda^*)]_{\nabla} - \sum_{\psi\in X_+}a_{\psi}s_{\lambda,\mu}^r(\psi^*).$$ In particular, $$s_{\lambda}^r(\nu) = [\St_r\otimes \nabla(\nu^*):\nabla(\lambda^*)]_{\nabla} - \sum_{\psi\in X_+}a_{\psi}s_{\lambda}^r(\psi^*),$$ and if $a_{\psi}\neq 0$ implies that $s_{\lambda,\mu}^r(\psi^*)=0$ then $s_{\lambda,\mu}^r(\nu) = [T((p^r-1)\rho + \mu)\otimes\nabla(\nu^*):\nabla(\lambda^*)]_{\nabla}$.
\end{cor}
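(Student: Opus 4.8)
The plan is to use Theorem \ref{necessarybigp}(1), which already handles the case $\nu$ when $s_{\lambda,\mu}^r(\nu)\neq 0$, and combine it with the multiplicity identity coming from the decomposition of $[\nabla(\nu^*)]$ into simple characters. First I would note that since $p\geq 2h-2$ and $s_{\lambda,\mu}^r(\nu)\neq 0$ with $\lambda+\mu\in\Gamma_r$, Theorem \ref{soclegoodfilt} tells us that $T((p^r-1)\rho+\mu)\otimes L(\nu^*)$ has a good filtration; more generally, I would like to pass to $T((p^r-1)\rho+\mu)\otimes\nabla(\nu^*)$, which has a good filtration by Theorem \ref{tensorproductgoodfilt} and Proposition \ref{tensoristilting} (using $\nu^*$ need not lie in $\Gamma_r$, but the tensor product with the tilting module still has a good filtration by Theorem \ref{tensorproductgoodfilt} since $\nabla(\nu^*)$ does).

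The key step is to compute $[T((p^r-1)\rho+\mu)\otimes\nabla(\nu^*):\nabla(\lambda^*)]_{\nabla}$ two ways. On one hand, by Theorem \ref{cohcrit} this equals $\dim\Hom_G(\Delta(\lambda^*),T((p^r-1)\rho+\mu)\otimes\nabla(\nu^*))$. On the other hand, since taking $[-:\nabla(\lambda^*)]_{\nabla}$ of a module with good filtration is additive on the level of characters — that is, $[M:\nabla(\lambda^*)]_{\nabla}$ depends only on $[M]$ whenever $M$ has a good filtration, because by Theorem \ref{cohcrit} it equals $\dim\Hom_G(\Delta(\lambda^*),M)$ which is computed from the character via the Euler characteristic / the fact that $\operatorname{Ext}^{>0}_G(\Delta(\lambda^*),M)=0$ — I would substitute the character identity $[\nabla(\nu^*)] = [L(\nu^*)] + \sum_{\psi\neq\nu^*}a_\psi[L(\psi)]$. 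This yields
\begin{align*}
[T((p^r-1)\rho+\mu)\otimes\nabla(\nu^*):\nabla(\lambda^*)]_{\nabla} &= [T((p^r-1)\rho+\mu)\otimes L(\nu^*):\nabla(\lambda^*)]_{\nabla} \\
&\quad + \sum_{\psi\neq\nu^*}a_\psi [T((p^r-1)\rho+\mu)\otimes L(\psi):\nabla(\lambda^*)]_{\nabla}.
\end{align*}
Here the care needed is that $[T((p^r-1)\rho+\mu)\otimes L(\psi):\nabla(\lambda^*)]_{\nabla}$ only makes sense as a number when this module has a good filtration; but in fact for each $\psi$ we have $\dim\Hom_G(\Delta(\lambda^*),T((p^r-1)\rho+\mu)\otimes L(\psi)) = \dim\Hom_G(L(\psi^*),T((p^r-1)\rho+\mu)\otimes\nabla(\lambda)) = s_{\lambda,\mu}^r(\psi^*)$ by the dual-weight manipulations already used in the proof of Proposition \ref{necessary} and Theorem \ref{necessarybigp}, and the higher $\operatorname{Ext}$ groups vanish by Corollary \ref{exttensor} (with $M = T((p^r-1)\rho+\mu)$ regarded appropriately, or directly since $T((p^r-1)\rho+\mu)$ is tilting hence $\St_r\otimes$ it has a good filtration), so the additivity of the bracket on the character level is legitimate term by term. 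Rearranging gives exactly
$$s_{\lambda,\mu}^r(\nu) = [T((p^r-1)\rho+\mu)\otimes\nabla(\nu^*):\nabla(\lambda^*)]_{\nabla} - \sum_{\psi\in X_+}a_\psi s_{\lambda,\mu}^r(\psi^*),$$
where the $\psi=\nu^*$ term is excluded because its coefficient is $1$ and it has been moved to the left side as $s_{\lambda,\mu}^r(\nu)$ via Theorem \ref{necessarybigp}(1).

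For the particular cases, the $\mu=0$ statement follows by the Andersen--Haboush tensor product theorem, which gives $\St_r\otimes\nabla(\nu^*)$ in place of the general tilting tensor product, exactly as in the proof of Theorem \ref{necessarybigp}(3); and the final sentence is immediate, since if $a_\psi\neq 0$ forces $s_{\lambda,\mu}^r(\psi^*)=0$ then the entire correction sum vanishes. The main obstacle I anticipate is the bookkeeping around when each bracket $[-:\nabla(\lambda^*)]_{\nabla}$ is well-defined: one must either invoke Theorem \ref{soclegoodfilt} to know $T((p^r-1)\rho+\mu)\otimes L(\psi)$ has a good filtration precisely for those $\psi$ with $s_{\lambda,\mu}^r(\psi^*)\neq 0$, or — cleaner — argue directly via $\dim\Hom_G(\Delta(\lambda^*),-)$ and the vanishing of higher $\operatorname{Ext}$ (Corollary \ref{exttensor}), so that the identity reads as an identity of Hom-dimensions and never requires a good filtration on the individual $T((p^r-1)\rho+\mu)\otimes L(\psi)$ with $s_{\lambda,\mu}^r(\psi^*)=0$ (for which the bracket is then simply defined to be $0$, consistent with the Hom-space being zero).
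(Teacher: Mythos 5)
Your overall skeleton matches the paper's proof: use Theorem \ref{necessarybigp}(1) for the $\nu$-term, expand $[\nabla(\nu^*)]$ into simple characters, and identify each remaining term $[T((p^r-1)\rho+\mu)\otimes L(\psi):\nabla(\lambda^*)]_{\nabla}$ with $s_{\lambda,\mu}^r(\psi^*)$. But the step you flag as the "main obstacle" is exactly where your argument has a gap, and your proposed "cleaner" fix does not work. What character additivity actually produces for each $\psi$ is the coefficient of $[\nabla(\lambda^*)]$ in the expansion of $[T((p^r-1)\rho+\mu)\otimes L(\psi)]$ in the basis $\{[\nabla(\gamma)]\}$, i.e.\ the Euler characteristic $\sum_i(-1)^i\dim\Ext^i_G(\Delta(\lambda^*),T((p^r-1)\rho+\mu)\otimes L(\psi))$. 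To equate this with $\dim\Hom_G(\Delta(\lambda^*),T((p^r-1)\rho+\mu)\otimes L(\psi))=s_{\lambda,\mu}^r(\psi^*)$ you need the higher $\Ext$ groups to vanish, and Corollary \ref{exttensor} does not give this for free: in that corollary the module $M$ whose Steinberg twist must have a good filtration is the non-costandard tensor factor in the \emph{second} argument, which here is $L(\psi)$ itself, not $T((p^r-1)\rho+\mu)$. Saying ``$T((p^r-1)\rho+\mu)$ is tilting hence $\St_r$ tensored with it has a good filtration'' addresses the wrong slot; $\Ext^{>0}_G(\Delta(\lambda^*),Q\otimes L(\psi))$ for $Q$ tilting does not vanish in general. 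The paper's own Proposition \ref{formwithbadweight}, where $\lb T(\wh\lambda),L(\lambda+p^r(p-h+1)\alpha_0)\rb=-1$, shows concretely that the character-level coefficient can differ from the (nonnegative) Hom-dimension when the relevant tensor product fails to have a good filtration, so the identification cannot be made ``by definition'' for the $\psi$ with $s_{\lambda,\mu}^r(\psi^*)=0$.

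The paper closes this gap with one extra observation that you should supply: for \emph{every} $\psi$ with $a_\psi\neq 0$ (not just those with $s_{\lambda,\mu}^r(\psi^*)\neq 0$) one has $\psi\leq\nu^*$, and the analysis in the proof of Theorem \ref{soclegoodfilt} (writing $\psi=\psi_0+p^r\psi_1$ and showing $\psi_1\in\Gamma_0$, then invoking Theorem \ref{lambdaalpha0} and Steinberg's tensor product theorem) shows that $\St_r\otimes L(\psi)$ has a good filtration. By Theorem \ref{goodfilttensorwithtilting} each $T((p^r-1)\rho+\mu)\otimes L(\psi)$ then has a good filtration, so each bracket is a genuine good-filtration multiplicity, Theorem \ref{cohcrit} applies term by term, and your Hom-space computation $\dim\Hom_G(\Delta(\lambda^*),T((p^r-1)\rho+\mu)\otimes L(\psi))=s_{\lambda,\mu}^r(\psi^*)$ (which is correct) finishes the argument, including for the terms where this number is zero. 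The rest of your write-up --- the $\mu=0$ specialization via Andersen--Haboush and the final sentence --- is fine.
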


\begin{proof}
First, we claim that $\St_r\otimes L(\psi)$ has a good filtration whenever $a_{\psi}\neq 0$.

To see this, note that for such $\psi$ we have $\psi\leq \nu^*$, and then the claim is clear from the proof of Theorem \ref{soclegoodfilt}.

By Theorem \ref{necessarybigp} we see that $s_{\lambda,\mu}^r(\nu) = [T((p^r-1)\rho + \mu)\otimes L(\nu^*):\nabla(\lambda^*)]_{\nabla}$, and this is completely determined by the character of $T((p^r-1)\rho + \mu)\otimes L(\nu^*)$. Thus we have $$s_{\lambda,\mu}^r(\nu) = [T((p^r-1)\rho + \mu)\otimes\nabla(\nu^*):\nabla(\lambda^*)]_{\nabla} - \sum_{\psi\in X_+}a_{\psi}[T((p^r-1)\rho + \mu)\otimes L(\psi):\nabla(\lambda^*)]_{\nabla}.$$

Now we apply Theorem \ref{necessarybigp} again to see that whenever $a_{\psi}\neq 0$ we have $s_{\lambda,\mu}^r(\psi^*) = [T((p^r-1)\rho + \mu)\otimes L(\psi):\nabla(\lambda^*)]_{\nabla}$, which finishes the proof.
\end{proof}

In the above, most of the assumptions were there to ensure that $T((p^r-1)\rho + \mu)\otimes L(\nu^*)$ had a good filtration. We can therefore also formulate the following which has somewhat different assumptions.

\begin{thm}
Let $\lambda,\mu\in X_+$ and $\nu\in \Gamma_r$. Then $$s_{\lambda,\mu}^r(\nu) = [T((p^r-1)\rho + \mu)\otimes L(\nu^*):\nabla(\lambda^*)]_{\nabla}.$$ In particular, if $\nu = \nu_0 + p^r\nu_1$ with $\nu_0\in X_r$ then $$s_{\lambda}^r(\nu) = [\nabla((p^r-1)\rho + p^r\nu_1^*)\otimes L(\nu_0^*):\nabla(\lambda^*)]_{\nabla}.$$
\end{thm}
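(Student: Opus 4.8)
The plan is to mimic the proof of Corollary~\ref{srgivencharacter} and Theorem~\ref{necessarybigp}, but to relocate the "good filtration" input so that it comes from the hypothesis $\nu\in\Gamma_r$ rather than from the assumption $p\geq 2h-2$ together with $s_{\lambda,\mu}^r(\nu)\neq 0$. First I would note that since $\nu\in\Gamma_r$, every composition factor $L(\psi)$ of $\nabla(\nu^*)$ satisfies $\psi\leq\nu^*\in\Gamma_r$, hence $\psi\in\Gamma_r$, so by Theorem~\ref{lambdaalpha0} (recalling $\Gamma_r=\emptyset$ unless $p\geq h$) and Proposition~\ref{tensorproductsteinberggood} the module $\St_r\otimes L(\nu^*)$ has a good filtration; then Theorem~\ref{goodfilttensorwithtilting} gives that $T((p^r-1)\rho+\mu)\otimes L(\nu^*)$ has a good filtration for any $\mu\in X_+$.

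Next I would run the standard adjunction/duality computation:
\[
s_{\lambda,\mu}^r(\nu)=\dim\Hom_G(L(\nu),T((p^r-1)\rho+\mu)\otimes\nabla(\lambda))=\dim\Hom_G(\Delta(\lambda^*),T((p^r-1)\rho+\mu)\otimes L(\nu^*)),
\]
using Proposition~\ref{tiltingdual} to pass from $\nabla(\lambda)$ to $\Delta(\lambda^*)$ via the self-duality of the tilting module (the same manipulation as in the proof of Proposition~\ref{necessary}). Since the right-hand module has a good filtration, Theorem~\ref{cohcrit} identifies this dimension with $[T((p^r-1)\rho+\mu)\otimes L(\nu^*):\nabla(\lambda^*)]_{\nabla}$, which is the first claimed formula.

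For the "in particular" statement, write $\nu=\nu_0+p^r\nu_1$ with $\nu_0\in X_r$. By Lemma~\ref{gammar}, since $\nu\in\Gamma_r$ we get $\nu_1\in\Gamma_0$, so $\nabla(\nu_1)$ is simple and therefore $L(\nu_1)\cong\nabla(\nu_1)\cong T(\nu_1)$; dualizing, $L(\nu_1^*)\cong\nabla(\nu_1^*)\cong T(\nu_1^*)$. Steinberg's tensor product theorem gives $L(\nu^*)\cong L(\nu_0^*)\otimes L(\nu_1^*)^{(r)}$, and then the Andersen--Haboush tensor product theorem applied to $\St_r\otimes\nabla(\nu_1^*)^{(r)}\cong\nabla((p^r-1)\rho+p^r\nu_1^*)$ yields $\St_r\otimes L(\nu^*)\cong\nabla((p^r-1)\rho+p^r\nu_1^*)\otimes L(\nu_0^*)$, so the first formula with $\mu=0$ becomes $s_{\lambda}^r(\nu)=[\nabla((p^r-1)\rho+p^r\nu_1^*)\otimes L(\nu_0^*):\nabla(\lambda^*)]_{\nabla}$, as required.

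The only mildly delicate point — the place I would be most careful about — is justifying the passage from $\Hom_G(L(\nu),T\otimes\nabla(\lambda))$ to $\Hom_G(\Delta(\lambda^*),T\otimes L(\nu^*))$ cleanly without inadvertently importing a hypothesis we no longer want: one uses $\Hom_G(L(\nu),T\otimes\nabla(\lambda))\cong\Hom_G(L(\nu)\otimes\nabla(\lambda)^*,T)\cong\Hom_G((T\otimes\nabla(\lambda)^*)^*,L(\nu)^*)$ via $\Hom_G(A,B)\cong\Hom_G(B^*,A^*)$ for finite-dimensional modules, together with $T^*\cong T((p^r-1)\rho+\mu)^*$ being again tilting (Proposition~\ref{tiltingdual}) and $\nabla(\lambda)^{**}\cong\nabla(\lambda)$, $\nabla(\lambda)^*\cong\Delta(\lambda^*)$. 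There is no real obstacle here; the content is entirely in having already proved the good-filtration fact from $\nu\in\Gamma_r$, which is exactly the observation in the proof of Theorem~\ref{soclegoodfilt} reused under a cleaner hypothesis.
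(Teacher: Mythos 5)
Your proof is correct and follows essentially the same route as the paper: the hypothesis $\nu\in\Gamma_r$ (hence $\nu^*\in\Gamma_r$, and $p\geq h$ since $\Gamma_r\neq\emptyset$) gives via Theorem \ref{lambdaalpha0} and Theorem \ref{goodfilttensorwithtilting} that $T((p^r-1)\rho+\mu)\otimes L(\nu^*)$ has a good filtration, after which the adjunction and Theorem \ref{cohcrit} yield the formula, and the Andersen--Haboush theorem handles the $\mu=0$ specialization. The only cosmetic difference is your detour through the composition factors of $\nabla(\nu^*)$ and Proposition \ref{tensorproductsteinberggood}, which is unnecessary since Theorem \ref{lambdaalpha0} applies directly to $L(\nu^*)$.
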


\begin{proof}
By Theorem \ref{lambdaalpha0} and Theorem \ref{goodfilttensorwithtilting} $T((p^r-1)\rho + \mu)\otimes L(\nu^*)$ has a good filtration, so by Theorem \ref{cohcrit} we get $$s_{\lambda,\mu}^r(\nu) = \dim\Hom_G(L(\nu),T((p^r-1)\rho + \mu)\otimes\nabla(\lambda))$$ $$= \dim\Hom_G(\Delta(\lambda^*),T((p^r-1)\rho + \mu)\otimes L(\nu^*)) = [T((p^r-1)\rho + \mu)\otimes L(\nu^*):\nabla(\lambda^*)]_{\nabla}$$ as claimed.

The second claim follows by applying the Andersen--Haboush tensor product theorem.
\end{proof}

And we also get a similar corollary.

\begin{cor}
Let $\lambda,\mu\in X_+$ and $\nu\in \Gamma_r$. Write $$[\nabla(\nu^*)] = [L(\nu^*)] +  \sum_{\psi\in X_+,\, \psi\neq\nu^*}a_{\psi}[L(\psi)].$$ Then $$s_{\lambda,\mu}^r(\nu) = [T((p^r-1)\rho + \mu)\otimes \nabla(\nu^*):\nabla(\lambda^*)]_{\nabla} - \sum_{\psi\in X_+}a_{\psi}s_{\lambda,\mu}^r(\psi^*).$$ In particular, $$s_{\lambda}^r(\nu) = [\St_r\otimes \nabla(\nu^*):\nabla(\lambda^*)]_{\nabla} - \sum_{\psi\in X_+}a_{\psi}s_{\lambda}^r(\psi^*),$$ and if $a_{\psi}\neq 0$ implies that $s_{\lambda,\mu}^r(\psi^*) = 0$ then $s_{\lambda,\mu}^r(\nu) = [T((p^r-1)\rho + \mu)\otimes \nabla(\nu^*):\nabla(\lambda^*)]_{\nabla}$.
\end{cor}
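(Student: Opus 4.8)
The plan is to mirror almost verbatim the argument for the corollary in the previous subsection (\texttt{srgivencharacter}), the only difference being that the hypothesis $\lambda+\mu\in\Gamma_r$ together with $p\geq 2h-2$ is replaced by the single hypothesis $\nu\in\Gamma_r$, which is exactly the hypothesis under which the theorem immediately preceding this corollary applies. So first I would invoke that theorem to get $s_{\lambda,\mu}^r(\nu) = [T((p^r-1)\rho+\mu)\otimes L(\nu^*):\nabla(\lambda^*)]_{\nabla}$, and note that this multiplicity is determined entirely by the character of $T((p^r-1)\rho+\mu)\otimes L(\nu^*)$ (since the module has a good filtration, by Theorem \ref{cohcrit} the multiplicity is a linear functional of the character).

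Next I would expand using the given decomposition $[\nabla(\nu^*)] = [L(\nu^*)] + \sum_{\psi\neq\nu^*} a_{\psi}[L(\psi)]$, i.e. $[L(\nu^*)] = [\nabla(\nu^*)] - \sum_{\psi} a_{\psi}[L(\psi)]$, to write
\[
s_{\lambda,\mu}^r(\nu) = [T((p^r-1)\rho+\mu)\otimes\nabla(\nu^*):\nabla(\lambda^*)]_{\nabla} - \sum_{\psi\in X_+} a_{\psi}[T((p^r-1)\rho+\mu)\otimes L(\psi):\nabla(\lambda^*)]_{\nabla},
\]
where the first term on the right makes sense because $T((p^r-1)\rho+\mu)\otimes\nabla(\nu^*)$ has a good filtration by Theorem \ref{tensorproductgoodfilt}, and each summand makes sense because for $a_\psi\neq 0$ we have $\psi\leq\nu^*$, hence $\psi\in\Gamma_r$ (since $\Gamma_r$ is closed downward and $\nu\in\Gamma_r\iff\nu^*\in\Gamma_r$), so $\St_r\otimes L(\psi)$ — and therefore $T((p^r-1)\rho+\mu)\otimes L(\psi)$ — has a good filtration by Theorem \ref{lambdaalpha0} and Theorem \ref{goodfilttensorwithtilting}. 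The point I must be slightly careful about is that this character manipulation is valid only once all the relevant modules are known to have good filtrations; the downward-closure of $\Gamma_r$ is what makes this work uniformly in $\psi$, and it is really the only place the hypothesis $\nu\in\Gamma_r$ is used beyond invoking the preceding theorem.

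Then I would apply the preceding theorem once more, this time to the weight $\psi^*$ (legitimate since $\psi\in\Gamma_r\iff\psi^*\in\Gamma_r$), to identify $[T((p^r-1)\rho+\mu)\otimes L(\psi):\nabla(\lambda^*)]_{\nabla} = s_{\lambda,\mu}^r(\psi^*)$, which yields the displayed formula $s_{\lambda,\mu}^r(\nu) = [T((p^r-1)\rho+\mu)\otimes\nabla(\nu^*):\nabla(\lambda^*)]_{\nabla} - \sum_\psi a_\psi s_{\lambda,\mu}^r(\psi^*)$. The $\mu=0$ specialization follows by substituting $T((p^r-1)\rho) = \St_r$ and using the Andersen--Haboush tensor product theorem exactly as in the earlier corollary, and the final ``in particular'' clause is immediate: if $a_\psi\neq 0\Rightarrow s_{\lambda,\mu}^r(\psi^*)=0$, the sum vanishes. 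I do not anticipate a genuine obstacle here — the work was done in the preceding theorem and in Theorem \ref{lambdaalpha0}; the only thing to get right is the bookkeeping of which modules have good filtrations, and that is handled entirely by the downward-closure of $\Gamma_r$ noted in the text at the start of Subsection 3.2.
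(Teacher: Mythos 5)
Your proof is correct and takes essentially the same route as the paper's: the paper merely observes that $T((p^r-1)\rho+\mu)\otimes L(\nu^*)$ has a good filtration by Theorems \ref{lambdaalpha0} and \ref{goodfilttensorwithtilting} and then says the rest "follows in the same way as Corollary \ref{srgivencharacter}", which is precisely the character expansion and double application of the preceding theorem that you spell out. Your use of the downward closure and $*$-stability of $\Gamma_r$ to handle the weights $\psi$ is exactly the point the paper relies on (via $\psi\leq\nu^*$), so there is no gap; the only cosmetic remark is that the $\mu=0$ specialization needs nothing beyond $T((p^r-1)\rho)=\St_r$, so the appeal to the Andersen--Haboush theorem there is superfluous.
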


\begin{proof}
Since $T((p^r-1)\rho + \mu)\otimes L(\nu^*)$ has a good filtration by Theorem \ref{lambdaalpha0} and Theorem \ref{goodfilttensorwithtilting} this follows in the same way as Corollary \ref{srgivencharacter}.
\end{proof}

\subsection{Inductive formulas}

In this section, we will give ways to relate $t_{\lambda}^r$ to $t_{\lambda'}^u$ for suitable $\lambda'$ and $u\leq r$, in some cases reducing everything to the $r=1$ case.

Note that the condition that $\nabla(\lambda)\cong\nabla(\lambda_0)\otimes\nabla(\lambda_1)^{(u)}$ in the following is a very strong assumption, but that it at least holds if either $\lambda_0 = (p^u-1)\rho$ or if all of $\nabla(\lambda)$, $\nabla(\lambda_0)$ and $\nabla(\lambda_1)$ are simple (the first follows by the Andersen--Haboush tensor product theorem and the second by Steinberg's tensor product theorem).

\begin{prop}\label{inductively}
Assume that $p\geq 2h-2$, let $\lambda\in\Gamma_r$ and write $\lambda = \lambda_0 + p^u\lambda_1$ with $\lambda_0\in X_u$ for some $u\leq r$. Assume that $\nabla(\lambda)\cong \nabla(\lambda_0)\otimes\nabla(\lambda_1)^{(u)}$. Then for any $\nu = \nu_0 + p^u\nu_1\in X_+$ with $\nu_0\in (p^u-1)\rho + X_u$ we have $$t_{\lambda}^r(\nu) = \sum_{\sigma\in \Gamma_0}\sum_{\mu \in \Gamma_{r-u}}t_{\lambda_0}^u(\nu_0 + p^u\sigma)t_{\lambda_1}^{r-u}((p^{r-u}-1)\rho + \mu)t_{\sigma,\mu}^{r-u}(\nu_1).$$
\end{prop}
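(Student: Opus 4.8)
The plan is to compute the tilting-number $t_\lambda^r(\nu)$ by decomposing the tensor product $\St_r \otimes \nabla(\lambda)$ in stages, using the factorization $\St_r \cong \St_u \otimes \St_{r-u}^{(u)}$ (which follows from the Andersen--Haboush tensor product theorem applied with $\lambda = 0$, or directly from Steinberg's tensor product theorem) together with the hypothesis $\nabla(\lambda) \cong \nabla(\lambda_0) \otimes \nabla(\lambda_1)^{(u)}$. Thus
\[
\St_r \otimes \nabla(\lambda) \cong \bigl(\St_u \otimes \nabla(\lambda_0)\bigr) \otimes \bigl(\St_{r-u} \otimes \nabla(\lambda_1)\bigr)^{(u)}.
\]
First I would expand the inner factor $\St_u \otimes \nabla(\lambda_0) = \bigoplus_{\rho'} t_{\lambda_0}^u(\rho') T(\rho')$, and note that since $\lambda_0 \in X_u$ we have $\langle \lambda_0, \alpha_0^\vee\rangle < p^u$, so Proposition \ref{trnonzerorestricted} forces every $\rho'$ with $t_{\lambda_0}^u(\rho') \neq 0$ to have the form $\rho' = (p^u-1)\rho + \sigma'$ with $\sigma' \in X_u$; moreover since $\lambda_0 \in \Gamma_u$ (as $p \geq 2h-2$ gives $X_u \subseteq \Gamma_u$), Proposition \ref{trnonzero} refines this to $\sigma' \in X_u \cap \Gamma_u$, and we may reindex these summands as $T((p^u-1)\rho + \sigma_0)$ with $\sigma_0 \in X_u$, whence by Corollary \ref{relationts} the multiplicity is $s_{\lambda_0}^u(w_u(\sigma_0))$; equivalently, writing $\nu_0 = (p^u-1)\rho + \tau_0$ with $\tau_0 \in X_u$, the relevant bookkeeping will be organized so that $t_{\lambda_0}^u(\nu_0 + p^u\sigma)$ appears.

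Next I would expand the twisted factor. By Donkin's tensor product theorem, for each indecomposable summand $T((p^{r-u}-1)\rho + \mu)$ of $\St_{r-u} \otimes \nabla(\lambda_1)$ (here $\lambda_1 \in \Gamma_{r-u}$ by Lemma \ref{gammar}, so these summands are controlled and $\mu \in \Gamma_{r-u}$), the twist $T((p^{r-u}-1)\rho + \mu)^{(u)}$ tensored against a summand $T((p^u-1)\rho + \sigma_0)$ from the first factor recombines: the key point is that $T((p^u-1)\rho + \sigma_0) = T((p^u-1)\rho + \sigma_0)$ with $\sigma_0 \in X_u$, and one tensors it with $\bigl(\St_{r-u} \otimes \nabla(\lambda_1)\bigr)^{(u)}$. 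Rather than go through Donkin's tensor product theorem summand-by-summand, the cleaner route is: $T((p^u-1)\rho+\sigma_0) \otimes \bigl(\St_{r-u}\otimes\nabla(\lambda_1)\bigr)^{(u)} = T((p^u-1)\rho+\sigma_0) \otimes \bigl(\bigoplus_\mu t_{\lambda_1}^{r-u}((p^{r-u}-1)\rho+\mu) T((p^{r-u}-1)\rho+\mu)\bigr)^{(u)}$, and then for each $\mu$ one applies a nested decomposition. Since $p \geq 2h-2$ and $\sigma_0\in X_u$, Donkin's tilting conjecture holds for $(p^u-1)\rho+\sigma_0$, so $T((p^u-1)\rho+\sigma_0)\otimes T((p^{r-u}-1)\rho+\mu)^{(u)} \cong T(\ldots)$ precisely when the smaller factor further decomposes appropriately — but in general we get $T((p^u-1)\rho + \sigma_0) \otimes M^{(u)}$ for $M = \St_{r-u}\otimes\nabla(\lambda_1)$, and the indecomposable tilting summands of this twisted tensor product are exactly indexed by the tilting-numbers $t_{\sigma,\mu}^{r-u}$ once one identifies $\sigma_0 = w_u(\text{something})$; this is where the factor $t_{\sigma,\mu}^{r-u}(\nu_1)$ enters.

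Concretely, the heart of the argument is the following identity, to be proved by combining Donkin's tensor product theorem with the classification of tilting modules: for $\sigma_0 \in X_u$ and $\mu \in X_+$,
\[
T((p^u-1)\rho + \sigma_0) \otimes T((p^{r-u}-1)\rho + \mu)^{(u)} \cong \bigoplus_{\xi \in X_+} t_{w_u(\sigma_0)^{\ast}??}^{\,\cdot}(\cdots)\, T(\cdots),
\]
so I would instead phrase it as: tensoring $\St_u \otimes \nabla(\lambda_0)$ (a sum of $T((p^u-1)\rho+\sigma_0)$'s) with the $u$-twist of $\St_{r-u}\otimes\nabla(\lambda_1)$ (a sum of $T((p^{r-u}-1)\rho+\mu)^{(u)}$'s), and using that $T((p^u-1)\rho+\sigma_0)\otimes T((p^{r-u}-1)\rho+\mu)^{(u)}$ is itself a sum of indecomposable tiltings whose multiplicities, by Corollary \ref{relationts} and socle considerations (Lemma \ref{tiltingsoclelarger}), are governed by the numbers $t_{\sigma,\mu}^{r-u}(\nu_1)$ after identifying highest weights via $\nu_0 + p^u\sigma \leftrightarrow (p^u-1)\rho + (\text{restricted part})$ and $w_u$. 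Collecting all three multiplicity factors and summing over the intermediate indices $\sigma \in \Gamma_0$ (the allowed twists of the first factor) and $\mu \in \Gamma_{r-u}$ (the summands of the second factor) yields the claimed formula, after checking that the ranges $\sigma \in \Gamma_0$ and $\mu \in \Gamma_{r-u}$ are precisely the nonzero ranges — $\sigma \in \Gamma_0$ because the extra twisted part of $\nu_0$ must lie in $\Gamma_0$ for the corresponding $t_{\lambda_0}^u$ to be nonzero (Lemma \ref{gammar} applied to $\lambda_0$'s summands combined with Proposition \ref{trnonzero}), and $\mu\in\Gamma_{r-u}$ because $\lambda_1\in\Gamma_{r-u}$.

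The main obstacle I anticipate is the bookkeeping in the last step: correctly matching the highest weight $\nu = \nu_0 + p^u\nu_1$ of an indecomposable summand $T(\nu)$ of the whole product against the triple of intermediate weights, and verifying that the multiplicity of $T(\nu)$ is exactly the product $t_{\lambda_0}^u(\nu_0 + p^u\sigma)\, t_{\lambda_1}^{r-u}((p^{r-u}-1)\rho + \mu)\, t_{\sigma,\mu}^{r-u}(\nu_1)$ with no overcounting. This requires that the decomposition of $T((p^u-1)\rho+\sigma_0)\otimes T((p^{r-u}-1)\rho+\mu)^{(u)}$ into indecomposables be rigid enough — which is guaranteed by $p\geq 2h-2$ via Donkin's tilting conjecture and Lemma \ref{tiltingsoclelarger}, since the socle of each $T((p^u-1)\rho+\sigma_0)$ is simple and lies in the bottom Frobenius layer, so Hom-spaces factor through the Frobenius kernel cleanly (exactly as in the proof of Lemma \ref{tiltingsoclelarger}). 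Once that rigidity is in hand, the formula drops out by counting $\dim\Hom_G(L(\nu), -)$ on both sides and using $s = t\circ w_r$ (Corollary \ref{relationts}) to translate back to tilting-numbers.
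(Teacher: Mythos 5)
Your opening move is the same as the paper's: factor $\St_r\otimes\nabla(\lambda)\cong(\St_u\otimes\nabla(\lambda_0))\otimes(\St_{r-u}\otimes\nabla(\lambda_1))^{(u)}$ and expand each factor into indecomposable tiltings. But the argument has a genuine gap at the crucial step, and it is visible in your own text: the displayed ``identity'' you call the heart of the argument is left with placeholders, and the surrounding discussion never pins down where the factor $t_{\sigma,\mu}^{r-u}(\nu_1)$ actually comes from. Two concrete problems. First, your appeal to Proposition \ref{trnonzerorestricted} is not valid: $\lambda_0\in X_u$ bounds $\langle\lambda_0,\alpha^{\vee}\rangle$ for \emph{simple} $\alpha$, not $\langle\lambda_0,\alpha_0^{\vee}\rangle$, so you cannot conclude that every summand of $\St_u\otimes\nabla(\lambda_0)$ has highest weight in $(p^u-1)\rho+X_u$. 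In general these highest weights are $(p^u-1)\rho+\gamma_0'+p^u\sigma$ with $\gamma_0'\in X_u$ and $\sigma\in\Gamma_0$ possibly nonzero (Proposition \ref{trnonzero} plus Lemma \ref{gammar}); if your claim were true the sum over $\sigma$ would collapse to $\sigma=0$ and you would be proving Corollary \ref{inductivelycor}, which needs exactly that extra hypothesis. Second, you try to locate $t_{\sigma,\mu}^{r-u}$ in the decomposition of $T((p^u-1)\rho+\sigma_0)\otimes T((p^{r-u}-1)\rho+\mu)^{(u)}$ with $\sigma_0\in X_u$; but under $p\geq 2h-2$ Donkin's tensor product theorem says this is already the \emph{indecomposable} module $T((p^u-1)\rho+\sigma_0+p^u((p^{r-u}-1)\rho+\mu))$, so no such multiplicities can appear there.

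The missing idea is the following bookkeeping with the ``overflow'' part $\sigma$. For a summand $T(\gamma)$ of $\St_u\otimes\nabla(\lambda_0)$ write $\gamma=\gamma_0+p^u\sigma$ with $\gamma_0\in(p^u-1)\rho+X_u$ and $\sigma\in\Gamma_0$; then $T(\gamma)\cong T(\gamma_0)\otimes T(\sigma)^{(u)}\cong T(\gamma_0)\otimes\nabla(\sigma)^{(u)}$ because $\sigma\in\Gamma_0$ forces $T(\sigma)\cong L(\sigma)\cong\nabla(\sigma)$. The untwisted copy of $\nabla(\sigma)$ sitting inside the twist is then absorbed into the second factor, producing $\bigl(T((p^{r-u}-1)\rho+\mu)\otimes\nabla(\sigma)\bigr)^{(u)}$, whose decomposition is \emph{by definition} $\bigoplus_{\varphi}t_{\sigma,\mu}^{r-u}(\varphi)T(\varphi)^{(u)}$; a final application of Donkin's tensor product theorem gives $T(\gamma_0)\otimes T(\varphi)^{(u)}\cong T(\gamma_0+p^u\varphi)$, and reading off the coefficient of $T(\nu_0+p^u\nu_1)$ yields the triple product summed over $\sigma\in\Gamma_0$ and $\mu\in\Gamma_{r-u}$. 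Without this step your proposal cannot produce the stated formula.
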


\begin{proof}
Since $\St_r\cong \St_u\otimes\St_{r-u}^{(u)}$ by Steinberg's tensor product theorem we can write 
$$\St_r\otimes\nabla(\lambda) \cong (\St_u\otimes \nabla(\lambda_0))\otimes (\St_{r-u}\otimes\nabla(\lambda_1))^{(u)}$$ $$\cong \left(\bigoplus_{\gamma\in X_+}t_{\lambda_0}^u(\gamma)T(\gamma)\right)\otimes\left(\bigoplus_{\psi\in X_+}t_{\lambda_1}^{r-u}(\psi)T(\psi)\right)^{(u)}$$
and for each $\gamma$ with $t_{\lambda_0}^u(\gamma)\neq 0$ we can write $\gamma = (p^u-1)\rho + \gamma_0' + p^u\sigma$ with $\gamma_0'\in X_u$ and $\sigma\in\Gamma_0$ by Proposition \ref{trnonzero} and Lemma \ref{gammar}. Set $\gamma_0 = (p^u-1)\rho + \gamma_0'$ so $T(\gamma)\cong T(\gamma_0)\otimes T(\sigma)^{(u)}\cong T(\gamma_0)\otimes \nabla(\sigma)^{(u)}$ by Donkin's tensor product theorem (since $\sigma\in\Gamma_0$ so $T(\sigma)\cong L(\sigma)\cong \nabla(\sigma)$). We can then rearrange the above to get 
$$\bigoplus_{\gamma_0\in (p^u-1)\rho + X_u}\bigoplus_{\sigma\in\Gamma_0}\bigoplus_{\psi\in X_+}t_{\lambda_0}^u(\gamma_0+p^u\sigma)t_{\lambda_1}^{r-u}(\psi)\left(T(\gamma_0)\otimes\left(T(\psi)\otimes \nabla(\sigma)\right)^{(u)}\right)$$
and similarly to above, whenever $t_{\lambda_1}^{r-u}(\psi)\neq 0$ we can write $\psi = (p^{r-u}-1)\rho + \mu$ with $\mu\in\Gamma_{r-u}$ so we can rewrite further by expanding the tensor product $T(\psi)\otimes\nabla(\sigma) = T((p^{r-u}-1)\rho + \mu)\otimes\nabla(\sigma)$
$$\bigoplus_{\gamma_0\in (p^u-1)\rho + X_u}\bigoplus_{\sigma\in\Gamma_0}\bigoplus_{\mu\in\Gamma_{r-u}}\bigoplus_{\varphi\in X_+}t_{\lambda_0}^u(\gamma_0 + p^u\sigma)t_{\lambda_1}^{r-u}((p^{r-u}-1)\rho + \mu)t_{\sigma,\mu}^{r-u}(\varphi)\left(T(\gamma_0)\otimes T(\varphi)^{(u)}\right)$$
$$\cong\bigoplus_{\gamma_0\in (p^u-1)\rho + X_u}\bigoplus_{\sigma\in\Gamma_0}\bigoplus_{\mu\in\Gamma_{r-u}}\bigoplus_{\varphi\in X_+}t_{\lambda_0}^u(\gamma_0 + p^u\sigma)t_{\lambda_1}^{r-u}((p^{r-u}-1)\rho + \mu)t_{\sigma,\mu}^{r-u}(\varphi)T(\gamma_0 + p^u\varphi)$$ 
from which the claim follows by fixing $\gamma_0 = \nu_0$ and $\varphi = \nu_1$.
\end{proof}

\begin{cor}\label{inductivelycor}
Assume that $p\geq 2h-2$, let $\lambda\in\Gamma_r$ and write $\lambda = \lambda_0 + p^u\lambda_1$ with $\lambda_0\in X_u$ for some $u\leq r$. Assume that $\nabla(\lambda)\cong \nabla(\lambda_0)\otimes\nabla(\lambda_1)^{(u)}$ and further that $t_{\lambda_0}^u(\nu)\neq 0\implies \nu\in (p^u-1)\rho + X_u$. Then for any $\nu\in X_+$ with $t_{\lambda}^r(\nu)\neq 0$ we can write $\nu = \nu_0 + p^u\nu_1$ with $\nu_0\in (p^u-1)\rho + X_u$ and $\nu_1\in (p^{r-u}-1)\rho + X_+$ and for such $\nu$ we have $t_{\lambda}^r(\nu) = t_{\lambda_0}^u(\nu_0)t_{\lambda_1}^{r-u}(\nu_1)$.
\end{cor}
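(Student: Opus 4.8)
The plan is to specialize Proposition \ref{inductively} and extract the single nonzero term in the double sum. First I would apply Proposition \ref{inductively}, which already gives
$$t_{\lambda}^r(\nu) = \sum_{\sigma\in \Gamma_0}\sum_{\mu \in \Gamma_{r-u}}t_{\lambda_0}^u(\nu_0 + p^u\sigma)t_{\lambda_1}^{r-u}((p^{r-u}-1)\rho + \mu)t_{\sigma,\mu}^{r-u}(\nu_1)$$
for $\nu = \nu_0 + p^u\nu_1$ with $\nu_0\in (p^u-1)\rho + X_u$; so I first need to check that any $\nu$ with $t_\lambda^r(\nu)\neq 0$ is automatically of this form. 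By Proposition \ref{trnonzero} (using $\lambda\in\Gamma_r$ and $p\geq 2h-2$) we have $\nu = (p^r-1)\rho + \nu'$ with $\nu'\in\Gamma_r$; writing $\nu' = \nu_0' + p^u\nu_1'$ with $\nu_0'\in X_u$ and setting $\nu_0 = (p^u-1)\rho + \nu_0'$, $\nu_1 = (p^{r-u}-1)\rho + \nu_1'$, and using that $(p^r-1)\rho = (p^u-1)\rho + p^u(p^{r-u}-1)\rho$, we get $\nu = \nu_0 + p^u\nu_1$ with $\nu_0\in (p^u-1)\rho + X_u$ and $\nu_1\in (p^{r-u}-1)\rho + X_+$, as required; moreover $\nu_1'\in\Gamma_{r-u}$ by Lemma \ref{gammar}.

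Next I would argue that in the double sum the only surviving contribution is $\sigma = 0$, $\mu = \nu_1'$. The extra hypothesis $t_{\lambda_0}^u(\tau)\neq 0\implies \tau\in (p^u-1)\rho + X_u$ forces $\nu_0 + p^u\sigma\in (p^u-1)\rho + X_u$ whenever the first factor is nonzero; since $\nu_0\in (p^u-1)\rho + X_u$ already and $X_u$ is ``bounded'' in the relevant direction (i.e. $\nu_0 - (p^u-1)\rho$ and $\nu_0 - (p^u-1)\rho + p^u\sigma$ both lie in $X_u$, with $\sigma\in X_+$), this pins down $\sigma = 0$. For the $\mu$ sum, I would use that $t_{\sigma,\mu}^{r-u}(\nu_1) = t_{0,\mu}^{r-u}(\nu_1)$ once $\sigma = 0$, and that this is $t_{\mu}^{r-u}$-type data: by the analogue of Proposition \ref{trnonzero} applied to $T((p^{r-u}-1)\rho + \mu)\otimes\nabla(0) = T((p^{r-u}-1)\rho + \mu)$ — which is indecomposable tilting — the module $T((p^{r-u}-1)\rho+\mu)$ equals $T(\nu_1)$ iff $\mu = \nu_1' $, so $t_{0,\mu}^{r-u}(\nu_1) = \delta_{\mu,\nu_1'}$. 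Substituting $\sigma = 0$, $\mu = \nu_1'$ into the remaining factors gives $t_{\lambda_0}^u(\nu_0)\,t_{\lambda_1}^{r-u}((p^{r-u}-1)\rho + \nu_1')\cdot 1 = t_{\lambda_0}^u(\nu_0)\,t_{\lambda_1}^{r-u}(\nu_1)$, which is the claimed formula.

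The main obstacle I anticipate is the bookkeeping around the $\mu$-summation: one must be careful that $t_{0,\mu}^{r-u}(\nu_1)$ really does behave like a Kronecker delta, i.e. that tensoring the indecomposable tilting module $T((p^{r-u}-1)\rho+\mu)$ with $\nabla(0)\cong k$ leaves it indecomposable and that its decomposition into indecomposable tiltings is trivial — this is immediate, but one should phrase it cleanly using the classification of tilting modules. A second, smaller point is verifying that the map $\nu\mapsto(\nu_0,\nu_1)$ is well-defined and that every required membership ($\nu_0\in(p^u-1)\rho+X_u$, $\nu_1\in(p^{r-u}-1)\rho+X_+$, $\nu_1'\in\Gamma_{r-u}$) holds, which follows by combining Proposition \ref{trnonzero}, Lemma \ref{gammar}, and the base-$p^u$ decomposition of $(p^r-1)\rho$ as above. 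Everything else is direct substitution into Proposition \ref{inductively}.
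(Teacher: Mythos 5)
Your proof is correct, but it takes a different route from the paper's. The paper proves the corollary by rerunning the tensor decomposition directly: it writes $\St_r\otimes\nabla(\lambda)\cong(\St_u\otimes\nabla(\lambda_0))\otimes(\St_{r-u}\otimes\nabla(\lambda_1))^{(u)}$, expands each factor into indecomposable tiltings, and uses the extra hypothesis that every $\gamma$ with $t_{\lambda_0}^u(\gamma)\neq 0$ lies in $(p^u-1)\rho+X_u$ to apply Donkin's tensor product theorem term by term, obtaining $\bigoplus_{\gamma,\psi}t_{\lambda_0}^u(\gamma)t_{\lambda_1}^{r-u}(\psi)T(\gamma+p^u\psi)$ at once. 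You instead treat the corollary as a genuine corollary: you take the double-sum formula of Proposition \ref{inductively} as given and show that under the extra hypothesis only the term $\sigma=0$, $\mu=\nu_1'$ survives --- your argument that $\nu_0+p^u\sigma\in(p^u-1)\rho+X_u$ forces $\sigma=0$ (a nonzero dominant $\sigma$ pushes some coordinate past $2p^u-2$) and that $t_{0,\mu}^{r-u}(\nu_1)=\delta_{\mu,\nu_1'}$ by indecomposability of $T((p^{r-u}-1)\rho+\mu)$ are both sound, as is your verification via Proposition \ref{trnonzero} and Lemma \ref{gammar} that every $\nu$ with $t_\lambda^r(\nu)\neq 0$ admits the required (unique) decomposition $\nu=\nu_0+p^u\nu_1$. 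The paper's version avoids invoking the heavier machinery of the proposition's proof a second time only in appearance --- it essentially repeats the first half of it --- whereas yours makes the logical dependence on Proposition \ref{inductively} explicit and isolates exactly where the extra hypothesis on $t_{\lambda_0}^u$ is used, at the small cost of the delta-function bookkeeping for the $\mu$-sum.
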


\begin{proof}
Since $\St_r\cong \St_u\otimes\St_{r-u}^{(u)}$ by Steinberg's tensor product theorem we can write 
$$\St_r\otimes\nabla(\lambda) \cong (\St_u\otimes \nabla(\lambda_0))\otimes (\St_{r-u}\otimes\nabla(\lambda_1))^{(u)}$$ $$\cong \left(\bigoplus_{\gamma\in X_+}t_{\lambda_0}^u(\gamma)T(\gamma)\right)\otimes\left(\bigoplus_{\psi\in X_+}t_{\lambda_1}^{r-u}(\psi)T(\psi)\right)^{(u)}$$ and since by assumption each $\gamma$ with $t_{\lambda_0}^u(\gamma)\neq 0$ can be written as $\gamma = (p^u-1)\rho + \gamma_0$ with $\gamma_0\in X_u$ we can apply Donkin's tensor product theorem to get that this is isomorphic to $$\bigoplus_{\gamma\in X_+,\psi\in X_+}t_{\lambda_0}^u(\gamma)t_{\lambda_1}^{r-u}(\psi)T(\gamma + p^u\psi)$$ which immediately gives the claim.
\end{proof}

\subsection{Bounding from below}

The results in the previous section had a very strong requirement on the weight $\lambda$. In this section we will remove this requirement at the cost of changing the equalities to inequalities.

\begin{lm}\label{submoduleinduced}
Let $M$ be a $G$-module satisfying the following for some $\lambda\in X_+$.
\begin{itemize}
\item $\lambda$ is maximal with $M_{\lambda}\neq 0$.
\item $\dim(M_{\lambda}) = 1$.
\item $\soc_{G}(M) = L(\lambda)$.
\end{itemize}
Then there is an injective homomorphism $M\hookrightarrow \nabla(\lambda)$.
\end{lm}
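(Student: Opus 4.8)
The plan is to realize the desired embedding as a specific piece of the evaluation-at-the-highest-weight-vector map into $\nabla(\lambda) = \ind_B^G(\lambda)$. Recall that $\nabla(\lambda)$ has the universal property that $\Hom_G(V,\nabla(\lambda)) \cong \Hom_B(V,k_\lambda)$ for any $G$-module $V$, where $k_\lambda$ is the $1$-dimensional $B$-module of weight $\lambda$ (Frobenius reciprocity). So to produce a nonzero homomorphism $\varphi\colon M \to \nabla(\lambda)$, it suffices to produce a nonzero $B$-homomorphism $M \to k_\lambda$, and then I must separately argue that the resulting $\varphi$ is injective.

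For the first part, I would use that $\lambda$ is maximal among weights of $M$: then $M_\lambda$ is a $B$-submodule quotient in the sense that the projection $M \to M_\lambda$ (killing all other weight spaces, which is $B^+$-stable... more carefully) — let me instead use the dual formulation. Since $\lambda$ is maximal with $M_\lambda \neq 0$, no weight $\mu > \lambda$ occurs, so the sum of all weight spaces $M_\mu$ with $\mu \neq \lambda$ together with... Actually the clean statement: because $\lambda$ is maximal, $U^+$ (the unipotent radical of the opposite Borel, i.e. corresponding to positive roots) acts on a highest weight vector of weight $\lambda$ trivially up to lower weights, so $M_\lambda$ is annihilated by the augmentation ideal appropriately and we get a surjective $B$-module map $M \twoheadrightarrow M_\lambda \cong k_\lambda$ (one dimensional by hypothesis). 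This gives by Frobenius reciprocity a nonzero $\varphi \in \Hom_G(M,\nabla(\lambda))$.

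For injectivity: $\ker\varphi$ is a $G$-submodule of $M$. If it were nonzero, it would contain a simple submodule, hence it would meet $\soc_G(M) = L(\lambda)$ nontrivially, and since that socle is simple, $L(\lambda) \subseteq \ker\varphi$. But $L(\lambda)$ contains a vector of weight $\lambda$, and the composite $M_\lambda \hookrightarrow M \xrightarrow{\varphi} \nabla(\lambda)$ is, by construction (tracing through Frobenius reciprocity), the identification $M_\lambda \cong k_\lambda \cong \nabla(\lambda)_\lambda$, in particular injective on $M_\lambda$ — so no weight-$\lambda$ vector lies in $\ker\varphi$. This contradiction forces $\ker\varphi = 0$.

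The main obstacle is the bookkeeping in the first step: making precise that maximality of $\lambda$ makes $M_\lambda$ a $B$-module \emph{quotient} of $M$ (equivalently that $\bigoplus_{\mu\neq\lambda} M_\mu$ is a $B$-submodule), and then checking that the homomorphism $\varphi$ produced by Frobenius reciprocity genuinely restricts to an isomorphism on the $\lambda$-weight space rather than vanishing there — this is what couples the first and third steps and makes the injectivity argument go through. Both facts are standard (they underlie the construction of $\nabla(\lambda)$ and its simple socle in \cite[II.2]{rags}), so I would cite that rather than reprove it, and then the socle hypothesis does the rest cleanly.
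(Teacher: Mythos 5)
Your proposal is correct and follows essentially the same route as the paper: produce a nonzero map $M\to\nabla(\lambda)$ via Frobenius reciprocity using the maximality of $\lambda$ (the paper phrases this as $M$ having a $B$-filtration with $\lambda$ on top), then use that the map is nonzero on $M_\lambda$ together with $\soc_G(M)=L(\lambda)$ to force the kernel to be zero. The only cosmetic difference is that the paper deduces ``$M_\lambda$ is not in the kernel'' from the image containing $\soc_G(\nabla(\lambda))=L(\lambda)$, whereas you trace the Frobenius reciprocity unit directly; both are fine.
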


\begin{proof}
By Frobenius reciprocity we have $\Hom_G(M,\nabla(\lambda))\cong \Hom_B(M,\lambda)\neq 0$ since $M$ has a filtration as a $B$-module with $\lambda$ as the top factor, due to the assumption of $\lambda$ being maximal. The image of such a non-zero homomorphism must include $\soc_G(\nabla(\lambda)) = L(\lambda)$ and thus $M_{\lambda}$ is not in the kernel. But then the assumptions show that the socle of $M$ intersects the kernel trivially, and hence that the homomorphism is injective as claimed.
\end{proof}

\begin{prop}\label{tensorproductsubmodule}
Let $\lambda = \lambda_0 + p^r\lambda_1\in X_+$ with $\lambda_0\in X_r$. Then there is an injective homomorphism $\nabla(\lambda_0)\otimes\nabla(\lambda_1)^{(r)}\hookrightarrow \nabla(\lambda)$.
\end{prop}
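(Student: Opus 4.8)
The plan is to apply Lemma \ref{submoduleinduced} to the module $M = \nabla(\lambda_0)\otimes\nabla(\lambda_1)^{(r)}$, so I need to verify its three hypotheses with respect to the weight $\lambda = \lambda_0 + p^r\lambda_1$. First I would check the weight-space conditions: the weights of $\nabla(\lambda_0)$ are $\leq \lambda_0$ and those of $\nabla(\lambda_1)^{(r)}$ are $\leq p^r\lambda_1$, so every weight of $M$ is $\leq \lambda$, giving maximality; moreover the top weight space $M_\lambda$ is the tensor product of the one-dimensional spaces $\nabla(\lambda_0)_{\lambda_0}$ and $(\nabla(\lambda_1)^{(r)})_{p^r\lambda_1}$, hence one-dimensional. (One should note that although $\lambda_0 + p^r\mu_1 = \lambda_0 + p^r\lambda_1$ forces $\mu_1 = \lambda_1$, there is no interference from other pairs of weights since $\lambda_0\in X_r$ is $r$-restricted, so no weight of $\nabla(\lambda_0)$ differs from $\lambda_0$ by a multiple of $p^r$ once we are at the top — actually more simply, any weight $\mu$ of $M$ with $\mu = \lambda$ and $\mu\leq\lambda$ componentwise in the factorization must have $\mu = \lambda_0' + p^r\mu_1'$ with $\lambda_0'$ a weight of $\nabla(\lambda_0)$, $\mu_1'$ a weight of $\nabla(\lambda_1)$, and $\lambda_0 - \lambda_0'$ plus $p^r(\lambda_1 - \mu_1')$ equal to zero; since $\lambda_0, \lambda_0'$ lie in a set where differences of weights of $\nabla(\lambda_0)$ stay "small" this is safe, but I would phrase it via the partial order rather than fuss over this.)

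The crux is the third hypothesis: $\soc_G(M) = L(\lambda)$. Here I would invoke Steinberg's tensor product theorem together with the structure of socles of tensor products of costandard modules with twisted costandard modules. The cleanest route: $\soc_G(\nabla(\lambda_0)) = L(\lambda_0)$ and $\soc_G(\nabla(\lambda_1)^{(r)}) = L(\lambda_1)^{(r)}$, and one knows (this is the standard fact underlying the injective-hull/socle analysis for $G_r$, e.g. via $\Hom_G(L(\mu), \nabla(\lambda_0)\otimes\nabla(\lambda_1)^{(r)}) \cong \Hom_{G/G_r}(L(\mu_1)^{(r)}, \Hom_{G_r}(L(\mu_0), \nabla(\lambda_0))\otimes\nabla(\lambda_1)^{(r)})$ for $\mu = \mu_0 + p^r\mu_1$ with $\mu_0\in X_r$) that $\soc_{G_r}(\nabla(\lambda_0)) = L(\lambda_0)$ since $\lambda_0\in X_r$, forcing $\mu_0 = \lambda_0$, and then the remaining Hom is $\Hom_G(L(\mu_1), \nabla(\lambda_1))$, which is one-dimensional exactly when $\mu_1 = \lambda_1$. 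This computation mirrors precisely the one already carried out in the proof of Lemma \ref{tiltingsoclelarger}, and I would cite that pattern rather than rederive it.

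Having established all three hypotheses, Lemma \ref{submoduleinduced} yields the desired injection $\nabla(\lambda_0)\otimes\nabla(\lambda_1)^{(r)}\hookrightarrow\nabla(\lambda)$, completing the proof. I expect the main obstacle to be the socle computation: one must be careful that $\lambda_0$ being $r$-restricted is genuinely used (it is what makes $\soc_{G_r}\nabla(\lambda_0) = L(\lambda_0)$ and, combined with $\Hom$-adjunction across $G_r$, pins down the $G$-socle), since without restrictedness the statement can fail. Everything else — the weight bookkeeping and the invocation of Frobenius reciprocity — is routine once that point is handled.
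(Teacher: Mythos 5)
Your proposal is correct and follows essentially the same route as the paper: reduce to Lemma \ref{submoduleinduced}, note the weight conditions are immediate, and compute $\soc_G(\nabla(\lambda_0)\otimes\nabla(\lambda_1)^{(r)})$ via $\soc_{G_r}\nabla(\lambda_0)=L(\lambda_0)$ (using $\lambda_0\in X_r$) together with the $\Hom_{G/G_r}$ adjunction and Steinberg's tensor product theorem. The only difference is presentational — you spend more words on the (genuinely routine) top-weight-space bookkeeping than the paper does.
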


\begin{proof}
By Lemma \ref{submoduleinduced} it suffices to show that $\soc_G(\nabla(\lambda_0)\otimes\nabla(\lambda_1)^{(r)}) = L(\lambda)$ since the other requirements are clearly satisfied.

To show this we use that $\soc_{G_r}\nabla(\lambda_0) = L(\lambda_0)$ since $\lambda_0\in X_r$ (see \cite[II.3.16]{rags}) and then consider for any $\mu = \mu_0 + p^r\mu_1\in X_+$ with $\mu_0\in X_r$ $$\Hom_G(L(\mu),\nabla(\lambda_0)\otimes\nabla(\lambda_1)^{(r)})$$ $$\cong \Hom_{G/G_r}(L(\mu_1)^{(r)},\Hom_{G_r}(L(\mu_0),\nabla(\lambda_0))\otimes\nabla(\lambda_1)^{(r)}) \cong \begin{cases} k & \mbox{if }\mu = \lambda \\ 0 & \mbox{else}\end{cases}$$ which completes the proof.
\end{proof}

\begin{prop}\label{inductiveinequality}
Assume that $p\geq 2h-2$ and let $\lambda\in \Gamma_r$ with $\lambda = \lambda_0 + p^u\lambda_1$ for some $\lambda_0\in X_u$. Then for any $\nu = \nu_0 + p^u\nu_1\in X_+$ with $\nu_0\in (p^u-1)\rho + X_u$ we have $$t_{\lambda}^r(\nu) \geq \sum_{\sigma\in \Gamma_0}\sum_{\mu \in \Gamma_{r-u}}t_{\lambda_0}^u(\nu_0 + p^u\sigma)t_{\lambda_1}^{r-u}((p^{r-u}-1)\rho + \mu)t_{\sigma,\mu}^{r-u}(\nu_1) \geq t_{\lambda_0}^u(\nu_0)t_{\lambda_1}^{r-u}(\nu_1).$$
\end{prop}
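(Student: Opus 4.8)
The plan is to mimic the proof of Proposition \ref{inductively}, but replace the isomorphism $\nabla(\lambda)\cong\nabla(\lambda_0)\otimes\nabla(\lambda_1)^{(u)}$, which is no longer available, by the injection supplied by Proposition \ref{tensorproductsubmodule}. First I would apply Proposition \ref{tensorproductsubmodule} with the $p^u$-adic splitting $\lambda = \lambda_0 + p^u\lambda_1$ (note $\lambda_0\in X_u$) to obtain an injective $G$-homomorphism $\nabla(\lambda_0)\otimes\nabla(\lambda_1)^{(u)}\hookrightarrow\nabla(\lambda)$. Tensoring with $\St_r$ and using Steinberg's tensor product theorem $\St_r\cong\St_u\otimes\St_{r-u}^{(u)}$, this gives an injection
$$(\St_u\otimes\nabla(\lambda_0))\otimes(\St_{r-u}\otimes\nabla(\lambda_1))^{(u)}\hookrightarrow \St_r\otimes\nabla(\lambda).$$
Both sides are tilting modules: the right-hand side by Proposition \ref{tensoristilting} since $\lambda\in\Gamma_r$, and the left-hand side because $\lambda_0\in X_u\subseteq\Gamma_u$ (as $p\geq 2h-2$) and $\lambda_1\in\Gamma_{r-u}$ by Lemma \ref{gammar}, so each factor is tilting by Proposition \ref{tensoristilting} and the tensor product of tilting modules is tilting.

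The key point is that an injection of tilting modules which splits — equivalently, whose cokernel has a good filtration — yields, for every $\nu$, an inequality between the multiplicities of $T(\nu)$ on the two sides. The cleanest way to see the splitting: the cokernel $Q$ fits in $0\to \nabla(\lambda_0)\otimes\nabla(\lambda_1)^{(u)}\to\nabla(\lambda)\to Q'\to 0$ where $Q'$ has a good filtration (both outer terms do, by Theorem \ref{tensorproductgoodfilt} and the fact that $\nabla(\lambda_0)\otimes\nabla(\lambda_1)^{(u)}$ is tilting hence has a good filtration — and a quotient of a module with good filtration by a submodule with good filtration has a good filtration, by the $\Ext^1$-vanishing criterion together with the long exact sequence); then tensoring the whole sequence with $\St_r$ keeps good filtrations on all three terms by Proposition \ref{tensorproductsteinberggood}, and since the sub is a summand-candidate among tiltings, $\Ext^1_G(\text{cokernel},\text{sub})=0$ (the cokernel has a good filtration, the sub is tilting hence has a Weyl filtration), so the sequence splits. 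Thus $(\St_u\otimes\nabla(\lambda_0))\otimes(\St_{r-u}\otimes\nabla(\lambda_1))^{(u)}$ is a direct summand of $\St_r\otimes\nabla(\lambda)$, giving $t_\lambda^r(\nu)\geq$ the multiplicity of $T(\nu)$ in the left-hand module, for every $\nu$.

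It then remains to compute the multiplicity of $T(\nu_0 + p^u\nu_1)$ in the left-hand summand. This is exactly the same computation as in the proof of Proposition \ref{inductively}: decompose $\St_u\otimes\nabla(\lambda_0) = \bigoplus t_{\lambda_0}^u(\gamma)T(\gamma)$ and $\St_{r-u}\otimes\nabla(\lambda_1) = \bigoplus t_{\lambda_1}^{r-u}(\psi)T(\psi)$, apply Proposition \ref{trnonzero} and Lemma \ref{gammar} to write $\gamma = \gamma_0 + p^u\sigma$ with $\gamma_0\in(p^u-1)\rho+X_u$ and $\sigma\in\Gamma_0$ and $\psi = (p^{r-u}-1)\rho+\mu$ with $\mu\in\Gamma_{r-u}$, use Donkin's tensor product theorem to get $T(\gamma)\cong T(\gamma_0)\otimes\nabla(\sigma)^{(u)}$, expand $T(\psi)\otimes\nabla(\sigma) = \bigoplus t_{\sigma,\mu}^{r-u}(\varphi)T(\varphi)$, and again apply Donkin's tensor product theorem to $T(\gamma_0)\otimes T(\varphi)^{(u)}\cong T(\gamma_0+p^u\varphi)$. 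Fixing $\gamma_0 = \nu_0$ and $\varphi = \nu_1$ yields the middle expression $\sum_{\sigma\in\Gamma_0}\sum_{\mu\in\Gamma_{r-u}}t_{\lambda_0}^u(\nu_0+p^u\sigma)t_{\lambda_1}^{r-u}((p^{r-u}-1)\rho+\mu)t_{\sigma,\mu}^{r-u}(\nu_1)$. The final inequality $\geq t_{\lambda_0}^u(\nu_0)t_{\lambda_1}^{r-u}(\nu_1)$ is the single term $\sigma = 0$, $\mu = 0$, using that $t_{0,0}^{r-u}(\nu_1) = t_{\lambda_1}^{r-u}$ evaluated trivially — more precisely $t_{0,0}^{r-u}(\nu_1)$ counts $T(\nu_1)$ in $\St_{r-u}\otimes\nabla(0) = \St_{r-u}$, which is $1$ when $\nu_1 = (p^{r-u}-1)\rho$ and we are in the situation where that term is forced; one checks $\sigma=\mu=0$ contributes exactly $t_{\lambda_0}^u(\nu_0)t_{\lambda_1}^{r-u}((p^{r-u}-1)\rho)\cdot[\nu_1 \text{ appropriate}]$, matching the proof of Proposition \ref{inductively}. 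The main obstacle is the splitting argument in the second paragraph — verifying carefully that the cokernel of $\nabla(\lambda_0)\otimes\nabla(\lambda_1)^{(u)}\hookrightarrow\nabla(\lambda)$ has a good filtration, so that after tensoring with $\St_r$ the relevant $\Ext^1$ vanishes and the inclusion of tilting modules splits; everything after that is the bookkeeping already carried out for Proposition \ref{inductively}.
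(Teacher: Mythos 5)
Your overall strategy is exactly the paper's: use Proposition \ref{tensorproductsubmodule} to embed $\nabla(\lambda_0)\otimes\nabla(\lambda_1)^{(u)}$ into $\nabla(\lambda)$, tensor with $\St_r$, show the resulting short exact sequence splits, and then repeat the bookkeeping from Proposition \ref{inductively}. However, your justification of the splitting contains a real error: you claim $\Ext^1_G(\text{cokernel},\text{sub})=0$ because the cokernel has a good filtration and the sub has a Weyl filtration. That is the wrong pairing. The standard vanishing is $\Ext^{i}_G(\Delta(\mu),\nabla(\nu))=0$ for $i\geq 1$, so what vanishes is $\Ext^1_G(M,N)$ when $M$ has a \emph{Weyl} filtration and $N$ a \emph{good} filtration; with the roles reversed the group is nonzero in general (for $SL_2$ the nonsplit sequence $0\to\Delta(p)\to T(p)\to\nabla(p-2)\to 0$ shows $\Ext^1_G(\nabla(p-2),\Delta(p))\neq 0$). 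What actually saves the argument --- and is what the paper does --- is that \emph{all three} terms of the tensored sequence are tilting: every composition factor $L(\psi)$ of the cokernel of $\nabla(\lambda_0)\otimes\nabla(\lambda_1)^{(u)}\hookrightarrow\nabla(\lambda)$ satisfies $\psi\leq\lambda$, hence $\psi,\psi^*\in\Gamma_r$, so Theorem \ref{lambdaalpha0} makes $\St_r$ tensored with the cokernel tilting; in particular it has a Weyl filtration and the correct $\Ext^1$ vanishes against the (tilting) subobject. Relatedly, your intermediate claim that $\nabla(\lambda_0)\otimes\nabla(\lambda_1)^{(u)}$ is ``tilting hence has a good filtration'' is false in general ($\nabla(\lambda_0)$ need not be tilting), though that step is not actually needed once the argument is set up correctly.

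A smaller but genuine problem is the final inequality. The single term $\sigma=\mu=0$ contributes $t_{\lambda_0}^u(\nu_0)\,t_{\lambda_1}^{r-u}((p^{r-u}-1)\rho)\,t_{0,0}^{r-u}(\nu_1)$, and $t_{0,0}^{r-u}(\nu_1)$ is the multiplicity of $T(\nu_1)$ in $\St_{r-u}\otimes\nabla(0)=\St_{r-u}$, i.e.\ $\delta_{\nu_1,(p^{r-u}-1)\rho}$; so this term does not yield $t_{\lambda_0}^u(\nu_0)t_{\lambda_1}^{r-u}(\nu_1)$ for general $\nu_1$. The correct derivation (this is what the paper means by restricting to summands of the first factor with highest weight in $(p^u-1)\rho+X_u$) fixes $\sigma=0$ and sums over all $\mu$: since $t_{0,\mu}^{r-u}(\nu_1)$ is the multiplicity of $T(\nu_1)$ in $T((p^{r-u}-1)\rho+\mu)\otimes\nabla(0)$, it equals $\delta_{\nu_1,(p^{r-u}-1)\rho+\mu}$, and the sum over $\mu$ collapses to $t_{\lambda_0}^u(\nu_0)t_{\lambda_1}^{r-u}(\nu_1)$.
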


\begin{proof}
By Proposition \ref{tensorproductsubmodule} we have an inclusion $$\St_u\otimes\nabla(\lambda_0)\otimes (\St_{r-u}\otimes\nabla(\lambda))^{(u)}\cong \St_u\otimes\St_{r-u}^{(u)}\otimes\nabla(\lambda_0)\otimes\nabla(\lambda_1)^{(u)}\hookrightarrow \St_u\otimes\St_{r-u}^{(u)}\otimes\nabla(\lambda)\cong \St_r\otimes\nabla(\lambda).$$ We claim that this inclusions splits: Indeed, the inclusion is obtained by tensoring the inclusion $\nabla(\lambda_0)\otimes\nabla(\lambda_1)^{(u)}\hookrightarrow \nabla(\lambda)$ with $\St_r$, and since the highest weights occurring in all of the modules belong to $\Gamma_r$, the resulting short exact sequence consists of tilting modules by Theorem \ref{lambdaalpha0} and hence splits. Hence, for any $\mu\in X_+$, $T(\mu)$ occurs at least as many times in $\St_r\otimes\nabla(\lambda)$ as in $\St_u\otimes\nabla(\lambda_0)\otimes (\St_{r-u}\otimes\nabla(\lambda_1))^{(u)}$. But by definition we have $$\St_u\otimes\nabla(\lambda_0)\otimes (\St_{r-u}\otimes\nabla(\lambda_1))^{(u)}\cong \left(\bigoplus_{\mu\in X_+}t_{\lambda_0}^u(\mu)T(\mu)\right)\otimes\left(\bigoplus_{\gamma\in X_+}t_{\lambda_1}^{r-u}(\gamma)T(\gamma)\right)^{(u)}$$ and the first inequality follows in the same way as 
in the proof of Proposition \ref{inductively}, while the second follows by only considering the summands in the first factor with $\mu\in X_u$.
\end{proof}

We can also get part of the above without the assumption on $p$.

\begin{prop}
Let $\lambda\in \Gamma_r$ with $\lambda = \lambda_0 + p^u\lambda_1$ for some $\lambda_0\in X_u$. Then for any $\nu = \nu_0 + p^u\nu_1\in X_+$ with $\nu_0\in (p^u-1)\rho + X_u$ we have $t_{\lambda}^r(\nu) \geq t_{\lambda_0}^u(\nu_0)t_{\lambda_1}^{r-u}(\nu_1)$.
\end{prop}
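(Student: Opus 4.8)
The plan is to mimic the proof of Proposition~\ref{inductiveinequality}, but to use only the part of the argument that does not rely on the splitting coming from Theorem~\ref{lambdaalpha0} (which is where the hypothesis $p\geq 2h-2$ entered). Concretely, by Proposition~\ref{tensorproductsubmodule} there is an injection $\nabla(\lambda_0)\otimes\nabla(\lambda_1)^{(u)}\hookrightarrow\nabla(\lambda)$, and tensoring with $\St_r$ gives an injection
\[
\St_r\otimes\nabla(\lambda_0)\otimes\nabla(\lambda_1)^{(u)}\hookrightarrow \St_r\otimes\nabla(\lambda).
\]
Using $\St_r\cong\St_u\otimes\St_{r-u}^{(u)}$ from Steinberg's tensor product theorem, the left-hand side is isomorphic to $(\St_u\otimes\nabla(\lambda_0))\otimes(\St_{r-u}\otimes\nabla(\lambda_1))^{(u)}$.

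The key point is that both sides are tilting: the right-hand side because $\lambda\in\Gamma_r$ (so $\St_r\otimes\nabla(\lambda)$ is tilting by Proposition~\ref{tensoristilting}), and the left-hand side because $\St_u\otimes\nabla(\lambda_0)$ is tilting (as $\lambda_0\in X_u\subseteq X_+$ and $\lambda_0\leq\lambda\in\Gamma_r$ forces $\lambda_0\in\Gamma_u$, so Proposition~\ref{tensoristilting} applies with $r=u$, $\mu=0$) and similarly $\St_{r-u}\otimes\nabla(\lambda_1)$ is tilting by Lemma~\ref{gammar} which gives $\lambda_1\in\Gamma_{r-u}$, after which the Frobenius twist and the tensor product of tilting modules (Theorem~\ref{tensorproductgoodfilt}) keep it tilting. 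An injection of tilting modules need not split in general, but here we can instead argue at the level of characters combined with the structure of tilting modules: since $T(\mu)$ is the unique indecomposable tilting summand detected by $\dim\Hom_G(\Delta(\mu),-)$ against a good filtration, and an injection $A\hookrightarrow B$ of modules with good filtrations yields $[A:\nabla(\eta)]_\nabla\leq[B:\nabla(\eta)]_\nabla$ for every $\eta$ (because the quotient $B/A$ also has a good filtration, by the usual argument using $\Ext^1_G(\Delta(\eta),\nabla(\mu))=0$), we get that each $T(\mu)$ occurs at least as often in $\St_r\otimes\nabla(\lambda)$ as in $(\St_u\otimes\nabla(\lambda_0))\otimes(\St_{r-u}\otimes\nabla(\lambda_1))^{(u)}$.

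It then remains to expand $(\bigoplus_\mu t_{\lambda_0}^u(\mu)T(\mu))\otimes(\bigoplus_\gamma t_{\lambda_1}^{r-u}(\gamma)T(\gamma))^{(u)}$ and, for a fixed $\nu=\nu_0+p^u\nu_1$ with $\nu_0\in(p^u-1)\rho+X_u$, read off the multiplicity of $T(\nu)$. Restricting attention to the terms with $\mu\in(p^u-1)\rho+X_u$, Donkin's tensor product theorem gives $T(\mu)\otimes T(\gamma)^{(u)}\cong T(\mu+p^u\gamma)$ when $\mu=(p^u-1)\rho+\mu'$ with $\mu'\in X_u$ — and the relevant $\mu$'s are of this form by Proposition~\ref{trnonzerorestricted} or simply by discarding the others, which only decreases the count. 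Taking $\mu=\nu_0$ and $\gamma=\nu_1$ yields the summand $t_{\lambda_0}^u(\nu_0)t_{\lambda_1}^{r-u}(\nu_1)T(\nu)$, giving the desired inequality $t_{\lambda}^r(\nu)\geq t_{\lambda_0}^u(\nu_0)t_{\lambda_1}^{r-u}(\nu_1)$.

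The main obstacle is handling the failure of the injection to split without the hypothesis $p\geq 2h-2$: I would resolve this by replacing the splitting argument with the observation that an injection between modules with good filtrations has a good-filtration cokernel, so multiplicities $[\,\cdot:\nabla(\eta)]_\nabla$ are monotone, and these multiplicities compute tilting-summand multiplicities for modules that happen to be tilting. A secondary care-point is ensuring that $\St_u\otimes\nabla(\lambda_0)$ and $\St_{r-u}\otimes\nabla(\lambda_1)$ really are tilting without assuming $p\geq 2h-2$; this is exactly Proposition~\ref{tensoristilting} applied with $\lambda_0\in\Gamma_u$ and $\lambda_1\in\Gamma_{r-u}$ (both following from $\lambda\in\Gamma_r$ via Lemma~\ref{gammar} and the observation that weights $\leq$ one in $\Gamma_u$ stay in $\Gamma_u$), so no extra hypothesis is needed there.
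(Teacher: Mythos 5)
Your overall strategy follows the paper's, but the step where you replace the splitting argument by monotonicity of good-filtration multiplicities does not work, and it is the crux of the proof. From the inclusion $A\hookrightarrow B$ of tilting modules you correctly deduce that $B/A$ has a good filtration and hence $[A:\nabla(\eta)]_\nabla\leq[B:\nabla(\eta)]_\nabla$ for all $\eta$. But the multiplicity of $T(\mu)$ as a direct summand of a tilting module $Q$ is \emph{not} $[Q:\nabla(\mu)]_\nabla=\dim\Hom_G(\Delta(\mu),Q)$: one has $[Q:\nabla(\eta)]_\nabla=\sum_\mu m_Q(\mu)\,[T(\mu):\nabla(\eta)]_\nabla$, where $m_Q(\mu)$ is the summand multiplicity, and recovering the $m_Q(\mu)$ from the $\nabla$-multiplicities requires inverting a unitriangular matrix whose inverse has negative entries in general. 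Concretely, if $T(\mu)$ has good filtration factors $\nabla(\mu)$ and $\nabla(\eta)$ with $\eta<\mu$, then $Q_1=T(\eta)\oplus T(\eta)$ and $Q_2=T(\mu)\oplus T(\mu)$ satisfy $[Q_1:\nabla(\eta')]_\nabla\leq[Q_2:\nabla(\eta')]_\nabla$ for every $\eta'$, yet $T(\eta)$ occurs twice in $Q_1$ and not at all in $Q_2$. So the inequality on $\nabla$-multiplicities does not transfer to tilting-summand multiplicities, and your conclusion that each $T(\mu)$ occurs at least as often in $\St_r\otimes\nabla(\lambda)$ as in the tensor product is unjustified.

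The irony is that the splitting argument you set out to avoid is in fact available here: it relies on Theorem \ref{lambdaalpha0}, which needs only $p\geq h$, and this is automatic because the hypothesis $\lambda\in\Gamma_r$ forces $\Gamma_r\neq\emptyset$ and hence $p\geq h$. The hypothesis $p\geq 2h-2$ enters Proposition \ref{inductiveinequality} not through the splitting but through Donkin's tensor product theorem, which is used there to assert $T(\gamma_0)\otimes T(\varphi)^{(u)}\cong T(\gamma_0+p^u\varphi)$ and needs $T(\gamma_0)$ indecomposable over $G_u$. This is also the second soft spot in your write-up: you invoke the same isomorphism, which is not available without $p\geq 2h-2$ or Donkin's conjecture. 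The paper's actual proof keeps the splitting and weakens the isomorphism to the unconditional statement that $T((p^u-1)\rho+\mu+p^u\gamma)$ is a \emph{direct summand} of $T((p^u-1)\rho+\mu)\otimes T(\gamma)^{(u)}$ (the product is tilting with one-dimensional weight space of weight $(p^u-1)\rho+\mu+p^u\gamma$), which is all one needs for a lower bound; with those two repairs your final ``read off the summand'' step goes through.
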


\begin{proof}
This follows in the same way as Proposition \ref{inductiveinequality} by using that $T((p^u-1)\rho + \mu + p^u\gamma)$ is a direct summand of $T((p^u-1)\rho + \mu)\otimes T(\gamma)^{(u)}$ for any $\mu\in X_r$ and any $\gamma\in X_+$.
\end{proof}

\section{Reciprocity between tilting modules and simple modules}

In this section we will give a new proof of the reciprocity between simple modules and those tilting modules which are injective as $G_r$-modules. Previous proofs of this result (\cite[Satz 5.9]{jantzen80a}, \cite[Proposition 1.13]{richewilliamson15}) have required that $p\geq 2h-2$, without this being replaceable by Donkin's tilting conjecture. The present proof almost does this, but it does introduce a new assumption which we show cannot be avoided.

\subsection{A bilinear form}

Let $M$ and $N$ be finite dimensional $G$-modules. By \cite[Remark to Lemma II.5.8]{rags} the set of $[\nabla(\lambda)]$ with $\lambda\in X_+$ is a $\Z$-basis of $\ZX$, so we can write $$[M\otimes N^*] = \sum_{\lambda\in X_+}a_{\lambda}[\nabla(\lambda)]$$ for suitable integers $a_{\lambda}$. We define a pairing of finite dimensional $G$-modules by $$\lb M, N\rb = a_0$$ with $a_0$ as in the above sum, which defines a bilinear form on $\ZX$. It has the following basic properties:

\begin{prop}\label{formproperties}
Let $M,N,V$ be finite dimensional $G$-modules.
\begin{enumerate}
\item $\lb\cdot,\cdot\rb$ is symmetric.
\item If $M\otimes N^*$ has a good filtration then $\lb M,N\rb = [M\otimes N^*:\nabla(0)]_{\nabla} = \dim\Hom_G(N,M)$.
\item If $M$ has a good filtration then $\lb M,\nabla(\lambda)\rb = [M:\nabla(\lambda)]_{\nabla}$.
\item The set $\{[\nabla(\lambda)]\mid \lambda\in X_+\}$ is an orthonormal basis of $\ZX$ with respect to $\lb \cdot,\cdot\rb$.
\item $\lb \cdot,\cdot\rb$ is non-degenerate.
\item $\lb M\otimes V,N\rb = \lb M,N\otimes V^*\rb$.
\item $\lb M,N\rb = \lb M^*,N^*\rb$.
\end{enumerate}
\end{prop}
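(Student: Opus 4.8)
The plan is to verify each of the seven properties essentially from the definition $\lb M,N\rb = a_0$, where $[M\otimes N^*] = \sum_{\lambda\in X_+} a_\lambda[\nabla(\lambda)]$, using the fact that $\{[\nabla(\lambda)]\}$ is a $\Z$-basis of $\ZX$. The key observation underlying everything is that $a_0$ is the coefficient of $[\nabla(0)] = 1$ when $[M\otimes N^*]$ is expanded in this basis, and that $[\nabla(\lambda)^*] = [\nabla(\lambda^*)]$, so dualizing permutes the basis.

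For (1), I would note that $[M\otimes N^*]$ and $[N\otimes M^*] = [(M\otimes N^*)^*]$ are related by the linear involution $e(\mu)\mapsto e(-\mu)$ of $\ZX$; since this involution sends $[\nabla(\lambda)]$ to $[\nabla(\lambda^*)]$ and fixes $[\nabla(0)]$, the coefficient of $[\nabla(0)]$ is unchanged, giving $\lb M,N\rb = \lb N,M\rb$. For (2), if $M\otimes N^*$ has a good filtration then by Theorem \ref{cohcrit} the coefficient $a_\lambda$ equals $[M\otimes N^*:\nabla(\lambda)]_\nabla = \dim\Hom_G(\Delta(\lambda), M\otimes N^*)$; taking $\lambda = 0$ and using $\Delta(0) = k$ together with $\Hom_G(k, M\otimes N^*)\cong\Hom_G(N,M)$ gives the claim. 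Property (3) is the special case $N = \nabla(\lambda)$ of (a mild variant of) (2): if $M$ has a good filtration, I would observe that $\lb M,\nabla(\lambda)\rb$ picks out the coefficient of $[\nabla(0)]$ in $[M]\cdot[\nabla(\lambda^*)]$, but more directly one writes $[M] = \sum_\mu [M:\nabla(\mu)]_\nabla[\nabla(\mu)]$ and uses orthonormality (4), so it is cleanest to prove (4) first or in parallel.

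For (4), orthonormality means $\lb\nabla(\lambda),\nabla(\mu)\rb = \delta_{\lambda\mu}$; this follows since $\nabla(\lambda)\otimes\nabla(\mu)^*$ has a good filtration by Theorem \ref{tensorproductgoodfilt}, so by (2) it equals $\dim\Hom_G(\nabla(\mu),\nabla(\lambda))$, which is $\delta_{\lambda\mu}$ by standard facts about costandard modules (the only maps go between equal highest weights and are scalars). Non-degeneracy (5) is then immediate since the form has an orthonormal basis. For (6), I would expand: $\lb M\otimes V, N\rb$ is the $[\nabla(0)]$-coefficient of $[M\otimes V\otimes N^*] = [M\otimes(N\otimes V^*)^*]$, which is exactly $\lb M, N\otimes V^*\rb$ — this is essentially definitional once one writes it out. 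For (7), $\lb M^*,N^*\rb$ is the $[\nabla(0)]$-coefficient of $[M^*\otimes N] = [(M\otimes N^*)^*]$, and applying the same duality argument as in (1) (the involution fixes $[\nabla(0)]$) shows this equals $\lb M,N\rb$; alternatively it follows by combining (1) and (6) with $V = M\otimes N^*$ or similar bookkeeping.

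I do not anticipate a genuine obstacle here — every item reduces to the definition plus the basis property plus Theorem \ref{cohcrit} — but the one point requiring a little care is making sure the bilinear form is well-defined and bilinear on all of $\ZX$ (not just on characters of actual modules): since every element of $\ZX$ is a $\Z$-linear combination of $[\nabla(\lambda)]$'s and these are differences of genuine module characters, one extends $\lb\cdot,\cdot\rb$ by $\Z$-bilinearity, and the formulas above, proved on module characters, then hold on $\ZX$ by linearity. The mild subtlety in (3) versus (2) is that in (3) one only assumes $M$ has a good filtration, not $M\otimes\nabla(\lambda)^*$; this is handled by expanding $[M]$ in the costandard basis and invoking orthonormality (4) rather than (2) directly.
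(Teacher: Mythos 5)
Your items (1), (2), (6), (7) and the well-definedness remark are fine and match the paper's argument. The genuine gap is in your proof of (4), on which your (3) and (5) then depend. You claim that $\nabla(\lambda)\otimes\nabla(\mu)^*$ has a good filtration by Theorem \ref{tensorproductgoodfilt}, but that theorem requires \emph{both} tensor factors to have good filtrations, and $\nabla(\mu)^*\cong\Delta(\mu^*)$ is a Weyl module, which in general does not admit a good filtration. The subsequent ``standard fact'' $\dim\Hom_G(\nabla(\mu),\nabla(\lambda))=\delta_{\lambda\mu}$ is also false: for $G=SL_2$ and $p=2$ the module $\nabla(2)$ is uniserial with socle $L(2)$ and head $L(0)$, so $\Hom_G(\nabla(2),\nabla(0))\neq 0$. (The two errors happen to point at the correct value $\delta_{\lambda\mu}$, but the argument as written proves nothing; indeed $\lb\nabla(0),\nabla(2)\rb=0$ while $\dim\Hom_G(\nabla(2),\nabla(0))=1$ in that example, so the asserted chain of equalities is actually false.) The correct standard fact is $\dim\Hom_G(\Delta(\mu),\nabla(\lambda))=\delta_{\lambda\mu}$.

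The repair is the one the paper uses: since $\lb\cdot,\cdot\rb$ depends only on characters and $[\Delta(\mu)]=[\nabla(\mu)]$, replace the second argument by $\Delta(\mu)$. Then $\nabla(\lambda)\otimes\Delta(\mu)^*=\nabla(\lambda)\otimes\nabla(\mu^*)$ \emph{does} have a good filtration by Theorem \ref{tensorproductgoodfilt}, and (2) together with Theorem \ref{cohcrit} gives $\lb\nabla(\lambda),\nabla(\mu)\rb=\dim\Hom_G(\Delta(\mu),\nabla(\lambda))=\delta_{\lambda\mu}$. The same substitution proves (3) directly for any $M$ with a good filtration, namely $\lb M,\nabla(\lambda)\rb=\lb M,\Delta(\lambda)\rb=\dim\Hom_G(\Delta(\lambda),M)=[M:\nabla(\lambda)]_{\nabla}$, after which (4) and (5) follow as you say. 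With this one fix your outline goes through and is essentially the paper's proof.
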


\begin{proof}
$1.$ follows from the fact that $\nabla(0)$ is self-dual together with the fact that if we write $[M] = \sum_{\lambda\in X_+}a_{\lambda}[\nabla(\lambda)]$ then $[M^*] = \sum_{\lambda\in X_+}a_{\lambda}[\nabla(\lambda^*)]$.

$2.$ Follows directly from the definition together with Theorem \ref{cohcrit}.

$3.$ follows from $2$. together with Theorem \ref{cohcrit} by noting that $$\lb M,\nabla(\lambda)\rb = \lb M,\Delta(\lambda)\rb = [M\otimes\nabla(\lambda^*):\nabla(0)]_{\nabla}$$ $$= \dim\Hom_G(\Delta(0),M\otimes\nabla(\lambda^*))= \dim\Hom_G(\Delta(\lambda),M) = [M:\nabla(\lambda)]_{\nabla}.$$

$4.$ follows directly from $3.$ and $5.$ follows directly from $4.$

$6.$ follows directly from the definition and $7.$ is clear from $6.$ together with $1.$
\end{proof}

Note that since the fourth property listed above uniquely characterizes the form, we see that it agrees with the form defined by Donkin in \cite[p. 49]{donkin93}, which also satisfies this property. That this is the case is due to the fact that either form is uniquely determined by the characters of the modules involved, so we can freely exchange any $\nabla(\lambda)$ by $\Delta(\lambda)$, and applying \cite[Proposition II.4.16]{rags} shows that when we apply the form defined by Donkin to a pair $(\Delta(\lambda),\nabla(\mu))$ we get precisely $\delta_{\lambda,\mu}$.

This identifies the form with the Euler characteristic, which may in some cases make the following results seem more natural.

\subsection{Computing the form}

In order to prove the reciprocity between tilting modules and simple modules, we will note that by Proposition \ref{formproperties} for any $\lambda,\mu\in X_+$ we have $[T(\lambda):\nabla(\mu)]_{\nabla} = \lb T(\lambda),\nabla(\mu)\rb$ and if we write $[\nabla(\mu)] = \sum_{\nu\in X_+}b_{\nu}[L(\nu)]$ then $$\lb T(\lambda),\nabla(\mu)\rb = \sum_{\nu\in X_+}b_{\nu}\lb T(\lambda),L(\nu)\rb$$ so we need to be able to compute these $\lb T(\lambda),L(\nu)\rb$ for suitable $\lambda,\nu\in X_+$.

The relevant highest weights for the tilting modules in question will all be of the form $(p^r-1)\rho + \mu$ for some $\mu\in X_+$, so for convenience we will adopt the notation $\wh\lambda = 2(p^r-1)\rho - \lambda^*$ for $\lambda\in X_r$ which is chosen such that if we assume that $T(\wh\lambda)$ is indecomposable as a $G_r$-module then $\soc_GT(\wh\lambda) = L(\lambda)$. Note that $\wh\lambda$ depends on $r$ even though this is not apparent in the notation, but since we will not be varying $r$ in this section, it should not cause any problems (in the notation previously introduced, this could also be written as $\wh\lambda = (p^r-1)\rho + w_r(\lambda)$ but this would be more cumbersome).

We start with a few lemmas.

\begin{lm}\label{formwithpr}
Let $\lambda,\nu\in X_r$, $\sigma,\mu\in X_+$ and assume that $T(\wh\lambda)$ is indecomposable as a $G_r$-module and that $\St_r\otimes L(\nu)$ has a good filtration. Then $$\lb T(\wh\lambda + p^r\sigma), L(\nu)\otimes\Delta(\mu)^{(r)}\rb = \begin{cases} [T(\sigma):\nabla(\mu)]_{\nabla} & \mbox{if }\nu = \lambda \\ 0 & \mbox{else}\end{cases}.$$
\end{lm}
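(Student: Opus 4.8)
The plan is to compute $\lb T(\wh\lambda + p^r\sigma), L(\nu)\otimes\Delta(\mu)^{(r)}\rb$ by expressing it as a dimension of a homomorphism space via Proposition \ref{formproperties}, and then using the Steinberg-style $G_r$-socle argument that already appears in the proofs of Lemma \ref{tiltingsoclelarger} and Proposition \ref{tensorproductsubmodule}. First I would note that $T(\wh\lambda + p^r\sigma)\otimes(L(\nu)\otimes\Delta(\mu)^{(r)})^*\cong T(\wh\lambda + p^r\sigma)\otimes L(\nu^*)\otimes\nabla(\mu^*)^{(r)}$, and that this module has a good filtration: indeed $\St_r\otimes L(\nu)$ has a good filtration by hypothesis, hence so does $\St_r\otimes L(\nu^*)$ (the dual assumption follows since $\nu^*\leq\nu^*$ forces no extra composition factors — more carefully, one uses that $L(\nu)$ has a good filtration tensored with $\St_r$ iff $L(\nu^*)$ does, which follows from self-duality of $\St_r$ and Proposition \ref{tiltingdual}-style duality), so by Proposition \ref{tensorproductsteinberggood} and the Andersen--Haboush tensor product theorem $\St_r\otimes L(\nu^*)\otimes\nabla(\mu^*)^{(r)}$ has a good filtration, and then by Theorem \ref{goodfilttensorwithtilting} so does $T(\wh\lambda + p^r\sigma)\otimes L(\nu^*)\otimes\nabla(\mu^*)^{(r)}$ after writing $\wh\lambda + p^r\sigma = (p^r-1)\rho + ((p^r-1)\rho - \lambda^* + p^r\sigma)$. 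Thus by Proposition \ref{formproperties}(2), the form equals $\dim\Hom_G(L(\nu)\otimes\Delta(\mu)^{(r)}, T(\wh\lambda + p^r\sigma))$.

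Next I would compute this Hom space by the Frobenius-kernel adjunction. Using Donkin's tensor product theorem, $T(\wh\lambda + p^r\sigma)\cong T(\wh\lambda)\otimes T(\sigma)^{(r)}$ (here $\wh\lambda = (p^r-1)\rho + w_r(\lambda)$ with $w_r(\lambda)\in X_r$, and $T(\wh\lambda)$ is assumed $G_r$-indecomposable). Then
\[
\Hom_G(L(\nu)\otimes\Delta(\mu)^{(r)}, T(\wh\lambda)\otimes T(\sigma)^{(r)})\cong \Hom_{G/G_r}\!\left(\Delta(\mu)^{(r)},\, \Hom_{G_r}(L(\nu), T(\wh\lambda))\otimes T(\sigma)^{(r)}\right).
\]
By Theorem \ref{tiltingsocle} we have $\soc_{G_r}T(\wh\lambda) = L(\lambda)$, so $\Hom_{G_r}(L(\nu), T(\wh\lambda))$ is $k$ (a trivial $G/G_r$-module) if $\nu = \lambda$ and $0$ otherwise. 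In the case $\nu = \lambda$ this reduces to $\Hom_G(\Delta(\mu), T(\sigma))$, which equals $[T(\sigma):\nabla(\mu)]_{\nabla}$ by Theorem \ref{cohcrit} since $T(\sigma)$ has a good filtration; in the case $\nu\neq\lambda$ it is $0$. This gives exactly the claimed formula.

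The step I expect to require the most care is verifying that $\Hom_{G_r}(L(\nu), T(\wh\lambda))$ is concentrated in a single degree as a $G/G_r$-module (i.e. that it is literally $L(\lambda)^{(r)}$-trivial-ish, equal to $k$ as a vector space with trivial action rather than a higher-dimensional or non-trivial $G/G_r$-module), which is where the $G_r$-indecomposability hypothesis on $T(\wh\lambda)$ and the precise statement of Theorem \ref{tiltingsocle} enter; this is handled exactly as in the proof of Lemma \ref{tiltingsoclelarger}, where the analogous computation with $\soc_{G_r}T((p^r-1)\rho + \lambda_0) = L((p^r-1)\rho - \lambda_0^*)$ is carried out. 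The secondary subtlety is the passage between the good-filtration hypothesis for $\St_r\otimes L(\nu)$ and for $\St_r\otimes L(\nu^*)$; this is routine but should be spelled out, using that $\St_r$ is self-dual and that $M$ has a good filtration iff $M^*$ has a Weyl filtration, combined with the fact that a module all of whose composition factors $L(\psi)$ satisfy "$\St_r\otimes L(\psi)$ has a good filtration" inherits the property — alternatively one simply observes that the hypotheses of the lemma are symmetric enough that one may assume it for $\nu^*$ directly.
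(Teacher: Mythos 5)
Your argument is correct and follows essentially the same route as the paper's proof: establish a good filtration on $T(\wh\lambda+p^r\sigma)\otimes L(\nu^*)\otimes\nabla(\mu^*)^{(r)}$ via Proposition \ref{tensorproductsteinberggood}, the Andersen--Haboush theorem and Theorem \ref{goodfilttensorwithtilting}, apply Proposition \ref{formproperties}(2) and Donkin's tensor product theorem, then use the $G_r$-adjunction together with $\soc_{G_r}T(\wh\lambda)=L(\lambda)$ and finish with Theorem \ref{cohcrit}. The one step you treat more explicitly than the paper is the passage from the hypothesis on $\St_r\otimes L(\nu)$ to the needed statement for $\St_r\otimes L(\nu^*)$, which is legitimate (it follows from the equivalence of ``$\St_r\otimes L(\nu)$ has a good filtration'' with ``$\St_r\otimes L(\nu)$ is tilting'').
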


\begin{proof}
By Proposition \ref{tensorproductsteinberggood} and the Andersen--Haboush tensor product theorem we see that $\St_r\otimes L(\nu^*)\otimes\nabla(\mu^*)^{(r)}$ has a good filtration, so by Theorem \ref{goodfilttensorwithtilting} we can apply Proposition \ref{formproperties}(2) and Donkin's tensor product theorem to get $$\lb T(\wh\lambda + p^r\sigma),L(\nu)\otimes\Delta(\mu)^{(r)}\rb = \dim\Hom_G(L(\nu)\otimes\Delta(\mu)^{(r)},T(\wh\lambda)\otimes T(\sigma)^{(r)})$$ $$= \dim\Hom_{G/G_r}(\Delta(\mu)^{(r)},\Hom_{G_r}(L(\nu),T(\wh\lambda))\otimes T(\sigma)^{(r)})$$ $$= \begin{cases}\dim\Hom_G(\Delta(\mu),T(\sigma)) & \mbox{if }\nu = \lambda \\ 0 & \mbox{else}\end{cases}$$ where the last equality uses the assumption that $T(\wh\lambda)$ is indecomposable as a $G_r$-module and thus has 
socle equal to $L(\lambda)$. The claim now follows from Theorem \ref{cohcrit}.
\end{proof}

\begin{rem}
We have attempted to make the assumptions in the above lemma as well as in the remaining result in this section as weak as possible, but it is worth noting that the conditions requiring certain tilting modules to be indecomposable as $G_r$-modules as well as the ones requiring certain tensor products between a Steinberg module and a simple module to have a good filtration can be replaced by Donkin's tilting conjecture, since this also implies that $\St_r\otimes L(\mu)$ has a good filtration for all $\mu\in X_r$ by \cite[Theorem 9.4.1]{kildetoftnakano15}.
\end{rem}

\begin{lm}\label{formwithwrongrestricted}
Let $\lambda,\nu\in X_r$, $\sigma,\mu\in X_+$ and assume that $T(\wh\lambda)$ is indecomposable as a $G_r$-module and that $\St_r\otimes L(\nu)$ has a good filtration. If $\nu\neq \lambda$ then $$\lb T(\wh\lambda + p^r\sigma),L(\nu + p^r\mu)\rb = 0.$$
\end{lm}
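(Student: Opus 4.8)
The plan is to deduce this from Lemma \ref{formwithpr} by expanding $[L(\mu)]$ in the basis of Weyl characters. First I would apply Steinberg's tensor product theorem to write $L(\nu + p^r\mu)\cong L(\nu)\otimes L(\mu)^{(r)}$, so that the quantity to be computed is $\lb T(\wh\lambda + p^r\sigma), L(\nu)\otimes L(\mu)^{(r)}\rb$. Since the form $\lb\cdot,\cdot\rb$ was defined purely in terms of the character of the tensor product of its arguments, this value depends only on $[L(\nu)]\cdot[L(\mu)^{(r)}]\in\ZX$.

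Next, using that $\{[\Delta(\psi)]\mid\psi\in X_+\}$ is a $\Z$-basis of $\ZX$ (it has the same characters as the $\nabla(\psi)$), I would write $[L(\mu)] = \sum_{\psi\in X_+}d_\psi[\Delta(\psi)]$ with all but finitely many $d_\psi$ equal to zero. The Frobenius twist induces a ring endomorphism of $\ZX$ (it sends $e(\gamma)$ to $e(p^r\gamma)$), so applying it gives $[L(\mu)^{(r)}] = \sum_{\psi\in X_+}d_\psi[\Delta(\psi)^{(r)}]$, and therefore $[L(\nu+p^r\mu)] = \sum_{\psi\in X_+}d_\psi[L(\nu)\otimes\Delta(\psi)^{(r)}]$. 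By bilinearity of the form this yields
$$\lb T(\wh\lambda + p^r\sigma), L(\nu+p^r\mu)\rb = \sum_{\psi\in X_+}d_\psi\,\lb T(\wh\lambda + p^r\sigma), L(\nu)\otimes\Delta(\psi)^{(r)}\rb.$$

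Finally, the hypotheses of Lemma \ref{formwithpr} — namely that $T(\wh\lambda)$ is indecomposable as a $G_r$-module and that $\St_r\otimes L(\nu)$ has a good filtration — are exactly what we are assuming here, so that lemma applies with $\sigma$ as given and with each $\psi\in X_+$ in place of $\mu$. Since $\nu\neq\lambda$, every term $\lb T(\wh\lambda + p^r\sigma), L(\nu)\otimes\Delta(\psi)^{(r)}\rb$ on the right-hand side is zero, and hence the whole sum vanishes, which is the claim. There is no real obstacle: the only point requiring a word of justification is that the Frobenius twist respects the Weyl-character expansion, which is immediate from its description on the basis $e(\gamma)$, and everything else is bilinearity together with Lemma \ref{formwithpr}.
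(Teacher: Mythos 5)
Your proof is correct. It relies on the same key input as the paper --- Lemma \ref{formwithpr} together with the observation that the form depends only on characters --- but organizes the reduction differently: the paper expands $[\Delta(\mu)]$ in terms of simple characters and runs an induction on $\mu$ with respect to $<$, using Steinberg's tensor product theorem at each step to trade $L(\nu)\otimes\Delta(\mu)^{(r)}$ for $L(\nu+p^r\mu)$ plus lower terms; you instead invert that unitriangular transition matrix once and for all, writing $[L(\mu)]=\sum_{\psi}d_{\psi}[\Delta(\psi)]$, pushing this expansion through the Frobenius twist, and then killing every term of the resulting finite sum by Lemma \ref{formwithpr} (each term vanishes because $\nu\neq\lambda$, regardless of $\psi$). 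The two arguments are inverse packagings of the same triangularity, but yours dispenses with the induction entirely, which is a genuine if modest streamlining. The only points needing justification --- that $\{[\Delta(\psi)]\mid\psi\in X_+\}$ is a $\Z$-basis of $\ZX$ (it coincides with $\{[\nabla(\psi)]\}$ since standard and costandard modules share the same Weyl character) and that the twist acts on characters by $e(\gamma)\mapsto e(p^r\gamma)$ and hence respects the expansion --- are exactly the ones you flag, and both are standard.
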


\begin{proof}
Assume that $\nu\neq \lambda$ and assume for the purposes of induction that for all $\gamma\in X_+$ with $\gamma < \mu$ we have $\lb T(\wh\lambda + p^r\sigma),L(\nu + p^r\gamma)\rb = 0$. Write $$[\Delta(\mu)] = [L(\mu)] + \sum_{\gamma\in X_+,\, \gamma < \mu}b_{\gamma}[L(\gamma)].$$

Then by Steinberg's tensor product theorem $$\lb T(\wh\lambda + p^r\sigma),L(\nu + p^r\mu)\rb = \lb T(\wh\lambda + p^r\sigma),L(\nu)\otimes\Delta(\mu)^{(r)}\rb - \sum_{\gamma\in X_+,\,\gamma < \mu}b_{\gamma}\lb T(\wh\lambda + p^r\sigma),L(\nu + p^r\gamma)\rb$$ but whenever $b_{\gamma}\neq 0$ we have $\gamma < \mu$ so by assumption the sum is $0$. Also, by Lemma \ref{formwithpr} the first term is $0$, so this shows that $\lb T(\wh\lambda),L(\nu + p^r\mu)\rb = 0$ by induction, where the base case follows from Lemma \ref{formwithpr}.
\end{proof}

In the following lemma, we need to assume that $\sigma$ is not strongly linked to $\mu$. For more information on strong linkage, we refer to \cite[II.6]{rags}.

\begin{lm}\label{formwithnotlinkedto0}
Let $\lambda,\nu\in X_r$, $\sigma,\mu\in X_+$ and assume that $T(\wh\lambda)$ is indecomposable as a $G_r$-module and that $\St_r\otimes L(\nu)$ has a good filtration. Assume further that $\nabla(\sigma)$ is simple. Let $\mu\in X_+$ and assume that $\sigma$ is not strongly linked to $\mu$. Then $\lb T(\wh\lambda + p^r\sigma),L(\nu + p^r\mu)\rb = 0$.
\end{lm}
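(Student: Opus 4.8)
The plan is to mimic the inductive structure of Lemma \ref{formwithwrongrestricted} but now induct on $\mu$ with respect to the strong linkage order, using the hypothesis that $\sigma$ is not strongly linked to $\mu$ together with the fact that $\nabla(\sigma)$ is simple (so $L(\sigma) \cong \nabla(\sigma)$). First I would set up the induction: assume the statement holds for all $\gamma \in X_+$ with $\gamma$ strictly below $\mu$ in the strong linkage order (the base case being covered by Lemma \ref{formwithpr} applied with the simple module $\Delta(\sigma) = \nabla(\sigma) = L(\sigma)$, noting $\sigma$ is not strongly linked to itself only if... — actually one must be slightly careful, so I would phrase the base case directly). The key computational identity is to expand $L(\nu + p^r\mu) = L(\nu) \otimes L(\mu)^{(r)}$ via Steinberg's tensor product theorem and to write $[\nabla(\mu)]$ in terms of $[L(\gamma)]$ with all $\gamma < \mu$ strongly linked to $\mu$ (by the strong linkage principle, \cite[Proposition II.6.13]{rags}). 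Thus
$$\lb T(\wh\lambda + p^r\sigma), L(\nu + p^r\mu)\rb = \lb T(\wh\lambda + p^r\sigma), L(\nu)\otimes\nabla(\mu)^{(r)}\rb - \sum_{\gamma}b_\gamma \lb T(\wh\lambda + p^r\sigma), L(\nu + p^r\gamma)\rb,$$
where each $\gamma$ appearing has $b_\gamma \neq 0$ only if $\gamma$ is strongly linked to $\mu$ and $\gamma < \mu$.

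Next I would handle the two pieces. For the sum: since $\gamma$ is strongly linked to $\mu$ and $\sigma$ is not strongly linked to $\mu$, transitivity of strong linkage forces $\sigma$ to not be strongly linked to $\gamma$ either; combined with $\gamma < \mu$, the induction hypothesis gives $\lb T(\wh\lambda + p^r\sigma), L(\nu + p^r\gamma)\rb = 0$, so the entire sum vanishes. For the first term, $\lb T(\wh\lambda + p^r\sigma), L(\nu)\otimes\nabla(\mu)^{(r)}\rb = \lb T(\wh\lambda + p^r\sigma), L(\nu)\otimes\Delta(\mu^*)^{*(r)}\rb$ — here I would instead apply Lemma \ref{formwithpr} after rewriting $\nabla(\mu)^{(r)}$ appropriately, or more cleanly use Proposition \ref{formproperties}(6) to move things around and reduce to $\lb T(\wh\lambda), L(\nu)\otimes\Delta(\mu)^{(r)} \otimes T(\sigma)^{*(r)}\rb$ type expressions. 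The honest route: by the Andersen--Haboush theorem and Donkin's tensor product theorem, $\lb T(\wh\lambda + p^r\sigma), L(\nu)\otimes\Delta(\mu)^{(r)}\rb = [T(\sigma):\nabla(\mu)]_\nabla$ when $\nu = \lambda$ (Lemma \ref{formwithpr}), but here $\nu$ may equal $\lambda$, so I cannot simply cite Lemma \ref{formwithwrongrestricted}. Instead, when $\nu = \lambda$, I compute $[T(\sigma):\nabla(\mu)]_\nabla = \lb T(\sigma),\nabla(\mu)\rb$ and observe that since $\nabla(\sigma)$ is simple, $T(\sigma) = L(\sigma) = \nabla(\sigma)$, so $\lb T(\sigma),\nabla(\mu)\rb = [\nabla(\sigma):\nabla(\mu)]_\nabla = \delta_{\sigma,\mu}$; since $\sigma$ is not strongly linked to $\mu$ we have $\sigma \neq \mu$, hence this is $0$. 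When $\nu \neq \lambda$, it is $0$ by Lemma \ref{formwithpr} directly.

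Assembling: both the first term and the sum vanish, so $\lb T(\wh\lambda + p^r\sigma), L(\nu + p^r\mu)\rb = 0$, completing the induction. The main obstacle I anticipate is getting the base case and the well-foundedness of the induction order precisely right: strong linkage order is not obviously well-founded on all of $X_+$, so I would either restrict attention to the (finite) set of weights occurring below the fixed $\mu$ — which is finite since it is contained in $\{\gamma : \gamma \leq \mu\}$ — or induct on $\langle \mu, \text{(sum of fundamental coweights)}\rangle$ or on the number of $\gamma < \mu$ with $\gamma$ dominant. A secondary subtlety is ensuring the transitivity step "($\gamma$ strongly linked to $\mu$) and ($\sigma$ not strongly linked to $\mu$) $\Rightarrow$ ($\sigma$ not strongly linked to $\gamma$)" is applied in the correct direction — strong linkage is a preorder, so if $\sigma$ were strongly linked to $\gamma$ and $\gamma$ strongly linked to $\mu$ then $\sigma$ would be strongly linked to $\mu$, a contradiction; this works provided I use the convention consistently, and I would double-check against \cite[II.6.4]{rags}.
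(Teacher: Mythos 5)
Your proposal is correct and follows essentially the same route as the paper: expand $[\Delta(\mu)]$ into composition factors, use Steinberg's tensor product theorem to reduce to $\lb T(\wh\lambda + p^r\sigma),L(\nu)\otimes\Delta(\mu)^{(r)}\rb$ plus terms handled by induction via transitivity of strong linkage, and kill the leading term using Lemma \ref{formwithpr} together with $T(\sigma)\cong\nabla(\sigma)$ and $\sigma\neq\mu$. Your extra care about well-foundedness of the induction (restricting to the finite set $\{\gamma\in X_+ : \gamma\leq\mu\}$) is a point the paper leaves implicit, but it changes nothing of substance.
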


\begin{proof}
Write $$[\Delta(\mu)] = [L(\mu)] + \sum_{\gamma \in X_+,\, \gamma < \mu}b_{\gamma}[L(\gamma)].$$ By Steinberg's tensor product theorem we have $$\lb T(\wh\lambda + p^r\sigma),L(\nu + p^r\mu)\rb = \lb T(\wh\lambda + p^r\sigma),L(\nu)\otimes\Delta(\mu)^{(r)}\rb - \sum_{\gamma\in X_+,\,\gamma < \mu}b_{\gamma}\lb T(\wh\lambda + p^r\sigma),L(\nu + p^r\gamma)\rb$$ and $\sigma$ is not strongly linked to any $\gamma$ with $b_{\gamma}\neq 0$ since these $\gamma$ are all strongly linked to $\mu$. But now we can assume by induction that all terms in the sum are $0$, so it remains to show that the first term is $0$. By Lemma \ref{formwithpr} this is either $0$ or $[T(\sigma):\nabla(\mu)]_{\nabla}$, and the latter equals $[\nabla(\sigma):\nabla(\mu)]_{\nabla} = 0$ since $\nabla(\sigma)$ was assumed to be simple and $\sigma$ was not strongly linked to $\mu$ (so in particular, we have $\sigma\neq\mu$).
\end{proof}

\begin{thm}\label{formwithsimple}
Let $\lambda,\nu\in X_r$, $\sigma,\mu\in X_+$ and assume that $T(\wh\lambda)$ is indecomposable as a $G_r$-module and $\nabla(\sigma)$ is simple. Assume further that $\St_r\otimes L(\nu)$ has a good filtration and that either $\nabla(\mu)$ is simple or $\sigma$ is not strongly linked to $\mu$.

Then $$\lb T(\wh\lambda + p^r\sigma),L(\nu + p^r\mu)\rb = \begin{cases}1 & \mbox{if }\nu + p^r\mu = \lambda + p^r\sigma \\ 0 & \mbox{else}\end{cases}.$$
\end{thm}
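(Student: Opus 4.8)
The plan is to combine the three lemmas just proved, splitting into the cases dictated by the hypothesis ``either $\nabla(\mu)$ is simple or $\sigma$ is not strongly linked to $\mu$.'' First I would dispose of the case $\nu \neq \lambda$: here Lemma \ref{formwithwrongrestricted} gives $\lb T(\wh\lambda + p^r\sigma),L(\nu + p^r\mu)\rb = 0$ immediately, regardless of $\mu$ and $\sigma$, and one checks that $\nu + p^r\mu \neq \lambda + p^r\sigma$ in this case since $\nu,\lambda\in X_r$ forces equality of the $r$-restricted parts. So from now on assume $\nu = \lambda$, and the claim reduces to showing $\lb T(\wh\lambda + p^r\sigma),L(\lambda + p^r\mu)\rb = \delta_{\mu,\sigma}$.

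Next I would handle the sub-case where $\sigma$ is not strongly linked to $\mu$. If $\sigma\neq\mu$, then in particular they are not strongly linked, and Lemma \ref{formwithnotlinkedto0} gives the value $0$, matching $\delta_{\mu,\sigma} = 0$ since $\lambda + p^r\mu \neq \lambda + p^r\sigma$. If $\sigma = \mu$, then $\sigma$ is trivially strongly linked to $\mu$, so this sub-case does not apply and we fall into the other branch of the hypothesis: $\nabla(\mu) = \nabla(\sigma)$ is simple. Thus it remains only to treat the case $\nu = \lambda$ with $\nabla(\mu)$ simple. Here I would run the same induction-on-$\mu$ argument as in Lemmas \ref{formwithwrongrestricted} and \ref{formwithnotlinkedto0}: write $[\Delta(\mu)] = [L(\mu)] + \sum_{\gamma < \mu} b_\gamma [L(\gamma)]$, and since $\nabla(\mu)\cong\Delta(\mu)$ is simple this sum is actually empty, so $[L(\mu)] = [\Delta(\mu)]$ and by Steinberg's tensor product theorem $\lb T(\wh\lambda + p^r\sigma),L(\lambda + p^r\mu)\rb = \lb T(\wh\lambda + p^r\sigma),L(\lambda)\otimes\Delta(\mu)^{(r)}\rb$. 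By Lemma \ref{formwithpr} (using $\nu = \lambda$) this equals $[T(\sigma):\nabla(\mu)]_{\nabla}$, and since $\nabla(\sigma)$ is simple we have $T(\sigma)\cong\nabla(\sigma)$, so $[T(\sigma):\nabla(\mu)]_{\nabla} = [\nabla(\sigma):\nabla(\mu)]_{\nabla} = \delta_{\sigma,\mu}$, which is exactly what we want.

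The main obstacle, I expect, is making sure the case analysis is genuinely exhaustive and that the two hypotheses interact correctly: specifically, one must be careful that when $\sigma = \mu$ (the only case where we truly need the value $1$), the hypothesis ``$\sigma$ not strongly linked to $\mu$'' is vacuous, so one is forced into the ``$\nabla(\mu)$ simple'' branch — but then one should double-check that $\nabla(\mu)$ simple and $\nabla(\sigma)$ simple are consistent (they are, since $\mu = \sigma$). A secondary point of care is the bookkeeping in the $r$-restricted versus $p^r$-divisible decomposition: since both $\lambda$ and $\nu$ lie in $X_r$, the equality $\nu + p^r\mu = \lambda + p^r\sigma$ in $X_+$ holds if and only if $\nu = \lambda$ and $\mu = \sigma$, which is what lets us phrase the final answer uniformly as a single Kronecker delta. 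Beyond that the argument is just a matter of quoting Lemmas \ref{formwithpr}, \ref{formwithwrongrestricted} and \ref{formwithnotlinkedto0} in the right order; no new computation is needed.
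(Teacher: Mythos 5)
Your proof is correct and follows essentially the same route as the paper: Lemma \ref{formwithwrongrestricted} disposes of $\nu\neq\lambda$, Lemma \ref{formwithnotlinkedto0} disposes of the case where $\sigma$ is not strongly linked to $\mu$, and the remaining case ($\nabla(\mu)$ simple) reduces via $[L(\mu)]=[\Delta(\mu)]$ and Lemma \ref{formwithpr} to $[T(\sigma):\nabla(\mu)]_{\nabla}=[\nabla(\sigma):\nabla(\mu)]_{\nabla}=\delta_{\sigma,\mu}$, exactly as in the paper's (more compressed, contrapositive) phrasing. One aside is stated backwards --- $\sigma\neq\mu$ does \emph{not} imply that $\sigma$ is not strongly linked to $\mu$ --- but this is harmless, since in that sub-case the non-linkage is already your standing hypothesis and what you actually use is the true converse, namely that non-linkage forces $\sigma\neq\mu$ and hence $\delta_{\sigma,\mu}=0$.
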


\begin{proof}
If $\lb T(\wh\lambda + p^r\sigma),L(\nu + p^r\mu)\rb \neq 0$ then by Lemma \ref{formwithwrongrestricted} and Lemma \ref{formwithnotlinkedto0} we have $\nu = \lambda$ and $\sigma$ is strongly linked to $\mu$ and thus by assumption we must have that $\nabla(\mu)$ is simple. But since we also have that $T(\sigma)\cong\nabla(\sigma)$ the claim now follows from Lemma \ref{formwithpr}.
\end{proof}

We also include the following which exchanges the conditions on $\sigma$ for stronger conditions on $\mu$.

\begin{prop}
Let $\lambda,\nu\in X_r$, $\sigma,\mu\in X_+$ and assume that $T(\wh\lambda)$ is indecomposable as a $G_r$-module and that $\St_r\otimes L(\nu)$ has a good filtration. Assume further that whenever $\psi\in X_+$ with $\psi\leq \mu$ then $\psi\not\leq\sigma$. Then $$\lb T(\wh\lambda + p^r\sigma),L(\nu + p^r\mu)\rb = 0.$$
\end{prop}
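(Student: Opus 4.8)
The plan is to prove this by induction on $\mu$ with respect to the partial order $\leq$ on $X_+$, following closely the pattern of the proofs of Lemma \ref{formwithwrongrestricted} and Lemma \ref{formwithnotlinkedto0}. So I would assume that the statement holds for all $\gamma\in X_+$ with $\gamma<\mu$ (with $\lambda,\nu,\sigma$ kept fixed) and deduce it for $\mu$; since the set of dominant weights below $\mu$ is finite this is a legitimate induction, with the base case (no $\gamma<\mu$) subsumed. First I would note that the hypothesis on $\mu$ is inherited by every $\gamma<\mu$: if $\psi\in X_+$ and $\psi\leq\gamma$ then $\psi\leq\mu$, so $\psi\not\leq\sigma$. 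Hence the inductive hypothesis will be applicable to all the relevant smaller weights.

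For the inductive step, write $[\Delta(\mu)]=[L(\mu)]+\sum_{\gamma\in X_+,\,\gamma<\mu}b_\gamma[L(\gamma)]$. Since $\nu\in X_r$, Steinberg's tensor product theorem gives $L(\nu)\otimes L(\gamma)^{(r)}\cong L(\nu+p^r\gamma)$ for every $\gamma\in X_+$, and as the form depends only on characters, expanding $L(\nu)\otimes\Delta(\mu)^{(r)}$ yields
$$\lb T(\wh\lambda+p^r\sigma),L(\nu+p^r\mu)\rb=\lb T(\wh\lambda+p^r\sigma),L(\nu)\otimes\Delta(\mu)^{(r)}\rb-\sum_{\gamma\in X_+,\,\gamma<\mu}b_\gamma\lb T(\wh\lambda+p^r\sigma),L(\nu+p^r\gamma)\rb.$$
By the previous paragraph and the inductive hypothesis, every term in the sum vanishes, so it remains only to treat the first term. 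By Lemma \ref{formwithpr} (whose hypotheses — $T(\wh\lambda)$ indecomposable as a $G_r$-module and $\St_r\otimes L(\nu)$ having a good filtration — are precisely those assumed here), this first term is $0$ if $\nu\neq\lambda$ and equals $[T(\sigma):\nabla(\mu)]_{\nabla}$ if $\nu=\lambda$.

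To finish I would dispose of the case $\nu=\lambda$: taking $\psi=\mu$ in the hypothesis (legitimate since $\mu\in X_+$ and $\mu\leq\mu$) gives $\mu\not\leq\sigma$; since all weights of the tilting module $T(\sigma)$ are $\leq\sigma$, the weight $\mu$ does not occur in $T(\sigma)$, so no copy of $\nabla(\mu)$ can occur in its good filtration and $[T(\sigma):\nabla(\mu)]_{\nabla}=0$. Thus the first term vanishes in all cases and the induction closes. There is no substantive obstacle here — this is a routine variant of the two preceding lemmas; the only points needing a moment's care are the bookkeeping that the hypothesis on $\mu$ is inherited by the weights $\gamma<\mu$ (so the induction is well-posed) and the elementary observation that $\mu\not\leq\sigma$ forces $[T(\sigma):\nabla(\mu)]_{\nabla}=0$.
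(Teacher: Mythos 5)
Your proof is correct and matches the paper's, which simply observes that the hypothesis on $\mu$ is inherited by every $\gamma\leq\mu$ and then runs the same induction as in Lemma \ref{formwithnotlinkedto0}. The only detail you supply beyond the paper's one-line argument is the (correct) observation that $\mu\not\leq\sigma$ forces $[T(\sigma):\nabla(\mu)]_{\nabla}=0$ via the classification of tilting modules, which is exactly the intended replacement for the strong-linkage argument used there.
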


\begin{proof}
Since the condition on $\mu$ is inherited by any $\gamma$ with $\gamma\leq \mu$, this follows in the same way as Lemma \ref{formwithnotlinkedto0}.
\end{proof}

\subsection{Reciprocity}

We can now prove the reciprocity between tilting modules and simple modules. Note that the result is in particular applicable whenever $\lambda + p^r\sigma\in\Gamma_r$ and $\mu\in\Gamma_r$, as long as we assume Donkin's tilting conjecture (so if $p\geq 2h-2$ it includes \cite[Satz 5.9]{jantzen80a} and \cite[Proposition 1.13]{richewilliamson15}). Also, as mentioned earlier, if we assume Donkin's tilting conjecture then the conditions on $\lambda$ and $\nu_0$ will be automatic, whereas the condition on $\nu_1$ will not (as will be seen in a later example).

\begin{cor}\label{reciprocity}
Let $\lambda\in X_r$ and $\sigma,\mu\in X_+$ and assume that $T(\wh\lambda)$ is indecomposable as a $G_r$-module and $\nabla(\sigma)$ is simple. Assume further that whenever $L(\nu_0 + p^r\nu_1)$ is a composition factor of $\nabla(\mu)$ with $\nu_0\in X_r$ then $\St_r\otimes L(\nu_0)$ has a good filtration and if $\nabla(\nu_1)$ is not simple then $\sigma$ is not strongly linked to $\nu_1$. Then $$[T(\wh\lambda + p^r\sigma):\nabla(\mu)]_{\nabla} = [\nabla(\mu):L(\lambda + p^r\sigma)]_G.$$
\end{cor}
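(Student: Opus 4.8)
The plan is to express the good-filtration multiplicity on the left via the bilinear form and then expand $\nabla(\mu)$ into its simple constituents, using the results of the previous subsection to evaluate the form on each term. First I would use Proposition \ref{formproperties}(3), which gives $[T(\wh\lambda + p^r\sigma):\nabla(\mu)]_{\nabla} = \lb T(\wh\lambda + p^r\sigma),\nabla(\mu)\rb$; for this to apply we need $T(\wh\lambda + p^r\sigma)$ to have a good filtration, which it does since it is tilting. Then I would write $[\nabla(\mu)] = \sum_{\nu\in X_+} [\nabla(\mu):L(\nu)]_G\,[L(\nu)]$ and use bilinearity of $\lb\cdot,\cdot\rb$ to obtain $\lb T(\wh\lambda + p^r\sigma),\nabla(\mu)\rb = \sum_{\nu} [\nabla(\mu):L(\nu)]_G\, \lb T(\wh\lambda + p^r\sigma),L(\nu)\rb$.

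Next I would evaluate each term $\lb T(\wh\lambda + p^r\sigma),L(\nu)\rb$ using Theorem \ref{formwithsimple}. Writing $\nu = \nu_0 + p^r\nu_1$ with $\nu_0\in X_r$, the hypotheses of Corollary \ref{reciprocity} are arranged precisely so that for each $\nu$ occurring as a composition factor of $\nabla(\mu)$ the assumptions of Theorem \ref{formwithsimple} are met: $T(\wh\lambda)$ is indecomposable as a $G_r$-module and $\nabla(\sigma)$ is simple by hypothesis, $\St_r\otimes L(\nu_0)$ has a good filtration by hypothesis, and either $\nabla(\nu_1)$ is simple or $\sigma$ is not strongly linked to $\nu_1$, again by hypothesis. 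Hence Theorem \ref{formwithsimple} gives $\lb T(\wh\lambda + p^r\sigma),L(\nu_0 + p^r\nu_1)\rb = 1$ if $\nu_0 + p^r\nu_1 = \lambda + p^r\sigma$ and $0$ otherwise. Substituting this back collapses the sum to the single term $\nu = \lambda + p^r\sigma$, yielding $\lb T(\wh\lambda + p^r\sigma),\nabla(\mu)\rb = [\nabla(\mu):L(\lambda + p^r\sigma)]_G$, which is the claim.

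One subtlety I would address carefully is that the decomposition $\nu = \nu_0 + p^r\nu_1$ with $\nu_0 \in X_r$ is exactly the one appearing in the hypothesis of the corollary, so the condition ``$\St_r\otimes L(\nu_0)$ has a good filtration'' and the condition on $\nu_1$ transfer verbatim to the hypotheses of Theorem \ref{formwithsimple}; the only place the sum ranges over weights not of this restricted form is vacuous, since every dominant weight has such a decomposition by the simply-connected assumption. I would also note that only composition factors $L(\nu)$ with $\nu\leq\mu$ appear, so the hypothesis need only be imposed for those, which is how it is stated.

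The main obstacle is not really in this proof itself, which is a short assembly, but in verifying that the invocation of Theorem \ref{formwithsimple} is legitimate term by term — in particular making sure the ``either $\nabla(\mu)$ is simple or $\sigma$ is not strongly linked to $\mu$'' alternative in that theorem is correctly matched, for each composition factor, against the corresponding alternative in the corollary's hypothesis. This is routine once the bookkeeping of the two $p$-adic pieces $\nu_0,\nu_1$ is set up, so the entire argument is essentially a one-line computation with the bilinear form followed by a citation of Theorem \ref{formwithsimple}.
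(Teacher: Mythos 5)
Your proposal is correct and follows essentially the same route as the paper: convert the filtration multiplicity to the bilinear form via Proposition \ref{formproperties}(3), expand $[\nabla(\mu)]$ into simple characters, and evaluate each term with Theorem \ref{formwithsimple} (whose hypotheses the corollary's assumptions are tailored to supply, with the nonzero term governed by Lemma \ref{formwithpr}). The bookkeeping you flag about matching $\nu_0,\nu_1$ to the variables of Theorem \ref{formwithsimple} is exactly the content of the paper's one-line proof.
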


\begin{proof}
Since by Proposition \ref{formproperties}(3) we have $[T(\wh\lambda + p^r\sigma):\nabla(\mu)]_{\nabla} = \lb T(\wh\lambda + p^r\sigma),\nabla(\mu)\rb$ the claim follows from Theorem \ref{formwithsimple} and Lemma \ref{formwithpr}.
\end{proof}

\subsection{A counterexample}

We would like to be able to get rid of the requirement on the composition factors of $\nabla(\mu)$ in Corollary \ref{reciprocity}, or at least lessen them to requiring that $\mu\in X_r$, but this is unfortunately not possible as we will now show.

\begin{prop}\label{formwithbadweight}
Assume that $p\geq h$. Let $\lambda\in X_r$ and assume that $T(\wh\lambda)$ is indecomposable as a $G_r$-module and that $\St_r\otimes L(\lambda)$ has a good filtration. Then $$\lb T(\wh\lambda),L(\lambda + p^r(p-h+1)\alpha_0)\rb = -1.$$
\end{prop}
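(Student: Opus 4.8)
The plan is to reduce the computation to Lemma~\ref{formwithpr} by expanding $L(\lambda + p^r(p-h+1)\alpha_0)$ via Steinberg's tensor product theorem and rewriting its Frobenius-twisted factor in terms of Weyl modules. Set $\mu = (p-h+1)\alpha_0$, a nonzero dominant weight since $p \geq h$, so that $L(\lambda + p^r\mu) \cong L(\lambda)\otimes L(\mu)^{(r)}$. The form $\lb\cdot,\cdot\rb$ depends only on the characters of the modules involved, so once we have an expression for $[L(\mu)]$ as a $\Z$-linear combination of Weyl characters $[\Delta(\gamma)]$ we can feed it in and evaluate each resulting term by Lemma~\ref{formwithpr}.

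The geometric input I would establish first is that $\mu$ lies in the alcove directly above the fundamental alcove: a direct computation with the dot-action gives $s_{\alpha_0,p}\cdot 0 = -(\langle\rho,\alpha_0^\vee\rangle - p)\alpha_0 = (p-h+1)\alpha_0 = \mu$, and $0$ lies in the interior of the fundamental alcove because $\langle\rho,\alpha^\vee\rangle \geq 1$ for $\alpha\in R^+$ while $\langle\rho,\alpha_0^\vee\rangle = h-1 < p$. From this I would conclude that $\Delta(\mu)$ has exactly two composition factors, $L(\mu)$ and $L(0)$, each with multiplicity one: strong linkage forces every composition factor other than the head $L(\mu)$ to have highest weight $\nu$ with $\nu \uparrow \mu$ and $\nu < \mu$, the only alcove below that of $\mu$ is the fundamental one, and the relevant $W_p$-orbit meets its closure only in $0$; the multiplicity of $L(0)$ is then pinned to one by the Jantzen sum formula, whose only surviving term is the one obtained by reflecting $\mu$ in the wall $H_{\alpha_0,p}$ down to $0$, contributing the character $[L(0)]$. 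Equivalently, $[L(\mu)] = [\Delta(\mu)] - [L(0)]$.

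Combining this with $L(0)^{(r)} = \Delta(0)^{(r)}$ and bilinearity gives
$$\lb T(\wh\lambda), L(\lambda+p^r\mu)\rb = \lb T(\wh\lambda), L(\lambda)\otimes\Delta(\mu)^{(r)}\rb - \lb T(\wh\lambda), L(\lambda)\otimes\Delta(0)^{(r)}\rb.$$
Both terms are instances of Lemma~\ref{formwithpr} with $\sigma = 0$ and $\nu = \lambda$; its hypotheses hold because $T(\wh\lambda)$ is assumed indecomposable as a $G_r$-module and $\St_r\otimes L(\lambda)$ is assumed to have a good filtration. Since $T(0) = \nabla(0) = k$, the first term equals $[T(0):\nabla(\mu)]_\nabla = 0$ as $\mu\neq 0$, and the second equals $[T(0):\nabla(0)]_\nabla = 1$, so the difference is $-1$.

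The only step that is not pure bookkeeping is the description of $\Delta((p-h+1)\alpha_0)$ in the second paragraph, so that is where I expect the real work to lie; one can either carry out the strong-linkage plus Jantzen-sum-formula argument sketched above (checking along the way that contributions from roots $\beta\neq\alpha_0$ vanish, which follows from $\langle\rho,\alpha_0^\vee\rangle = h-1$ being maximal among the $\langle\rho,\beta^\vee\rangle$ together with $p\geq h$), or simply invoke the well-known structure of Weyl modules with highest weight in the second alcove. Everything else is an application of Steinberg's tensor product theorem and Lemma~\ref{formwithpr}.
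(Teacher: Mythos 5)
Your proof is correct and follows essentially the same route as the paper: both use the Jantzen sum formula to get $[\Delta((p-h+1)\alpha_0)] = [L((p-h+1)\alpha_0)] + [L(0)]$, then apply Steinberg's tensor product theorem and Lemma~\ref{formwithpr} (the paper evaluates the term $\lb T(\wh\lambda),L(\lambda)\rb$ via Theorem~\ref{formwithsimple}, which is the same as your $\sigma=\mu=0$ instance of Lemma~\ref{formwithpr}). The only difference is that you spell out the alcove-geometry justification of the Weyl module decomposition, which the paper simply asserts.
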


\begin{proof}
First observe that by the Jantzen sum formula (\cite[Proposition II.8.19]{rags}) we have $[\Delta((p-h+1)\alpha_0)] = [L((p-h+1)\alpha_0)] + [L(0)]$ so by Steinberg's tensor product theorem we have $[L(\lambda + p^r(p-h+1)\alpha_0)] = [L(\lambda)\otimes\Delta((p-h+1)\alpha_0)^{(r)}] - [L(\lambda)]$.

The claim now follows from Lemma \ref{formwithpr} and Theorem \ref{formwithsimple}.
\end{proof}

\begin{ex}
Let $G = SL_5$ and $p=5$. Let $\mu = (2,3,3,2)$ (we will write all weights in terms of the fundamental weights so $(a_1,a_2,a_3,a_4) = a_1\omega_1 + a_2\omega_2 + a_3\omega_3 + a_4\omega_4$ or in other words, if we label the simple roots $\alpha_1,\alpha_2,\alpha_3,\alpha_4$ going from one end of the Dynkin diagram to the other, this is the unique weight $\lambda$ such that $\langle\lambda,\alpha_i^{\vee}\rangle = a_i$ for $i=1,2,3,4$). Then $L(p(p-h+1)\alpha_0) = L(5,0,0,5)$ is a composition factor of $\nabla(\mu)$ with multiplicity $1$ (as can be checked by applying the Jantzen sum formula) and thus by Proposition \ref{formwithbadweight} and arguing as above (with the same assumptions as previously) we have $$[T(\wh 0):\nabla(\mu)]_{\nabla} = [\nabla(\mu):L(0)]_G - 1$$ which is not equal to $[\nabla(\mu):L(0)]_G$.
\end{ex}

Throughout this section we have had result needing the assumption that $T(\wh\lambda)$ was indecomposable as a $G_r$-module for some $\lambda\in X_r$. This condition is in fact also necessary if we are to obtain results like these. We will postpone the proof of this to later, as it will require us to know more about the dimension of homomorphism spaces for $G_r$.

\section{Restriction to a Frobenius kernel}

In this section, we will consider the restrictions of $T((p^r-1)\rho + \lambda)$ and $\St_r\otimes\nabla(\lambda)$ to $G_r$.

\subsection{Relating $\Hom$-spaces for $G_r$ and $G$}

We will need the following result which follows from the discussion in \cite[5.1]{bnppss12}.

\begin{thm}\label{grext}
Let $M$ and $N$ be finite dimensional $G$-modules. There exists a finite dimensional $G$-module $\Q_{M,N}^{r}$ such that.
\begin{itemize}
\item $\Hom_{G_r}(M,N)\cong \Hom_G(M,N\otimes\Q_{M,N}^{r})$.
\item $\Q_{M,N}^{r}$ has a filtration with factors of the form $\nabla(\lambda)^{(r)}$, each occurring with multiplicity either $0$ or $\dim\nabla(\lambda)$.
\item If $\nabla(\lambda)^{(r)}$ does not occur in $\Q_{M,N}^{r}$ then $\Hom_G(M,N\otimes\nabla(\lambda)^{(r)}) = 0$.
\end{itemize}
\end{thm}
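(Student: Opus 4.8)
The plan is to realize the functor $\Hom_{G_r}(M,-)$ as a $G$-equivariant functor and identify the "coefficient" $G$-module $\Q_{M,N}^r$ explicitly. Recall the standard fact (from \cite[I.8.14, I.6.12]{rags} or the discussion in \cite[5.1]{bnppss12}) that for a $G$-module $V$ the invariants $V^{G_r}$ carry a natural $G/G_r \cong G^{(r)}$-action, and that for finite-dimensional $M$ one has $\Hom_{G_r}(M,N) \cong (M^*\otimes N)^{G_r}$ as $G$-modules, where $G$ acts on the right-hand side through $G/G_r$. So the first step is to set $\Q_{M,N}^r$ to be a module built so that $\Hom_G(M,N\otimes\Q_{M,N}^r) \cong (M^*\otimes N)^{G_r}$; the natural candidate is to untwist, i.e. to take $\Q_{M,N}^r$ to be (a twist by $F^r$ of) the coordinate algebra $k[G_r\backslash G]$ truncated to those weights actually needed, or more precisely the injective hull of the relevant socle. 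I would instead follow the cleaner route: for the trivial module one has $k[G/G_r] = k[G]^{(r)}$ as a $G$-module (acting by right translation, then untwisting), and this has a good filtration with $\nabla(\lambda)\otimes\nabla(\lambda)^*$-type factors by the algebraic Peter--Weyl theorem \cite[II.4.20]{rags}. The module $\Q_{M,N}^r$ should be the ``finite part'' of $(k[G]^{(r)})$ that sees $M$ and $N$, i.e. cut down to the finitely many dominant weights $\lambda$ with $\Hom_{G_r}(M,N\otimes\nabla(\lambda)^{(r)})\ne 0$.

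Concretely, the key steps in order would be: \emph{(1)} Establish the isomorphism $\Hom_{G_r}(M,N)\cong (M^*\otimes N\otimes k[G]^{(r)})^G$, using that $k[G]$ with the right regular action is an injective $G_r$-module and that $\Hom_{G_r}(M,N)\cong \Hom_{G_r}(k, M^*\otimes N)$; this is essentially Frobenius reciprocity combined with $\operatorname{ind}_{G_r}^{G}$ and the tensor identity. \emph{(2)} Use the Peter--Weyl filtration of $k[G]$: as a $G\times G$-module (left and right translation) it has a filtration with sections $\nabla(\lambda)\otimes\nabla(\lambda^*)$, $\lambda\in X_+$ (here using $\cite[II.4.20]{rags}$). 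Twisting by $F^r$ on one side and restricting the other side's action back to $G$ through $G/G_r$, one gets that the relevant module has a filtration with sections of the form $\nabla(\lambda)^{(r)}$ each appearing $\dim\nabla(\lambda)$ times (the $\dim\nabla(\lambda)$ coming from the other tensor factor $\nabla(\lambda^*)$ becoming multiplicity space once we pass to $G_r$-fixed points appropriately). \emph{(3)} Since $M$ and $N$ are finite dimensional, only finitely many $\lambda$ contribute a nonzero $\Hom_G(M, N\otimes\nabla(\lambda)^{(r)})$ (weight reasons: such $\lambda$ must satisfy $p^r\lambda \le$ some bound coming from the highest weights of $M^*\otimes N$), so we may truncate $k[G]^{(r)}$ to a finite-dimensional $G$-submodule $\Q_{M,N}^r$ without changing $\Hom_G(M, N\otimes -)$; this truncation is possible because the filtration is by $\nabla(\lambda)^{(r)}$'s and one can take the submodule generated by the relevant isotypic pieces. \emph{(4)} The third bullet is then immediate: if $\nabla(\lambda)^{(r)}$ is \emph{not} among the truncated sections, that is precisely because $\Hom_G(M,N\otimes\nabla(\lambda)^{(r)})=0$, which is how we chose the truncation; conversely one checks the truncation does contain every $\lambda$ with nonvanishing $\Hom$, so the multiplicity is $0$ or $\dim\nabla(\lambda)$ exactly as claimed.

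The main obstacle I anticipate is making step (2) precise --- namely, correctly bookkeeping how the left-translation $G$-action on $k[G]$ interacts with the Frobenius twist on the right-translation action, so that the resulting $G$-filtration really has sections $\nabla(\lambda)^{(r)}$ with multiplicity exactly $\dim\nabla(\lambda)$ rather than, say, $\dim\nabla(\lambda^*)=\dim\nabla(\lambda)$ (these agree, but one must be careful about which weight indexes the section versus the multiplicity) and that the filtration does not degenerate when we restrict from $G/G_r$ back along the Frobenius. A secondary technical point is justifying that the truncation in step (3) can be chosen to be a genuine $G$-submodule (not just a subquotient) compatibly with the good filtration --- here one uses that $\nabla$-filtered modules are closed under the relevant extensions and that the set of contributing weights is ``downward closed enough'' in the dominance order to pick out a submodule. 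Given that the paper cites \cite[5.1]{bnppss12} for this, I would expect the cleanest writeup to simply quote that reference for the construction and then verify the three bulleted properties, with the bulk of the work being the multiplicity count in the second bullet.
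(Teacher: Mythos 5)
Your proposal is correct and follows essentially the argument the paper relies on: the paper gives no proof of its own, citing \cite[5.1]{bnppss12}, and the construction there is exactly your route via $\Hom_{G_r}(M,N)\cong\Hom_G(M,N\otimes\ind_{G_r}^G k)$, the tensor identity, the Peter--Weyl good filtration of $k[G]^{(r)}$ with sections $\nabla(\lambda)^{(r)}$ of multiplicity $\dim\nabla(\lambda)$, and truncation to a finite-dimensional submodule using that only finitely many $\lambda$ can contribute for weight reasons. The technical points you flag (bookkeeping the twist on the left versus right translation action, and choosing the truncation as a genuine submodule compatible with the filtration order) are the right ones and are handled in the cited reference.
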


Note that while the module $\Q_{M,N}^{r}$ above is not uniquely determined by the given conditions, this will not be a problem, as we will only need it to compute certain $\Hom$-spaces.

\subsection{Dimension of $\Hom$-spaces for $G_r$}

Using Theorem \ref{grext}, we can determine the dimension of the space of homomorphisms between certain tilting modules and costandard modules as $G_r$-modules.

\begin{thm}\label{dimhomgr}
Let $\lambda,\nu\in X_+$. Then $$\dim\Hom_{G_r}(T((p^r-1)\rho + \lambda),\nabla(\nu)) = \sum_{\mu\in X_+}\dim(\nabla(\mu))[T((p^r-1)\rho + \lambda^*)\otimes\nabla(\mu)^{(r)}:\nabla(\nu^*)]_{\nabla}.$$ In particular, $$\dim\Hom_{G_r}(\St_r,\nabla(\nu)) = \sum_{\mu\in X_+}\dim(\nabla(\mu))[\nabla((p^r-1)\rho + p^r\mu):\nabla(\nu^*)]_{\nabla}$$ $$= \sum_{\mu\in X_+}\dim(\nabla(\mu))[\nabla(\nu):\nabla((p^r-1)\rho + p^r\mu)]_{\nabla} = \begin{cases}\dim(\nabla(\mu)) & \mbox{if }\nu = (p^r-1)\rho + p^r\mu \\ 0 & \mbox{else}\end{cases}.$$
\end{thm}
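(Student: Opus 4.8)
The plan is to combine Theorem \ref{grext} with the good filtration results of Section 3 and Donkin's cohomological criterion (Theorem \ref{cohcrit}). First I would apply Theorem \ref{grext} with $M = T((p^r-1)\rho + \lambda)$ and $N = \nabla(\nu)$ to obtain
$$\Hom_{G_r}(T((p^r-1)\rho + \lambda),\nabla(\nu)) \cong \Hom_G(T((p^r-1)\rho + \lambda),\nabla(\nu)\otimes\Q_{M,N}^r),$$
and then expand $\Q_{M,N}^r$ along its filtration with factors $\nabla(\mu)^{(r)}$. Since $T((p^r-1)\rho+\lambda)$ is tilting and $\nabla(\nu)\otimes\nabla(\mu)^{(r)}$ has a good filtration (by the Andersen--Haboush tensor product theorem and Theorem \ref{tensorproductgoodfilt}), the functor $\Hom_G(T((p^r-1)\rho+\lambda),-)$ is exact on the relevant short exact sequences, so the dimension splits as a sum over the filtration factors, each contributing with multiplicity $\dim\nabla(\mu)$ when $\nabla(\mu)^{(r)}$ occurs and $0$ otherwise. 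Crucially, by the third bullet of Theorem \ref{grext}, the terms where $\nabla(\mu)^{(r)}$ does \emph{not} occur have $\Hom_G(T((p^r-1)\rho+\lambda),\nabla(\nu)\otimes\nabla(\mu)^{(r)}) = 0$ anyway, so I can extend the sum to run over all $\mu\in X_+$ without changing anything. This gives
$$\dim\Hom_{G_r}(T((p^r-1)\rho + \lambda),\nabla(\nu)) = \sum_{\mu\in X_+}\dim(\nabla(\mu))\dim\Hom_G(T((p^r-1)\rho + \lambda),\nabla(\nu)\otimes\nabla(\mu)^{(r)}).$$

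Next I would identify each summand using Corollary \ref{exttensor} (with $M$ the trivial module, $\nu$ there being $\nu$ here, and $\mu$ there being $\mu$ here), which gives
$$\dim\Hom_G(T((p^r-1)\rho + \lambda),\nabla(\nu)\otimes\nabla(\mu)^{(r)}) = [T((p^r-1)\rho + \lambda^*)\otimes\nabla(\mu)^{(r)}:\nabla(\nu^*)]_{\nabla},$$
yielding the first displayed formula. For the ``in particular'' part, set $\lambda = 0$; then $\lambda^* = 0$ and $T((p^r-1)\rho)\otimes\nabla(\mu)^{(r)} = \St_r\otimes\nabla(\mu)^{(r)} \cong \nabla((p^r-1)\rho + p^r\mu)$ by the Andersen--Haboush tensor product theorem, giving the second displayed equality. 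The third equality is just the symmetry $[\nabla((p^r-1)\rho+p^r\mu):\nabla(\nu^*)]_{\nabla} = [\nabla(\nu):\nabla((p^r-1)\rho+p^r\mu)]_{\nabla}$ obtained by dualizing (using $\nabla(\sigma)^* = \Delta(\sigma^*)$ and Proposition \ref{formproperties}, or directly since $((p^r-1)\rho)^* = (p^r-1)\rho$). Finally, since $\nabla(\nu)$ is itself costandard, the multiplicity $[\nabla(\nu):\nabla((p^r-1)\rho + p^r\mu)]_{\nabla}$ is $1$ if $\nu = (p^r-1)\rho + p^r\mu$ and $0$ otherwise, and because $\nu$ determines $\mu$ uniquely in that case (the $p^r\mu$-part is forced), the whole sum collapses to the stated case distinction.

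I expect the main obstacle to be bookkeeping rather than conceptual: one must be careful that the filtration of $\Q_{M,N}^r$ interacts correctly with the $\Hom$ functor, i.e. that exactness genuinely holds so that dimensions add over the filtration. This needs that each layer $\nabla(\nu)\otimes\nabla(\mu)^{(r)}$ has a good filtration (so that $\Ext^1_G(T((p^r-1)\rho+\lambda), -)$ vanishes, which is exactly the content of the $\Ext$-vanishing in Corollary \ref{exttensor}), and that the extension of the summation index from ``$\mu$ occurring in $\Q_{M,N}^r$'' to ``all $\mu\in X_+$'' is harmless — both points are handled cleanly by the three bullets of Theorem \ref{grext} together with Corollary \ref{exttensor}. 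A minor subtlety in the ``in particular'' computation is keeping track of duals and the self-duality of $(p^r-1)\rho$, but this is routine.
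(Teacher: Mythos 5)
Your proposal is correct and follows essentially the same route as the paper: apply Theorem \ref{grext}, use the $\Ext$-vanishing and $\Hom$-identification from Corollary \ref{exttensor} to split the dimension over the filtration of $\Q_{M,N}^r$ and extend the sum to all $\mu\in X_+$ via the third bullet, then specialize with the Andersen--Haboush tensor product theorem and the self-duality of $(p^r-1)\rho$. No gaps.
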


\begin{proof}
By Theorem \ref{grext} there is a finite dimensional $G$-module $\Q = \Q_{T((p^r-1)\rho + \lambda),\nabla(\nu)}^r$ such that $\Hom_{G_r}(T((p^r-1)\rho + \lambda),\nabla(\nu)) \cong \Hom_G(T((p^r-1)\rho + \lambda),\nabla(\nu)\otimes \Q)$. 

We first note that by Corollary \ref{exttensor} for all $i\geq 1$ and all $\mu\in X_+$ we have $\Ext_G^i(T((p^r-1)\rho + \lambda),\nabla(\nu)\otimes \nabla(\mu)^{(r)}) = 0$.

This means that since $\Q$ has a filtration with factors of the form $\nabla(\mu)^{(r)}$ by Theorem \ref{grext} we get $$\dim\Hom_{G_r}(T((p^r-1)\rho + \lambda),\nabla(\nu)) = \dim\Hom_G(T((p^r-1)\rho + \lambda),\nabla(\nu)\otimes \Q)$$ $$= \sum_{\mu\in X_+}\dim(\nabla(\mu))\dim\Hom_G(T((p^r-1)\rho + \lambda),\nabla(\nu)\otimes\nabla(\mu)^{(r)})$$ where we can sum over all $\mu\in X_+$ by the third property of $\Q$ listed in Theorem \ref{grext}.

The first claim now follows from Corollary \ref{exttensor} while the first equality of the second claim follows from this by applying the Andersen--Haboush tensor product theorem, the second follows by similar arguments after changing the summation to be over $\mu^*$ instead of $\mu$, and the final equality is clear.
\end{proof}

Using the bilinear form introduced earlier, we can extend this to be valid for any finite dimensional $G$-module.

\begin{thm}\label{dimhomgrall}
Let $\lambda\in X_+$ and $M$ be a finite dimensional $G$-module. Then $$\dim\Hom_{G_r}(T((p^r-1)\rho + \lambda),M) = \sum_{\mu\in X_+}\dim(\nabla(\mu))\lb T((p^r-1)\rho + \lambda)\otimes\nabla(\mu)^{(r)},M\rb.$$ In particular, $$\dim\Hom_{G_r}(\St_r,M) = \sum_{\mu\in X_+}\dim(\nabla(\mu))\lb \nabla((p^r-1)\rho + p^r\mu),M\rb.$$
\end{thm}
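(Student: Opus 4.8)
Looking at Theorem \ref{dimhomgrall}, the goal is to extend Theorem \ref{dimhomgr} from costandard modules $\nabla(\nu)$ to arbitrary finite dimensional $G$-modules $M$, replacing the multiplicity $[\cdots:\nabla(\nu^*)]_{\nabla}$ by the bilinear form $\lb\cdot,\cdot\rb$. The plan is as follows.

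First I would reduce to the case $M = \nabla(\nu)$ already treated in Theorem \ref{dimhomgr}. The key point is that both sides of the asserted identity are additive in $M$ on short exact sequences of $G$-modules, hence depend only on the class $[M]$ in $\ZX$. For the left-hand side this is because $\St_r$, and more generally $T((p^r-1)\rho + \lambda)$, is injective (equivalently projective) as a $G_r$-module: $T((p^r-1)\rho+\lambda)$ is injective over $G_r$ by Proposition \ref{tiltinginj} since $\langle (p^r-1)\rho+\lambda,\alpha^\vee\rangle \geq p^r-1$ for all $\alpha\in S$, so $\Hom_{G_r}(T((p^r-1)\rho+\lambda),-)$ is exact and thus $M\mapsto \dim\Hom_{G_r}(T((p^r-1)\rho+\lambda),M)$ is additive. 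For the right-hand side, $\lb\cdot,\cdot\rb$ is bilinear on $\ZX$ by construction, so $M\mapsto \lb T((p^r-1)\rho+\lambda)\otimes\nabla(\mu)^{(r)},M\rb$ is additive, and the (finite) sum over $\mu$ is too — here one must note that for fixed $\lambda$ only finitely many $\mu$ contribute, which follows from the third bullet of Theorem \ref{grext} as used in the proof of Theorem \ref{dimhomgr}, or more simply from the fact that the weights of $M$ constrain $\mu$.

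Next, since $\{[\nabla(\nu)]\mid \nu\in X_+\}$ is a $\Z$-basis of $\ZX$ (cited from \cite[Remark to Lemma II.5.8]{rags} and recorded in the setup of the bilinear form), I can write $[M] = \sum_{\nu\in X_+} c_\nu [\nabla(\nu)]$ with $c_\nu\in\Z$, only finitely many nonzero. By additivity both sides of the claimed identity, evaluated at $M$, equal $\sum_\nu c_\nu$ times their value at $\nabla(\nu)$. For $\nabla(\nu)$, the left-hand side is given by Theorem \ref{dimhomgr}, and the right-hand side term $\lb T((p^r-1)\rho+\lambda)\otimes\nabla(\mu)^{(r)},\nabla(\nu)\rb$ equals $[T((p^r-1)\rho+\lambda)\otimes\nabla(\mu)^{(r)}:\nabla(\nu)]_{\nabla}$ by Proposition \ref{formproperties}(3), since $T((p^r-1)\rho+\lambda)\otimes\nabla(\mu)^{(r)}$ has a good filtration (it is tilting: $\St_r\otimes T(\lambda)\otimes\nabla(\mu)^{(r)}$ is tilting by the Andersen--Haboush theorem and Theorem \ref{tensorproductgoodfilt}, and $T((p^r-1)\rho+\lambda)\otimes\nabla(\mu)^{(r)}$ is a summand, or directly via Corollary \ref{exttensor}). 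One then checks this matches $\dim(\nabla(\mu))[T((p^r-1)\rho+\lambda^*)\otimes\nabla(\mu)^{(r)}:\nabla(\nu^*)]_{\nabla}$ appearing in Theorem \ref{dimhomgr} after the substitution $\mu\leftrightarrow\mu^*$ and using $[X:\nabla(\sigma)]_\nabla = [X^*:\nabla(\sigma^*)]_\nabla$ together with $\dim\nabla(\mu)=\dim\nabla(\mu^*)$ — this is exactly the bookkeeping done in the final line of the proof of Theorem \ref{dimhomgr}. So the two sides agree on each basis element, hence on all of $M$. The particular case $\lambda=0$ is immediate since $\St_r = T((p^r-1)\rho)$ and $\St_r\otimes\nabla(\mu)^{(r)}\cong\nabla((p^r-1)\rho+p^r\mu)$ by the Andersen--Haboush tensor product theorem.

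The main obstacle is not conceptual but a matter of care: ensuring the sum over $\mu$ is genuinely finite so that "additivity of each summand" upgrades to "additivity of the sum", and matching the $*$-twisted indexing between the $\nabla$-multiplicity formulation of Theorem \ref{dimhomgr} and the symmetric bilinear-form formulation here. Both are handled by invoking Proposition \ref{formproperties}(1),(3),(7) and the already-established finiteness in Theorem \ref{grext}; no new ideas beyond Theorem \ref{dimhomgr} are needed.
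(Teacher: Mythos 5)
Your proposal is correct and follows essentially the same route as the paper: both sides are viewed as $\Z$-linear functionals on $\ZX$ (the left one by $G_r$-projectivity of $T((p^r-1)\rho+\lambda)$, the right one by bilinearity of $\lb\cdot,\cdot\rb$), checked to agree on the basis $\{[\nabla(\nu)]\}$ via Theorem \ref{dimhomgr} and Proposition \ref{formproperties}, and then identified after the $\mu\leftrightarrow\mu^*$ bookkeeping. Your extra remarks on the finiteness of the sum and the explicit $*$-twisting are sound and only make explicit what the paper leaves implicit.
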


\begin{proof}
Since $T((p^r-1)\rho + \lambda)$ is projective as a $G_r$-module, the map $M\mapsto \dim\Hom_{G_r}(T((p^r-1)\rho + \lambda),M)$ defines a linear map from $\ZX$ to $\Z$.

Similarly, the map $M\mapsto \sum_{\mu\in X_+}\dim(\nabla(\mu))\lb T((p^r-1)\rho + \lambda^*)\otimes\nabla(\mu)^{(r)},M^*\rb$ is linear.

By Theorem \ref{dimhomgr} and Proposition \ref{formproperties} these maps agree on $\nabla(\nu)$ for all $\nu\in X_+$, since $\nabla(\nu^*)$ and $\nabla(\nu)^*$ have the same character. But then they agree on all finite dimensional $G$-modules by \cite[Remark to Lemma II.5.8]{rags}, and the first claim follows by using Proposition \ref{formproperties}(7) and changing the summation to be over $\mu^*$.

The second claim follows by applying the Andersen--Haboush tensor product theorem.
\end{proof}

\subsection{Relation to Donkin's tilting conjecture}

We can now prove that in a suitable formulation, the reciprocity between tilting modules and simple modules in fact implies Donkin's tilting conjecture.

For another recent result with the same conclusion, see \cite[Theorem 4.3.1]{sobaje16}.

\begin{thm}\label{impliesdonkin}
Let $\lambda\in X_r$ and assume that for $\nu\in X_r$ and $\mu\in X_+$ we have $$\lb T(\wh\lambda),L(\nu)\otimes\Delta(\mu)^{(r)}\rb = \begin{cases}1 & \mbox{if }\nu = \lambda\mbox{ and }\mu = 0 \\ 0 & \mbox{else}\end{cases}.$$ Then $T(\wh\lambda)$ is indecomposable as a $G_r$-module.
\end{thm}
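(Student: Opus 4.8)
The plan is to prove the contrapositive is awkward, so instead I would argue directly, extracting indecomposability of $T(\wh\lambda)$ as a $G_r$-module from the numerical hypothesis on the form. The key observation is that the form $\lb\cdot,\cdot\rb$, when one of the arguments is a tilting module, computes dimensions of $\Hom_{G}$-spaces (by Proposition~\ref{formproperties}(2)), and that $\Hom_{G_r}$-spaces can be rewritten as $\Hom_G$-spaces against a module built from Frobenius twists (Theorem~\ref{grext}); combining these, the hypothesis should pin down $\dim\Hom_{G_r}(T(\wh\lambda),L(\nu))$ for all $\nu\in X_r$. First I would recall that $T(\wh\lambda)=T(2(p^r-1)\rho-\lambda^*)$ is injective (hence projective) as a $G_r$-module by Proposition~\ref{tiltinginj}, since $\langle 2(p^r-1)\rho-\lambda^*,\alpha^\vee\rangle\geq p^r-1$ for all $\alpha\in S$ when $\lambda\in X_r$. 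So as a $G_r$-module it is a direct sum of indecomposable injectives $Q_r(\nu)$, and its indecomposability is equivalent to $\dim\soc_{G_r}T(\wh\lambda)=1$, i.e. to $\sum_{\nu\in X_r}\dim\Hom_{G_r}(L(\nu),T(\wh\lambda))=1$.

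The main step is therefore to compute $\dim\Hom_{G_r}(L(\nu),T(\wh\lambda))$ for $\nu\in X_r$ using the hypothesis. Using Theorem~\ref{grext} with $M=L(\nu)$, $N=T(\wh\lambda)$, we get $\dim\Hom_{G_r}(L(\nu),T(\wh\lambda))=\dim\Hom_G(L(\nu),T(\wh\lambda)\otimes\Q^r_{L(\nu),T(\wh\lambda)})$, and since $\Q^r_{L(\nu),T(\wh\lambda)}$ has a good filtration with factors $\nabla(\mu)^{(r)}$ each occurring $0$ or $\dim\nabla(\mu)$ times, and since $T(\wh\lambda)\otimes\nabla(\mu)^{(r)}$ is tilting (Donkin's tensor product theorem: it is a summand of $\St_r\otimes\nabla(\wh\lambda-(p^r-1)\rho$-type$)\otimes\nabla(\mu)^{(r)}$... more precisely $T(\wh\lambda)\otimes T(\mu)^{(r)}$ is tilting), we can expand. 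The point is that $\dim\Hom_G(L(\nu),T(\wh\lambda)\otimes\nabla(\mu)^{(r)})=\dim\Hom_G(T(\wh\lambda)\otimes\nabla(\mu)^{(r)},L(\nu))$ by Proposition~\ref{tiltingdual} (the tensor product being tilting), and this in turn, by the self-duality $\nabla(\mu)^{(r)*}$-manipulations and Proposition~\ref{formproperties}(2), equals $\lb T(\wh\lambda)\otimes\nabla(\mu^*)^{(r)},L(\nu^*)\rb$; rewriting $\nabla(\mu^*)$ in the basis of $\Delta$'s (same character) and matching against the hypothesis $\lb T(\wh\lambda),L(\nu')\otimes\Delta(\mu')^{(r)}\rb=\delta_{\nu',\lambda}\delta_{\mu',0}$ together with Proposition~\ref{formproperties}(6), this vanishes unless $\nu=\lambda$ and the relevant twist-factor is trivial, in which case it is $1$. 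Summing over $\nu\in X_r$ and over the filtration factors of $\Q^r$, only the trivial twist $\nabla(0)^{(r)}$ contributes and only for $\nu=\lambda$, giving $\dim\soc_{G_r}T(\wh\lambda)=1$.

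I expect the main obstacle to be bookkeeping the dualities and the direction of the Frobenius twists cleanly: one must be careful that the hypothesis is stated with $\Delta(\mu)^{(r)}$ while Theorem~\ref{grext} naturally produces $\nabla(\mu)^{(r)}$, and that the form is only guaranteed to compute $\Hom_G$ when a good filtration is present, which is why the tilting property of $T(\wh\lambda)\otimes\nabla(\mu)^{(r)}$ and Proposition~\ref{tiltingdual} are essential. A secondary subtlety is making sure the sum $\sum_{\mu}\dim(\nabla(\mu))(\cdots)$ over the filtration of $\Q^r$ is manipulated using linearity of $M\mapsto\dim\Hom_{G_r}(L(\nu),M)$ (valid since $L(\nu)$ restricted to $G_r$ need not be projective — here it is the \emph{injective} $T(\wh\lambda)$ that gives exactness, so one should phrase the additivity via $\Hom_{G_r}(-,T(\wh\lambda))$ on short exact sequences of $G$-modules with good filtration, where all higher $\Ext_{G_r}$ against an injective vanish). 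Once these compatibilities are set up, the computation collapses to the claimed single contribution and indecomposability follows.
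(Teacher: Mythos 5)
Your overall strategy is the right one and matches the paper's: reduce indecomposability over $G_r$ to showing that $\dim\Hom_{G_r}$ between $T(\wh\lambda)$ and the restricted simples $L(\nu)$, $\nu\in X_r$, is $\delta_{\nu,\lambda}$, and extract that number from the hypothesis on the form. The paper does this in one line by citing Theorem~\ref{dimhomgrall} (applied to $M=L(\nu)$, in the ``head'' direction $\Hom_{G_r}(T(\wh\lambda),L(\nu))$) together with Proposition~\ref{formproperties}. You instead try to re-derive the needed identity directly from Theorem~\ref{grext} in the ``socle'' direction, and this is where the argument breaks.

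The gap is in the step where you evaluate $\dim\Hom_G(L(\nu),T(\wh\lambda)\otimes\nabla(\mu)^{(r)})$ by the form. First, $T(\wh\lambda)\otimes\nabla(\mu)^{(r)}$ is \emph{not} tilting in general: it has a good filtration (it is a summand of $\nabla((p^r-1)\rho+p^r\mu)\otimes T((p^r-1)\rho-\lambda^*)$), but its dual involves $\St_r\otimes\Delta(\mu^*)^{(r)}\cong\Delta((p^r-1)\rho+p^r\mu^*)$, which has a good filtration only when $\nabla(\mu)$ is simple. So Proposition~\ref{tiltingdual} does not apply. More seriously, to identify $\dim\Hom_G(L(\nu),T(\wh\lambda)\otimes\nabla(\mu)^{(r)})$ with a value of $\lb\cdot,\cdot\rb$ via Proposition~\ref{formproperties}(2) you would need $T(\wh\lambda)\otimes\nabla(\mu)^{(r)}\otimes L(\nu^*)$ to have a good filtration, i.e.\ essentially that $\St_r\otimes L(\nu^*)$ has a good filtration for all $\nu\in X_r$ --- but that is a consequence of Donkin's conjecture, which is exactly what this theorem is meant to establish, so assuming it here would be circular. (A secondary unresolved point: summing $\Hom_G(L(\nu),T(\wh\lambda)\otimes-)$ over the filtration of $\Q^r$ needs an $\Ext^1_G$-vanishing that your remark about injectivity over $G_r$ does not supply, since the filtration sits in the second argument of a $\Hom_G$, not a $\Hom_{G_r}$.) The paper's way around both problems is the linearity argument behind Theorem~\ref{dimhomgrall}: prove the formula for $M=\nabla(\nu)$, where Corollary~\ref{exttensor} supplies all good filtrations and $\Ext$-vanishing, and then extend to arbitrary $M$ (in particular $M=L(\nu)$) using that $M\mapsto\dim\Hom_{G_r}(T(\wh\lambda),M)$ is additive on short exact sequences because $T(\wh\lambda)$ is $G_r$-projective, hence depends only on $[M]$. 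If you route your computation through that theorem (or its dual form for the socle, using $T(\wh\lambda)^*\cong T(\wh{\lambda^*})$ and Proposition~\ref{formproperties}(6),(7)), the rest of your argument goes through.
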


\begin{proof}
By Theorem \ref{dimhomgrall} and Proposition \ref{formproperties}(7) the assumptions imply that for $\nu\in X_r$ we have $$\dim\Hom_{G_r}(T(\wh\lambda),L(\nu)) = \begin{cases}1 & \mbox{if }\nu = \lambda \\ 0 & \mbox{else}\end{cases}$$ which gives the claim.
\end{proof}

\begin{cor}
The following are equivalent.
\begin{enumerate}
\item Donkin's tilting conjecture.
\item $\lb T(\wh\lambda),L(\nu)\otimes\Delta(\mu)^{(r)}\rb = \delta_{\nu,\lambda}\delta_{\mu,0}$ for all $\lambda,\nu\in X_r$ and all $\mu\in X_+$.
\end{enumerate}
\end{cor}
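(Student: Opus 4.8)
The plan is to prove the two implications separately, using the machinery already assembled.

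For the direction $(1)\Rightarrow(2)$: assume Donkin's tilting conjecture. Then for every $\lambda\in X_r$ the module $T(\wh\lambda)$ is indecomposable as a $G_r$-module, and moreover (as noted in the remark following Lemma~\ref{formwithwrongrestricted}, via \cite[Theorem 9.4.1]{kildetoftnakano15}) $\St_r\otimes L(\nu)$ has a good filtration for all $\nu\in X_r$. These are precisely the hypotheses needed to apply Lemma~\ref{formwithpr}: for $\lambda,\nu\in X_r$ and $\mu\in X_+$ we get
$$\lb T(\wh\lambda),L(\nu)\otimes\Delta(\mu)^{(r)}\rb = \begin{cases}[T(0):\nabla(\mu)]_{\nabla} & \text{if }\nu=\lambda\\ 0 & \text{else}\end{cases}.$$
Since $T(0)\cong\nabla(0)$ is the trivial module, $[T(0):\nabla(\mu)]_{\nabla} = \delta_{\mu,0}$, which yields exactly $\lb T(\wh\lambda),L(\nu)\otimes\Delta(\mu)^{(r)}\rb = \delta_{\nu,\lambda}\delta_{\mu,0}$.

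For the direction $(2)\Rightarrow(1)$: fix $\lambda\in X_r$. The hypothesis of item~(2) restricted to this $\lambda$ (and all $\nu\in X_r$, $\mu\in X_+$) is precisely the hypothesis of Theorem~\ref{impliesdonkin}, whose conclusion is that $T(\wh\lambda)$ is indecomposable as a $G_r$-module. Since $\lambda\in X_r$ was arbitrary, this is the statement of Donkin's tilting conjecture. I would simply invoke Theorem~\ref{impliesdonkin} for each $\lambda\in X_r$ and conclude.

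There is no real obstacle here: the corollary is a formal repackaging of Theorem~\ref{impliesdonkin} (which supplies one implication) together with Lemma~\ref{formwithpr} and the hypothesis-removal afforded by \cite[Theorem 9.4.1]{kildetoftnakano15} (which supplies the converse). The only point requiring a moment's care is checking that Donkin's tilting conjecture really does license dropping the good-filtration hypothesis on $\St_r\otimes L(\nu)$ in Lemma~\ref{formwithpr}, but this is exactly the content of the cited remark. Thus the proof is short:
\begin{proof}
$(1)\Rightarrow(2)$: Assume Donkin's tilting conjecture. Then for each $\lambda\in X_r$ the module $T(\wh\lambda)$ is indecomposable as a $G_r$-module, and by \cite[Theorem 9.4.1]{kildetoftnakano15} $\St_r\otimes L(\nu)$ has a good filtration for all $\nu\in X_r$. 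Hence Lemma~\ref{formwithpr} applies (with $\sigma=0$) and gives, for $\lambda,\nu\in X_r$ and $\mu\in X_+$,
$$\lb T(\wh\lambda),L(\nu)\otimes\Delta(\mu)^{(r)}\rb = \begin{cases}[T(0):\nabla(\mu)]_{\nabla} & \text{if }\nu=\lambda \\ 0 & \text{else}\end{cases}.$$
Since $T(0)$ is the trivial module, $[T(0):\nabla(\mu)]_{\nabla} = \delta_{\mu,0}$, and the right-hand side equals $\delta_{\nu,\lambda}\delta_{\mu,0}$ as required.

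$(2)\Rightarrow(1)$: Fix $\lambda\in X_r$. The assumption of item~(2), specialized to this $\lambda$ and to all $\nu\in X_r$, $\mu\in X_+$, is precisely the hypothesis of Theorem~\ref{impliesdonkin}, which therefore yields that $T(\wh\lambda)$ is indecomposable as a $G_r$-module. As $\lambda\in X_r$ was arbitrary, Donkin's tilting conjecture holds.
\end{proof}
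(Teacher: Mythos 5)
Your proof is correct and is essentially the paper's own argument: the paper also proves the corollary as "the combination of Lemma \ref{formwithpr} and Theorem \ref{impliesdonkin}," and your write-up just makes explicit the (valid) points that Donkin's conjecture supplies the hypotheses of Lemma \ref{formwithpr} via \cite[Theorem 9.4.1]{kildetoftnakano15} and that $[T(0):\nabla(\mu)]_{\nabla}=\delta_{\mu,0}$.
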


\begin{proof}
This is the combination of Lemma \ref{formwithpr} and Theorem \ref{impliesdonkin}.
\end{proof}

\subsection{Decomposition of $\St_r\otimes\nabla(\lambda)$ for $G_r$}

Since $\St_r\otimes\nabla(\lambda)$ is injective as a $G_r$-module, for any $\lambda\in X_+$ we can write $$\St_r\otimes\nabla(\lambda) \cong \bigoplus_{\nu\in X_r}d_{\lambda}^r(\nu)Q_r(\nu)$$ where $d_{\lambda}^r(\nu) = \dim\Hom(L(\nu),\St_r\otimes\nabla(\lambda))$.

We can then compute the $d_{\lambda}^r$ using the previous results, giving a formula in terms of standard character data for $G$.

\begin{prop}
For any $\lambda\in X_+$ and any $\nu\in X_r$ we have $$d_{\lambda}^r(\nu) = \sum_{\mu\in X_+}\dim(\nabla(\mu))\lb \nabla((p^r-1)\rho + p^r\mu),L(\nu^*)\otimes\nabla(\lambda)\rb.$$
\end{prop}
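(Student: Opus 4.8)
The plan is to combine the definition of $d_\lambda^r(\nu)$ with Theorem \ref{dimhomgrall}. By definition, $d_\lambda^r(\nu) = \dim\Hom_{G_r}(L(\nu),\St_r\otimes\nabla(\lambda))$. Since $\St_r$ is self-dual as a $G$-module (and in particular as a $G_r$-module) and $\St_r$ is injective and projective as a $G_r$-module, I would first rewrite this $\Hom$-space so that the tilting module $T((p^r-1)\rho) = \St_r$ sits in the first argument. Concretely, using self-duality of $\St_r$ together with the fact that $\Hom_{G_r}(A,B\otimes C)\cong\Hom_{G_r}(A\otimes C^*,B)$ for finite-dimensional modules, one gets
$$\Hom_{G_r}(L(\nu),\St_r\otimes\nabla(\lambda))\cong\Hom_{G_r}(\St_r, L(\nu)^*\otimes\nabla(\lambda))\cong\Hom_{G_r}(\St_r,L(\nu^*)\otimes\nabla(\lambda)),$$
where the last step uses $L(\nu)^*\cong L(\nu^*)$. (Alternatively, one can move $\St_r$ across directly: $\Hom_{G_r}(L(\nu),\St_r\otimes\nabla(\lambda))\cong\Hom_{G_r}(\St_r\otimes L(\nu)^*,\nabla(\lambda))$ and then use $\St_r\otimes L(\nu)^*\cong \St_r\otimes L(\nu^*)$ together with self-duality — either route lands in the same place once one is careful with duals.)

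Next I would apply the second part of Theorem \ref{dimhomgrall}, which says that for any finite-dimensional $G$-module $M$,
$$\dim\Hom_{G_r}(\St_r,M)=\sum_{\mu\in X_+}\dim(\nabla(\mu))\,\lb\nabla((p^r-1)\rho+p^r\mu),M\rb.$$
Taking $M = L(\nu^*)\otimes\nabla(\lambda)$ — which is a finite-dimensional $G$-module since $\nu^*\in X_r\subseteq X_+$ and $\lambda\in X_+$ — gives exactly
$$d_\lambda^r(\nu)=\sum_{\mu\in X_+}\dim(\nabla(\mu))\,\lb\nabla((p^r-1)\rho+p^r\mu),L(\nu^*)\otimes\nabla(\lambda)\rb,$$
which is the claimed formula. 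The only remaining bookkeeping is to justify that the rewriting in the first paragraph is an isomorphism of $\Gq$-modules... I mean $G_r$-modules: this is a standard tensor-hom adjunction for a group scheme, valid for finite-dimensional modules, combined with the self-duality $\St_r\cong\St_r^*$ (which holds as $G$-modules, hence as $G_r$-modules).

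The main obstacle, such as it is, is purely one of matching dual weights correctly: one must track whether the $L(\nu)$ in the socle decomposition becomes $L(\nu)$ or $L(\nu^*)$ after commuting $\St_r$ past the tensor factor, and the statement to be proved already records that it is $L(\nu^*)$, so the bookkeeping must be arranged to produce precisely that. Concretely, the cleanest route is $\Hom_{G_r}(L(\nu),\St_r\otimes\nabla(\lambda))\cong\Hom_{G_r}(\St_r^*,L(\nu)^*\otimes\nabla(\lambda))\cong\Hom_{G_r}(\St_r,L(\nu^*)\otimes\nabla(\lambda))$ using $\St_r^*\cong\St_r$ and $L(\nu)^*\cong L(\nu^*)$. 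Everything else is a direct substitution into Theorem \ref{dimhomgrall}; no new ideas about good filtrations or tilting modules are needed beyond what is already established, since all the analytic content has been pushed into that theorem.
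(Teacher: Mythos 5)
Your proof is correct and takes essentially the same route as the paper, whose entire argument is ``this follows straight from Theorem \ref{dimhomgrall}''; the tensor--hom adjunction and self-duality bookkeeping you spell out (moving $\St_r$ into the first argument and replacing $L(\nu)^*$ by $L(\nu^*)$) is precisely the implicit content of that one-line deduction.
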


\begin{proof}
This follows straight from Theorem \ref{dimhomgrall}.
\end{proof}

\section{Restriction to a finite Chevalley group}

In this section, we consider what happens when $T((p^r-1)\rho + \lambda)$ and $\St_r\otimes \nabla(\lambda)$ are restricted to the finite Chevalley group $\Gq$. Recall that $q = p^r$.

\subsection{Relating $\Hom$-spaces for $\Gq$ and $G$}

We will need the following result which follows directly from the proof of \cite[Theorem 3.2.1]{bnppss12}.

\begin{thm}\label{gqext}
Let $M$ and $N$ be finite dimensional $G$-modules. There exists a finite dimensional $G$-module $\G_{M,N}^{r}$ such that
\begin{itemize}
\item $\Hom_{\Gq}(M,N)\cong \Hom_G(M,N\otimes\G_{M,N}^{r})$.
\item $\G_{M,N}^{r}$ has a filtration with factors of the form $\nabla(\lambda^*)\otimes \nabla(\lambda)^{(r)}$, each occurring with multiplicity at most $1$.
\item If $\nabla(\lambda^*)\otimes\nabla(\lambda)^{(r)}$ does not occur in $\G_{M,N}^{r}$ then $\Hom_G(M,N\otimes\nabla(\lambda^*)\otimes\nabla(\lambda)^{(r)}) = 0$.
\end{itemize}
\end{thm}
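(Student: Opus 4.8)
The plan is to imitate the construction behind Theorem~\ref{grext}, with the finite group $\Gq$ playing the role of the Frobenius kernel $G_r$: the module $\G_{M,N}^{r}$ will be a sufficiently large finite-dimensional $G$-submodule of $\ind_{\Gq}^G k$. The two inputs are the tensor identity together with Frobenius reciprocity, which pin down the $\Hom$-space, and the Lang--Steinberg theorem, which makes $\ind_{\Gq}^G k$ explicit.

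First I would record the identity $\Hom_{\Gq}(M,N)\cong\Hom_G(M,N\otimes\ind_{\Gq}^G k)$. This follows by combining the tensor identity \cite[I.3.6]{rags}, which gives $\ind_{\Gq}^G(N|_{\Gq})\cong N\otimes\ind_{\Gq}^G k$, with Frobenius reciprocity for the functor $\ind_{\Gq}^G$, which is exact since $G/\Gq$ is affine. Next I would make $\ind_{\Gq}^G k=k[G/\Gq]$ explicit as a $G$-module: by the Lang--Steinberg theorem the map $x\mapsto F^r(x)x^{-1}$ is surjective and induces an isomorphism $G/\Gq\cong G$, and transporting the left translation action along it yields the twisted conjugation action $h\cdot y=F^r(h)yh^{-1}$. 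Thus $\ind_{\Gq}^G k$ is $k[G]$ equipped with the $G$-action obtained by restricting the usual $(G\times G)$-action (left times right translation) along an appropriate twisted diagonal $G\to G\times G$.

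Now $k[G]$ carries its standard good filtration as a $(G\times G)$-module, with subquotients $\nabla(\mu)\boxtimes\nabla(\mu^*)$, $\mu\in X_+$, each occurring exactly once. Restricting along the twisted diagonal and reindexing, these become subquotients of the form $\nabla(\lambda^*)\otimes\nabla(\lambda)^{(r)}$, $\lambda\in X_+$, each appearing once. Ordering $X_+$ so that every initial segment $\Lambda$ is finite, $k[G]$ is the directed union of the finite-dimensional $(G\times G)$-submodules $\mathcal{F}_\Lambda$ built from the $\lambda\in\Lambda$, and each $\mathcal{F}_\Lambda$ acquires, after restriction along the twisted diagonal, a filtration whose factors are among the $\nabla(\lambda^*)\otimes\nabla(\lambda)^{(r)}$ with multiplicity at most one.

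The last step is truncation. Since $M$ and $N$ are finite dimensional, $\Hom_G(M,N\otimes-)$ commutes with the directed union, so the finite-dimensional space $\Hom_{\Gq}(M,N)\cong\Hom_G(M,N\otimes k[G])=\varinjlim_\Lambda\Hom_G(M,N\otimes\mathcal{F}_\Lambda)$ already equals $\Hom_G(M,N\otimes\mathcal{F}_\Lambda)$ for $\Lambda$ large enough; after enlarging $\Lambda$ once more so that it contains every $\lambda$ with $\Hom_G(M,N\otimes\nabla(\lambda^*)\otimes\nabla(\lambda)^{(r)})\neq 0$, one sets $\G_{M,N}^{r}=\mathcal{F}_\Lambda$, and the three bullet points follow. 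The step that requires a genuine argument, and the one I expect to be the main obstacle, is the finiteness used in this enlargement. To see it, rewrite the space by the tensor--hom adjunction (using $\nabla(\lambda^*)^*\cong\Delta(\lambda)$) as $\Hom_G(M\otimes N^*\otimes\Delta(\lambda),\nabla(\lambda)^{(r)})$; since $\nabla(\lambda)^{(r)}$ has simple socle $L(p^r\lambda)$, nonvanishing forces $L(p^r\lambda)$ to be a quotient of $M\otimes N^*\otimes\Delta(\lambda)$, so $p^r\lambda$ is a weight of that module, i.e. $p^r\lambda=\xi+\eta$ with $\xi$ a weight of the fixed module $M\otimes N^*$ and $\eta\leq\lambda$; hence $(p^r-1)\lambda\leq\xi$, and only finitely many dominant $\lambda$ satisfy such an inequality for one of the finitely many weights $\xi$. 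Everything else is the formal transfer of the argument underlying Theorem~\ref{grext}, and the whole thing is carried out in the proof of \cite[Theorem 3.2.1]{bnppss12}.
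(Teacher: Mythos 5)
Your argument is correct and is essentially the proof the paper relies on: the statement is attributed to the proof of \cite[Theorem 3.2.1]{bnppss12}, which proceeds exactly as you do --- Frobenius reciprocity plus the tensor identity to reduce to $k[G/\Gq]$, the Lang map to identify this with $k[G]$ under twisted conjugation, Donkin's good filtration of $k[G]$ with multiplicity-one sections $\nabla(\lambda^*)\otimes\nabla(\lambda)^{(r)}$, and a finite truncation. Your weight estimate $(p^r-1)\lambda\leq\xi$ correctly handles the only nontrivial point, namely that only finitely many $\lambda$ contribute a nonzero $\Hom$-space.
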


Note that while the module $\G_{M,N}^{r}$ is not uniquely determined by the conditions in the above theorem, this will not be a problem here as we will only be using it for computing various $\Hom$-spaces.

\subsection{Dimensions of $\Hom$-spaces for $\Gq$}

The $\lambda = 0$ case of the following is a special case of \cite[Proposition 2.5]{wanwang11} (whose proof is due to C. Pillen). The proof is very similar, using some of the previous results from this paper to make it applicable to $\lambda\neq 0$.

\begin{thm}\label{dimhomnabla}
Let $\lambda,\nu\in X_+$. Then $$\dim\Hom_{\Gq}(T((p^r-1)\rho + \lambda),\nabla(\nu)) = \sum_{\mu\in X_+}[T((p^r-1)\rho + \lambda^*)\otimes\nabla(\mu)^{(r)}\otimes \nabla(\nu):\nabla(\mu)]_{\nabla}.$$ In particular, $$\dim\Hom_{G(\F_q)}(\St_r,\nabla(\nu)) = \sum_{\mu\in X_+}[\nabla((p^r-1)\rho + p^r\mu)\otimes \nabla(\nu):\nabla(\mu)]_{\nabla}$$ $$= \sum_{\mu\in X_+}[\nabla(\mu)\otimes\nabla(\nu):\nabla((p^r-1)\rho + p^r\mu)]_{\nabla}.$$
\end{thm}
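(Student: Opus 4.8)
The plan is to mimic the structure of the proof of Theorem \ref{dimhomgr}, using the $\Gq$-analogue \ref{gqext} of \ref{grext} in place of it. By Theorem \ref{gqext} there is a finite dimensional $G$-module $\G = \G_{T((p^r-1)\rho + \lambda),\nabla(\nu)}^r$ with $\Hom_{\Gq}(T((p^r-1)\rho + \lambda),\nabla(\nu)) \cong \Hom_G(T((p^r-1)\rho + \lambda),\nabla(\nu)\otimes\G)$, and $\G$ has a filtration with subfactors of the form $\nabla(\mu^*)\otimes\nabla(\mu)^{(r)}$, each occurring at most once. The first step is to observe that tensoring $\nabla(\nu)$ into this filtration, the module $\nabla(\nu)\otimes\G$ has a filtration with subfactors $\nabla(\nu)\otimes\nabla(\mu^*)\otimes\nabla(\mu)^{(r)}$.

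The key second step is to check that $\Ext_G^i(T((p^r-1)\rho + \lambda),\nabla(\nu)\otimes\nabla(\mu^*)\otimes\nabla(\mu)^{(r)}) = 0$ for all $i\geq 1$ and all $\mu\in X_+$. This is exactly an instance of the vanishing in Corollary \ref{exttensor}, applied with $M = \St_r$ — which has the (trivial) good filtration, so $\St_r\otimes M = \St_r\otimes\St_r$ — no wait: one should apply Corollary \ref{exttensor} directly with the module $M$ there chosen so that $\St_r\otimes M$ has a good filtration. Taking $M$ to be a point where $\St_r\otimes M$ has a good filtration is not needed: instead apply Corollary \ref{exttensor} with $\nabla(\nu^*)$ playing the role of $\nabla(\nu)$ there, $\nabla(\mu^*)$ — hmm, one needs $M\otimes\nabla(\nu)\otimes\nabla(\mu)^{(r)}$; here the extra factor $\nabla(\mu^*)$ has trivial Frobenius twist, so absorb it into the first $\nabla$-factor. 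Concretely, write $\nabla(\nu)\otimes\nabla(\mu^*)$ as a module with a good filtration by Theorem \ref{tensorproductgoodfilt}, and then Corollary \ref{exttensor} (with $M = \St_r$, the good filtration being $\St_r\otimes\St_r$, which has a good filtration by Theorem \ref{tensorproductgoodfilt}) gives both the $\Ext$-vanishing and the identification of $\Hom$ with a $\nabla$-multiplicity, once we expand $\nabla(\nu)\otimes\nabla(\mu^*)$ into its $\nabla$-layers.

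The third step is then the bookkeeping: because all higher $\Ext$ groups vanish against each filtration layer of $\nabla(\nu)\otimes\G$, the dimension of $\Hom_G(T((p^r-1)\rho + \lambda),\nabla(\nu)\otimes\G)$ is additive over the layers, so it equals $\sum_{\mu\in X_+}\dim\Hom_G(T((p^r-1)\rho + \lambda),\nabla(\nu)\otimes\nabla(\mu^*)\otimes\nabla(\mu)^{(r)})$, where the sum may be extended over all $\mu\in X_+$ by the third bullet of Theorem \ref{gqext} (layers not appearing contribute a vanishing $\Hom$). Applying Corollary \ref{exttensor} to each term rewrites $\dim\Hom_G(T((p^r-1)\rho + \lambda),\nabla(\nu)\otimes\nabla(\mu^*)\otimes\nabla(\mu)^{(r)})$ as $[T((p^r-1)\rho + \lambda^*)\otimes\nabla(\mu)^{(r)}\otimes(\nabla(\nu)\otimes\nabla(\mu^*)):\nabla(0)]_{\nabla}$ — more precisely one needs to match the shape of Corollary \ref{exttensor}, treating $\nabla(\nu)$ and $\nabla(\mu^*)$ together; after expanding into $\nabla$-layers and using Theorem \ref{cohcrit} this becomes $[T((p^r-1)\rho + \lambda^*)\otimes\nabla(\mu)^{(r)}\otimes\nabla(\nu):\nabla(\mu)]_{\nabla}$, giving the first claimed formula. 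For the ``in particular'' part, set $\lambda = 0$ and apply the Andersen--Haboush tensor product theorem to replace $\St_r\otimes\nabla(\mu)^{(r)}$ by $\nabla((p^r-1)\rho + p^r\mu)$; the second displayed equality is just Theorem \ref{cohcrit} applied in the form $[A\otimes B : \nabla(\xi)]_\nabla = \dim\Hom_G(\Delta(\xi), A\otimes B)$ together with the self-duality of $\St_r$ and $\nabla(\mu)\cong\Delta(\mu)$ at the character level, rearranging which costandard plays the role of the ``test'' module. The main obstacle I anticipate is purely notational: matching the precise placement of duals $(\cdot)^*$ and twists $(\cdot)^{(r)}$ in Corollary \ref{exttensor} to the shape $\nabla(\mu^*)\otimes\nabla(\mu)^{(r)}$ coming out of Theorem \ref{gqext}, and making sure the extra untwisted factor $\nabla(\mu^*)$ is correctly absorbed before invoking the corollary; the homological input itself is entirely supplied by Corollary \ref{exttensor} and Theorem \ref{gqext}.
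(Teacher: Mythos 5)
Your proposal follows the paper's proof exactly: invoke Theorem \ref{gqext} to replace the $\Gq$-Hom by $\Hom_G(T((p^r-1)\rho+\lambda),\nabla(\nu)\otimes\G)$, use the Ext-vanishing from Corollary \ref{exttensor} to make the dimension additive over the filtration layers $\nabla(\mu^*)\otimes\nabla(\mu)^{(r)}$ of $\G$ (extending the sum over all $\mu$ by the third bullet of Theorem \ref{gqext}), and then apply Corollary \ref{exttensor} once more to identify each term, finishing with Andersen--Haboush and the reindexing over $\mu^*$ for the second displayed equality. The only thing to clean up is your instantiation of Corollary \ref{exttensor}: taking $M=\St_r$ there would introduce a spurious Steinberg factor, and there is no need to expand $\nabla(\nu)\otimes\nabla(\mu^*)$ into $\nabla$-layers at all --- the correct choice is simply $M=\nabla(\nu)$ (so that $\St_r\otimes M$ has a good filtration by Theorem \ref{tensorproductgoodfilt}) with the corollary's untwisted costandard factor equal to $\nabla(\mu^*)$, which directly gives both the Ext-vanishing and $\dim\Hom_G(T((p^r-1)\rho+\lambda),\nabla(\nu)\otimes\nabla(\mu^*)\otimes\nabla(\mu)^{(r)})=[T((p^r-1)\rho+\lambda^*)\otimes\nabla(\mu)^{(r)}\otimes\nabla(\nu):\nabla(\mu)]_{\nabla}$.
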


\begin{proof}
By Theorem \ref{gqext} there is a finite dimensional $G$-module $\G = \G_{T((p^r-1)\rho + \lambda),\nabla(\nu)}^{r}$ such that $\Hom_{\Gq}(T((p^r-1)\rho + \lambda),\nabla(\nu)) \cong \Hom_G(T((p^r-1)\rho + \lambda),\nabla(\nu)\otimes\G)$.

First we claim that for all $i\geq 1$ and all $\mu\in X_+$ we have $\Ext_{G}^i(T((p^r-1)\rho + \lambda),\nabla(\nu)\otimes \nabla(\mu^*)\otimes\nabla(\mu)^{(r)}) = 0$. This follows from Corollary \ref{exttensor} since $\St_r\otimes\nabla(\nu)$ has a good filtration by Theorem \ref{tensorproductgoodfilt}.

This means that since $\G$ has a filtration with factors of the form $\nabla(\mu^*)\otimes\nabla(\mu)^{(r)}$ by Theorem \ref{gqext} we have $$\dim\Hom_{\Gq}(T((p^r-1)\rho + \lambda),\nabla(\nu)) = \dim\Hom_G(T((p^r-1)\rho + \lambda),\nabla(\nu)\otimes \G)$$ $$=\sum_{\mu\in X_+}\dim\Hom_G(T((p^r-1)\rho + \lambda),\nabla(\nu)\otimes\nabla(\mu^*)\otimes\nabla(\mu)^{(r)})$$ where we can sum over all $\mu\in X_+$ due to the third property of $\G$ listed in Theorem \ref{gqext}. The first claim now follows directly from Corollary \ref{exttensor}.

The first equality of the second claim follows by applying the Andersen--Haboush tensor product theorem, while the second follows by a similar argument to the above once the summation is changed to be over $\mu^*$ rather than $\mu$.
\end{proof}

Using the bilinear form introduced previously, we can in fact extend the above result to hold for all finite dimensional $G$-modules, rather than just $\nabla(\nu)$. It is interesting to compare the $\lambda = 0$ case to \cite[Theorem 6.9]{humphreysfinlie} which is essentially the same formula, but with characters of costandard modules replaced by characters of simple modules.

It is also interesting to compare the below result to \cite[Theorem 10.11]{humphreysfinlie}. When $\lambda\in X_r$ is such that $T((p^r-1)\rho)+\lambda)$ is indecomposable as a $G_r$-module and $M = L(\gamma)$ for some $\gamma\in X_r$, the formulas will give the same results, once the weights have been relabeled suitably. And in fact, the formulas producing the same results for all weights is equivalent to $T((p^r-1)\rho + \lambda)$ being indecomposable as a $G_r$-module.

\begin{thm}\label{dimhomall}
Let $M$ be a finite dimensional $G$-module and $\lambda\in X_+$. Then $$\dim\Hom_{\Gq}(T((p^r-1)\rho + \lambda),M) = \sum_{\mu\in X_+}\lb T((p^r-1)\rho + \lambda^*)\otimes\nabla(\mu)^{(r)}\otimes M,\nabla(\mu)\rb$$ $$= \sum_{\mu\in X_+}\lb T((p^r-1)\rho + \lambda)\otimes \nabla(\mu)^{(r)}\otimes\nabla(\mu^*),M\rb.$$ In particular, $$\dim\Hom_{G(\F_q)}(\St_r,M) = \sum_{\mu\in X_+}\lb \nabla((p^r-1)\rho + p^r\mu)\otimes M,\nabla(\mu)\rb$$ $$= \sum_{\mu\in X_+}\lb \nabla(\mu)\otimes M,\nabla((p^r-1)\rho + p^r\mu)\rb.$$
\end{thm}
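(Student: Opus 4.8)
The plan is to mimic the proof of Theorem \ref{dimhomgrall}: upgrade Theorem \ref{dimhomnabla} from costandard modules to arbitrary finite dimensional $G$-modules by a linearity-plus-density argument using the bilinear form. The key observation is that $T((p^r-1)\rho + \lambda)$ is projective (and injective) as a $\Gq$-module by Proposition \ref{tiltingproj}, so $M\mapsto \dim\Hom_{\Gq}(T((p^r-1)\rho + \lambda),M)$ is additive on short exact sequences of $G$-modules and hence factors through a $\Z$-linear map $\ZX\to\Z$. I will compare this with the map $M\mapsto \sum_{\mu\in X_+}\lb T((p^r-1)\rho + \lambda^*)\otimes\nabla(\mu)^{(r)}\otimes M,\nabla(\mu)\rb$, which is visibly $\Z$-linear in $[M]$ since $\lb\cdot,\cdot\rb$ is bilinear on $\ZX$; I should first note the sum is finite (only finitely many $\mu$ contribute, since $T((p^r-1)\rho + \lambda^*)\otimes\nabla(\mu)^{(r)}\otimes M$ has bounded highest weight while $\nabla(\mu)^{(r)}$ forces $\mu$ small, paralleling the third bullet of Theorem \ref{gqext}).

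Next I would check the two linear maps agree on the basis $\{[\nabla(\nu)]\mid \nu\in X_+\}$ of $\ZX$ (from \cite[Remark to Lemma II.5.8]{rags}). By Theorem \ref{dimhomnabla}, $\dim\Hom_{\Gq}(T((p^r-1)\rho + \lambda),\nabla(\nu)) = \sum_{\mu\in X_+}[T((p^r-1)\rho + \lambda^*)\otimes\nabla(\mu)^{(r)}\otimes\nabla(\nu):\nabla(\mu)]_{\nabla}$. On the other side, since $\St_r\otimes\nabla(\nu)$ has a good filtration (Theorem \ref{tensorproductgoodfilt}), so does $\St_r\otimes\nabla(\mu)^{(r)}\otimes\nabla(\nu)$ by Proposition \ref{tensorproductsteinberggood} and the Andersen--Haboush tensor product theorem, hence $T((p^r-1)\rho + \lambda^*)\otimes\nabla(\mu)^{(r)}\otimes\nabla(\nu)$ has a good filtration by Theorem \ref{goodfilttensorwithtilting}; then Proposition \ref{formproperties}(3) gives $\lb T((p^r-1)\rho + \lambda^*)\otimes\nabla(\mu)^{(r)}\otimes\nabla(\nu),\nabla(\mu)\rb = [T((p^r-1)\rho + \lambda^*)\otimes\nabla(\mu)^{(r)}\otimes\nabla(\nu):\nabla(\mu)]_{\nabla}$. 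Summing over $\mu$ matches Theorem \ref{dimhomnabla} exactly, so the maps agree on all of $\ZX$, proving the first displayed equality (after passing $\lambda$ to $\lambda^*$ is unnecessary since the statement already has $\lambda^*$ on the right-hand side — I should be careful to state it with the same $\lambda$ as in Theorem \ref{dimhomnabla}).

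The second displayed equality is a formal rearrangement: apply Proposition \ref{formproperties}(6) to move the tensor factor $\nabla(\mu)^{(r)}\otimes M$ around, or more directly use Proposition \ref{formproperties}(7) together with $(7)$ applied to $\nabla(\mu)^{(r)}$ versus $\nabla(\mu^*)^{(r)}$, and re-index the sum by $\mu^*$; since $\mu\mapsto\mu^*$ permutes $X_+$ and $\dim\nabla(\mu)=\dim\nabla(\mu^*)$ (here no dimension factor appears, unlike the $G_r$ case), this is routine. The "in particular" statements for $\St_r$ then follow by specializing $\lambda = 0$ and applying the Andersen--Haboush tensor product theorem to identify $\St_r\otimes\nabla(\mu)^{(r)}\cong\nabla((p^r-1)\rho + p^r\mu)$, with the second form obtained by the same re-indexing over $\mu^*$.

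The main obstacle is verifying that the sum over $\mu\in X_+$ is genuinely finite and that the interchange of the (finite) sum with the linear-map argument is legitimate; this is handled by the third bullet point of Theorem \ref{gqext} exactly as in the proof of Theorem \ref{dimhomnabla}, but I must make sure the finiteness is phrased intrinsically in terms of the form (rather than via the auxiliary module $\G_{M,N}^r$) so that the linearity argument on $\ZX$ is not circular — concretely, $\lb T((p^r-1)\rho + \lambda^*)\otimes\nabla(\mu)^{(r)}\otimes M,\nabla(\mu)\rb$ can only be nonzero if $\nabla(\mu)$ appears in the character of $T((p^r-1)\rho + \lambda^*)\otimes\nabla(\mu)^{(r)}\otimes M$, which by weight considerations bounds $\langle\mu,\alpha^\vee\rangle$ and hence restricts $\mu$ to a finite set depending only on the highest weights of $M$, $\lambda$ and on $p^r$.
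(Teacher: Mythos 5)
Your proposal is correct and follows essentially the same route as the paper's proof: both sides are $\Z$-linear in $[M]$ (the left because $T((p^r-1)\rho+\lambda)$ is projective over $\Gq$), they agree on the basis $\{[\nabla(\nu)]\}$ of $\ZX$ by Theorem \ref{dimhomnabla} and Proposition \ref{formproperties}(3), and the remaining equalities are formal manipulations of the form plus Andersen--Haboush. Your extra care about finiteness of the sum is welcome (the paper is silent on it), though the cleanest justification is not a direct weight bound but linearity itself: writing $[M]$ as a finite combination of $[\nabla(\nu)]$'s reduces to the basis case, where the terms are non-negative good-filtration multiplicities summing to the finite dimension $\dim\Hom_{\Gq}(T((p^r-1)\rho+\lambda),\nabla(\nu))$.
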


\begin{proof}
Since $T((p^r-1)\rho + \lambda)$ is projective as a $\Gq$-module, the map $M\mapsto \dim\Hom_{\Gq}(T((p^r-1)\rho + \lambda),M)$ defines a $\mathbb{Z}$-linear map from $\ZX$ to $\mathbb{Z}$.

Similarly, the map $M\mapsto \sum_{\mu\in X_+}\lb T((p^r-1)\rho + \lambda^*)\otimes\nabla(\mu)^{(r)}\otimes M,\nabla(\mu)\rb$ is $\mathbb{Z}$-linear.

By Theorem \ref{dimhomnabla} together with Proposition \ref{formproperties}(3) the above two $\mathbb{Z}$-linear maps agree on $\nabla(\nu)$ for all $\nu\in X_+$ and since these form a $\mathbb{Z}$-basis of $\ZX$ by \cite[Remark to Lemma II.5.8]{rags}, the maps must agree on all finite dimensional $G$-modules, which yields the first claim.

The second equality follows by applying Proposition \ref{formproperties}.
\end{proof}

\subsection{Decomposition of $\St_r\otimes \nabla(\lambda)$ for $\Gq$}
Since $\St_r\otimes \nabla(\lambda)$ is projective as a $\Gq$-module, we can write $$\St_r\otimes\nabla(\lambda) \cong \bigoplus_{\nu\in X_r}p_{\lambda}^r(\nu)P_r(\nu)$$ where $p_{\lambda}^r(\nu) = \dim\Hom_{\Gq}(L(\nu),\St_r\otimes\nabla(\lambda))$.

Using the previous results, we can then give a formula for the $p_{\lambda}^r(\nu)$ in terms of standard character data for $G$.

\begin{cor}
Let $\lambda\in X_+$ and $\nu\in X_r$. Then $$p_{\lambda}^r(\nu) = \sum_{\mu\in X_+}\lb \nabla(\mu)\otimes\nabla(\lambda)\otimes L(\nu^*),\nabla((p^r-1)\rho + p^r\mu)\rb.$$
\end{cor}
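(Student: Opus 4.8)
The plan is to deduce this directly from the $\St_r$-case of Theorem \ref{dimhomall}, in the same way the formula for $d_{\lambda}^r$ was deduced from Theorem \ref{dimhomgrall}. By construction $p_{\lambda}^r(\nu)=\dim\Hom_{\Gq}(L(\nu),\St_r\otimes\nabla(\lambda))$, so the first task is to rewrite this Hom-space so that $\St_r$ (which is a tilting module of the required shape) sits in the first argument, rather than $L(\nu)$ (which need not be tilting).

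Since $\Gq$ is a finite group, every finite dimensional $\Gq$-module is rigid and the standard tensor-hom adjunctions apply. Combining these with the self-duality $\St_r^*\cong\St_r$ and $L(\nu)^*\cong L(\nu^*)$, I would obtain
$$\Hom_{\Gq}(L(\nu),\St_r\otimes\nabla(\lambda))\;\cong\;\Hom_{\Gq}(\St_r\otimes L(\nu),\nabla(\lambda))\;\cong\;\Hom_{\Gq}(\St_r,\nabla(\lambda)\otimes L(\nu^*)).$$
Applying the second displayed identity in the ``in particular'' part of Theorem \ref{dimhomall} with $M=\nabla(\lambda)\otimes L(\nu^*)$ then gives
$$\dim\Hom_{\Gq}(\St_r,\nabla(\lambda)\otimes L(\nu^*))=\sum_{\mu\in X_+}\lb\nabla(\mu)\otimes\nabla(\lambda)\otimes L(\nu^*),\nabla((p^r-1)\rho+p^r\mu)\rb,$$
where commutativity of the tensor product is used to list the factors in the order appearing in the statement. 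Chaining the two displays proves the corollary.

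I do not expect any real obstacle; the only point needing care is the bookkeeping in the first step, checking that the dualities are applied consistently so that one lands on $\St_r$ in the first slot of the Hom-space (Theorem \ref{dimhomall} does not directly cover $\St_r$ in the second slot). One could instead argue via the invariants functor, writing $\Hom_{\Gq}(L(\nu),-)\cong\big(L(\nu^*)\otimes(-)\big)^{\Gq}$ and $(\St_r\otimes N)^{\Gq}\cong\Hom_{\Gq}(\St_r,N)$, but this amounts to the same computation. As in Theorem \ref{dimhomall}, no restriction on $p$ is required, consistent with the hypothesis-free statement.
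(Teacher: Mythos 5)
Your argument is correct and is exactly the reasoning behind the paper's one-line proof ("this follows directly from Theorem \ref{dimhomall}"): one moves $L(\nu)$ out of the first slot via the standard adjunctions and the self-duality of $\St_r$, then applies the theorem with $M=\nabla(\lambda)\otimes L(\nu^*)$. No gaps.
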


\begin{proof}
This follows directly from Theorem \ref{dimhomall}.
\end{proof}

\bibliography{bibtex-full}

\newcommand{\etalchar}[1]{$^{#1}$}
\def\cprime{$'$}
\begin{thebibliography}{BNP{\etalchar{+}}12}

\bibitem[And80]{andersen80}
Henning~Haahr Andersen.
\newblock The {F}robenius morphism on the cohomology of homogeneous vector
  bundles on {$G/B$}.
\newblock {\em Ann. of Math. (2)}, 112(1):113--121, 1980.

\bibitem[And01]{andersen01}
Henning~Haahr Andersen.
\newblock {$p$}-filtrations and the {S}teinberg module.
\newblock {\em J. Algebra}, 244(2):664--683, 2001.

\bibitem[BDM11]{bdm11}
C.~Bowman, S.~R. Doty, and S.~Martin.
\newblock Decomposition of tensor products of modular irreducible
  representations for {${\rm SL}_3$}.
\newblock {\em Int. Electron. J. Algebra}, 9:177--219, 2011.
\newblock With an appendix by C. M. Ringel.

\bibitem[BDM15]{bdm15}
C.~Bowman, S.~R. Doty, and S.~Martin.
\newblock Decomposition of tensor products of modular irreducible
  representations for {${\rm SL}_3$}: the {$p\geq 5$} case.
\newblock {\em Int. Electron. J. Algebra}, 17:105--138, 2015.

\bibitem[BNP{\etalchar{+}}12]{bnppss12}
Christopher~P. Bendel, Daniel~K. Nakano, Brian~J. Parshall, Cornelius Pillen,
  Leonard~L. Scott, and David~I. Stewart.
\newblock Bounding extensions for finite groups and {F}robenius kernels.
\newblock {\em arXiv:1208.6333 [math.RT]}, 2012.

\bibitem[DH05]{dotyhenke05}
Stephen Doty and Anne Henke.
\newblock Decomposition of tensor products of modular irreducibles for {${\rm
  SL}_2$}.
\newblock {\em Q. J. Math.}, 56(2):189--207, 2005.

\bibitem[Don81]{donkin81}
Stephen Donkin.
\newblock A filtration for rational modules.
\newblock {\em Math. Z.}, 177(1):1--8, 1981.

\bibitem[Don93]{donkin93}
Stephen Donkin.
\newblock On tilting modules for algebraic groups.
\newblock {\em Math. Z.}, 212(1):39--60, 1993.

\bibitem[Dru13]{drupieski13}
Christopher~M. Drupieski.
\newblock On projective modules for {F}robenius kernels and finite {C}hevalley
  groups.
\newblock {\em Bull. Lond. Math. Soc.}, 45(4):715--720, 2013.

\bibitem[Hab80]{haboush80}
W.~J. Haboush.
\newblock A short proof of the {K}empf vanishing theorem.
\newblock {\em Invent. Math.}, 56(2):109--112, 1980.

\bibitem[Hum76]{humphreyschevalley}
James~E. Humphreys.
\newblock {\em Ordinary and modular representations of {C}hevalley groups}.
\newblock Lecture Notes in Mathematics, Vol. 528. Springer-Verlag, Berlin-New
  York, 1976.

\bibitem[Hum06]{humphreysfinlie}
James~E. Humphreys.
\newblock {\em Modular representations of finite groups of {L}ie type}, volume
  326 of {\em London Mathematical Society Lecture Note Series}.
\newblock Cambridge University Press, Cambridge, 2006.

\bibitem[Jan80]{jantzen80a}
Jens~C. Jantzen.
\newblock Darstellungen halbeinfacher {G}ruppen und ihrer {F}robenius-{K}erne.
\newblock {\em J. Reine Angew. Math.}, 317:157--199, 1980.

\bibitem[Jan03]{rags}
Jens~Carsten Jantzen.
\newblock {\em Representations of algebraic groups}, volume 107 of {\em
  Mathematical Surveys and Monographs}.
\newblock American Mathematical Society, Providence, RI, second edition, 2003.

\bibitem[KN15]{kildetoftnakano15}
Tobias Kildetoft and Daniel~K. Nakano.
\newblock On good {$(p,r)$}-filtrations for rational {$G$}-modules.
\newblock {\em J. Algebra}, 423:702--725, 2015.

\bibitem[Mat90]{mathieu90}
Olivier Mathieu.
\newblock Filtrations of {$G$}-modules.
\newblock {\em Ann. Sci. \'Ecole Norm. Sup. (4)}, 23(4):625--644, 1990.

\bibitem[RW15]{richewilliamson15}
Simon Riche and Geordie Williamson.
\newblock Tilting modules and the $p$-canonical basis.
\newblock {\em arXiv:1512.08296 [math.RT]}, 2015.

\bibitem[Sob16]{sobaje16}
Paul Sobaje.
\newblock Varieties of ${G}_r$-summands in rational ${G}$-modules.
\newblock {\em arXiv:1605.06330 [math.RT]}, 2016.

\bibitem[Ste63]{steinberg63}
Robert Steinberg.
\newblock Representations of algebraic groups.
\newblock {\em Nagoya Math. J.}, 22:33--56, 1963.

\bibitem[WW11]{wanwang11}
Jinkui Wan and Weiqiang Wang.
\newblock The {${\rm GL}_n(q)$}-module structure of the symmetric algebra
  around the {S}teinberg module.
\newblock {\em Adv. Math.}, 227(4):1562--1584, 2011.

\end{thebibliography}
\bibliographystyle{alpha}

\end{document}